\theoremstyle{plain}
\newtheorem{theorem}{Theorem}[section]
\newtheorem{corollary}[theorem]{Corollary}
\newtheorem{lemma}[theorem]{Lemma}
\newtheorem{proposition}[theorem]{Proposition}
\theoremstyle{definition}
\newtheorem{definition}[theorem]{Definition}
\theoremstyle{remark}
\newtheorem{remark}[theorem]{Remark}
\numberwithin{equation}{section}
\newcommand{\average}{{\mathchoice {\kern1ex\vcenter{\hrule height.4pt
				width 6pt depth0pt} \kern-9.7pt} {\kern1ex\vcenter{\hrule
				height.4pt width 4.3pt depth0pt} \kern-7pt} {} {} }}
\def\R{\mathbb{R}}
\def\S{\mathbb{S}}
\def\Q{\mathbb{Q}}
\def\N{\mathbb{N}}
\newcommand{\loc}{{\mathrm{loc}}}
\begin{document}

	\title[The fractional obstacle problem with drift: higher regularity]{The fractional obstacle problem with drift:\\ higher regularity of free boundaries}
	
	\author{Teo Kukuljan}
	\address{Universit\"at Z\"urich, Institut f\"ur Mathematik, Winterthurerstrasse, 8057 Z\"urich, 
		Switzerland}
	\email{teo.kukuljan@math.uzh.ch}
	
	\thanks{The author has received funding from the European Research Council (ERC) under the Grant Agreement No 801867, and from the Swiss National Science Foundation project 200021\_178795. 
	The paper was written under supervision of Xavier Ros-Oton, we thank him for proposing many ideas and approaches.}
	\keywords{Nonlocal operators, fractional Laplacian, free boundary, higher regularity.}
	\subjclass[2010]{35R35, 60G22.}
	
	\begin{abstract}
		We study the higher regularity of free boundaries in obstacle problems for integro-differential operators with drift, like $(-\Delta)^s +b\cdot\nabla$, in the subcritical regime $s>\frac{1}{2}$. Our main result states that once the free boundary is $C^1$ then it is $C^\infty$, whenever $s\not\in\Q$.
		
		In order to achieve this, we establish a fine boundary expansion for solutions to linear nonlocal equations with drift in terms of the powers of distance function. Quite interestingly, due to the drift term, the powers do not increase by natural numbers and the fact that $s$ is irrational plays al important role. Such expansion still allows us to prove a higher order boundary Harnack inequality, where the regularity holds in the tangential directions only.
	\end{abstract}

	\maketitle

	\section{Introduction}

	Obstacle problems for integro-differential operators naturally appear in probability and mathematical finance, for example in the optimal stopping problem for L\'evy processes with jumps, which has been used in pricing models for American options since 1970, see \cite{CT04}. More recently, obstacle problems of this kind  appeared also in other fields of science, for example biology and material science, see \cite{CDM16,R18,S15} and references therein.
	
	Henceforth,  more and more effort has been put into understanding obstacle problems for nonlocal operators. The obstacle problem is a non-linear equation that can be written in the form
	$$\min\{ Lu,u-\varphi\} = 0\quad \quad \text{ in }\quad\R^n,$$
	where $\varphi$ is a given smooth  function with compact support, called the obstacle, and $L$ is some kind of nonlocal operator. The main goal of study is to understand the set $\{u>\varphi\}$, concretely to find out the regularity of its boundary $\partial\{u>\varphi\},$ called the free boundary.  The most basic and canonical example of a nonlocal operator is the fractional Laplacian, $(-\Delta)^s$, given by
	$$  (-\Delta)^s u (x) = c_{n,s} p.v. \int_{\R^n} \frac{u(x)-u(y)}{|x-y|^{n+2s}}dy,\quad s\in(0,1),$$
	where the constant $c_{n,s}$ is chosen so that the Fourier symbol of the operator is $|\xi|^{2s}.$ 
	The main result regarding the regularity of the free boundary in the case of the fractional Laplacian states that the free boundary is $C^\infty$ outside of a set of singular or degenerate points. Results of this type are usually established in three steps:
	\begin{enumerate}[(a)] 
		\item The free boundary splits into regular and singular/degenerate points,
		
		\item Near regular points the free boundary is $C^{1,\alpha}$ for some $\alpha>0$,
		
		\item If the free boundary is $C^{1,\alpha}$, then it is $C^\infty$.
	\end{enumerate}
	Parts (a) and (b) were established in \cite{ACS08,CSS08}, and part (c) in \cite{DS16,JN17,KPS15,KR19}. 
	Analogous results have been established for a family of integro-differential operators of the form
	\begin{align*}
		Lu(x) &= p.v.\int_{\R^n} (u(x)-u(x+y))K(y)dy\\
		&=\frac{1}{2}\int_{\R^n} (2u(x)-u(x+y)-u(x-y))K(y)dy,
	\end{align*}
	where the kernel $K$ satisfies
	\begin{align}\label{kernelConditions}
	\begin{split}
	&K \text{ is even, homogeneous and}\\
	&\frac{\lambda}{|y|^{n+2s}}\leq K(y)\leq \frac{\Lambda}{|y|^{n+2s}}, \text{ for all }y\in\R^n \text{ with } 0<\lambda\leq \Lambda,\text{ }s\in(0,1).
	\end{split}
	\end{align}
	Steps (a) and (b) has been established in \cite{CRS17}, and step (c) in \cite{AR20}. 
	
	Two more cases of operators are of particular interest for study. The parabolic case, when the operator is given with $(-\Delta)^s + \partial_t$, and the drift case or the fractional Laplacian with drift, when the operator equals $(-\Delta)^s + b \cdot\nabla$. In order to be able to apply similar tools as in the case $b\equiv 0$, we must assume that the parameter $s>\frac{1}{2}$ -- the subcritical regime. Then the fractional Laplacian is of higher order than the derivative terms, which allows us to treat the drift term as reminders. Still, much less is known in these cases. In the parabolic case, step (a) and (b) were established in \cite{BFR18,CF13}, and there are no generalisations to a wider class of elliptic operators, and (c) is an open problem. Similarly, for the fractional Laplacian with drift, steps (a) and (b) have been established in \cite{GPPS17,PP15}, but again (c) remained an open problem. On the other hand, the fractional Laplacian with drift in the critical regime $s=\frac{1}{2}$ is studied in \cite{FR18}, where again steps (a) and (b) are established. 
	
	The aim of this paper is to continue the study of the free boundary around regular points for the obstacle problem with  drift in the subcritical regime $s>\frac{1}{2}$:
	\begin{align}\label{obstacleProblem}
	\min\{Lu+b\cdot\nabla u, u-\varphi\} = 0\quad \quad\text{in }\quad\R^n,
	\end{align}
	where $b$ is a constant vector in $\R^n$.
	We show that if $s\not\in\Q$, once the free boundary is $C^{1}$, it is in fact $C^\infty,$ as long as $\varphi$ is $C^\infty$. Our main result states the following.
	
	\begin{theorem}\label{1.1}
		Let $L$ be the fractional Laplacian. Let $s\in (\frac{1}{2},1)\backslash\mathbb{Q}$, and $\varphi$ be any $C^\infty_c(\R^n)$ obstacle, and $u$ be any solution to \eqref{obstacleProblem}. Then the free boundary $\partial\{u>\varphi\}$ is $C^\infty$ in a neighbourhood of any regular point.\footnote{We refer to \cite{GPPS17} or Section 7 below, for the definition of regular points.}
	\end{theorem}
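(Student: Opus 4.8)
The plan is to follow the standard bootstrap strategy for free boundary regularity, but where the key analytic input is the fine boundary expansion advertised in the abstract. First I would reduce the obstacle problem \eqref{obstacleProblem} to a statement about a linear nonlocal equation with drift by a familiar hodograph-type argument: near a regular free boundary point, after subtracting the obstacle, the solution $v=u-\varphi$ satisfies $Lv+b\cdot\nabla v = g$ in the positivity set $\{v>0\}$, with $g$ smooth, and $v\equiv 0$ in the complement. Knowing the free boundary is $C^1$, one knows $v$ vanishes to order $s$ at the boundary, i.e. $v \asymp d^s$ where $d$ is the distance to the free boundary. The regularity of the free boundary is then governed by the regularity of the quotient of two solutions that both vanish like $d^s$ — this is the boundary Harnack principle. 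So the scheme is: (i) establish the fine boundary expansion of solutions in powers of $d$; (ii) use it to prove a higher order (tangential) boundary Harnack inequality; (iii) feed this into a bootstrap to upgrade $C^1 \Rightarrow C^{1,\alpha} \Rightarrow C^\infty$ for the free boundary.

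The heart of the matter, and the step I expect to be the main obstacle, is (i): the boundary expansion. In the driftless case, solutions expand as $d^s(a_0 + a_1 d + a_2 d^2 + \cdots)$ plus a smooth tangential part, and the exponents increase by integers. With the drift term $b\cdot\nabla$, which is of order one and hence lower than $(-\Delta)^s$ only because $s>\tfrac12$, the operator no longer commutes nicely with the homogeneity, and the correct ansatz must include extra powers. I would look for the expansion $v = d^s\sum_{j,k} c_{j,k}\, d^{j+k(1-s)}$ (tangentially-varying coefficients), where the $k(1-s)$ shift is forced by matching the action of $b\cdot\nabla$ on $d^s$ against $(-\Delta)^s$ of the correction terms; here is precisely where $s\notin\mathbb{Q}$ enters — it guarantees the exponents $j+k(1-s)$ never collide with each other or with integers, so no logarithmic terms appear and the formal expansion is uniquely solvable order by order. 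Establishing that the formal expansion is actually asymptotic (with the right error estimates, in Hölder norms in the tangential variables) is the technical core: one works in half-space-type geometry, constructs explicit barriers built from the powers of $d$, and iterates using the already-known interior and boundary estimates for the fractional Laplacian with a lower-order perturbation.

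Given the expansion, step (ii) is comparatively routine: for two solutions $v_1, v_2$ with matching leading behaviour, the expansion shows $v_1/v_2$ has an asymptotic expansion whose coefficients are controlled, but crucially the non-integer powers $d^{k(1-s)}$ are not smooth up to the boundary, so only the \emph{tangential} derivatives of the quotient can be bounded — this is exactly the "regularity in tangential directions only" phenomenon flagged in the abstract. One then differentiates the equation tangentially and iterates the tangential boundary Harnack, each step gaining one tangential derivative. Finally, step (iii): parametrising the free boundary locally as a graph $x_n = \Gamma(x')$ and writing $e_i\cdot\nabla v / e_n\cdot\nabla v$ as a ratio of two solutions of the linearised equation, the tangential boundary Harnack gives $\nabla\Gamma \in C^{0,\alpha}$ in the tangential variables, hence $\Gamma\in C^{1,\alpha}$; bootstrapping the tangential estimates and using that a function whose tangential derivatives to all orders are controlled along a graph is in fact $C^\infty$ (a short interpolation/elliptic-regularisation argument on the equation the free boundary itself satisfies) upgrades $\Gamma$ to $C^\infty$, which is the assertion of Theorem~\ref{1.1}. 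The subtlety to watch in (iii) is that the tangential-only nature of the estimates means one cannot naively iterate in all directions; the resolution is that the free boundary has no "normal direction" of its own — regularity of a graph is a purely tangential question — so the tangential boundary Harnack is in fact sufficient.
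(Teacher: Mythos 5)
Your overall scheme is the same as the paper's: reduce to the linear drift equation satisfied by $w=u-\varphi$ and its derivatives, establish a fine boundary expansion in powers of $d$, deduce a boundary Harnack estimate that is higher order only in the tangential directions, and bootstrap the regularity of the free boundary by expressing the normal through quotients $w_i/w_n$ of solutions (the initial $C^{1,\alpha}$ regularity and the nondegeneracy $\partial_\nu w\geq c\,d^s$ near regular points being imported from \cite{GPPS17,PP15}). Two points, however, need correction.

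First, the exponents in your ansatz are wrong. You propose $v=d^s\sum_{j,k}c_{j,k}\,d^{\,j+k(1-s)}$, i.e.\ increments of $1-s$. But the matching you describe forces increments of $2s-1$: the drift applied to a term $Q\,d^{s+a}$ produces an error of size $d^{s+a-1}$, and to absorb it one needs a correction $\tilde Q\,d^{p}$ with $L(\tilde Q d^{p})\approx Q\,d^{s+a-1}$, i.e.\ $p-2s=s+a-1$, so $p=s+a+(2s-1)$. Hence the correct family of exponents is $s+k(2s-1)+l$ with $k,l\in\N$ (this is the expansion the paper proves), and with your increment $k(1-s)$ the order-by-order matching does not close except for the special value $s=2/3$. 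This also changes where irrationality enters: the obstruction is not a generic ``collision of exponents'', but the specific failure of the identity $L(x_+^p)=c_p x_+^{p-2s}$ when $p\in\N+2s$ (logarithmic terms) or $p\in\N+s$ (where $c_p=0$, so the correction equation cannot be solved); applied to the powers $s+k(2s-1)+l=(2k+1)s-k+l$ this excludes $s$ of the form $m/(2k+1)$, whence the hypothesis $s\notin\Q$.

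Second, a quantitative point in the bootstrap: each application of the tangential boundary Harnack gains only $\varepsilon_0=2s-1$ (not a full tangential derivative), i.e.\ $\partial\Omega\in C^\beta$ yields $w_i/w_n\in C^{\beta-1+(2s-1)}(\partial\Omega)$ and hence $\partial\Omega\in C^{\beta+(2s-1)}$; since $2s-1>0$ the iteration still reaches $C^\infty$, but the induction must be run with this increment, and at each stage one needs the coefficient regularity of the whole expansion in the tangential variable (not just the leading quotient), which is the technical content your outline leaves unaddressed. With the exponent corrected and the gain per step stated as $2s-1$, your plan coincides with the paper's proof.
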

	
	This is the analogue of the step (c) explained above, since the initial regularity is provided in \cite{GPPS17}. We  explain the exclusion of the rational parameters $s$ in the subsection below. Let us also point out, that our proof from the initial $C^{1}$ regularity to $C^\infty$ is done for a family of operators $L$ whose kernels satisfy \eqref{kernelConditions} and are $C^\infty(\mathbb{S}^{n-1}).$ Hence as soon as the initial regularity is provided for this class as well, we immediately obtain the analogue of the above theorem (see Corollary \ref{infiniteRegularityCorollary}).
	
	\subsection{Strategy of the proof}
	
	In order to obtain the higher regularity of the free boundary, we exploit the fact that the normal vector can be expressed with the quotients of the partial derivatives of the height function $w:=u-\varphi$ (see \cite{DS15,DS16} or  \cite[Section 5]{AR20}). Hence, we closely study the quotients $w_i/w_n$ in the domain $\Omega = \{w>0\}$. 
	
	Some ideas are drawn from \cite{AR20}, but we are faced with several difficulties arising from the drift term. 
	
	In \cite{AR20}, it is established that the quotients $w_i/w_n$ are as smooth as the boundary, say $C^\beta(\overline{\Omega})$. This implies that the normal vector $\nu$ to the boundary is $C^\beta$ as well, and hence the boundary itself is $C^{\beta+1}.$ A key step to show this is to establish that the quotients $w_i/d^s$ are $C^{\beta-1}(\overline{\Omega})$. Here, unfortunately, these results fail due to the presence of the drift term, and the best regularities we can get are $w_i/d^s,w_i/w_n\in C^{2s-1}(\overline{\Omega}).$
	Still, one may wonder if $w_i/w_n\in C^\beta(\partial \Omega)$, i.e., the regularity in \emph{tangential} directions only. This is indeed what we prove here, but it turns out to be quite delicate, as explained next. 
	
	We use the expansion result from \cite{AR20}, stating that
	\begin{equation}\label{expansionResultFromAR20}
		\left\lbrace\begin{array}{rcll}
		Lu &= &f&\text{in }\Omega\cap B_1,\\
		u&=&0&\text{in }\Omega^c\cap B_1,
		\end{array}\right. \Longrightarrow \quad u(x) = Q(x) d^s + O(|x|^{\beta-1+s}),
	\end{equation}
	for some polynomial $Q$ of degree $\lfloor\beta-1\rfloor$, provided that $f$ is smooth enough. For simplicity, let us turn to the one-dimensional case, $\Omega = \{x>0\}\subset\R$. When there is no drift term, \eqref{expansionResultFromAR20} applies directly on $w_i$ and gives the expansion of the form 
	$$w_i(x) = c_0 x_+^s + c_1 x_+^{1+s} + c_2 x_+^{2+s}+\ldots + O(|x|^{\beta-1+s}), $$
	which yields that $w_i/d^s$ agrees with a polynomial up to an error term of order $\beta-1$. This provides enough information to deduce the wanted regularity. When the drift term appears, the partial derivatives solve 
	$$\left\lbrace\begin{array}{rcll}
	Lw_i &=& f_i -b\cdot \nabla w_i &\text{in }\Omega\cap B_1,\\
	w_i&=&0&\text{in }\Omega^c\cap B_1.
	\end{array}\right. $$
	Since a priori we have very little regularity for $\nabla w_i$, the expansion result \eqref{expansionResultFromAR20} gives us only that $w_i = c_0 x_+^s + O(|x|^{s+\alpha})$, for some small $\alpha$. To continue the expansion, we deduce that than the gradient is of the form $\nabla w_i = c_0' x_+^{s-1} + O(|x|^{s-1+\alpha}),$ and then find a suitable constant $c_1$, such that $L(c_1 x_+^{s+\varepsilon_0}) = c_0' x_+^{s-1}$. Then we expand the function $w_i - c_1x_+^{s+2s-1}$, which has a bit better right-hand side, to get
	$$ w_i = c_0 x_+^s +c_1x_+^{s+2s-1} + O(|x|^{s+2s-1+\alpha}).$$ We proceed in the similar manner, to get the expansion 
	$$ w_i = c_0 x_+^s +c_1x_+^{s+2s-1} + c_2x_+^{s+2(2s-1)}+\ldots + O(|x|^{\beta-1+s}),$$
	where the powers in the expansion are of the form $s+k(2s-1) + l$, for $k,l\in\N$, but smaller than $\beta-1+s$. The procedure is based on the fact that $Lx_+^p = c_px^{p-2s}$ for some non-zero constant $c_p$. This equality fails when $p$ is of the form $m+s$ or $m+2s$ for any integer $m$. This leads to exclusion of rational parameters $s$.
	
	In fact, similar happens in the general case. If $\partial\Omega\subset \R^n$ is $C^{\beta}$, we are able to establish the expansion of the form
	$$w_i (x)= \sum_{k,l\geq 0}^{k(2s-1)+l\leq \beta-1}Q_{k,l,z}(x) d^{s+k(2s-1)+l}(x) + O(|x-z|^{\beta-1+s}),$$
	around every regular free boundary point $z$, where the zero order term of $Q_{0,0,z}$ is $C^{\beta-1}(\partial\Omega)$ as a function of $z$.
 	The expansion tells us, that the best regularity of $\frac{w_i}{d^s}$ and $\frac{w_i}{w_n}$ is $C^{2s-1}(\overline{\Omega})$, but moreover also that $  \frac{w_i}{d^s} \in C^{\beta-1}(\partial\Omega)$. Furthermore, with some extra steps we are able to deduce also $\frac{w_i}{w_n} \in C^{\beta-1+(2s-1)}(\partial\Omega)$, which provides that the normal $\nu$ to the boundary is of the same regularity, and so the boundary has to be $C^{\beta+(2s-1)}$. Since $2s-1>0$, this is enough to bootstrap and deduce that $\partial\Omega$ is actually $C^\infty.$
	
	Let us describe the expansion part a little bit more in details. 
	We improve the accuracy of the expansion of $w_i$ gradually. The improvement is obtained in two steps. First we show how the expansion of $w_i$ translates to the gradient. This presents a rather cumbersome step, which needs some additional interior regularity estimates and Corollary \ref{generalisedGrowthLemma}. 
	
	Then we need to correct the expansion of $w_i$, in such a way that its operator evaluation becomes small. Concretely, for every term $Qd^{\tilde{p}}$ in the drift term expansion, we need to find polynomial $\tilde{Q}$ and a suitable power $p$ so that $L(\tilde{Q} d^p)\approx Qd^{\tilde{p}}$. In order to establish it, one needs to show a result of the type
	\begin{equation}\label{theGratAssumption} 	
		\partial\Omega\in C^\beta\quad\Longrightarrow\quad L(\tilde{Q} d^p) = \phi d^{p-2s} + R,
	\end{equation}
	where $\phi\in C^\beta(\overline{\Omega})$ and $R\in C^{\beta-1+ p-2s}(\overline{\Omega})$, together with additional information about function $\phi$. To get a correspondence between $\tilde{Q}$ and $\phi$ we perform a blow-up argument with a limiting result, that reduces to the flat case. Together with the explicit computation in the flat case, we are able to argue the existence of $\tilde{Q}$, so that $L(\tilde{Q}d^{p+2s})\approx Q_i d^{p},$ up to some error terms. Note that in the computation in the flat case, we need that the power is not in $\N+2s$, otherwise logarithmic terms would appear. Since we apply it on the powers of the form $(2k+1)s - l$, our argument with expansions works only when $s$ is not of the form $\frac{m}{(2k+1)}$, for two integers $m,k$, and hence we set $s$ to be irrational.
	
	\subsection{Boundary regularity for linear nonlocal equations with drift}
	
	Our results hold for arbitrary solutions to linear equations of the form 
	\begin{equation}\label{linearEquation}
	\left\lbrace\begin{array}{rcl l}
	Lu+b\cdot\nabla u&=&f&\text{in  } \Omega\cap B_1\\
	u &=&0 & \text{in  } \Omega^c\cap B_1.
	\end{array}\right.
	\end{equation}
	With its aid we are able to establish the following boundary Harnack-type estimate.

	\begin{theorem}\label{Harnack}
		Let $s>\frac{1}{2},$ $s\not\in\Q$, and assume $\Omega\subset\R^n$ is a $C^\beta$ domain with $\beta> 2-s,$ $\beta\not\in\N$ and $\beta\pm s\not\in\N$. Let $L$ be an operator whose kernel $K$ is $C^{2\beta+1}(\S^{n-1})$ and satisfies conditions \eqref{kernelConditions}. For $i=1,2$, let $u_i$ be two solutions to
		$$\left\lbrace\begin{array}{rcl l}
		Lu_i+b\cdot\nabla u_i&=&f_i&\text{in  } \Omega\cap B_1\\
		u_i &=&0 & \text{in  } \Omega^c\cap B_1,
		\end{array}\right.$$
		with $f_i\in C^{\beta-2+s }(\overline{\Omega}\cap B_1)$ and $b\in\R^n.$ Assume that $u_2\geq c_2d^s$ in $B_1$, for some positive $c_2.$ Then
		\begin{equation}\label{optimalHarnack}
			\frac{u_1}{u_2}\in C^{2s-1}(\overline{\Omega}\cap B_{1/2}),
		\end{equation}
		and
		\begin{align*}
		\frac{u_1}{u_2}\in C^{\beta-1+(2s-1)}(\partial\Omega\cap B_{1/2}).
		\end{align*}
	\end{theorem}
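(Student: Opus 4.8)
The plan is to reduce the boundary Harnack estimate to the fine boundary expansion announced in the introduction, applied simultaneously to $u_1$ and $u_2$. Concretely, for every regular (here, every) boundary point $z\in\partial\Omega\cap B_{1/2}$ I would invoke the expansion
\begin{equation*}
u_i(x) = \sum_{\substack{k,l\ge 0\\ k(2s-1)+l\le \beta-1}} Q^{(i)}_{k,l,z}(x)\, d^{\,s+k(2s-1)+l}(x) + O\bigl(|x-z|^{\beta-1+s}\bigr),
\end{equation*}
where each $Q^{(i)}_{k,l,z}$ is a polynomial, and the zeroth-order coefficient $q_i(z):=Q^{(i)}_{0,0,z}(z)$ of the leading term depends on $z\in\partial\Omega$ in a $C^{\beta-1}$ fashion. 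The nondegeneracy hypothesis $u_2\ge c_2 d^s$ forces $q_2(z)\ge c_2>0$ uniformly, so $q_2$ never vanishes. The candidate boundary trace of the quotient is then $g(z):=q_1(z)/q_2(z)$, which lies in $C^{\beta-1}(\partial\Omega\cap B_{1/2})$ as a ratio of a $C^{\beta-1}$ function by a $C^{\beta-1}$ function bounded below. Subtracting, $u_1 - g(z)u_2$ has leading-order coefficient vanishing at $z$, and its expansion starts at order $s+(2s-1)$ — this gain of $2s-1$ in the homogeneity is exactly the source of the extra $(2s-1)$ in the exponent of the second conclusion.

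For the interior (non-tangential) estimate \eqref{optimalHarnack}, the strategy is the standard iteration/blow-up scheme for boundary Harnack quotients: show that $u_1/u_2$ is bounded in $\overline\Omega\cap B_{1/2}$, and then establish the Hölder seminorm bound by a dyadic argument. At each scale around a boundary point one rescales $v_r := \bigl(u_1 - g(z)u_2\bigr)(z+r\,\cdot)/r^{s}$, uses the expansion to see that $v_r$ has size $O(r^{2s-1})$ at unit scale together with the right-hand-side and growth control (here one needs the linear equation \eqref{linearEquation}, the already-established regularity $u_i/d^s\in C^{2s-1}(\overline\Omega)$ from \cite{AR20}-type arguments adapted to the drift, and the generalized growth lemma — Corollary \ref{generalisedGrowthLemma}), and concludes geometric decay of the oscillation of $u_1/u_2 - g(z)$ at the point $z$. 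Combining the decay at every boundary point with interior Schauder/regularity estimates for the quotient away from $\partial\Omega$ yields $u_1/u_2\in C^{2s-1}(\overline\Omega\cap B_{1/2})$; the exponent $2s-1$ is optimal precisely because the drift prevents the next term in the $u_i$-expansion from being $d^{1+s}$, replacing it by $d^{s+(2s-1)}$.

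For the sharp tangential statement $u_1/u_2\in C^{\beta-1+(2s-1)}(\partial\Omega\cap B_{1/2})$, I would argue as follows. Write $h:=u_1/u_2$ on $\overline\Omega$; on the boundary $h(z)=g(z)=q_1(z)/q_2(z)$. To control tangential Hölder differences $|h(z)-h(z')|$ for $z,z'\in\partial\Omega$ of comparable size $\rho=|z-z'|$, use the expansion of $u_1 - g(z')u_2$ centered at $z'$: its leading coefficient vanishes at $z'$, so along $\partial\Omega$ it is of size $O(\rho^{\beta-1+(2s-1)}\cdot\rho^{?})$ after dividing by $d^s\sim u_2$ — the precise bookkeeping being that the coefficient function $z\mapsto Q_{0,0,z}$ is $C^{\beta-1}$, hence the difference $q_1(z)-g(z')q_2(z)$ vanishes to order $\beta-1$ at $z=z'$, and the improved-expansion terms of order $s+k(2s-1)+l$ with $k\ge 1$ contribute the extra $(2s-1)$. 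Dividing by $q_2$ (bounded below) preserves the estimate.

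The main obstacle I expect is establishing the boundary expansion itself with the stated $C^{\beta-1}$-dependence of the leading coefficient on $z$ — i.e. the content sketched in the "strategy of the proof" subsection: gradually upgrading the accuracy of the expansion of each $u_i$, transferring an expansion of $u$ into an expansion of $\nabla u$ (requiring the additional interior estimates and Corollary \ref{generalisedGrowthLemma}), and then correcting each term $Q d^{\tilde p}$ in the drift contribution $-b\cdot\nabla u$ by solving $L(\tilde Q d^{p})\approx Q d^{\tilde p}$ via a blow-up reduction to the flat half-space and the explicit identity $L(d^p)=c_p d^{p-2s}$. The delicate point — and the reason for $s\notin\mathbb Q$ — is that these explicit flat computations degenerate (logarithms appear, $c_p=0$) when the exponent $p$ lands in $\mathbb N+s$, $\mathbb N+2s$, or $\mathbb N$; since the exponents generated are of the form $(2k+1)s-l$, irrationality of $s$ is exactly what keeps all of them admissible. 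Once the expansion with the coefficient regularity is in hand, deducing \eqref{optimalHarnack} and the tangential estimate is comparatively routine bookkeeping of the type above, together with standard interior regularity for the quotient.
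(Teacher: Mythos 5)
Your proposal tracks the paper's overall strategy faithfully: expand each $u_i$ around boundary points in powers $d^{s+k\varepsilon_0+l}$ with $C^{\beta-1-\,(\cdot)}$-regular coefficients (Propositions~\ref{harnack1.1}--\ref{harnack1.3}), use the compactness/blow-up scheme to obtain the interior estimate \eqref{optimalHarnack} (Proposition~\ref{harnack2.1} via Lemma~\ref{2functions1.1}), and track the tangential regularity through the expansion coefficients. Your identification of the key obstacle --- producing the expansion itself via the correction step $L(\tilde Q\,d^p)\approx Qd^{\tilde p}$ in the flat case, and why irrationality of $s$ is needed --- matches the paper's roadmap, including the use of Corollary~\ref{generalisedGrowthLemma} when transferring the expansion to the gradient.

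However, your argument for the tangential estimate $u_1/u_2\in C^{\beta-1+\varepsilon_0}(\partial\Omega)$ has a genuine gap. You correctly derive that the leading coefficients $q_1,q_2$ are $C^{\beta-1}$ in $z$, and hence that the boundary trace $g=q_1/q_2$ is $C^{\beta-1}$; but the claimed conclusion requires the \emph{same} boundary function $g=u_1/u_2\vert_{\partial\Omega}$ to be $C^{\beta-1+\varepsilon_0}$, which your reasoning does not reach. Knowing that $u_1-g(z)u_2$ has next term of order $d^{s+\varepsilon_0}$ gives the local oscillation bound $u_1/u_2-g(z)=O(|x-z|^{\varepsilon_0})$, i.e.\ exactly the $C^{\varepsilon_0}(\overline\Omega)$ estimate \eqref{optimalHarnack}; it does not upgrade the regularity of $z\mapsto g(z)$ beyond $C^{\beta-1}$. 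The paper bridges this by comparing $u_1$ to $u_2$ rather than to $d^s$ in the expansion step: because $f_i\in C^{\beta-2+s}=C^{(\beta-1-s)+\varepsilon_0}$ (one extra $\varepsilon_0$ of right-hand-side regularity), one can carry the expansion of each $\tilde u_{i,z}$ one iteration further, and then the two-function expansion results (Lemmas~\ref{2functions1.2}, \ref{2functions2.2}, \ref{2functions3.2}) produce a \emph{polynomial} $Q_z\in\mathbf P_{\lfloor\beta-1+\varepsilon_0\rfloor}$ (one degree higher than in the one-function Schauder case) with $|\tilde u_{1,z}-Q_z\tilde u_{2,z}|\le C|x-z|^{\beta-1+\varepsilon_0+s}$; dividing by $u_2\sim d^s$ and invoking Lemma~\ref{divisionLemma} together with Lemma~\ref{theUltimateRegularityLemma} then yields the sharp tangential regularity. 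Your sketch does not identify the need for this higher-degree polynomial approximation of the quotient, and without it the extra $\varepsilon_0$ along $\partial\Omega$ cannot be concluded.
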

	
	An important step towards towards the proof of Theorem \ref{Harnack} is the following boundary Schauder-type estimate for solutions to \eqref{linearEquation}.

	\begin{theorem}\label{Schauder}
		Let $s>\frac{1}{2},$ $s\not\in\Q$, and assume $\Omega\subset\R^n$ is a $C^\beta$ domain with $\beta>1+s$, $\beta\not\in\N,$ and $\beta\pm s\not\in\N$. Let $L$ be an operator whose kernel $K$ is $C^{2\beta+1}(\S^{n-1})$ and satisfies conditions \eqref{kernelConditions}. Let $u$ be a solution to
		\eqref{linearEquation}
		with $f\in C^{\beta-1-s}(\overline{\Omega})$ and $b\in\R^n.$ 
		
		Then
		\begin{equation}\label{optimalSchauder}
			\frac{u}{d^s} \in C^{2s-1}(\overline{\Omega}\cap B_{1/2}),
		\end{equation}
		and 
		$$\frac{u}{d^s}\in C^{\beta-1}(\partial\Omega\cap B_{1/2}).$$ 
	\end{theorem}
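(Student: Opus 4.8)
The plan is to build the boundary expansion of $u$ in powers of the distance function iteratively, treating the drift term $b\cdot\nabla u$ as a lower-order perturbation of the right-hand side. We start from the crude bound. Since $\beta>1+s$ and $f\in C^{\beta-1-s}(\overline\Omega)$, standard boundary regularity for $L$ in a $C^\beta$ domain (applied to the equation $Lu = f - b\cdot\nabla u$, with $\nabla u$ known only to be bounded from the first, optimal $C^s$ estimate) gives a first-order expansion $u(x) = Q_{0,0,z}(x)\,d^s(x) + O(|x-z|^{s+\alpha})$ around each boundary point $z$, for some small $\alpha>0$ with $s+\alpha < 2s-1$ if $2s-1$ is small. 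The key device is the expansion result \eqref{expansionResultFromAR20} from \cite{AR20}, but we must feed it progressively better information about $\nabla u$ near $\partial\Omega$.

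The core of the argument is the improvement loop. Suppose we have an expansion $u(x) = \sum_{k(2s-1)+l\le p} Q_{k,l,z}(x)\, d^{s+k(2s-1)+l}(x) + O(|x-z|^{s+p})$ with $p<\beta-1$. First I would translate this into an expansion of $\nabla u$ of the form $\nabla u(x) = \sum Q'_{k,l,z}(x)\, d^{s-1+k(2s-1)+l}(x) + O(|x-z|^{s-1+p})$; this is the cumbersome differentiation step, requiring interior estimates rescaled to dyadic annuli around $z$ and Corollary \ref{generalisedGrowthLemma} to control the error terms. Then the drift contributes a term $b\cdot\nabla u$ whose leading piece is $\phi\, d^{s-1+\cdots}$; invoking the key computation \eqref{theGratAssumption}, I find a polynomial correction $\widetilde Q$ and power $p'=s+2s-1+\cdots$ so that $L(\widetilde Q\, d^{p'})$ matches this leading piece up to an error in $C^{\beta-1+p'-2s}$. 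Here is exactly where $s\notin\mathbb{Q}$ enters: the flat-case computation $L x_+^q = c_q x_+^{q-2s}$ needs $q\notin\mathbb N+s$ and $q\notin\mathbb N+2s$ to avoid logarithmic terms, and the powers $q$ we generate are of the form $(2k+1)s-l$; irrationality of $s$ guarantees $c_q\ne 0$ and no resonance. Subtracting these corrections from $u$ improves the right-hand side, and re-applying \eqref{expansionResultFromAR20} extends the expansion by one more increment of $2s-1$ (or of $1$). Since $2s-1>0$, finitely many iterations reach exponent $\beta-1$.

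Once the full expansion at every boundary point $z$ is in hand, the two conclusions follow by now-standard bookkeeping. The interior estimate \eqref{optimalSchauder}, $u/d^s\in C^{2s-1}(\overline\Omega\cap B_{1/2})$, comes from the fact that the second term in the expansion carries the power $d^{2s-1}$ relative to $d^s$ (or $d^1$, but $2s-1<1$), so $u/d^s = Q_{0,0,z}(z) + O(|x-z|^{2s-1})$ uniformly in $z$; one converts this pointwise-in-$z$ control, together with the interior $C^\infty$ regularity of $u$ in $\Omega$ away from the boundary, into a genuine Hölder norm via a standard Campanato-type argument. For the tangential statement $u/d^s\in C^{\beta-1}(\partial\Omega\cap B_{1/2})$, the point is that the coefficient $Q_{0,0,z}(z)$—the value of $u/d^s$ at $z\in\partial\Omega$—depends on $z$ with $C^{\beta-1}$ regularity; this is extracted by comparing the expansions at two nearby boundary points $z_1,z_2$ and estimating $|Q_{0,0,z_1}(z_1)-Q_{0,0,z_2}(z_2)|$ against $|z_1-z_2|^{\beta-1}$, using the uniqueness of the expansion and the error bounds.

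The main obstacle I expect is the differentiation step combined with the correction step: making rigorous that an expansion of $u$ in powers of $d$ differentiates term-by-term into the expected expansion of $\nabla u$ with controlled remainder (the distance function itself is only $C^{\beta}$, and $\nabla d$ interacts with the tangential polynomials), and then verifying that the corrector $\widetilde Q$ supplied by the blow-up/flat-case analysis of \eqref{theGratAssumption} actually cancels the drift contribution to the required order rather than merely its leading coefficient. Keeping track of which powers $s+k(2s-1)+l$ appear, ensuring none of them lands in the forbidden sets $\mathbb N+s$ or $\mathbb N+2s$ (here the hypotheses $\beta,\beta\pm s\notin\mathbb N$ and $s\notin\mathbb Q$ are used), and bounding all the accumulated error terms uniformly in the base point $z$, is the bulk of the technical work.
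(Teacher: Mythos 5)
Your proposal follows essentially the same route the paper takes: iterate an expansion of $u$ in powers $d^{\,s+k(2s-1)+l}$, differentiate the current expansion to feed an expansion of $\nabla u$ into the drift term, use the flat-case corrector computation (the paper's Theorem~\ref{surjectivityResult}, obtained from \eqref{theGratAssumption} via blow-up) to cancel the resulting right-hand-side contribution, and re-apply the one-function expansion lemma, stopping after finitely many steps since $2s-1>0$; the interior $C^{2s-1}$ bound comes from the first increment and a division lemma, and the tangential $C^{\beta-1}$ regularity of $u/d^s$ is extracted from the $z$-dependence of the leading coefficient by comparing expansions at nearby boundary points (finite differences). This is precisely the content of the paper's Propositions~\ref{harnack1.1}--\ref{harnack1.3}, of which Theorem~\ref{Schauder} is an immediate corollary.
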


	We emphasise that \eqref{optimalHarnack} and \eqref{optimalSchauder} are optimal; and therefore the higher regularity of $u_1/u_2$ and $u/d^s$ holds only in the tangential directions.
	
	\subsection{Organization of the paper}
	
	In section 2 we present the notation and some tools we use throughout the paper. In Section 3, we prove \eqref{theGratAssumption} and some similar results regarding the evaluation of the operator $L$ on  powers of the distance function. Section 4 is devoted to the computation of the flat case and establishing the correspondence between polynomials described above. Furthermore, it provides a boundary regularity estimate needed in our framework. In Section 5, we prove expansion type results which find use in Section 6, where we prove 
	Theorem \ref{Schauder} and Theorem \ref{Harnack}. In Section 7 we prove Theorem \ref{1.1} and related results. At the end there is an appendix, where we prove technical auxiliary results, to lighten the body of the paper.

	\section{Notation and preliminaries}
	
	When $\beta\not\in\N$, with $C^\beta$ we mean the H\"older space $C^{\lfloor\beta\rfloor,\langle\beta\rangle}$, where $\lfloor\cdot\rfloor $ denotes the  integer part of a number and $\langle x\rangle = x - \lfloor x\rfloor$. Moreover, with  $C_0(\R^n)$ we denote the closure of continuous, compactly supported functions with respect to the $L^\infty$ norm.
	
	For $y\in\R^n$ we denote $\langle y\rangle = y/|y|.$ Also, we use the multi-index notation $\alpha\in\N^n$, $\alpha = (\alpha_1,\ldots,\alpha_n)$, $|\alpha|=\alpha_1+\ldots+\alpha_n$, and accordingly we denote
	$$\partial^\alpha = \left(\frac{\partial}{\partial x_1}\right)^{\alpha_1}\circ\ldots\circ \left(\frac{\partial}{\partial x_n}\right)^{\alpha_n}.$$ 
	
	With $\textbf{P}_k$ we denote the space of polynomials of order $k$. For a function $f$, we denote $T^k_a f$ its Taylor polynomial of order $k$ at a point $a$. 
	
	Throughout the paper $s$ is a parameter in $(0,1)$, often also $s>1/2$, and $\varepsilon_0 = 2s-1$. Furthermore for real numbers $a,b$ we denote $a\land b = \min\{a,b\}$ and $a\lor b=\max\{a,b\}.$ 
	
	The unit sphere is denoted with $\mathbb{S}^{n-1} = \{x\in\R^n : |x|=1\}.$ Sometimes the following comes in handy
	$$\textbf{1}_{condition} = \left\lbrace\begin{array}{cl}
	1&\text{if }condition\text{ holds}\\
	0&\text{otherwise.}
	\end{array}\right.$$

	Finally, $C$ indicates an unspecified constant not depending on any of the relevant quantities, and whose value is allowed to change from line to line. We make use of sub-indices whenever we will want to underline the  dependencies of the	constant.
	
	\subsection{Generalised distance function}
	Throughout the paper, for a domain $\Omega$ the distance function to its complement is of great use. 
	Since we need it to be more regular inside $\Omega$, we work with the generalised distance function defined as follows.
	
	\begin{definition}\label{definitionOfGeneralDistance}
		Let $\Omega\subset\R^n$ be an open set with $C^\beta$ boundary. We denote with $d\in C^\infty(\Omega)\cap C^{\beta}(\overline{\Omega})$ a function satisfying
		$$\frac{1}{C}\operatorname{dist} (\cdot, \Omega^c)\leq d\leq C\operatorname{dist} (\cdot, \Omega^c),$$
		$$|D^k d|\leq C_k d^{\beta-k},\quad\text{for all }k>\beta.$$
	\end{definition}

	The definition follows \cite[Definition 2.1]{AR20} and the precise construction is provided by \cite[Lemma A.2]{AR20}. 
	
	\subsection{Nonlocal equations for functions with polynomial growth at infinity}
	
	We are often in situation when the function on which we want to evaluate the operator $L$ does not satisfy the growth control. In our case it mostly occurs when doing the limiting arguments and blow-ups. Then we are faced with a function which has polynomial growth. The evaluation is done according the following definition.
	
	\begin{definition}\label{definitionGeneralised}
		Let $k\in\N$, $\Omega\subset\R^n$ be a bounded domain, $u\in L^1_{\operatorname{loc}}(\R^n)$, and $f\in L^\infty(\Omega).$ Assume that $u$ satisfies 
		$$\int_{\R^n} \frac{u(y)}{1+|y|^{n+k+2s}}dy <\infty.$$
		We say that
		\begin{equation}\label{definitionOfLInGeneralisedSense}
			Lu\stackrel{k}{=}f,\quad \text{in }\Omega,
		\end{equation}
		if there exists a family of polynomials $(p_R)_{R>0}\subset \textbf{P}_{k-1}$, and a family of functions $(f_R\colon \Omega\to\R)_{R>0}$, such that for all $R>\operatorname{diam}(\Omega)$ we have
		$$L(u\chi_{B_R}) = f_R+p_R\quad \text{in }\Omega,$$
		and 
		$$f_R\xrightarrow{R\to\infty}f\quad\text{uniformly in }\Omega.$$
		In the case when $\Omega$ is unbounded, we say that \eqref{definitionOfLInGeneralisedSense} holds, if it holds in any bounded subdomain.
	\end{definition}

	The definition follows the one in \cite[Section 3]{AR20}. There one can also find some properties of solutions to \eqref{definitionOfLInGeneralisedSense}.

	\section{Non-local operators and the distance function}
	
	The goal of this section is to prove the following result:
	
	\begin{theorem}\label{goalOfSection2}
		Let $K\in C^{2\beta+1}(\mathbb{S}^{n-1})$ be a kernel as in \eqref{kernelConditions}. Let $p\in(0,2s)$ and $\Omega$ be a domain in $\R^n$ with $C^\beta$ boundary, for some $\beta>1+2s-p.$  Let $\eta\in C^\infty(\R^n).$ Then we have
		$$L(\eta d^p) = \varphi d^{p-2s} + R,$$
		where $\varphi \in C^\beta(\overline{\Omega}),$ and $R\in C^{\beta-1-2s+p}(\overline{\Omega}),$ with
		$$||R||_{C^{\beta-1-2s+p}(\overline{\Omega})}\leq C,$$
		where $C$ depends only on $n,$ $s,$ $\eta,$ $||K||_{C^{2\beta-1}(\S^{n-1})},$ $ p,$ $\beta$ and the $C^\beta$ norm of $\partial\Omega$.
	\end{theorem}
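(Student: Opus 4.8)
The plan is to first reduce to understanding the behavior of $L$ acting on the model function $d^p$ near the boundary, and then to handle the cutoff $\eta$ and the far-field contributions as lower-order perturbations. Fix a boundary point $z \in \partial\Omega$ and work in coordinates where $\partial\Omega$ is locally a $C^\beta$ graph. Since $p \in (0,2s)$, the function $d^p$ is locally integrable against the kernel near $\partial\Omega$, so $L(d^p)$ makes sense pointwise (in the principal value sense, or as a convergent integral once $p < 2s$). The first step is to freeze the geometry: near $z$ one splits $d = d(z) + \ell_z + (\text{remainder of order } \beta)$, where $\ell_z$ is the linearization of the generalized distance; because $d \in C^\beta(\overline\Omega)$ with the derivative bounds in Definition \ref{definitionOfGeneralDistance}, one can compare $d^p$ with the half-space model $(x\cdot e_z)_+^p$ after an affine change of variables, picking up an error controlled by the $C^\beta$ norm of $\partial\Omega$.

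Second, I would carry out the explicit half-space computation: for a kernel of the form \eqref{kernelConditions} that is $C^{2\beta+1}$ on the sphere, one has $L\big((x_n)_+^p\big) = c(x_n)_+^{p-2s}$ on the half-space $\{x_n>0\}$ for an explicit constant $c = c(n,s,p,K)$, using homogeneity of degree $p-2s$ and the evenness/ellipticity of $K$; this is where we crucially use $p \notin \N + 2s$ (no logarithmic resonance) — but here $p \in (0,2s)$ so the only issue would be $p = 2s$, which is excluded, so the computation goes through cleanly. Transplanting this back to $\Omega$ via the change of variables from Step 1 produces the principal term $\varphi d^{p-2s}$, with $\varphi(z)$ essentially $c$ times a Jacobian factor depending smoothly on the local normal direction and the first-order data of $d$ at $z$; tracking the $z$-dependence through the $C^\beta$ regularity of $\partial\Omega$ gives $\varphi \in C^\beta(\overline\Omega)$.

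Third, the remainder $R = L(\eta d^p) - \varphi d^{p-2s}$ collects: (i) the error from approximating $d^p$ by its half-space model, which is the integral of the kernel against a function vanishing to order $\beta$ faster than the model near $\partial\Omega$, hence of regularity $C^{\beta-1-2s+p}$ up to the boundary — this is the content we borrow from the flat-case estimates and the generalized-distance bounds; (ii) the contribution of the cutoff $\eta$, i.e.\ terms involving $L(\eta)d^p$-type pieces and commutators $\int (\eta(x)-\eta(y))(d^p(x)-d^p(y))K(x-y)\,dy$, which are smoother because $\eta \in C^\infty$ and the singularity of $d^p$ is integrable; and (iii) the far-field tail $\int_{|y|\gtrsim 1}$, which is bounded with all derivatives since $K$ decays like $|y|^{-n-2s}$ and $\eta$ has, effectively, the regularity to spare. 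Summing these with the stated dependence on $\|K\|_{C^{2\beta-1}(\S^{n-1})}$, $\eta$, $p$, $\beta$, and $\|\partial\Omega\|_{C^\beta}$ yields the bound on $\|R\|_{C^{\beta-1-2s+p}(\overline\Omega)}$.

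The main obstacle I expect is controlling the $z$-dependence of the leading coefficient and the remainder simultaneously near the boundary: one must show that the affine normalization at nearby boundary points $z, z'$ differs by $C^\beta$-controlled quantities and that the half-space constant $c$ varies accordingly, \emph{and} that the residual integral, after subtracting the principal term, genuinely gains the full $\beta - 1$ of Hölder regularity rather than just the $2s-1$ one naively gets from $d^p/d^s$-type quotients. Technically this is a boundary-regularity-for-nonlocal-equations argument: one writes the equation satisfied by $R$ (or by $\eta d^p - \widetilde Q d^s$ after absorbing the principal term) and applies the Schauder-type machinery — but since we only have the statement and not yet its proof, the cleanest route is to prove it directly by the blow-up/compactness scheme sketched in the introduction, reducing to the flat case where everything is explicit, and using that $\beta > 1 + 2s - p$ guarantees the target exponent $\beta - 1 - 2s + p$ is positive so the remainder is genuinely Hölder continuous.
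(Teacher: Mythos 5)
There is a genuine gap. Your plan correctly identifies the principal term via the half-space identity $L\big((x_n)_+^p\big)=c\,x_n^{p-2s}$ and correctly names the hard point (that the residual gains the full $C^{\beta-1-2s+p}$ regularity up to the boundary, not just a low-order bound), but it never supplies an argument for that point. Freezing coordinates at a boundary point $z$ and comparing $d^p$ with the tangent half-space model $(x\cdot e_z)_+^p$ only controls the error near that single point, and only in a weighted $L^\infty$ sense: this is exactly what Lemma \ref{LonDistance} in the paper gives ($|R|\le Cd^{(\alpha+\varepsilon-s)\land 0}$), and it cannot produce H\"older regularity of order $\beta-1-2s+p$, which may be large and therefore requires differentiating the nonlocal integral many times, with each derivative falling on the kernel singularity. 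Your item (i), ``hence of regularity $C^{\beta-1-2s+p}$ up to the boundary,'' is an assertion of the theorem's core content rather than a proof of it. The paper's actual route is quite different in its technical heart: it rewrites $L(\eta d^p)$ as $\int d^{p-1}(y)K(y-x)\psi(y)\cdot(y-x)\,dy$ (Corollary \ref{aboveCorollary}), flattens the boundary, decomposes the kernel into pieces of increasing homogeneity, Taylor-expands the transported density at interior points, applies the one-dimensional identities (Lemmas \ref{LonPower} and \ref{lostCalculation}) to each piece, and proves weighted derivative bounds $|D^kR|\le Cd^{\beta-1-2s+p-k}$ (Lemma \ref{lemmaRegularity}, Theorem \ref{mainResultSection2}), which are then converted into H\"older regularity up to the boundary. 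None of this machinery, or a substitute for it, appears in your proposal.

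Your fallback, a blow-up/compactness scheme ``reducing to the flat case,'' does not close the gap either. Compactness arguments of the type used later in the paper (Section 5, and in the proof of Corollary \ref{surjectivityLemma}) yield pointwise expansions or growth control at boundary points, and are useful for identifying the leading coefficient or showing certain Taylor coefficients vanish; they do not yield high-order H\"older continuity of the explicit remainder $R=L(\eta d^p)-\varphi d^{p-2s}$ on all of $\overline\Omega$. To upgrade pointwise growth to $C^{\beta-1-2s+p}(\overline\Omega)$ one still needs interior estimates on derivatives of $L(\eta d^p)$ with boundary-distance weights, and obtaining those is precisely the kernel-expansion computation you skipped. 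A second, smaller omission: the $z$-dependence of the coefficient $\varphi$ (that it assembles into a single $C^\beta(\overline\Omega)$ function, including the factor $|\nabla d|^{2s}$-type Jacobian contributions) is claimed by ``tracking the $z$-dependence through the $C^\beta$ regularity of $\partial\Omega$,'' but with only an $L^\infty$-type control of the remainder at each $z$ you cannot even separate $\varphi$ from $R$ at the stated regularity.
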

	
	Let us start with the result which estimates $L(d^{s+\varepsilon})$ in a $C^{1,\alpha}$ domain, where $\varepsilon<s$ and $\alpha\leq1.$ It is done in a similar manner as \cite[Proposition 2.3]{RS16}. This result already gives all the regularity of $L(d^p)$ we need in this setting and we do not develop it further.
	
	\begin{lemma}\label{LonDistance}
		Let $\Omega$ be a $C^{1,\alpha}$ domain with $\alpha\leq1$ and $K$ a kernel satisfying \eqref{kernelConditions}. Then for $\varepsilon\in (-s,s)$ we have the following equality
		$$L(d^{s+\varepsilon})(x) = c_{s+\varepsilon} |\nabla d (x)|^{2s} d^{\varepsilon-s}(x) + R(x),\quad \quad x\in\Omega,$$
		where $c_{s+\varepsilon}$ is an explicit constant and $R$ satisfies $|R|\leq C d^{(\alpha+\varepsilon-s)\land 0}$. Moreover, $c_{s+\varepsilon}=0$ if and only if $\varepsilon=0.$
	\end{lemma}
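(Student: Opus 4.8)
The plan is to reduce the computation to the half-space model case and control the error by the regularity of the distance function. First I would fix a point $x_0\in\Omega$ with $d(x_0)=\rho$, and write the defining integral for $L(d^{s+\varepsilon})(x_0)$ splitting it into the region $B_{\rho/2}(x_0)$, where $d$ is smooth with comparable size, and the far region. In the near region I would freeze the ``flattened'' geometry: after a rotation so that $\nabla d(x_0)=|\nabla d(x_0)|e_n$, Taylor-expand $d$ around $x_0$ to write $d(x_0+y)=d(x_0)+\nabla d(x_0)\cdot y + O(|y|^{1+\alpha})$ on $B_{\rho/2}(x_0)$, and compare $d^{s+\varepsilon}$ with the function $\ell(y)_+^{s+\varepsilon}$, where $\ell(y)=d(x_0)+\nabla d(x_0)\cdot y$ is the affine approximation. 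The affine function raised to the power $s+\varepsilon$ is, up to the scalar $|\nabla d(x_0)|^{s+\varepsilon}$, a rescaled one-dimensional power, and for those there is the explicit identity $L_0(t_+^{s+\varepsilon}) = c_{s+\varepsilon}\, t_+^{\varepsilon - s}$ for the one-dimensional $1$-d fractional-type operator obtained by integrating $K$ over hyperplanes; a change of variables recovers the claimed main term $c_{s+\varepsilon}|\nabla d(x_0)|^{2s}d(x_0)^{\varepsilon-s}$, with $c_{s+\varepsilon}$ an explicit constant.

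The core estimate is then the bound on the remainder $R(x_0)$, which collects three contributions: (i) the difference between $d^{s+\varepsilon}$ and $\ell_+^{s+\varepsilon}$ on $B_{\rho/2}(x_0)$, controlled using $|d(x_0+y)-\ell(y)|\le C|y|^{1+\alpha}$ together with the mean value / convexity-type estimate $|a^{s+\varepsilon}-b^{s+\varepsilon}|\le C(a\vee b)^{s+\varepsilon-1}|a-b|$; (ii) the tail contribution from $|y|>\rho/2$, where $d^{s+\varepsilon}$ has at most polynomial growth and $K(y)\le \Lambda|y|^{-n-2s}$, giving a bound of order $\rho^{\varepsilon-s}$ (and better when $\alpha+\varepsilon-s\ge 0$); and (iii) the fact that the model operator $L_0$ differs from the frozen-coefficient version of $L$ at $x_0$ by terms that vanish under the kernel's evenness and can be absorbed. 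Summing the dyadic annuli $B_{2^{-k}\rho}(x_0)\setminus B_{2^{-k-1}\rho}(x_0)$, the singular integral over the near region of $|y|^{1+\alpha}\cdot d(x_0+y)^{s+\varepsilon-1}\cdot|y|^{-n-2s}$ converges and yields $C\rho^{(\alpha+\varepsilon-s)\wedge 0}$ --- the two regimes $\alpha+\varepsilon-s<0$ and $\ge 0$ being exactly the source of the $\wedge 0$ in the statement. I would carry out this dyadic summation carefully, keeping track of where the borderline case $\alpha+\varepsilon=s$ would produce a logarithm (and noting it is excluded since we only claim $d^{(\alpha+\varepsilon-s)\wedge 0}$, with the case $=0$ meaning boundedness).

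The statement that $c_{s+\varepsilon}=0$ if and only if $\varepsilon=0$ is handled by the explicit one-dimensional computation: $c_{s+\varepsilon}$ is, up to a positive normalizing constant, an integral of the form $\int_{\R}\bigl(2t_+^{s+\varepsilon}-(t+\tau)_+^{s+\varepsilon}-(t-\tau)_+^{s+\varepsilon}\bigr)|\tau|^{-1-2s}\,d\tau$ evaluated at $t=1$, which by homogeneity equals a constant times a Beta-function expression that vanishes precisely at $\varepsilon=0$ (this is the classical fact that $(-\Delta)^s$ in one dimension annihilates $x_+^{s}$, i.e. $L(d^s)=O(1)$, whereas for $\varepsilon\ne 0$ the Fourier-analytic / Beta-integral value is nonzero). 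I would invoke the known closed form --- e.g. from the computation in \cite{RS16} or a direct residue/Beta-integral evaluation --- rather than recompute it.

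The main obstacle I anticipate is bookkeeping in the remainder estimate: one must simultaneously track the anisotropy of $K$ (so the one-dimensional reduction requires integrating $K$ over each direction and using homogeneity plus evenness to kill odd terms), the limited $C^{1,\alpha}$ regularity of $\partial\Omega$ (so only a first-order Taylor expansion of $d$ is available, forcing the error exponent $1+\alpha$), and the interaction of these with the principal-value singularity at $y=0$. Getting a clean $C d^{(\alpha+\varepsilon-s)\wedge 0}$ bound --- rather than something with an extra logarithmic loss --- is the delicate point, and it is exactly why the lemma is phrased as an $L^\infty$ bound on $R$ rather than a Hölder estimate; following the structure of \cite[Proposition 2.3]{RS16} and \cite[Proposition 2.3]{RS16}-type arguments should make this manageable.
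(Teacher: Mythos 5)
Your overall strategy---compare $d^{s+\varepsilon}$ with the affine approximation $\ell_+^{s+\varepsilon}$, reduce $L(\ell_+^{s+\varepsilon})$ to a one-dimensional power via homogeneity, and estimate the difference $L(d^{s+\varepsilon}-\ell^{s+\varepsilon})$ by splitting at scale $\rho=d(x_0)$---is indeed the same as the paper's, and your handling of the far and intermediate regions and of the constant $c_{s+\varepsilon}$ (via the explicit 1d identity) matches.

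There is, however, a genuine gap in your near-region estimate. You propose to bound $|d^{s+\varepsilon}(x_0+y)-\ell^{s+\varepsilon}(x_0+y)|$ on $B_{\rho/2}(x_0)$ by the first-order mean-value inequality $|a^{s+\varepsilon}-b^{s+\varepsilon}|\le C(a\vee b)^{s+\varepsilon-1}|a-b|$ together with $|d-\ell|\le C|y|^{1+\alpha}$, which gives a bound of order $\rho^{s+\varepsilon-1}|y|^{1+\alpha}$. Integrating this against $|y|^{-n-2s}$ over $B_{\rho/2}$ requires $1+\alpha>2s$, i.e.\ $\alpha>2s-1$; since $\alpha\le 1$ and the paper works in the regime $s>\tfrac12$, the case $\alpha\le 2s-1$ is very much in play and your integral diverges at $y=0$. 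Symmetrizing the principal value does not save you here, because the symmetric second difference of $v:=d^{s+\varepsilon}-\ell^{s+\varepsilon}$ is only controlled once you control $D^2 v$. The crucial point you are missing is that $v$ has a \emph{second-order} zero at $x_0$: both $d^{s+\varepsilon}$ and $\ell^{s+\varepsilon}$ have the same value and the same gradient at $x_0$ (since $\ell(x_0)=d(x_0)$ and $\nabla\ell=\nabla d(x_0)$), so $\nabla v(x_0)=0$. One then estimates $\|D^2 v\|_{L^\infty(B_{\rho/2}(x_0))}\le C\rho^{s+\varepsilon+\alpha-2}$ by explicit computation (using $|a^p-b^p|\le |a-b|(a^{p-1}+b^{p-1})$), obtaining $|v(x_0+y)|\le C\rho^{s+\varepsilon+\alpha-2}|y|^2$ on $B_{\rho/2}$. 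The $|y|^2$ decay makes the near-field integral converge for every $s\in(0,1)$ and produces $C\rho^{\alpha+\varepsilon+s}$, which together with the other pieces gives the stated $C\rho^{(\alpha+\varepsilon-s)\wedge 0}$. Finally, your item (iii) about a discrepancy between $L$ and a ``frozen-coefficient'' $L_0$ is not an issue: $L$ is already translation-invariant (constant-coefficient), so the 1d reduction of $L(\ell_+^{s+\varepsilon})$ via polar coordinates and evenness of $K$ is exact and produces no extra error term.
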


	\begin{proof}
		Let $x_0\in\Omega$ and $\rho = d(x_0)$. Notice that when $\rho\geq \rho_0>0$, then $d^{\varepsilon+s}$ is smooth in the neighbourhood of $x_0$, and thus $L(d^{\varepsilon+s})(x_0)$ is bounded by a constant depending only on $\rho_0$. Thus we may assume that $\rho\in(0,\rho_0)$, for some small $\rho_0$ depending only on $\Omega$.
		
		Let us denote $l(x) = (d(x_0) + \nabla d(x_0)\cdot (x-x_0))_+$. With explicit computation and knowing the one-dimensional case, we get
		\begin{align*}
			L(l^{s+\varepsilon})&=L\left(|\nabla d(x_0)|^{\varepsilon+s}\left(\frac{d(x_0)}{|\nabla d(x_0)|}+ \frac{\nabla d(x_0)}{|\nabla d(x_0)|}(\cdot - x_0)\right)_+^{s+\varepsilon}\right)\\
			& = |\nabla d(x_0)|^{\varepsilon+s} c_{s+\varepsilon} \left(\frac{d(x_0)}{|\nabla d(x_0)|}+ \frac{\nabla d(x_0)}{|\nabla d(x_0)|}(\cdot - x_0)\right)^{\varepsilon-s},\quad \text{in }\{l>0\},
		\end{align*}
		where $c_{s+\varepsilon}=0$ if and only if $\varepsilon=0$ (see \cite[Theorem 3.10]{AR20}),
		and so we have 
		$$ L(l^{s+\varepsilon})(x_0) = |\nabla d(x_0)|^{2s}c_{s+\varepsilon} d^{\varepsilon-s}(x_0).$$
		
		Then, in the same way as in \cite[Proposition 2.3]{RS16} get the estimates
		$$|d(x_0+y)-l(x_0+y)|\leq C|y|^{1+\alpha},\quad y\in\R^n,$$
		$$ |\nabla d (x_0+y)-\nabla l(x_0+y)|\leq C|y|^\alpha,\quad y\in B_{\rho/2}.$$
		With bounding the derivatives carefully this gives
		$$|d^{s+\varepsilon}(x_0+y)-l^{s+\varepsilon}(x_0+y) |\leq \left\lbrace \begin{array}{cl}
		C\rho^{s+\varepsilon+\alpha-2}|y|^2&y\in B_{\rho/2},\\
		C|y|^{1+\alpha}(d^{s+\varepsilon-1}(x_0+y)+l^{s+\varepsilon-1}(x_0+y))&y\in B_1\backslash B_{\rho/2}\\
		C|y|^{s+\varepsilon}& y\in B_1^c.
		\end{array}\right.$$
		The first one bases on the estimate $||D^2(d^{s+\varepsilon}-l^{s+\varepsilon})||_{L^\infty(B_{\rho/2})}\leq C\rho^{s+\varepsilon+\alpha-2}$, which follows after explicit computation and various application of the estimate $|a^p-b^p|\leq |a-b|(a^{p-1}+b^{p-1})$, which is true for all positive $a,b$ and $p\in(-\infty,2)$. The second one is straight forward application of the latter estimate, and the last is based on the growth of $l$ at infinity.
		
		Now we are in position to estimate
		\begin{align*}
		\left| L(d^{s+\varepsilon})(x_0)\right. -&\left. c_{s+\varepsilon}|\nabla d(x_0)|^{2s}d^{\varepsilon-s}(x_0)\right| = \left| L(d^{s+\varepsilon}-l^{s+\varepsilon})(x_0) \right|\\
		\leq& \int_{\R^n} |d^{s+\varepsilon}-l^{s+\varepsilon}|(x_0+y)\frac{\Lambda}{|y|^{n+2s}}dy\\
		\leq& \int_{B_{\rho/2}}C\rho^{s+\varepsilon+\alpha-2}|y|^{2-n-2s}dy \\&+
			\int_{B_1\backslash B_{\rho/2}}C|y|^{1+\alpha}(d^{s+\varepsilon-1}+l^{s+\varepsilon-1})(x_0+y)\frac{1}{|y|^{n+2s}}dy\\
			&+\int_{B_1^c} C|y|^{-n-s+\varepsilon}dy\\
		\leq &C\rho^{\alpha + \varepsilon+s} + C(1+ \rho^{\varepsilon+\alpha-s}) \leq C\rho^{(\alpha+\varepsilon-s)\land 0}.
		\end{align*}
		The estimation of the first and the third integrand are computations, and on the middle one we apply \cite[Lemma 2.5]{RS16} twice.
	\end{proof}

	Now we turn to establishing Theorem \ref{goalOfSection2}. It extends \cite[Corollary 2.3]{AR20} in the sense that we allow wider choice of the power of the distance function. Basically we want to compute $L(d^p)$, for $p\in (0,2s).$ The approach is analogue to the one in \cite{AR20}. We transform the integral into a suitable form, then we expend the kernel into terms of increasing homogeneities and then treat each of them separately. Due to the change of power some cancellations are not happening and we have to track the additional terms. 
	
	We start with the following result, which computes $L(d^p)$ in the case where the domain is a half-space (often referred as the flat case). We strongly use the fact that the distance function is one-dimensional in that case and apply the computation from one dimension.

	\begin{lemma}\label{LonPower}
		Let $a\colon \mathbb{S}^{n-1}\to\R$ be an even function, such that $\int_{\mathbb{S}^{n-1}} |\theta_n|^{2s} a(\theta) d\theta <\infty$ and let $p\in(0, 2s)$. Then 
		$$p.v.\int_{\R^n} (x_n^p - (x_n+y_n)_+^p)\frac{a(\left\langle y\right\rangle )}{|y|^{n+2s}}dz = c_p x_n^{p-2s}\int_{\mathbb{S}^{n-1}} |\theta_n|^{2s} a(\theta) d\theta,\quad \text{when } x_n>0. $$
		Moreover, $c_p=0$ if and only if $p=s$.
	\end{lemma}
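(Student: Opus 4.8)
The plan is to reduce the $n$-dimensional singular integral to the one-dimensional computation by slicing along rays through the origin, exploiting that the integrand depends on $y$ only through $y_n$, $|y|$ and $\langle y\rangle$. First I would rewrite the principal value in symmetric form: since $a$ is even the kernel $a(\langle y\rangle)|y|^{-n-2s}$ is even, so — exactly as in the identity for $L$ recorded in the introduction —
\[
\mathrm{p.v.}\!\int_{\R^n}\!\bigl(x_n^p-(x_n+y_n)_+^p\bigr)\frac{a(\langle y\rangle)}{|y|^{n+2s}}\,dy=\frac12\int_{\R^n}\bigl(2x_n^p-(x_n+y_n)_+^p-(x_n-y_n)_+^p\bigr)\frac{a(\langle y\rangle)}{|y|^{n+2s}}\,dy.
\]
Because $x_n>0$, for $|y|$ small the map $t\mapsto t^p$ is smooth near $t=x_n$, so the bracket on the right is $O(|y|^2)$ as $y\to0$; combined with $2s-p>0$, which tames the growth of $(x_n+y_n)_+^p$ at infinity, the right-hand integral is absolutely convergent, and in particular the principal value on the left exists.

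Next I would pass to polar coordinates $y=r\theta$ ($r>0$, $\theta\in\S^{n-1}$), so that $|y|=r$, $y_n=r\theta_n$, $\langle y\rangle=\theta$ and $dy=r^{n-1}\,dr\,d\theta$, and apply Fubini — legitimate once the radial integral is estimated and combined with the integrability hypothesis on $a$, the key point being that the substitution below produces a factor $|\theta_n|^{2s}$ that compensates any blow-up of $a$ near the equator $\{\theta_n=0\}$. Slices with $\theta_n=0$ contribute nothing. For $\theta_n\neq0$ the substitution $t=r|\theta_n|$ produces the factor $|\theta_n|^{2s}$ and, since $\{x_n+r\theta_n,\,x_n-r\theta_n\}=\{x_n+t,\,x_n-t\}$, turns the radial integral into the $\theta$-independent quantity
\[
\int_0^\infty\bigl(2x_n^p-(x_n+t)_+^p-(x_n-t)_+^p\bigr)\frac{dt}{t^{1+2s}}=2\,c_p\,x_n^{p-2s},
\]
where, by the scaling $t=x_n\tau$, one sets $c_p:=\tfrac12\int_0^\infty\bigl(2-(1+\tau)^p-(1-\tau)_+^p\bigr)\tau^{-1-2s}\,d\tau$, a finite constant depending only on $p$ and $s$. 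Collecting the factors gives exactly $c_p\,x_n^{p-2s}\int_{\S^{n-1}}|\theta_n|^{2s}a(\theta)\,d\theta$, as claimed.

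It remains to see that $c_p=0$ if and only if $p=s$. This is the one-dimensional fact already invoked in the proof of Lemma \ref{LonDistance}: up to the positive normalizing constant of the one-dimensional fractional Laplacian, $c_p$ equals $(-\Delta)^s\bigl((\cdot)_+^p\bigr)(1)$, which by \cite[Theorem 3.10]{AR20} is a nonzero multiple of $x_+^{p-2s}$ precisely when $p\neq s$; since we restrict to $p\in(0,2s)$, this is the only exceptional case (the exponents in $s+\N$ and $2s+\N$ that would otherwise intervene all lie at or above $2s$). The step I expect to require the most care is the interaction between the principal value and the slicing; the symmetrization performed in the first step is designed precisely to sidestep it, by rendering the integral absolutely convergent before any change of variables, after which the computation is routine.
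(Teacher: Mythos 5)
Your argument is correct and follows essentially the same route as the paper: symmetrize the principal value using the evenness of $a$, pass to polar coordinates, rescale the radial variable to produce the factor $|\theta_n|^{2s}$, and reduce to the one-dimensional identity for the fractional Laplacian of $x_+^p$, with the non-vanishing of $c_p$ for $p\neq s$ coming from the half-space Liouville theorem \cite[Theorem 3.10]{AR20}. The only cosmetic differences (substituting in $r$ rather than in $x_n$, keeping $r\in(0,\infty)$ instead of extending to $\R$, and explicitly recording the absolute convergence that the paper leaves implicit) do not change the proof.
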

	\begin{proof}
		We use the knowledge that $\frac{1}{4}\int_\R (2\xi^p-(\xi+r)_+^p - (\xi-r)_+^p)\frac{1}{|r|^{1+2s}}dr = c_p \xi^{p-2s}$, for positive numbers $\xi,$ where $c_p=0$ if and only if $p=s$. This follows from the Liouville theorem in a half-space, see for example \cite[Theorem 3.10]{AR20}. Using the symmetry of the kernel, we rewrite the integral in the statement as
		$$ \frac{1}{2}\int_{\R^n} (2x_n^p - (x_n+y_n)_+^p-(x_n-y_n)_+^p)\frac{a(\left\langle y\right\rangle )}{|y|^{n+2s}}dz,$$
		and then applying the polar coordinates and using evenness of the integrand, we get
		$$\frac{1}{4}\int_{\mathbb{S}^{n-1}}\int_{\R} \big (2x_n^p - (x_n+r\theta_n)_+^p-(x_n-r\theta_n)_+^p\big )\frac{a(\theta )}{|r|^{n+2s}}r^{n-1}drd\theta=$$
		$$= \int_{\mathbb{S}^{n-1}} |\theta_n|^{p}\frac{1}{4}\int_{\R} \left(2\left(\frac{x_n}{|\theta_n|}\right)^p - \left(\frac{x_n}{|\theta_n|}+r\right)_+^p-\left(\frac{x_n}{|\theta_n|}-r\right)_+^p\right)\frac{1}{|r|^{1+2s}}dr\cdot a(\theta )d\theta=$$
		$$=\int_{\mathbb{S}^{n-1}} |\theta_n|^{p} c_p\left(\frac{x_n}{|\theta_n|}\right)^{p-2s}  a(\theta )d\theta = c_p x_n^{p-2s} \int_{\mathbb{S}^{n-1}} |\theta_n|^{2s} a(\theta) d\theta.$$
	\end{proof}
	
	We proceed with a result which connects the previous lemma with the terms which appear in computation further on. 
	
	\begin{lemma}\label{lostCalculation}
		Let $a\colon \mathbb{S}^{n-1}\to\R$ be an odd, bounded function, and let $p\in(0, 2s)$. Then 
		$$ p.v.\int_{\R^n} (z_n)_+^{p-1} \frac{a(\left\langle z-x\right\rangle)}{|z-x|^{n+2s-1}}dz = \tilde{c}_p x_n^{p-2s},\quad x\in\{x_n>0\}.$$
		Moreover, $\tilde{c}_p = 0$ if and only if $p=s$.
	\end{lemma}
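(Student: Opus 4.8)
Looking at Lemma~\ref{lostCalculation}, the plan is to reduce the $n$-dimensional integral to the one-dimensional computation already used in Lemma~\ref{LonPower}, exploiting the fact that the only thing that matters is how the kernel weights the $n$-th coordinate direction.

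First I would pass to polar coordinates centered at $x$. Writing $z = x + r\theta$ with $r>0$ and $\theta\in\mathbb{S}^{n-1}$, the integrand $(z_n)_+^{p-1} = (x_n + r\theta_n)_+^{p-1}$, and the measure contributes $r^{n-1}\,dr\,d\theta$, so that together with the $|z-x|^{n+2s-1} = r^{n+2s-1}$ in the denominator we get
\begin{equation*}
\mathrm{p.v.}\int_{\mathbb{S}^{n-1}} a(\theta) \int_0^\infty (x_n + r\theta_n)_+^{p-1}\,\frac{dr}{r^{2s}}\,d\theta.
\end{equation*}
Since $a$ is odd, I would then symmetrize: replacing $\theta$ by $-\theta$ on half the sphere and averaging, the inner integral becomes $\frac12\int_0^\infty\big((x_n+r\theta_n)_+^{p-1} - (x_n - r\theta_n)_+^{p-1}\big)\,r^{-2s}\,dr$, i.e. the full integral equals
\begin{equation*}
\frac12\int_{\mathbb{S}^{n-1}} a(\theta)\int_0^\infty \big((x_n+r\theta_n)_+^{p-1} - (x_n-r\theta_n)_+^{p-1}\big)\frac{dr}{r^{2s}}\,d\theta.
\end{equation*}
For $\theta_n\neq 0$, substitute $t = r|\theta_n|$ in the inner integral; this pulls out a factor $|\theta_n|^{2s-1}$ (from $dr/r^{2s} = |\theta_n|^{2s-1}\,dt/t^{2s}$) and rescales the argument of the power to $x_n/|\theta_n| \pm t$. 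The inner integral thus becomes $|\theta_n|^{2s-1}$ times a one-dimensional quantity depending only on $x_n/|\theta_n|$.

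The key one-dimensional fact I would isolate as a preliminary computation is that for $\xi>0$,
\begin{equation*}
\frac12\,\mathrm{p.v.}\int_0^\infty \big((\xi + t)_+^{p-1} - (\xi - t)_+^{p-1}\big)\frac{dt}{t^{2s}} = \tilde{c}_p\,\xi^{p-2s},
\end{equation*}
which follows by the scaling $t\mapsto \xi t$ (the integral is homogeneous of degree $p-1-2s+1 = p-2s$ in $\xi$), reducing the constant to the case $\xi=1$; the resulting integral $\int_0^\infty\big((1+t)^{p-1} - (1-t)_+^{p-1}\big)t^{-2s}\,dt$ converges as a principal value precisely because $p\in(0,2s)$ controls the behavior at $t=0$ (the difference is $O(t)$ there, and $1-2s>-1$ after one cancellation) and $p-1-2s<-1$ controls decay at infinity. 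Plugging $\xi = x_n/|\theta_n|$ back in gives the inner integral equal to $\tilde{c}_p\,|\theta_n|^{2s-1}(x_n/|\theta_n|)^{p-2s} = \tilde{c}_p\,|\theta_n|^{2s-1}|\theta_n|^{2s-p}x_n^{p-2s}$, and after collecting powers of $|\theta_n|$ the $\theta$-integral factors out as a finite constant (absorbed into $\tilde c_p$, or left as $\tilde c_p\int_{\mathbb{S}^{n-1}}|\theta_n|^{\cdots}a(\theta)\,d\theta$ depending on normalization), leaving exactly $\tilde c_p\,x_n^{p-2s}$.

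The main obstacle I anticipate is the sign/vanishing claim: $\tilde{c}_p = 0$ if and only if $p = s$. The reduction above expresses $\tilde c_p$ as (a sphere factor times) the one-dimensional constant $\frac12\int_0^\infty\big((1+t)^{p-1} - (1-t)_+^{p-1}\big)t^{-2s}\,dt$, so I need to show this vanishes exactly at $p=s$. I would handle this by relating it to the constant $c_p$ from Lemma~\ref{LonPower} (or from \cite[Theorem 3.10]{AR20}): differentiating the identity $L_{1\mathrm{D}}(x_+^p) = c_p x_+^{p-2s}$ in the spatial variable, one sees that $p\,\tilde c_p$ and $c_p$ are proportional (up to a nonzero factor coming from $\tfrac{d}{dx}x^{p-2s} = (p-2s)x^{p-2s-1}$ versus the derivative landing on $x_+^{p-1}$), so $\tilde c_p$ vanishes exactly where $c_p$ does away from $p=0$, namely at $p=s$; and $p=0$ is excluded since $p\in(0,2s)$. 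Alternatively one can compute the one-dimensional integral explicitly via Beta functions: it evaluates to a product of Gamma factors one of which is $\Gamma\big(\tfrac{p-s}{1}\big)$-type, whose only zero (pole of the reciprocal) in the admissible range corresponds to $p=s$; but the derivative-relation argument is cleaner and avoids special-function bookkeeping, so that is the route I would take.
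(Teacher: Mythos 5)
Your plan of attack — polar coordinates around $x$, symmetrization using the oddness of $a$, reduction to a one-dimensional radial integral, scaling by homogeneity — is a valid and somewhat more elementary route than the paper's, which instead passes through the integration-by-parts lemma \cite[Lemma 2.4]{AR20} to rewrite the whole expression as $\tilde L\big((x_n)_+^p\big)$ for an even kernel $a(\theta)/\theta_n$ and then invokes Lemma~\ref{LonPower} verbatim. The first half of your argument, up to and including the identification of the one-dimensional constant, is sound (aside from a minor scaling typo: the substitution $t=r|\theta_n|$ yields the inner argument $x_n+t\,\mathrm{sign}(\theta_n)$ with prefactor $|\theta_n|^{2s-1}$, whereas pulling $|\theta_n|^{p-1}$ out of the power and leaving $r$ alone yields the argument $x_n/|\theta_n|\pm r$; you mixed the two, though the exponents of $|\theta_n|$ conveniently telescope to the same $2s-1$ either way). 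You also do need to keep track of the $\mathrm{sign}(\theta_n)$ factor that is produced, but since it pairs with the odd $a(\theta)$ the resulting spherical integral is even and nonzero in general, so this only affects bookkeeping, not the conclusion.

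The genuine gap is in the vanishing argument, which you yourself flagged as the main obstacle. Differentiating the identity $L_{1D}(\xi_+^p)=c_p\xi^{p-2s}$ in the spatial variable $\xi$ yields $p\,L_{1D}(\xi_+^{p-1})=(p-2s)\,c_p\,\xi^{p-2s-1}$, which is a recursion between $c_p$ and $c_{p-1}$ (the same second-order operator with kernel $|r|^{-1-2s}$ applied at a shifted power); it does \emph{not} relate $c_p$ to $\tilde c_p$, because your $\tilde c_p$ sits in front of a \emph{first-order} operator with kernel $r^{-2s}$, and differentiating in $\xi$ leaves the kernel untouched. The relation you want, $p\,\tilde c_p = -2s\,c_p$, is correct, but it is obtained by integrating by parts in the \emph{radial kernel variable} $r$: writing $G(r)=2\xi_+^p-(\xi+r)_+^p-(\xi-r)_+^p$ one has
\begin{equation*}
\int_0^\infty G(r)\,r^{-1-2s}\,dr \;=\; \frac{1}{2s}\int_0^\infty G'(r)\,r^{-2s}\,dr \;=\; -\frac{p}{2s}\int_0^\infty\bigl[(\xi+r)_+^{p-1}-(\xi-r)_+^{p-1}\bigr]r^{-2s}\,dr,
\end{equation*}
the boundary terms vanishing exactly because $p\in(0,2s)$ and $s<1$. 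This integration by parts in the kernel variable is precisely the content of \cite[Lemma 2.4]{AR20} which the paper invokes at the outset and which your polar-coordinates route was designed to avoid; in the end you cannot avoid it, you merely push it down to the one-dimensional stage. With this fix the rest of your proposal closes.
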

	\begin{proof}
		Let us start with the left-hand side. By \cite[Lemma 2.4]{AR20}, we have
		$$ p.v.\int_{\R^n} (z_n)_+^{p-1} \frac{a(\left\langle z-x\right\rangle)}{|z-x|^{n+2s-1}}dz = p.v.\int_{\R^n} \nabla (z_n)_+^{p}\cdot (z-x) \frac{a(\left\langle z-x\right\rangle)/\left\langle z-x\right\rangle_n}{|z-x|^{n+2s}}dz $$ 
		$$= \tilde{L}((x_n)_+^p),$$
		where the kernel of $\tilde{L}$ is $\frac{a(\left\langle z-x\right\rangle)/\left\langle z-x\right\rangle_n}{|z-x|^{n+2s}}$. Since $a$ is bounded, $\frac{a(\theta)}{\theta_n}$ satisfies the assumption of Lemma \ref{LonPower}, and so the above expression equals $\tilde{c}_p x_n^{p-2s}$, where $\tilde{c}_p$ depends only on $p$ and $a$, and vanishes if and only if $p=s$.
	\end{proof}

	After splitting the kernel into homogeneities, we will have to deal with the terms of increasing power in the kernel. The next result shows how to deal with them. This is the main step towards Theorem \ref{mainResultSection2}. The extra parameter $\zeta$ is introduced because we want to apply the results on functions of the form $P(x-\zeta) d^p(x)$ for some polynomial $P$. It is going to be crucial to know the regularity in the additional parameter and hence need to track it carefully.
	
	\begin{lemma}\label{lemmaRegularity}
		Let $q \in \{x\in\R^n : x_n>0\}$ and $r>0$ be such that $\overline{B_r(q)}\subset B_1.$ Let $k\in\N,$ non-integer $\beta>1,$ $p\in(0, 2s)$, $k>\beta - 2s+p$. Assume that for every $\zeta$ varying over some bounded $C^\beta$ surface we have $\psi_\zeta \in C^{\beta-1}(\overline{B_1})\cap C^k(\overline{B_r(q)}),$ so that $\left|\left|D^j\psi_\zeta\right|\right|_{C^\beta_\zeta}\leq C_0$, for $j\geq 0$. For $j\in\N$, let $a_j\in C^{j+k-1}(\mathbb{S}^{n-1})$ satisfy 
		$$a_j(-\theta) = (-1)^{j+1} a_j(\theta),\quad \theta \in \mathbb{S}^{n-1}.$$
		Then the function defined as 
		$$I_j(x) := \operatorname{p.v.}\int_{B_1}(z_n)_+^{p-1} \frac{a_j(\left\langle z-x\right\rangle )}{|z-x|^{n+2s-j-1}}\psi_\zeta(z) dz$$ satisfies 
		$$I_j(x) = P_\zeta(x)x_n^{p-2s} + R_\zeta(x),$$ where $P_\zeta$ is a polynomial whose coefficients are bounded with $C||\psi_\zeta||_{C^{\beta-1}(B_1)}||a_j||_{C^{k+j-1}(\mathbb{S}^{n-1})},$ and are $C^\beta$ smooth as functions of $\zeta$ with bounded $C^\beta_\zeta$ norm, and $R_\zeta$ is of class $C^{k+j-1}$ in $B_{r/2}(q)$ with 
		$$|D^{k+j-1}R_\zeta(x)| \leq C||a_j||_{C^{k+j-1}(\mathbb{S}^{n-1})}\times$$
		$$\times\left(\sum_{|\gamma|=\lfloor\beta\rfloor }^{k-1} ||\psi_\zeta||_{C^{|\gamma|}(B_r(q))} r^{p-2s-k+1+|\gamma|} + ||\psi_\zeta||_{C^k(B_r(q))} r^{1+p-2s} + ||\psi_\zeta||_{C^{\beta-1}(B_1)}r^{\beta - k + p -2s}\right).$$ 
		The constant $C$ depends only on $n$, $s$, $\beta$ and $p$.
	\end{lemma}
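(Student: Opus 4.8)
The plan is to split the integral $I_j(x)$ into a ``singular'' piece concentrated near the point $x$ (where the kernel behaves like $|z-x|^{-(n+2s-j-1)}$, which is integrable for $j\geq 1$ but near-singular for the first few $j$) and a ``regular'' piece supported away from $x$. More precisely, fix $x\in B_{r/2}(q)$ and write $I_j(x)=I_j^{\mathrm{near}}(x)+I_j^{\mathrm{far}}(x)$ using a cutoff that is $1$ on $B_{r/4}(x)$ and $0$ outside $B_{r/2}(x)\subset B_r(q)$. On the far piece the integrand is smooth in $x$ with all the relevant norms controlled by $\|\psi_\zeta\|_{C^{\beta-1}(B_1)}$, $\|a_j\|_{C^{k+j-1}(\S^{n-1})}$ and negative powers of $r$; differentiating $k+j-1$ times under the integral sign and using $\overline{B_r(q)}\subset B_1$ together with $q_n>0$ (so $(z_n)_+^{p-1}$ is bounded on the relevant region) gives a contribution of the form $R_\zeta^{\mathrm{far}}$ with the claimed bound, and the dependence on $\zeta$ is inherited from $\psi_\zeta$ via the hypothesis $\|D^j\psi_\zeta\|_{C^\beta_\zeta}\leq C_0$.

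For the near piece the idea is to Taylor-expand $\psi_\zeta$ around a fixed boundary reference point, or rather to freeze $\psi_\zeta$ and $(z_n)_+^{p-1}$ at the ``flat model'': replace $\psi_\zeta(z)$ by its Taylor polynomial $T^{k-1}_q\psi_\zeta(z)$ of degree $k-1$ at $q$ (here the hypothesis $\psi_\zeta\in C^k(\overline{B_r(q)})$ is used), and note that the remainder $\psi_\zeta(z)-T^{k-1}_q\psi_\zeta(z)=O(|z-q|^{k})$ is absorbed into $R_\zeta$ after an argument like in \cite[Lemma 2.5]{RS16}, producing the term $\|\psi_\zeta\|_{C^k(B_r(q))}r^{1+p-2s}$; the tail error where we throw away the part of $B_1$ far from $q$ but still inside $B_1$ produces the term $\|\psi_\zeta\|_{C^{\beta-1}(B_1)}r^{\beta-k+p-2s}$, and the intermediate Taylor coefficients of orders between $\lfloor\beta\rfloor$ and $k-1$ account for the sum $\sum_{|\gamma|=\lfloor\beta\rfloor}^{k-1}\|\psi_\zeta\|_{C^{|\gamma|}(B_r(q))}r^{p-2s-k+1+|\gamma|}$. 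Once $\psi_\zeta$ is replaced by a polynomial, each monomial $(z-q)^\gamma$ times $(z_n)_+^{p-1}$ times $|z-x|^{-(n+2s-j-1)}a_j(\langle z-x\rangle)$ can, after extending the integral back to all of $\R^n$ (the difference being again a smooth remainder), be evaluated using the explicit flat computation: by the parity condition $a_j(-\theta)=(-1)^{j+1}a_j(\theta)$ one checks that the homogeneity bookkeeping reduces each such integral, via Lemma \ref{lostCalculation} (applied to $j=1$) and its higher-$j$ analogues obtained by differentiating in $x$, to a constant multiple of $x_n^{p-2s}$ times a polynomial in $x$; collecting these yields the principal part $P_\zeta(x)x_n^{p-2s}$, whose coefficients are linear in the Taylor coefficients of $\psi_\zeta$ and in $\|a_j\|_{C^{k+j-1}}$, hence bounded by $C\|\psi_\zeta\|_{C^{\beta-1}(B_1)}\|a_j\|_{C^{k+j-1}(\S^{n-1})}$ and $C^\beta$ in $\zeta$.

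Finally one must check that $R_\zeta=I_j-P_\zeta(\cdot)x_n^{p-2s}$ is genuinely $C^{k+j-1}$ in $B_{r/2}(q)$ with the stated bound on $D^{k+j-1}R_\zeta$: this is done by differentiating the decomposition $k+j-1$ times, noting that each derivative landing on the kernel $|z-x|^{-(n+2s-j-1)}a_j(\langle z-x\rangle)$ raises its homogeneity by one, so after $k+j-1$ derivatives we face kernels of order $n+2s-j-1+(k+j-1)=n+2s+k-2$, and the integrability/cancellation estimates for such kernels against $(z_n)_+^{p-1}$ times a $C^{k}$ (or merely $C^{\beta-1}$, near $\partial B_1$) density are exactly what the three error terms encode. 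The main obstacle I expect is the careful near-field analysis: tracking, after the Taylor truncation of $\psi_\zeta$, precisely which monomials contribute to the polynomial $P_\zeta$ versus to $R_\zeta$ (this is governed by whether the resulting power of $x_n$ exceeds $\beta-1$ or not, via the condition $k>\beta-2s+p$), and simultaneously keeping the $\zeta$-regularity under control — the estimate $\|D^j\psi_\zeta\|_{C^\beta_\zeta}\leq C_0$ has to be threaded through every step so that both $P_\zeta$ and $R_\zeta$ inherit $C^\beta$ dependence on $\zeta$. The other delicate point is justifying the various ``extend the integral back to $\R^n$'' and ``differentiate under the integral sign'' manipulations in the principal-value sense, which is where the parity hypothesis on $a_j$ and the explicit one-dimensional identities behind Lemmas \ref{LonPower}–\ref{lostCalculation} do the real work.
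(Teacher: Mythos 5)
Your outline shares the paper's main ingredients (Taylor expansion of $\psi_\zeta$, the flat computation behind Lemma \ref{lostCalculation} after differentiating enough times to bring the kernel to homogeneity $-(n+2s-1)$, and Lemma \ref{generalisedA9}-type bounds), but the near/far splitting you propose does not reach the claimed estimate. The decisive term in the statement is $\|\psi_\zeta\|_{C^{\beta-1}(B_1)}\,r^{\beta-k+p-2s}$, and your treatment of the far piece --- keep $\psi_\zeta$ as it is on $B_1\setminus B_{r/4}(x)$ and differentiate $k+j-1$ times under the integral sign --- only yields, via Lemma \ref{generalisedA9}, a bound of order $\|\psi_\zeta\|_{L^\infty(B_1)}\,r^{1+p-2s-k}$, which for $\beta>1$ exceeds $r^{\beta+p-2s-k}$ by the factor $r^{1-\beta}\to\infty$ and is not controlled by any of the three terms in the statement (the other two involve norms on $B_r(q)$ and even larger exponents). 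The paper's proof avoids this by Taylor-expanding $\psi_\zeta$ to order $\lfloor\beta\rfloor-1$ (its global regularity) \emph{also} on $B_1\setminus B_r(q)$, recombining this low-order polynomial part with the near region into full-ball integrals (the term \eqref{niceTerm}), which are the sole source of $P_\zeta(x)x_n^{p-2s}$ plus a smooth remainder bounded independently of $r$; only the far Taylor \emph{remainder}, which carries the extra decay $|z-x_0|^{\beta-1}$, is estimated brutally, and it is precisely this factor that produces $r^{\beta-k+p-2s}$ in \eqref{reminders}. Without this recombination the exponent $\beta$ --- the whole point of the lemma, since it is what feeds the $C^{\beta}$-type conclusion of Corollary \ref{corollaryRegularity} --- is unreachable, even if you restrict the exact computation to the low-order Taylor part on the near ball and estimate its far contribution separately.

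Two further gaps. First, in your scheme the principal polynomial is extracted from $T^{k-1}_q\psi_\zeta$ after an exact evaluation; but the coefficients of orders $\lfloor\beta\rfloor\le|\gamma|\le k-1$ are controlled only by $\|\psi_\zeta\|_{C^{|\gamma|}(B_r(q))}$, which in the intended applications blows up like $r^{\beta-1-|\gamma|}$, so these orders may not enter $P_\zeta$ (whose coefficients must be bounded by $\|\psi_\zeta\|_{C^{\beta-1}(B_1)}$); they have to remain localized on $B_r(q)$ and be estimated after differentiation, using $x_n\sim r$ on $B_{r/2}(q)$, as in the terms \eqref{term23} and \eqref{additionalTerm23} --- merely asserting that they ``account for the sum'' does not follow from the mechanism you describe. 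Second, ``extending the integral back to all of $\R^n$'' for a monomial density $(z-q)^\gamma(z_n)_+^{p-1}$ against the kernel of order $n+2s-j-1$ diverges at infinity as soon as $|\gamma|+j\ge 2s-p$, i.e.\ in essentially every relevant case; the paper circumvents this by differentiating first (so that Lemma \ref{lostCalculation} applies as a convergent principal value) and then treating the integral over $\R^n\setminus B_1$ as a bounded smooth error, and this ordering of operations needs to be made explicit rather than appealed to as ``the difference being again a smooth remainder.''
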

	\begin{proof}
		Fix some point $x_0$ in $B_{r/2}(q)$ and write
		\begin{align*}
		\psi_\zeta(z) &= \sum_{|\gamma| \leq k-1} \partial^\gamma \psi_\zeta(x_0) (z-x_0)^\gamma + P_k(x_0,z) \\
		&= \sum_{|\gamma|\leq k-1}\partial^\gamma \psi_\zeta(x_0) \sum_{\alpha\leq\gamma} \binom{\gamma}{\alpha}(z-x)^\alpha (x-x_0)^{\gamma-\alpha}+P_k\\
		&= \sum_{|\alpha|\leq k-1} \Psi^k_\alpha(x_0,x) (z-x)^\alpha + P_k(x_0,z), \quad \quad z\in B_{r}(q),
		\end{align*}
		where $\Psi_\alpha^k (x_0,x) = \sum_{\gamma\geq\alpha}^{|\gamma|\leq k-1}\binom{\gamma}{\alpha}\partial^\gamma\psi_\zeta(x_0)(x-x_0)^{\gamma-\alpha}$ and $|P_k(x_0,z)|\leq ||\psi_\zeta||_{C^k(B_r(q))}|z-x_0|^k.$ Note that all the dependence in $\zeta$ comes through $\partial^\gamma\psi_\zeta$, which is by assumption of bounded $C^\beta_\zeta$ norm. %norm depends on C_j up to k or something like this...
		Moreover, $\Psi^k_\alpha$ does not depend on $z$ and hence it exits all the integrals over the variable $z$.
		Similarly, we expand $\psi_\zeta$ in $B_1\backslash B_r(q)$ using $C^{\beta-1}$ regularity
		\begin{align*}
		\psi_\zeta(z) &= \sum_{|\gamma| \leq \beta-1} \partial^\gamma \psi_\zeta(x_0) (z-x_0)^\gamma + P_{\beta-1}(x_0,z) \\
		&= \sum_{|\alpha|\leq \beta-1} \Psi^{\beta-1}_\alpha(x_0,x) (z-x)^\alpha + P_{\beta-1}(x_0,z), \quad \quad z\in B_1\backslash B_r(q),
		\end{align*}
		where $\Psi_\alpha^{\beta-1}(x_0,x) = \sum_{\gamma\geq\alpha}^{|\gamma|\leq \beta-1}\binom{\gamma}{\alpha}\partial^\gamma\psi_\zeta(x_0)(x-x_0)^{\gamma-\alpha}$ and $|P_{\beta-1}(x_0,z)|\leq ||\psi_\zeta||_{C^{\beta-1}(B_1)} |z-x_0|^{\beta-1}$.
		We plug these expansions into the definition of $I_j$ so that
		\begin{align}
		I_j(x) =& \int_{B_r(p)}(z_n)^{p-1} \frac{a_j(\left\langle z-x\right\rangle)}{|z-x|^{n+2s-j-1}}\psi_\zeta(z) dz + \int_{B_1\backslash B_r(q)}(z_n)_+^{p-1} \frac{a_j(\left\langle z-x\right\rangle)}{|z-x|^{n+2s-j-1}}\psi_\zeta(z) dz \nonumber \\ 
		=&\sum_{|\alpha|\leq \beta -1} \Psi_\alpha^{\beta-1}(x_0,x)\int_{B_1}(z_n)_+^{p-1} \frac{a_j(\left\langle z-x\right\rangle)\left\langle z-x\right\rangle^\alpha}{|z-x|^{n+2s-j-1-|\alpha|}}dz \label{niceTerm}\\
		&+ \sum_{\lfloor\beta\rfloor \leq |\alpha|\leq k-1} \Psi^{k}_\alpha (x_0,x)\int_{B_r(q)}(z_n)^{p-1} \frac{a_j(\left\langle z-x\right\rangle)\left\langle z-x\right\rangle^\alpha}{|z-x|^{n+2s-j-1-|\alpha|}}dz \label{term23}\\
		& + \sum_{ |\alpha|\leq \lfloor\beta\rfloor-1} \left(\Psi^{k}_\alpha (x_0,x)-\Psi^{\beta-1}_\alpha(x_0,x)\right)\int_{B_r(q)}(z_n)^{p-1} \frac{a_j(\left\langle z-x\right\rangle)\left\langle z-x\right\rangle^\alpha}{|z-x|^{n+2s-j-1-|\alpha|}}dz \label{additionalTerm23}\\
		\begin{split}
		&+\int_{B_r(q)}(z_n)^{p-1} \frac{a_j(\left\langle z-x\right\rangle)}{|z-x|^{n+2s-j-1}}P_k(x_0,z) dz\\
		&+\int_{B_1\backslash B_r(q)}(z_n)_+^{p-1} \frac{a_j(\left\langle z-x\right\rangle)}{|z-x|^{n+2s-j-1}}P_{\beta-1}(x_0,z) dz.
		\end{split}\label{reminders}
		\end{align}
		Consider first the term \eqref{niceTerm}. Note that $\Psi^{\beta-1}_\alpha(x_0,x)$ are polynomials in $x$, whose coefficients are derivatives of $\psi_\zeta$. To analyse the integral part, choose a multi-index $\delta$ with $|\delta| = j$ and calculate 
		\begin{align*}
		\partial^{\alpha+\delta}_x \int_{B_1}(z_n)_+^{p-1} \frac{a_j(\left\langle z-x\right\rangle)\left\langle z-x\right\rangle^\alpha}{|z-x|^{n+2s-j-1-|\alpha|}}dz &= \int_{B_1}(z_n)_+^{p-1} \frac{\tilde{a}_j(\left\langle z-x\right\rangle)}{|z-x|^{n+2s-1}}dz\\
		&= \tilde{L}((x_n)_+^p)(x) - \int_{\R^n\backslash B_1}(z_n)_+^{p-1} \frac{\tilde{a}_j(\left\langle z-x\right\rangle)}{|z-x|^{n+2s-1}}dz,
		\end{align*}
		where we used that $\tilde{a}_j$ represents a kernel of some operator $\tilde{L}$. 
		But due to Lemma \ref{lostCalculation}, $\tilde{L}((x_n)_+^p)(x) = c_{p}x_n^{p-2s}$ for $x_n>0$, where $c_p$ depends only on $p$ and $a_j$, and the remaining integrals is as smooth as $\tilde{a}_j$, which is 
		$C^{j+k-1-|\alpha|-j} = C^{k-1-|\alpha|}.$ Hence the original function equals $Q(x) x_n^{p-2s} + S(x)$ where $Q$ is a polynomial and $S$ is of class $C^{k-1+j}(B_{1/2})$. 
		Because $\Psi_\alpha^{\beta-1}$ are polynomials in $x$ with coefficients bounded with $||\psi_\zeta||_{C^{\beta-1}(B_1)}$ and $C^\beta_\zeta$ smooth, the term \eqref{niceTerm} contributes $P(x) x_n^{p-2s} + R_1(x)$, where $P$ is as stated in the claim, and $R_1$ satisfies 
		$|D^{j+k-1}R_1(x)|\leq C ||a_j||_{C^{k+j-1}(\mathbb{S}^{n-1})}||\psi_\zeta||_{C^{\beta-1}(B_1)},$ for $x \in B_{1/2}\cap\{x_n>0\}$. 
		
		Proceed now to term \eqref{term23}. Let us rewrite it for convenience:
		$$\sum_{\lfloor\beta\rfloor \leq |\alpha|\leq k-1} 
		\sum_{\gamma\geq \alpha} \binom{\gamma}{\alpha}\partial^\gamma\psi_\zeta(x_0)(x-x_0)^{\gamma-\alpha}
		\int_{B_r(q)}(z_n)^{p-1} \frac{a_j(\left\langle z-x\right\rangle)\left\langle z-x\right\rangle}{|z-x|^{n+2s-j-1-|\alpha|}}dz.$$
		Choose such multi-indices $\alpha,\gamma$, and another one with $|\delta| = j+k-1.$ Then 
		$$\partial_x^\delta \left( (x-x_0)^{\gamma-\alpha}
		\int_{B_r(q)}(z_n)^{p-1} \frac{a_j(\left\langle z-x\right\rangle)\left\langle z-x\right\rangle}{|z-x|^{n+2s-j-1-|\alpha|}}dz  \right)= $$ 
		$$= \sum_{\eta\leq \min(\gamma-\alpha, \delta)} \binom{\delta}{\eta} c_{\gamma-\alpha,\eta}(x-x_0)^{\gamma-\alpha-\eta} \partial^{\delta-\eta} 
		\int_{B_r(q)}(z_n)^{p-1} \frac{a_j(\left\langle z-x\right\rangle)\left\langle z-x\right\rangle}{|z-x|^{n+2s-j-1-|\alpha|}}dz. $$
		Note that since $|\eta|\leq |\gamma-\alpha| \leq k-1-|\alpha| $ we have that $|\delta-\eta|\geq k-1+j-(k-1-|\alpha|) =j+|\alpha|$. Let us analyse the integral part. Choose any $\epsilon\leq\delta-\eta$ of order $|\epsilon| = j+|\alpha|$, and compute first 
		\begin{equation}\label{wierdCase}
		\partial^\epsilon\int_{B_r(q)}(z_n)^{p-1} \frac{a_j(\left\langle z-x\right\rangle)\left\langle z-x\right\rangle}{|z-x|^{n+2s-j-1-|\alpha|}}dz = \int_{B_r(q)}(z_n)^{p-1} \frac{\tilde{a}_j(\left\langle z-x\right\rangle)}{|z-x|^{n+2s-1}}dz
		\end{equation}
		$$ = \tilde{L}((x_n)_+^{p}) -\int_{\R^n\backslash B_1}(z_n)^{p-1} \frac{\tilde{a}_j(\left\langle z-x\right\rangle)}{|z-x|^{n+2s-1}}dz-\int_{B_1\backslash B_r(q)}(z_n)^{p-1} \frac{\tilde{a}_j(\left\langle z-x\right\rangle)}{|z-x|^{n+2s-1}}dz$$
		$$ = c_p x_n^{p-2s} + g(x) - \int_{B_1\backslash B_r(q)}(z_n)_+^{p-1} \frac{\tilde{a}_j(\left\langle z-x\right\rangle)}{|z-x|^{n+2s-1}}dz,$$
		where $c_p$ is an explicit constant depending on $a_j$, obtained in Lemma \ref{lostCalculation}, and $g$ is or class $C^{k-1-|\alpha|}$. Differentiating the above equation $\delta-\eta-\epsilon$ more, we get 
		$$ \partial^{\delta-\eta} 
		\int_{B_r(q)}(z_n)^{p-1} \frac{a_j(\left\langle z-x\right\rangle)\left\langle z-x\right\rangle}{|z-x|^{n+2s-j-1-|\alpha|}}dz =$$ $$= \tilde{c}_p x_n^{p-2s-|\delta-\eta-\epsilon|} + \partial^{\delta-\eta-\epsilon}g(x) + \int_{B_1\backslash B_r(q)}(z_n)_+^{p-1} \frac{\hat{a}_j(\left\langle z-x\right\rangle)}{|z-x|^{n+2s-1 +|\delta-\eta-\epsilon|}}dz.$$ 
		Since $x\in B_{r/2}(q),$ the first term is bounded with $$C||a_j||_{C^{k+j-1}(\mathbb{S}^{n-1})} r^{p-2s-|\delta-\eta-\epsilon|} = C||a_j||_{C^{k+j-1}(\mathbb{S}^{n-1})} r^{p-2s-k+1 + |\eta| + |\alpha|},$$ and the second one just with $||a_j||_{C^{k+j-1}(\mathbb{S}^{n-1})}$. For the third one we use Lemma \ref{generalisedA9}
		to bound it with $||a_j||_{C^{k+j-1}(\mathbb{S}^{n-1})} r^{p-2s - |\delta-\eta-\epsilon|}.$ To apply the lemma, we need $2s-1+|\delta-\eta-\epsilon|\geq 0$. The only time, this does not happen is, when $|\delta-\eta-\epsilon|= 0$, and $s<\frac{1}{2}.$ But then, $\delta-\eta = \epsilon$, and the pole we get in the right-hand side in \eqref{wierdCase} is integrable, since $2s-1<0$. Therefore we can directly apply Lemma \ref{generalisedA9}, to end up with the same estimate.
		Putting it all together, we got the following estimate for the norm of $D^{k-1+j}$ of the term \eqref{term23}:
		$$\sum_{\lfloor\beta\rfloor \leq |\alpha|\leq k-1} 
		\sum_{\gamma\geq \alpha} \binom{\gamma}{\alpha}\partial^\gamma\psi_\zeta(x_0)\sum_{\eta\leq \min(\gamma-\alpha, \delta)} \binom{\delta}{\eta} c_{\gamma-\alpha,\eta}(x-x_0)^{\gamma-\alpha-\eta} \times$$
		$$
		||a_j||_{C^{k+j-1}(\mathbb{S}^{n-1})} r^{p-2s - |\delta-\eta-\epsilon|}		
		$$
		$$\leq C||a_j||_{C^{k+j-1}(\mathbb{S}^{n-1})}\sum_{\gamma,\alpha,\eta}
		\partial^\gamma\psi_\zeta(x_0) r^{|\gamma-\alpha-\eta|}r^{p-2s-k+1 + |\eta| + |\alpha|}$$ $$\leq  C||a_j||_{C^{k+j-1}(\mathbb{S}^{n-1})}\sum_{\lfloor\beta\rfloor \leq |\gamma|\leq k-1} ||\psi_\zeta||_{C^{|\gamma|}(B_r(q))} r^{p-2s-k+1+|\gamma|}.$$
		
		With \eqref{additionalTerm23} we proceed similarly. Note first, that 
		$$\Psi^{k}_\alpha (x_0,x)-\Psi^{\beta-1}_\alpha(x_0,x) = \sum^{\gamma\geq\alpha}_{\lfloor\beta\rfloor \leq|\gamma|\leq k-1} \binom{\gamma}{\alpha}\partial^\gamma\psi_\zeta(x_0)(x-x_0)^{\gamma-\alpha}, $$ and so 
		$$\eqref{additionalTerm23} = \sum_{ |\alpha|\leq \lfloor\beta\rfloor-1} \sum^{\gamma\geq\alpha}_{\lfloor\beta\rfloor \leq|\gamma|\leq k-1} \binom{\gamma}{\alpha}\partial^\gamma\psi_\zeta(x_0)(x-x_0)^{\gamma-\alpha}\int_{B_r(q)}(z_n)^{p-1} \frac{a_j(\left\langle z-x\right\rangle)\left\langle z-x\right\rangle}{|z-x|^{n+2s-j-1-|\alpha|}}dz.$$
		When we differentiate the expression $j+k-1$ times (choose multi-index $\delta$), perform the Leibnitz rule (with multi-index $\eta$), we end up with the same terms as when we treated \eqref{term23}, just that it also happens that
		$|\delta-\eta| < j+|\alpha|$. Then we just estimate the integral we get with Lemma \ref{generalisedA9}, to obtain the bound with the desired power of $r$. Therefore we get the same estimate as before, just the range of the index $\alpha$ is different
		$$|D^{j+k-1}\eqref{additionalTerm23}|\leq  C||a_j||_{C^{k+j-1}(\mathbb{S}^{n-1})}\sum_{\lfloor\beta\rfloor \leq |\gamma|\leq k-1} ||\psi_\zeta||_{C^{|\gamma|}(B_r(q))} r^{p-2s-k+1+|\gamma|}.$$
		
		Finally, to estimate \eqref{reminders}, use the argumentation as in \cite[Lemma 2.9]{AR20}
		to get
		\begin{align*}
			&\left|D^{k-1+j}|_{x_0} \int_{B_r(q)}(z_n)^{p-1} \frac{a_j(\left\langle z-x\right\rangle)}{|z-x|^{n+2s-j-1}}P_k(x_0,z) dz\right|\leq\\
			\leq& C||a_j||_{C^{k+j-1}}||\psi_\zeta||_{C^{k}(B_r(q))}\int_{B_r(q)}(z_n)^{p-1} \frac{1}{|z-x|^{n+2s-2}}dz\\
			\leq& C||a_j||_{C^{k+j-1}}||\psi_\zeta||_{C^{k}(B_r(q))}r^{1-2s+p} 
		\end{align*}
		in view of Lemma \ref{generalisedA9}. Similarly,		
		\begin{align*}
			&\left|D^{k-1+j}|_{x_0}\int_{B_1\backslash B_r(q)}(z_n)_+^{p-1} \frac{a_j(\left\langle z-x\right\rangle)}{|z-x|^{n+2s-j-1}}P_{\beta-1}(x_0,z) dz\right|\leq\\ 
			\leq& C||a_j||_{C^{k+j-1}}||\psi_\zeta||_{C^{\beta-1}(B_1)}\int_{B_1\backslash B_r(q)}(z_n)_+^{p-1} \frac{1}{|z-x|^{n+2s-1 + k -\beta}}dz \\
			\leq& C||a_j||_{C^{k+j-1}}|| \psi_\zeta||_{C^{\beta-1}(B_1)}r^{\beta - k - 2s+p},
		\end{align*}
		and the result follows.
	\end{proof}
	
	Next, we show how to connect the obtained estimates on the derivatives with the regularity up to the boundary. Of course we need to have suitable assumptions on the function $\psi$.

	\begin{corollary}\label{corollaryRegularity}
		Let $p\in(0,2s)$ and $\beta>1+2s-p$. Suppose for every $\zeta$ varying over some bounded $C^\beta$ surface, we have $\psi_\zeta\in C^{\beta-1}(B_1)\cap C^\infty(B_1 \cap \{x_n>0\})$ satisfying
		$$|D^k\psi_\zeta|\leq C_k d^{\beta-1-k},\quad \quad k>\beta-1,$$
		$$D^k\psi_\zeta \text{ is } C^{\beta}\text{ in variable }\zeta \quad k\in\N.$$
		For some $j\in \N$, let $a_j\in C^{j+\lfloor \beta-2s+p\rfloor}(\mathbb{S}^{n-1})$ satisfy
		$$a_j(\theta) = (-1)^{j+1}a_j(\theta),\quad \theta \in \S^{n-1}.$$
		Then the function defined as 
		$$I_j(x) := p.v.\int_{B_1}(z_n)_+^{p-1}\frac{a_j(\langle z-x\rangle )}{|z-x|^{n+2s-j-1}}\psi_\zeta(z)dz$$
		satisfies
		$$I_j(x) = P_\zeta(x)x_n^{p-2s} + R_\zeta(x),$$
		where $P_\zeta$ is a polynomial whose coefficients are bounded with $C||\psi_\zeta||_{C^{\beta-1}(B_1)}||a_j||_{C^{j+\lfloor \beta-2s+p\rfloor}(\S^{n-1})}$ and $C^\beta$ in variable $\zeta$. Furthermore, the function $R_\zeta$ is of class $C^{j+\beta-1-2s+p}(B_{1/2}\cap \{x_n\geq0\})$, with 
		$$||R_\zeta||_{C^{j+\beta-1-2s+p}} \leq C ||a_j||_{C^{j+\beta}(\mathbb{S}^{n-1})}||\psi_\zeta||_{C^{\beta-1}(B_1)},$$
		where the constant $C$ depends only on $n$, $s$, $\beta$ and $p$.
	\end{corollary}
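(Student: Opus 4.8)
The plan is to deduce Corollary~\ref{corollaryRegularity} from Lemma~\ref{lemmaRegularity} by a localization-and-summation argument, exactly the passage from interior derivative estimates to boundary H\"older regularity that is standard for the distance function. Fix a point $x_0\in B_{1/2}\cap\{x_n>0\}$ and set $\rho=d(x_0)$; we may assume $\rho$ small, since away from $\{x_n=0\}$ the integrand defining $I_j$ is smooth and all estimates are trivial. Choose a ball $B_r(q)\subset B_1$ with $q$ in the direction of $\nabla d(x_0)$, $r\simeq\rho$, and $x_0\in B_{r/2}(q)$; this is the $(q,r)$ required by Lemma~\ref{lemmaRegularity}. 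The hypotheses of that lemma are met: $\psi_\zeta\in C^{\beta-1}(\overline{B_1})$ by assumption, while on $B_r(q)$ we have, for $\beta-1<|\gamma|\le k-1$, the bound $\|\psi_\zeta\|_{C^{|\gamma|}(B_r(q))}\le C_{|\gamma|}\,\rho^{\beta-1-|\gamma|}$ from $|D^{|\gamma|}\psi_\zeta|\le C_{|\gamma|}d^{\beta-1-|\gamma|}$ together with $d\simeq\rho$ on $B_r(q)$; similarly $\|\psi_\zeta\|_{C^k(B_r(q))}\le C\rho^{\beta-1-k}$. The oddness parity assumption on $a_j$ is inherited, and the regularity $a_j\in C^{j+\lfloor\beta-2s+p\rfloor}(\S^{n-1})$ with $k:=\lfloor\beta-2s+p\rfloor+1>\beta-2s+p$ is precisely what Lemma~\ref{lemmaRegularity} needs (note $k>\beta$ as well since $2s-p<2s<2\le\beta$ fails in general — one should rather take $k=\lfloor\beta\rfloor+\lceil 2s-p\rceil$ or simply the smallest integer exceeding both $\beta$ and $\beta-2s+p$, adjusting $a_j$'s regularity index accordingly, which is why the statement asks $a_j\in C^{j+\beta}$).

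With these substitutions, Lemma~\ref{lemmaRegularity} gives $I_j(x)=P_\zeta(x)x_n^{p-2s}+R_\zeta(x)$ with $P_\zeta$ of the claimed form, and, plugging the $\rho$-dependence into the three sums in the estimate for $|D^{k+j-1}R_\zeta|$, one finds that each term is bounded by $C\|a_j\|\cdot\big(\sum_{\lfloor\beta\rfloor\le|\gamma|\le k-1}\rho^{\beta-1-|\gamma|}\rho^{p-2s-k+1+|\gamma|}+\rho^{\beta-1-k}\rho^{1+p-2s}+\rho^{\beta-k+p-2s}\big)=C\|a_j\|\,\rho^{\beta-k+p-2s}$, i.e.
$$|D^{k+j-1}R_\zeta(x_0)|\le C\|a_j\|_{C^{j+\beta}(\S^{n-1})}\|\psi_\zeta\|_{C^{\beta-1}(B_1)}\;d(x_0)^{(\beta-1-2s+p)-(k-1+j)+j}\cdot d(x_0)^{\,j-j}.$$
After bookkeeping the exponent reads $d(x_0)^{(j+\beta-1-2s+p)-(j+k-1)}$, which is exactly the borderline decay $|D^{m}R_\zeta|\lesssim d^{\,\sigma-m}$ with $m=j+k-1$ and $\sigma=j+\beta-1-2s+p$, with $m>\sigma$ since $k>\beta-2s+p$. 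Hence by the standard fact (see \cite[Lemma A.1]{AR20} or the characterization of $C^\sigma$ via interior derivative growth, valid because $\sigma=j+\beta-1-2s+p\notin\N$ — guaranteed by $\beta\notin\N$) the function $R_\zeta$, which is already $C^\infty$ in $\{x_n>0\}$ and has the right lower-order derivative control, extends to a $C^{j+\beta-1-2s+p}$ function up to $\{x_n=0\}$ in $B_{1/2}$, with the stated norm bound.

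Finally, the $\zeta$-regularity: all dependence on $\zeta$ in both $P_\zeta$ and $R_\zeta$ enters only through the coefficients $\partial^\gamma\psi_\zeta(x_0)$ appearing in the $\Psi$-polynomials of Lemma~\ref{lemmaRegularity}'s proof, and these are $C^\beta$ in $\zeta$ with uniformly bounded $C^\beta_\zeta$ norm by hypothesis; since the remaining kernel integrals do not involve $\zeta$, the map $\zeta\mapsto P_\zeta$ has $C^\beta$ coefficients and $\zeta\mapsto R_\zeta$ is $C^\beta$ into the relevant H\"older space, uniformly. I expect the main obstacle to be purely bookkeeping: verifying that the three competing powers of $r$ in Lemma~\ref{lemmaRegularity} all collapse to the single exponent $\rho^{\beta-k+p-2s}$ after inserting $\|\psi_\zeta\|_{C^{|\gamma|}(B_r(q))}\lesssim\rho^{\beta-1-|\gamma|}$, and then correctly identifying this with the $d^{\sigma-m}$ growth that the boundary-regularity lemma consumes — together with pinning down the minimal integer $k$ (and hence the minimal smoothness index on $a_j$) for which Lemma~\ref{lemmaRegularity} applies, which forces the $C^{j+\beta}(\S^{n-1})$ hypothesis in the statement.
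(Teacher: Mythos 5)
Your proposal is correct and takes essentially the same route as the paper's proof: apply Lemma~\ref{lemmaRegularity} on a ball $B_r(q)\subset\{x_n>0\}$ with $q=x_0$ and $r\simeq d(x_0)$, feed the decay hypothesis $|D^{|\gamma|}\psi_\zeta|\le C_{|\gamma|}d^{\beta-1-|\gamma|}$ into the three competing terms of the estimate so that they all collapse to $r^{\beta-k+p-2s}=r^{\langle\beta-2s+p\rangle-1}$, and then invoke the standard equivalence between the interior bound $|D^{j+k-1}R_\zeta|\lesssim d^{\langle\beta-2s+p\rangle-1}$ and $C^{j+\beta-1-2s+p}$-regularity up to $\{x_n=0\}$.

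One small correction to your parenthetical: the Corollary's \emph{hypothesis} is $a_j\in C^{j+\lfloor\beta-2s+p\rfloor}(\S^{n-1})$, not $C^{j+\beta}$ — the exponent $j+\beta$ appears only in the final norm estimate (where it is harmless slop). This hypothesis corresponds exactly to the paper's choice $k=\lfloor\beta-2s+p\rfloor+1$, which satisfies the only stated requirement $k>\beta-2s+p$ of Lemma~\ref{lemmaRegularity}; you do not need to enlarge $k$ past $\beta$, and doing so would make the hypothesis on $a_j$ strictly stronger than what the Corollary claims. Your observation that the sum $\sum_{|\gamma|=\lfloor\beta\rfloor}^{k-1}$ can be vacuous when $k\le\lfloor\beta\rfloor$ is correct, and it does point to a mild imprecision inside the \emph{proof} of Lemma~\ref{lemmaRegularity} (the splitting of $\Psi^k_\alpha-\Psi^{\beta-1}_\alpha$ is written as if $k\ge\lfloor\beta\rfloor$), but that is a bookkeeping issue in the Lemma rather than a gap in the Corollary's proof, which is a legitimate application of the Lemma as stated.
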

	
	\begin{proof}
		Choose $k = \lfloor \beta-2s+p\rfloor +1$. From the above lemma, whenever $x\in B_r(x_0)$ and $d(x_0)=2r$, we get
		$$|D^{k+j-1}R_\zeta(x)| \leq C||a_j||_{C^{k+j-1}(\mathbb{S}^{n-1})}\times$$
		$$\times\left(\sum_{|\gamma|=\lfloor\beta\rfloor }^{k-1} ||\psi_\zeta||_{C^{|\gamma|}(B_r(q))} r^{p-2s-k+1+|\gamma|} + ||\psi_\zeta||_{C^k(B_r(q))} r^{1+p-2s} + ||\psi_\zeta||_{C^{\beta-1}(B_1)}r^{\beta - k + p -2s}\right),$$ 
		which we furthermore estimate with 
		$$\leq    C||a_j||_{C^{k+j-1}(\mathbb{S}^{n-1})}   r^{\beta - k + p -2s},$$
		where we used the assumption on $\psi_\zeta$. This gives, that every derivative of order $k+j-2$ is a $C^{\left\langle \beta-2s+p \right\rangle }$ function in $B_{1/2}\cap\{x_n\geq0\}$.
	\end{proof}

	The following develops analogous result as Lemma \ref{lemmaRegularity}, in the setting when the function $\psi$ has a zero of order one at origin. Then, the regularity estimate improves roughly by one power as well. We establish it in the similar way as before, just the cases we treat change a little. Note also, that this step is not done completely correct in \cite{AR20}, and deserves more attention. 
	
	\begin{lemma}\label{improvedLemmaRegularity}
		Let $q \in \{x\in\R^n : x_n>0\}$ and $r>0$ be such that $\overline{B_r(q)}\subset B_1.$ Take $k\in\N,$ non-integer $\beta>1,$ $p\in(0,2s)$, $k>\beta+1 - 2s+p$. Assume that for every $\zeta$ varying over some bounded $C^\beta$ surface  we have a function $\psi_\zeta \in C^{\beta-1}(\overline{B_1},\R^n)\cap C^k(\overline{B_r(q)},\R^n),$ satisfying $\left|\left|D^j\psi_\zeta\right|\right|_{ C^\beta_\zeta}\leq C_0,$ for $j\geq 0$. For $j\in\N$, let $a_j\in C^{j+k}(\mathbb{S}^{n-1})$ satisfy 
		$$a_j(-\theta) = (-1)^{j+1} a_j(\theta),\quad \theta \in \mathbb{S}^{n-1}.$$
		Then the function defined as 
		$$I_j(x) := \operatorname{p.v.}\int_{B_1}(z_n)^{p-1} \frac{a_j(\left\langle z-x\right\rangle )}{|z-x|^{n+2s-j-1}}z\cdot\psi_\zeta(z) dz$$ satisfies 
		$$I_j(x) = P_\zeta(x)x_n^{p-2s} + R_\zeta(x),$$ where $P_\zeta$ is a polynomial whose coefficients are bounded with $C||\psi_\zeta||_{C^{\beta-1}(B_1)}||a_j||_{C^{k+j-1}(\mathbb{S}^{n-1})}$ and $C^\beta$ functions of variable $\zeta$ with bounded $C^\beta_\zeta$ norm, and $R_\zeta$ is of class $C^{k+j-1}$ in $B_{r/2}(q)$ with 
		$$|D^{k+j-1}R_\zeta(x)| \leq C||a_j||_{C^{k+j-1}(\mathbb{S}^{n-1})}|q|\times$$
		$$\times\left(\sum_{|\gamma|=\lfloor\beta\rfloor }^{k-1} ||\psi_\zeta||_{C^{|\gamma|}(B_r(q))} r^{p-2s-k+1+|\gamma|} + ||\psi_\zeta||_{C^k(B_r(q))} r^{1+p-2s} + ||\psi_\zeta||_{C^{\beta-1}(B_1)}r^{\beta - k + p -2s}\right).$$ 
	\end{lemma}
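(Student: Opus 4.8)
The plan is to mirror the proof of Lemma \ref{lemmaRegularity} almost verbatim, tracking the extra factor coming from the linear weight $z\mapsto z\cdot\psi_\zeta(z)$. The starting point is to write $z\cdot\psi_\zeta(z)$ in the two regions $B_r(q)$ and $B_1\setminus B_r(q)$ by Taylor expanding around a fixed point $x_0\in B_{r/2}(q)$: on $B_r(q)$ use the $C^k$ expansion to order $k-1$, and on $B_1\setminus B_r(q)$ use the $C^{\beta-1}$ expansion to order $\lfloor\beta-1\rfloor$. The key observation is that $\Phi_\zeta(z):=z\cdot\psi_\zeta(z)$ inherits the smoothness of $\psi_\zeta$ (it is a polynomial times a smooth vector field), so $\|D^m\Phi_\zeta\|\le C(|q|+r)\|D^m\psi_\zeta\|+C\|D^{m-1}\psi_\zeta\|$ near $B_r(q)$; since $r\le|q|\lesssim 1$ this is where the stated factor $|q|$ enters. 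After the substitution $(z-x_0)^\gamma=\sum_{\alpha\le\gamma}\binom{\gamma}{\alpha}(z-x)^\alpha(x-x_0)^{\gamma-\alpha}$, we obtain exactly the same decomposition of $I_j(x)$ into the four pieces \eqref{niceTerm}, \eqref{term23}, \eqref{additionalTerm23}, \eqref{reminders}, with $\psi_\zeta$ replaced by $\Phi_\zeta$ throughout.

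From there the treatment of each piece is identical to Lemma \ref{lemmaRegularity}: for the analogue of \eqref{niceTerm} one differentiates $\alpha+\delta$ times with $|\delta|=j$, absorbs the monomial $\langle z-x\rangle^\alpha$ into a new kernel $\tilde a_j$ of homogeneity making the integral of the form $\tilde L((x_n)_+^p)$, and invokes Lemma \ref{lostCalculation} to get the polynomial-times-$x_n^{p-2s}$ main term plus a $C^{k-1+j}$ remainder; the coefficients of $P_\zeta$ are bounded by the $C^{\beta-1}$ norm of $\Phi_\zeta$, hence by $C|q|\,\|\psi_\zeta\|_{C^{\beta-1}(B_1)}$. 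For the analogues of \eqref{term23} and \eqref{additionalTerm23}, one differentiates $j+k-1$ times, applies Leibniz, uses Lemma \ref{lostCalculation} together with Lemma \ref{generalisedA9} to bound the leftover integrals over $B_1\setminus B_r(q)$ by the appropriate power $r^{p-2s-|\delta-\eta-\epsilon|}$, and collects the powers of $r$ exactly as before — again each term carrying one extra factor $|q|$ from $\Phi_\zeta$. The remainder piece \eqref{reminders} is estimated by the argument of \cite[Lemma 2.9]{AR20} with $\|P_k\|\lesssim|q|\,\|\psi_\zeta\|_{C^k}$ and $\|P_{\beta-1}\|\lesssim|q|\,\|\psi_\zeta\|_{C^{\beta-1}}$, giving the claimed $r^{1+p-2s}$ and $r^{\beta-k+p-2s}$ bounds times $|q|$.

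The one genuinely new bookkeeping point — and the place I expect to spend the most care — is verifying that multiplying by $z\cdot\psi_\zeta(z)$ does not cost parity or regularity in the kernel: the factor $z\cdot\psi_\zeta(z)$ is $C^k$ (not merely $C^{k-1}$), which is why the hypothesis here is $a_j\in C^{j+k}(\S^{n-1})$ rather than $C^{j+k-1}$, and why one can still extract $(z-x)^\alpha$ monomials up to $|\alpha|=k-1$ before losing smoothness. One must also check that the zero of order one of $\Phi_\zeta$ at the origin is genuinely used: writing $z\cdot\psi_\zeta(z)=(z-x_0)\cdot\psi_\zeta(z)+x_0\cdot\psi_\zeta(z)$ shows the ``extra derivative'' improvement is really the statement that the order-$|\gamma|$ Taylor data of $\Phi_\zeta$ is controlled by the order-$(|\gamma|-1)$ data of $\psi_\zeta$ plus $|q|$ times its order-$|\gamma|$ data, which is exactly what feeds the clean final estimate. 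The $\zeta$-dependence is automatic: every coefficient that appears is a derivative $\partial^\gamma\psi_\zeta(x_0)$ times a fixed polynomial in $x-x_0$, and $\partial^\gamma\psi_\zeta$ is $C^\beta_\zeta$ by hypothesis, so $P_\zeta$ has $C^\beta_\zeta$ coefficients with the stated bound. Assembling the three families of $r$-powers and taking the supremum over $x_0$ with $d(x_0)\sim r$ yields the displayed estimate for $D^{k+j-1}R_\zeta$.
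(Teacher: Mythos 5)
Your reduction --- set $\Phi_\zeta(z):=z\cdot\psi_\zeta(z)$, run Lemma \ref{lemmaRegularity} verbatim with $\Phi_\zeta$ in place of $\psi_\zeta$, and then convert norms of $\Phi_\zeta$ into $|q|$ times norms of $\psi_\zeta$ --- does not deliver the stated estimate, and this is the crux of the lemma, not bookkeeping. Two places break. First, the global term: on $B_1$ the weight satisfies only $|z|\le 1$, so $\|\Phi_\zeta\|_{C^{\beta-1}(B_1)}\lesssim\|\psi_\zeta\|_{C^{\beta-1}(B_1)}$ with \emph{no} factor $|q|$; your claim $\|P_{\beta-1}\|\lesssim |q|\,\|\psi_\zeta\|_{C^{\beta-1}}$ for the Taylor remainder of $\Phi_\zeta$ used on $B_1\setminus B_r(q)$ is false (the remainder piece $(z-x_0)\cdot\bigl(\psi_\zeta-T\psi_\zeta\bigr)$ carries $|z-x_0|\sim 1$, not $|q|$). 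Second, the borderline local term: your inequality $\|D^m\Phi_\zeta\|\le C(|q|+r)\|D^m\psi_\zeta\|+C\|D^{m-1}\psi_\zeta\|$ produces, at $m=\lfloor\beta\rfloor$, the contribution $\|\psi_\zeta\|_{C^{\lfloor\beta\rfloor-1}(B_r(q))}\,r^{p-2s-k+1+\lfloor\beta\rfloor}$ without any $|q|$; to absorb it into the allowed right-hand side you would need $r^{1-\langle\beta\rangle}\lesssim|q|$, which fails precisely in the relevant regime $|q|\sim r\to 0$. So the improvement by $|q|$ cannot be obtained by treating $z\cdot\psi_\zeta$ as a black-box function and comparing norms.

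The paper's mechanism is different and is what you must do: split $z_i=x_i+(z_i-x_i)$ \emph{inside each integral} (equivalently relative to the evaluation point, with $|x_i|\le|q|+r/2\lesssim|q|$ since $r\le|q|$). The $x_i$-piece reproduces the integrals of Lemma \ref{lemmaRegularity} multiplied by $|q|$. The $(z_i-x_i)$-piece is absorbed into the kernel, turning $a_j(\langle z-x\rangle)\langle z-x\rangle^{\alpha}|z-x|^{-(n+2s-j-1-|\alpha|)}$ into the analogous expression with $\alpha+e_i$, i.e.\ lowering the pole by one; re-running the estimates (Lemma \ref{lostCalculation}, Lemma \ref{generalisedA9}) then yields one extra power of $r\le|q|$ in every term. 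It is exactly here that the two strengthened hypotheses are used: $k>\beta+1-2s+p$ is needed so that Lemma \ref{generalisedA9} still applies to the far-region integral after the gain of one power, and $a_j\in C^{j+k}(\S^{n-1})$ (one degree more than in Lemma \ref{lemmaRegularity}) is needed because an extra angular monomial has been absorbed into the kernel --- not, as you write, because $z\cdot\psi_\zeta$ is $C^k$; in Lemma \ref{lemmaRegularity} the density is already $C^k$ and $C^{j+k-1}$ suffices. That your argument never invokes the condition $k>\beta+1-2s+p$ is a symptom of the gap: without the kernel-pairing step there is nothing for it to do, and without it the $|q|$ factor cannot be produced.
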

	
	\begin{proof}
		The beginning is the same as in the lemma above, but now we deal with integrals of the form 
		\begin{align}
		\int_{B_1}z_i(z_n)_+^{p-1} \frac{a_j(\left\langle z-x\right\rangle)\left\langle z-x\right\rangle^\alpha}{|z-x|^{n+2s-j-1-|\alpha|}}dz, \label{term6} \\	
		(x-x_0)^{\gamma-\alpha}\int_{B_r(q)}z_i(z_n)^{p-1} \frac{a_j(\left\langle z-x\right\rangle)\left\langle z-x\right\rangle^\alpha}{|z-x|^{n+2s-j-1-|\alpha|}}dz, \label{term7}
		\\
		\int_{B_r(q)}z_i(z_n)^{p-1} \frac{a_j(\left\langle z-x\right\rangle)}{|z-x|^{n+2s-j-1}}P_k(x_0,z) dz, \label{term8}
		\\
		\int_{B_1\backslash B_r(q)}z_i(z_n)_+^{p-1} \frac{a_j(\left\langle z-x\right\rangle)}{|z-x|^{n+2s-j-1}}P_{\beta-1}(x_0,z) dz. \label{term9}
		\end{align}
		First we split $z_i = x_i + (z_i-x_i)$, and treat both terms analogously as in the previous lemma. Let us start with \eqref{term6}:
		$$\eqref{term6} = x_i\int_{B_1}(z_n)_+^{p-1} \frac{a_j(\left\langle z-x\right\rangle)\left\langle z-x\right\rangle^\alpha}{|z-x|^{n+2s-j-1-|\alpha|}}dz +
		\int_{B_1}(z_n)_+^{p-1} \frac{a_j(\left\langle z-x\right\rangle)\left\langle z-x\right\rangle^{\alpha+e_i}}{|z-x|^{n+2s-j-1-|\alpha+e_i|}}dz.$$
		Performing the same as in previous lemma, we see, that it equals $P_\zeta(x)x_n^{p-2s} + R_1(x)$, where $P_\zeta$ is a polynomial with the suitable bound and regularity in $\zeta$, and $R_1$ is as smooth as the kernel $a_j.$ 
		Since the power of the pole in the second integral is for one bigger than usual we need to assume, that $a_j$ is of one class smoother.
		
		The term \eqref{term7} splits into 
		$$(x-x_0)^{\gamma-\alpha}x_i\int_{B_r(q)}(z_n)^{p-1} \frac{a_j(\left\langle z-x\right\rangle)\left\langle z-x\right\rangle^\alpha}{|z-x|^{n+2s-j-1-|\alpha|}}dz
		+$$ 
		$$+ (x-x_0)^{\gamma-\alpha}\int_{B_r(q)}(z_n)^{p-1} \frac{a_j(\left\langle z-x\right\rangle)\left\langle z-x\right\rangle^{\alpha+e_i}}{|z-x|^{n+2s-j-1-|\alpha+e_i|}}dz,$$
		which we analyse just like \eqref{term23} and \eqref{additionalTerm23}. The first integrand brings $|x_i|$ more to the estimate, while the second brings $r$, so together the estimate improves for $C|q|$.
		
		The reminder terms, \eqref{term8} and \eqref{term9}, both still work. In analysis of the first one we get an integrable pole, so we use the other case of Lemma \ref{generalisedA9} which improves the estimate for $C|q|$, and for the other one we get a new restriction on $k$: $\beta+1-2s+p<k$, so that we can use Lemma \ref{generalisedA9}, and we get one extra power of $r$, which is majorised with $C|q|$.
	\end{proof}

	In the following result we show how to connect the computation of $L(\psi d^p)$ with the previous results, to get the desired statement. As in the model case \cite[Section 2]{AR20}, we split the kernel into homogeneities, we perform the flattening of the boundary, and carefully treat the obtained terms.  
	
	\begin{theorem}\label{mainResultSection2}
		Let $\Omega$ be a domain in $\R^n$, such that $0\in\partial\Omega$ and $\partial\Omega\cap B_1\in C^\beta,$ for some $\beta>1$, and $p\in(0, 2s)$. Let an integer $k> \beta-2s+p-1$ and the kernel $K$ be a $C^{2k+1}$ function satisfying \eqref{kernelConditions}. Let for every $\zeta\in\partial\Omega\cap B_1$ the function $\psi_\zeta\in C^{\beta-1}(\overline{B}_1,\R^n)\cap C^\infty(\Omega\cap B_1,\R^n)$ satisfies
		\begin{align}\label{conditionLikeD}
		\begin{split}
		\left|\left|D^j\psi_\zeta(x)\right|\right| _ {C^\beta_\zeta(\partial\Omega\cap B_1)}\leq C_0,\quad \forall x\in \Omega\cap B_1, j\geq 0,\text{ and}\\
		|D^j\psi_\zeta(x)| \leq C_j r^{\beta-1-j},\quad\quad j>\beta-1,\text{ }x\in B_r(x_0),
		\end{split}
		\end{align}
		whenever $B_{2r}(x_0)\subset \Omega\cap B_1$. Then 
		\begin{align*}
			L_\psi(d^p)(x) :=& p.v.\int_{B_1} d^{p-1}(y)K(y-x)\psi_\zeta(y)\cdot (y-x)dy\\ =& \varphi_\zeta(x) d^{p-2s}(x) + R_\zeta(x),
		\end{align*}
		where $\varphi_\zeta$ is composition of the boundary flattening map with a polynomial whose coefficients'  $C^\beta$ norm in variable $\zeta$ is bounded with a constant depending on $C_0$. We have $||\varphi_\zeta||_{C^\beta(B_1)}\leq C ||\psi_\zeta||_{C^{\beta-1}(B_1)}$, and $R_\zeta$ satisfies the following estimate:
		$$|D^k R_\zeta(x)|\leq C d^{\beta-1-2s+p-k}(x),$$
		for $x\in B_{1/2}$. 
		
		Moreover, if $\psi_\zeta = fF_\zeta$, where $f$ is a $C^{\beta}$ function satisfying $|D^jf(x)| \leq C_j r^{\beta-j},$ $j>\beta$, $ x\in B_r(x_0),$ $B_{2r}(x_0)\subset \Omega$, and vanishing at $0$, and $F_\zeta$ a vector function satisfying \eqref{conditionLikeD}, then $R_\zeta$ satisfies 
		$$|D^k R_\zeta(x)|\leq C |x|d^{\beta-1-2s+p-k}(x),$$
		for $x\in B_{1/2}$.
		The constant $C$  depends only on $k$, $s$, $||K||_{C^{2k+1}(\S^{n-1})}$, $\beta$ and $||\partial\Omega||_{C^\beta}.$
	\end{theorem}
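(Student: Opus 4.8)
The plan is to follow the scheme of \cite[Section~2]{AR20}, adapted to the extra linear factor $\psi_\zeta\cdot(y-x)$: localize near $\partial\Omega$, flatten the boundary so that $d$ becomes essentially the last coordinate, expand the pulled-back kernel into homogeneous pieces, and feed each piece into Corollary \ref{corollaryRegularity} --- and into Lemma \ref{improvedLemmaRegularity} for the refined bound. First I would dispose of the region away from $\partial\Omega$: if $d(x)\geq\rho_0$, then $d^p$ and $\psi_\zeta$ are $C^\infty$ near $x$ and, since the integration runs only over the bounded set $B_1$, $L_\psi(d^p)$ is bounded together with all its derivatives by a constant depending only on $\rho_0$ and the data (the principal value near $y=x$ being harmless because $\psi_\zeta(y)\cdot(y-x)$ vanishes linearly there, so its odd part cancels against the even $K$). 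Hence it suffices to estimate $L_\psi(d^p)(x)$ for $x\in B_{1/2}$ with $d(x)$ small, and after a covering argument and a translation we may assume $x$ is close to the boundary point $0\in\partial\Omega$.

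Next I would flatten the boundary, using the generalised distance function and the construction behind Definition \ref{definitionOfGeneralDistance} (that is, \cite[Lemma~A.2]{AR20}): there is a map $\Phi\in C^\beta(\overline{\Omega\cap B_1})\cap C^\infty(\Omega\cap B_1)$ with $\Phi(0)=0$, $\Phi(\Omega\cap B_1)\subset\{z_n>0\}$, interior bounds $|D^j\Phi|\leq C_j\,d^{\,\beta-j}$ for $j>\beta$, and with $d\circ\Phi^{-1}(z)$ comparable to $z_n$ and satisfying the matching interior bounds. Setting $z=\Phi(y)$, $w=\Phi(x)$ and changing variables, $d^{p-1}(y)$ becomes $(z_n)_+^{p-1}$ times a positive $C^\beta$ (and $C^\infty$ interior) factor, the Jacobian is another such factor, and $\psi_\zeta(y)\cdot(y-x)$ becomes $\widetilde\psi_\zeta(z)\cdot(z-w)$ modulo higher-order terms arising from $\Phi^{-1}(z)-\Phi^{-1}(w)-D\Phi^{-1}(w)(z-w)$, where $\widetilde\psi_\zeta$ and the correction coefficients still obey the $C^\beta_\zeta$ control and the interior derivative bounds of \eqref{conditionLikeD}, since $\Phi^{-1}$ is independent of $\zeta$. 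The kernel $K(\Phi^{-1}(z)-\Phi^{-1}(w))\,|\det D\Phi^{-1}(z)|$ times the linear factor is then expanded as in \cite[Section~2]{AR20}: Taylor-expanding $\Phi^{-1}$ to order $\lfloor\beta\rfloor$ and using the homogeneity of $K$, one writes it as a finite sum of terms $\dfrac{a_j(w,\langle z-w\rangle)}{|z-w|^{\,n+2s-j-1}}$ plus a remainder kernel with $C^{\langle\beta\rangle}$ sphere-regularity vanishing to order $\beta$ at $z=w$; the extra linear factor raises the homogeneity by one and flips parity relative to the even $K$, so $a_j(w,-\theta)=(-1)^{j+1}a_j(w,\theta)$, exactly the parity required by Corollary \ref{corollaryRegularity}, and $K\in C^{2k+1}(\S^{n-1})$ is precisely the regularity needed for all the $a_j$ that occur to be as smooth on the sphere as that corollary demands. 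Finally the $w$-dependence of the $a_j$ is removed by freezing $w$ at the evaluation point, the error being absorbed into the $\psi$-factors.

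For the main statement I would apply Corollary \ref{corollaryRegularity} to each term $I_j$, with its ``$\psi$'' taken to be $\widetilde\psi_\zeta$ times the smooth $C^\beta$ factors and its power $p\in(0,2s)$, obtaining $I_j(w)=P_{j,\zeta}(w)\,w_n^{p-2s}+R_{j,\zeta}(w)$, where the coefficients of $P_{j,\zeta}$ are bounded by $C\,||\widetilde\psi_\zeta||_{C^{\beta-1}}\leq C\,||\psi_\zeta||_{C^{\beta-1}(B_1)}$ and are $C^\beta$ in $\zeta$, and $R_{j,\zeta}\in C^{\,j+\beta-1-2s+p}$ up to $\{w_n=0\}$; the remainder kernel contributes a function of the same type but strictly more regular. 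Summing over $j$ and pulling back through $\Phi$ turns $w_n^{p-2s}$ into $d^{p-2s}$ (up to a positive $C^\beta$ factor that I absorb into $\varphi_\zeta$), turns $\sum_j P_{j,\zeta}$ into $\varphi_\zeta:=\big(\sum_j P_{j,\zeta}\big)\circ\Phi$ --- a polynomial composed with the flattening map --- with $||\varphi_\zeta||_{C^\beta(B_1)}\leq C\,||\psi_\zeta||_{C^{\beta-1}(B_1)}$, since $\Phi\in C^\beta$ and composing a polynomial with a $C^\beta$ map is bounded on $C^\beta$, and with coefficients $C^\beta$ in $\zeta$. The remainder $R_\zeta$ is $C^\infty$ in $\Omega\cap B_{1/2}$, and the interior estimate of Lemma \ref{lemmaRegularity} applied to the dominant $j=0$ piece (the rest being strictly less singular) gives $|D^k R_\zeta(x)|\leq C\,d(x)^{\,\beta-1-2s+p-k}$ for $x\in B_{1/2}$, with $C$ depending only on the listed quantities.

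For the refined case $\psi_\zeta=fF_\zeta$ with $f(0)=0$: after flattening, $f\circ\Phi^{-1}$ still vanishes at $0$, so (using $\beta>1$) I would write $f\circ\Phi^{-1}(z)=z\cdot g(z)$ with $g\in C^{\beta-1}$ satisfying the same interior bounds, after which every term of the expansion is exactly of the form treated by Lemma \ref{improvedLemmaRegularity}, with its ``$\psi$'' now $g$ times $\widetilde F_\zeta$ times the smooth factors. That lemma's estimate carries the extra factor $|q|$, and since $0\in\partial\Omega$ one has $|q|\sim|x|$ whenever $x\in B_r(q)$ with $d(q)\sim r$, so the remainder bound improves to $|D^k R_\zeta(x)|\leq C\,|x|\,d(x)^{\,\beta-1-2s+p-k}$ on $B_{1/2}$, while the polynomial part is handled exactly as before. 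I expect the genuinely delicate point to be the flattening bookkeeping: checking that after the change of variables the pulled-back distance power $(z_n)_+^{p-1}$ times smooth factors, the vector $\widetilde\psi_\zeta$, and every homogeneous piece $a_j$ simultaneously satisfy --- uniformly and with the correct $C^\beta_\zeta$-dependence --- all the hypotheses of Corollary \ref{corollaryRegularity} and Lemma \ref{improvedLemmaRegularity}, and calibrating the truncation order of the kernel expansion against the $C^{2k+1}$ hypothesis on $K$; once this is in place, the two lemmas do the work and only exponent bookkeeping remains.
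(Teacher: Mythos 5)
Your overall plan (flatten, expand the kernel into increasing homogeneities, feed each piece into Lemma \ref{lemmaRegularity} / Corollary \ref{corollaryRegularity}, and into Lemma \ref{improvedLemmaRegularity} for the refined case) is the same as the paper's, and the preliminary reduction away from $\partial\Omega$ and the reduction to $f\circ\Phi^{-1}(z)=z\cdot g(z)$ via Lemma \ref{lemmaA8} in the moreover case are both correct.

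The place where the sketch is too thin, and where the paper actually does nontrivial work, is the handling of the base-point dependence of the kernel coefficients. After flattening, the expanded kernel pieces are of the form $b_i(\xi,\langle z-x\rangle)/|z-x|^{n+2s-i-1}$, where $\xi$ is the point at which $K(\phi^{-1}(\cdot)-\phi^{-1}(\xi))$ is Taylor-expanded, and ultimately one needs to evaluate on the diagonal $\xi=x$. Your phrase ``the $w$-dependence of the $a_j$ is removed by freezing $w$ at the evaluation point, the error being absorbed into the $\psi$-factors'' glosses over precisely this: after freezing, $a_j$ would still vary with the evaluation point, which is not a function on $\S^{n-1}$ as required by Lemmas \ref{lemmaRegularity} and \ref{improvedLemmaRegularity}. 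The paper instead works with the two-variable function $I_r(x,\xi)$, writes $\partial^\alpha_x|_{\xi=x}I_r(x,x)=\sum_{\gamma\le\alpha}\binom{\alpha}{\gamma}\partial^\gamma_x\partial^{\alpha-\gamma}_\xi I_r(x,\xi)|_{\xi=x}$, and only then can apply the one-point lemmas to each summand. This Leibniz splitting is what forces the case distinction $|\gamma|\gtrless i+\beta-1-2s+p$ that selects, for each pair $(\gamma,i)$, whether Lemma \ref{lemmaRegularity} or Corollary \ref{corollaryRegularity} is invoked, and also produces the separate ``annular'' integrals over $B_1\setminus B_r(q)$ and the truncation-remainder kernel $R_{|\gamma|+1}$, each of which needs its own estimate via Lemma \ref{generalisedA9}. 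None of these appear in your sketch; your single claim ``apply Corollary \ref{corollaryRegularity} to each $I_j$ and use Lemma \ref{lemmaRegularity} on the $j=0$ piece'' does not reproduce them. The refined case also has a borderline exponent $|\gamma|=i+\lfloor\beta-2s+p\rfloor$ that the paper flags as problematic and treats by an extra split $z_l=(z_l-x_l)+x_l$ before invoking Lemma \ref{generalisedA9}; your sketch omits this too. These are all fixable along exactly the lines you lay out, but they are the crux of the proof, not mere ``flattening bookkeeping'' as you suggest at the end.
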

	
	\begin{proof}
		Flattening the boundary (with $\phi$) just like in \cite[Section 2.2]{AR20}, and using the same notation as therein, we get
		$$p.v.\int_{B_1} d^{p-1}(y)K(y-x)\psi_\zeta(y)\cdot (y-x)dy = p.v.\int_{B_1} (z_n)_+^{p-1}(y)J(\phi(z)-\phi(\hat{x}))\cdot \rho_\zeta(z)dz =:I(\hat{x}).$$
		Remember that $\rho_\zeta(z)=\psi_\zeta(\phi(z))|\operatorname{det}D\phi(z)|.$ 
		Condition \eqref{conditionLikeD} is closed under multiplication of functions and composites. Since both $\phi$ and $D\phi$ satisfy it, so does $\rho_\zeta$. For simplicity we omit the hat script. We want to prove that $I(x) = P_\zeta(x)x_n^{p-2s} + \hat{R}_\zeta(x),$ where $P_\zeta$ is a polynomial and $\hat{R}_\zeta$ satisfies 
		$$|D^kR_\zeta(x)|\leq C d^{\beta-1-2s+p-k}(x).$$
		
		We proceed the same way as in the paper with Nicola. We fix a multi-index $\alpha$ of order $|\alpha|=k$, and a point $q\in \{x_n>0\}.$ We denote $r = d(q)/2$. We split $I(x) = I_1(x) + I_r(x,x)$, and proceed as in \cite[proof of Theorem 2.2]{AR20} to write
		\begin{align*}
			\partial^\alpha_x I_r(x,x) =& \sum_{\gamma\leq\alpha}\binom{\alpha}{\gamma}\sum_{i=0}^{|\gamma|} \partial^\gamma_x\partial^{\alpha-\gamma}_\xi \int_{B_r(q)}(z_n)_+^{p-1}\frac{b_i(\xi,\left\langle z-x\right\rangle )}{|z-x|^{n+2s-i-1}}\cdot \rho_\zeta(z)dz\\
			&+\sum_{\gamma\leq\alpha}\binom{\alpha}{\gamma} \partial^\gamma_x\partial^{\alpha-\gamma}_\xi \int_{B_r(q)}(z_n)_+^{p-1}\frac{R_{|\gamma|+1}(\xi,\left\langle z-x\right\rangle )}{|z-x|^{n+2s-1}}\cdot \rho_\zeta(z)dz.
		\end{align*}
		To do expansions as in \cite{AR20}, we need that the kernel is $|\alpha|+|\gamma|+1$ times differentiable, so since $\gamma\leq \alpha$, we need $2k+1$ regularity of the kernel.
		
		We estimate the error term in the same way as in \cite{AR20}, but we use  $|\rho_\zeta(z)|\leq C$:
		\begin{equation}
		\begin{split}
		\left|\partial^\gamma_x\partial^{\alpha-\gamma}_\xi \int_{B_r(q)}(z_n)_+^{p-1}\frac{R_{|\gamma|+1}(\xi,\left\langle z-x\right\rangle )}{|z-x|^{n+2s-1}}\cdot \rho_\zeta(z)dz\right|&\leq \\
		\leq Cr^{\beta-|\alpha|-2}\int_{B_r(q)}(z_n)^{p-1}|z-x|^{-n-2s+2}dz
		&\leq C r^{\beta-2s+p-1-k},
		\end{split}
		\end{equation}
		in view of Lemma \ref{generalisedA9}.
		
		When $|\gamma|>i+\beta-1-2s+p$, we write 
		$$\partial^\gamma_x \int_{B_r(q)}(z_n)_+^{p-1}\frac{\partial^{\alpha-\gamma}_\xi b_i(\xi,\left\langle z-x\right\rangle )}{|z-x|^{n+2s-i-1}}\cdot \rho_\zeta(z)dz = $$
		$$ = -\partial^\gamma_x \int_{B_1\backslash B_r(q)}(z_n)_+^{p-1}\frac{\partial^{\alpha-\gamma}_\xi b_i(\xi,\left\langle z-x\right\rangle )}{|z-x|^{n+2s-i-1}}\cdot \rho_\zeta(z)dz + \Gamma(x) +\partial^\gamma (P_1(x)x_n^{p-2s}),$$
		where we denoted 
		$$\Gamma(x) := \partial^\gamma \int_{B_1}(z_n)_+^{p-1}\frac{\partial^{\alpha-\gamma}_\xi b_i(\xi,\left\langle z-x\right\rangle )}{|z-x|^{n+2s-i-1}}\cdot \rho(z)dz-\partial^\gamma (P_1(x)x_n^{p-2s}),$$
		for suitable polynomial $P_1$ obtained in Lemma \ref{lemmaRegularity}. Note that the lemma gives that the coefficients of obtained polynomial are $C^\beta$ in $\zeta$ with bounded $C^\beta_\zeta$ norm.
		Due to condition on $|\gamma|$ above, we can apply Lemma \ref{lemmaRegularity} (we use $k'=|\gamma|-i+1$ in the assumption ot the lemma) to every summand of $\Gamma$, to say 
		$$|\Gamma(x)|\leq C||\partial^{\alpha-\gamma}_\xi b_i(\xi,\cdot)||_{C^{|\gamma|}(\mathbb{S}^{n-1})}  r^{\beta-|\gamma|+i-1-2s+p},$$
		where we already used the estimates on $\rho_\zeta$. Using this, together with the estimates on $||\partial^{\alpha-\gamma}_\xi b_i(\xi,\cdot)||_{C^{|\gamma|}(\mathbb{S}^{n-1})}$ in \cite[(26)]{AR20}, we end up with
		\begin{equation}\label{GammaTerm}
		|\Gamma(x)|\leq C r^{\beta-|\gamma|+i-1-2s+p} \left\lbrace\begin{array}{l l}
		1&\text{if }|\alpha|-|\gamma|+i+1<\beta,\\
		r^{\beta-i-1-|\alpha|+|\gamma|} & \text{otherwise,}
		\end{array}\right\rbrace
		\leq Cr^{\beta-1-2s+p-k},
		\end{equation}
		since $|\gamma|\leq k$ and $i\geq0$, and $\beta>1$. We postpone the analysis of the "annular" integral.
		
		When $|\gamma|<i+\beta -1 -2s+p$ write
		$$\int_{B_r(q)}(z_n)_+^{p-1}\frac{b_i(\xi,\left\langle z-x\right\rangle )}{|z-x|^{n+2s-i-1}}\cdot \rho_\zeta(z)dz = $$
		$$ = \int_{B_1}(z_n)_+^{p-1}\frac{b_i(\xi,\left\langle z-x\right\rangle )}{|z-x|^{n+2s-i-1}}\cdot \rho_\zeta(z)dz-P_2(x)x_n^{p-2s}+ $$ 
		$$+P_2(x)x_n^{p-2s} - \int_{B_1\backslash B_r(q)}(z_n)_+^{p-1}\frac{b_i(\xi,\left\langle z-x\right\rangle )}{|z-x|^{n+2s-i-1}}\cdot \rho_\zeta(z)dz,$$ 
		for some suitable polynomial $P_2$ obtained in Lemma \ref{lemmaRegularity}. 
		Notice, that the first term (the integral minus the polynomial) is smoother in $B_{1/2}$ than the derivation order, due to Corollary \ref{corollaryRegularity}, so it brings a bounded term.
		
		Let us now deal with the "annular" integrals. In the case $i>\beta-|\alpha|+|\gamma|-1$ we do the same as in \cite{AR20}, to get
		\begin{align*}
		\begin{split}
		&\left|\partial^\gamma_x\partial^{\alpha-\gamma}_\xi \int_{B_1\backslash B_r(q)}(z_n)_+^{p-1}\frac{b_i(\xi,\left\langle z-x\right\rangle )}{|z-x|^{n+2s-i-1}}\cdot \rho_\zeta(z)dz\right|\leq\\
		&\leq Cr^{\beta-i-1-|\alpha|+|\gamma|}\int_{B_1\backslash B_r(q)}(z_n)_+^{p-1}|z-x|^{-n-2s+i+1-|\gamma|}|\rho_\zeta(z)|dz\\
		&\leq Cr^{\beta-2s+p-1-k},
		\end{split}
		\end{align*}
		in view of Lemma \ref{generalisedA9} (note that $i\leq |\gamma|$).
		
		So we are left with 
		$$\partial^\alpha_x|_pI_r(x,x)- \partial^\alpha (P_1(x)+P_2(x))x_n^{p-2s} = $$
		$$= \sum_{\gamma\leq\alpha}\binom{\alpha}{\gamma}\sum_{i=0}^{\lfloor\beta\rfloor -|\alpha|+|\gamma|-1}\partial^\gamma_x\partial^{\alpha-\gamma}_\xi \int_{B_1\backslash B_r(q)}(z_n)_+^{p-1}\frac{b_i(\xi,\left\langle z-x\right\rangle )}{|z-x|^{n+2s-i-1}}\cdot \rho_\zeta(z)dz + \Theta_\alpha(r,q),$$
		where $|\Theta_\alpha(r,q)|\leq C r^{\beta-1-2s+p-k}.$ 
		
		Finally, the same steps as in \cite[Estimate (27)]{AR20} give
		\begin{align*}
		\begin{split}
		\left|\partial^\alpha I_1 (q) -  \sum_{\gamma\leq\alpha}\binom{\alpha}{\gamma}\sum_{i=0}^{\lfloor\beta\rfloor -|\alpha|+|\gamma|-1}\partial^\gamma_x|_q\partial^{\alpha-\gamma}_\xi|_q \int_{B_1\backslash B_r(q)}(z_n)_+^{p-1}\frac{b_i(\xi,\left\langle z-x\right\rangle )}{|z-x|^{n+2s-i-1}}\cdot \rho_\zeta(z)dz\right|\leq\\
		\leq C r^{\beta-|\alpha|}\int_{B_1\backslash B_r(q)}(z_n)_+^{p-1}|z-q|^{-n-2s}|\rho_\zeta(z)|dz\leq Cr^{\beta-1-2s+p-k},
		\end{split}
		\end{align*}
		where we estimated $|\rho_\zeta(z)|\leq C$ and applied Lemma \ref{generalisedA9}.
		
		Therefore we got that $L_\psi(\phi(x)) = P(x)x_n^{p-2s} + \hat{R}(x),$ with $\hat{R}$ satisfying $|D^k\hat{R}|\leq C d^{\beta-1-2s+p-k}$ for $k>\beta-1-2s+p$. But since $\phi^{-1}$ is a diffeomorphism satisfying $|D^j(\phi^{-1})|\leq C d^{\beta-j},$ when $j>\beta$, we have that 
		$$L_\psi(x) = P(\phi^{-1}(x))d^{p-2s}(x) + \hat{R}(\phi^{-1}(x)) = \varphi_\zeta(x)d^{p-2s}(x) + R_\zeta(x),$$
		where $|D^kR_\zeta|\leq C d^{\beta-1-2s+p-k},$ which we get by explicit calculation of the composite:
		$$\partial^\alpha (R\circ\phi) = \sum_{j=1}^{|\alpha|}c_j D^jR (\partial^{\alpha_1}\phi,\ldots,\partial^{\alpha_j}\phi),\quad\quad\text{where }|\alpha_i|\geq 1, \text{ and }\alpha_1+\ldots+\alpha_j = \alpha.$$
		The expression $D^jR ( v_1,\ldots,v_j)$ means evaluation of $j$-linear form on $j$ vectors.
		
		Let us now turn to the "moreover" case; when $\psi = fF_\zeta$ with $f\in C^{\beta}(B_1)$ vanishing at $0$. Then $\rho_\zeta$ is also of such form, and since $\phi$ also satisfies regularity condition \eqref{conditionLikeD}, we have that $\rho_\zeta(z) = g(z)G_\zeta(z)$, where $g$ vanishes at $0$, and $g$ and $G_\zeta$ satisfy the same conditions on growth of the derivatives as $f$ and $F_\zeta$. Next, we apply Lemma \ref{lemmaA8} to $g$, to write $g(z) = z\cdot h(z)$ for a $C^{\beta-1} (B_1,\R^n)$ vector function $h$, which by regularity condition of $g$ satisfies \eqref{conditionLikeD}. We proceed with the estimation. Note that now, we have additional $z$ in the function $\rho_\zeta$, which improves all estimates with $|q|$. We do the same steps as above with some minor differences. 
		
		In the error term, we change the estimate $|\rho_\zeta(z)|\leq C$ to $|\rho_\zeta(z)|\leq C|z|,$  which leads to the estimate
		$$Cr^{\beta-|\alpha|-2}\int_{B_r(q)}(z_n)^{p-1}|z-x|^{-n-2s+2}|z|dz \leq C|q| r^{\beta-1-2s+p-k},$$ in view of Lemma \ref{generalisedA9}.
		
		Then we want to use Lemma \ref{improvedLemmaRegularity}  instead of Lemma \ref{lemmaRegularity}, which leads to splitting cases on $|\gamma|>i+\beta-2s+p$ and $|\gamma|<i+\beta-1-2s+p$. In the estimate of $|\Gamma|$, we get additional $|q|$ from application of Lemma \ref{improvedLemmaRegularity}. 
		Note that the case $|\gamma|=i+\lfloor \beta-2s+p\rfloor $ is problematic (note that this is "never" also $i=|\gamma|$, since $\beta > 1+2s-p$ in practice). In this case, we act as usual. First we split the integral on the ball $B_1$ minus on the "annular" region (we deal with this one later). Now on $B_1$, we use the fact, that $\rho_\zeta(z) = z\cdot h(z)G_\zeta(z),$ write it in components, to end up with the integrals of the form
		$$\int_{B_1}(z_n)_+^{p-1}\frac{\tilde{b}_i(\xi,\left\langle z-x\right\rangle )}{|z-x|^{n+2s-i-1}}\tilde{\rho}_\zeta(z)z_ldz, $$
		where $\tilde{\rho}_\zeta$ still satisfies condition \eqref{conditionLikeD}.
		We proceed with splitting $z_l = (z_l-x_l)+x_l$, and treat the two cases separately. The one with $z_l-x_l$ decreases the power of the pole, which is the same as increasing $i$ by one, which we already treated. We differentiate the term with $x_l$. When we derive $x_l$, the derivative on the integral term decreases by one, and the integral is smoother than the derivation, so it gives a bounded term.	The last term which remains is 
		\begin{equation}\label{problematicChild}
		x_l\partial^\gamma \int_{B_1}(z_n)_+^{p-1}\frac{\tilde{b}_i(\xi,\left\langle z-x\right\rangle )}{|z-x|^{n+2s-i-1}}\tilde{\rho}_\zeta(z)dz,
		\end{equation}
		which we treat just like the one that lead to \eqref{GammaTerm}. Note that this time we have additional $x_l$ in the front, which improves the estimate with $|q|$.
		
		The "annular" integrals are treated similarly. We split the cases in the same way as before. When $i>\beta-|\alpha|+|\gamma| -1 $ we now estimate $|\rho_\zeta(z)|\leq C|z|$ and apply Lemma \ref{generalisedA9} to get the improved estimate with one power of $r$. Notice that we can not use that lemma only in the case $i=|\gamma|$. 
		Then we have to return to the integral on the region $B_r(q)$. We expand 
		$$b_i(\xi,\left\langle z-x\right\rangle )\cdot\rho_\zeta(z)= \sum_j \sum_l b_i^j(\xi,\left\langle z-x\right\rangle )G^j(z)h^l(z)z_l =$$ $$= \sum_j \sum_l b_i^j(\xi,\left\langle z-x\right\rangle )G^j(z)h^l(z)(x_l + (z_l-x_l)),$$
		and plug it into the integral
		$$\int_{B_r(q)}(z_n)_+^{p-1}\frac{b_i(\xi,\left\langle z-x\right\rangle )}{|z-x|^{n+2s-i-1}}\cdot \rho_\zeta(z)dz. $$
		We treat the cases as follows: when the pole, obtained after the estimate \cite[(26)]{AR20}, is integrable  we directly apply Lemma \ref{generalisedA9} to the integral in the region $B_r(q)$  and obtain additional power or $r$. When the pole is not integrable, we first make the transformation to $B_1\backslash B_r(q)$ and then apply Lemma \ref{generalisedA9}. We get additional $x_l$ in front, which brings $|q|$ to the result.
	\end{proof}

	With one last step we obtain a more concrete statement. Notice, that it is stated in a broader setting than Theorem \ref{mainResultSection2}, due to its applications in other sections. 
	
	\begin{corollary}\label{aboveCorollary}
		Let $p\in(0,2s)$ and $\beta > 1+2s-p$. Let $\partial\Omega\cap B_1$ be $C^\beta$, and let $K$ be a $C^{2k+1}(\mathbb{S}^{n-1})$ kernel satisfying \eqref{kernelConditions}, for some integer $k>\beta-1-2s+p$. Let for every $\zeta\in\partial\Omega$ the function $\eta_\zeta\in C^{\beta}(\overline{\Omega}\cap B_1)\cap C^\infty(\Omega\cap B_1)$ satisfy 
		$$|D^j\eta_\zeta(x)|\leq C_j d^{\beta-j},\quad\quad j>\beta\quad \text{and}$$
		$$ \left|\left|D^j\eta_\zeta\right|\right|_{C^\beta_\zeta}\leq C_0,\quad j\geq 0.$$
		
		Then, $L(\eta_\zeta d^p) = \varphi_\zeta d^{p-2s} + R_\zeta$, where $\varphi_\zeta$ is composition of the boundary flattening map with a polynomial whose coefficients'  $C^\beta_\zeta$ norm is bounded with a constant depending on $C_0$. We have $\varphi_\zeta\in C^{\beta}(\overline{\Omega}\cap B_{1/2})$ with $||\varphi_\zeta||_{C^{\beta}(\overline{\Omega}\cap B_{1/2})}\leq ||\eta_\zeta||_{C^\beta(B_1)}$, and $R_\zeta\in C^{\beta-1-2s+p}(\overline{\Omega}\cap B_{1/2})$, with 
		$$|D^kR_\zeta(x)|\leq C d^{\beta-1-2s+p-k}(x),\quad\quad \text{in }\overline{\Omega}\cap B_{1/2}.$$
		
		Moreover, if at some boundary point $z$ we have $\eta_\zeta(z)=0$, the estimate on $R_\zeta$ improves to
		$$|D^kR_\zeta(x)|\leq C|x-z| d^{\beta-1-2s+p-k}(x),\quad\quad \text{in }\overline{\Omega}\cap B_{1/2}.$$
		The constant $C$  depends only on $k$, $s$, $||K||_{C^{2k+1}(\S^{n-1})}$, $\beta$ and $||\partial\Omega||_{C^\beta}.$
	\end{corollary}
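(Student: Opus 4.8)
The plan is to reduce Corollary \ref{aboveCorollary} to Theorem \ref{mainResultSection2} by an integration by parts that turns $L(\eta_\zeta d^p)$ into an operator of the form $L_\psi(d^p)$, and then to convert the derivative bound on the remainder supplied by that theorem into H\"older regularity up to $\partial\Omega$. Since the assertion is local in $\overline{\Omega}\cap B_{1/2}$, it suffices to prove it in a small ball around each $z_0\in\partial\Omega\cap B_{1/2}$ (translating $z_0$ to the origin and rescaling so as to meet the hypotheses of Theorem \ref{mainResultSection2}), together with the easy interior bound on balls $B_r(x_0)$ with $B_{2r}(x_0)\subset\Omega$ and $d(x_0)\sim r$, where $\eta_\zeta d^p\in C^\infty$ with $|D^j(\eta_\zeta d^p)|\le C r^{p-j}$ so that $L(\eta_\zeta d^p)$ and its derivatives obey the expected powers of $d$ by a direct estimate; after multiplying $\eta_\zeta d^p$ by a cut-off $\chi\in C^\infty_c(B_1)$ with $\chi\equiv 1$ on $B_{3/4}$, the contribution of $\R^n\setminus B_1$ is smooth in $B_{1/2}$.

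The key identity is the following. As $K$ is even and homogeneous of degree $-n-2s$, writing $u(x)-u(y)=-\int_0^1\nabla u\bigl(x+t(y-x)\bigr)\cdot(y-x)\,dt$, interchanging integrations and rescaling $z=x+t(y-x)$ gives, for $u=\chi\,\eta_\zeta d^p$ extended by $0$ off $\Omega$,
\[
L(\eta_\zeta d^p)(x)=-\tfrac{1}{2s}\,\mathrm{p.v.}\!\int_{\R^n}\nabla(\eta_\zeta d^p)(y)\cdot(y-x)\,K(y-x)\,dy+(\text{smooth in }B_{1/2}),
\]
the principal value converging because $\int_{B_R(x)\setminus B_\rho(x)}(y-x)K(y-x)\,dy=0$ for all $0<\rho<R$. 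Since $\nabla(\eta_\zeta d^p)=d^{p-1}\bigl(d\,\nabla\eta_\zeta+p\,\eta_\zeta\nabla d\bigr)$ and $d^{p-1}$ is supported in $\Omega$, discarding the tail $\R^n\setminus B_1$ (again smooth in $B_{1/2}$) we obtain, in the notation of Theorem \ref{mainResultSection2} and with the same kernel $K$,
\[
L(\eta_\zeta d^p)(x)=-\tfrac{1}{2s}\,L_{\psi_\zeta}(d^p)(x)+(\text{smooth in }B_{1/2}),\qquad \psi_\zeta:=d\,\nabla\eta_\zeta+p\,\eta_\zeta\nabla d .
\]

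It remains to verify the hypotheses of Theorem \ref{mainResultSection2} for $\psi_\zeta$. One has $\psi_\zeta\in C^{\beta-1}(\overline{\Omega}\cap B_1,\R^n)\cap C^\infty(\Omega\cap B_1,\R^n)$ as a sum of products of a $C^\beta$ and a $C^{\beta-1}$ function, with $\|D^j\psi_\zeta\|_{C^\beta_\zeta}\le C C_0$ for every $j$ because $d$ does not depend on $\zeta$; and on balls $B_r(x_0)$ with $B_{2r}(x_0)\subset\Omega$, Leibniz's rule together with Definition \ref{definitionOfGeneralDistance} and the growth assumption on $\eta_\zeta$ gives $|D^j(d\,\nabla\eta_\zeta)|\le Cd^{\beta-j}\le Cd^{\beta-1-j}$ and $|D^j(\eta_\zeta\nabla d)|\le Cd^{\beta-1-j}$ for $j>\beta-1$, which is \eqref{conditionLikeD} with the same $\beta$. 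Theorem \ref{mainResultSection2}, applied with the integer $k$ of the hypotheses (whose requirement $K\in C^{2k+1}(\S^{n-1})$ is exactly the one assumed here), then yields $L_{\psi_\zeta}(d^p)=\varphi_\zeta d^{p-2s}+R_\zeta$ with $\varphi_\zeta$ a flattening map composed with a polynomial of the stated type, $\|\varphi_\zeta\|_{C^\beta}\le C\|\psi_\zeta\|_{C^{\beta-1}}\le C\|\eta_\zeta\|_{C^\beta(B_1)}$, and $|D^kR_\zeta(x)|\le Cd^{\beta-1-2s+p-k}(x)$; arguing as at the end of Corollary \ref{corollaryRegularity} (using the interior smoothness of $R_\zeta$) upgrades the last bound to $R_\zeta\in C^{\beta-1-2s+p}(\overline{\Omega}\cap B_{1/2})$, and patching with the interior and tail contributions finishes the main statement. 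For the ``moreover'' part, if $\eta_\zeta(z)=0$ with $z\in\partial\Omega$ then also $d(z)=0$, so each summand of $\psi_\zeta$ is of the form (scalar vanishing at $z$) $\times$ (vector satisfying \eqref{conditionLikeD}) --- with $f=d$, $F_\zeta=\nabla\eta_\zeta$ in the first and $f=\eta_\zeta$, $F_\zeta=p\,\nabla d$ in the second, where $f$ meets the growth condition of the ``moreover'' clause by Definition \ref{definitionOfGeneralDistance} resp.\ by hypothesis on $\eta_\zeta$ --- so by linearity of $L_\psi$ in $\psi$, applying that clause to each summand (after translating $z$ to the origin) and adding gives $|D^kR_\zeta(x)|\le C|x-z|\,d^{\beta-1-2s+p-k}(x)$.

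The delicate point is justifying the integration-by-parts identity: $\eta_\zeta d^p$ is only $C^{\min(\beta,p)}$ near $\partial\Omega$ and $\nabla(\eta_\zeta d^p)$ blows up like $d^{p-1}$ when $p<1$, so one must control the interchange of $\int_0^1 dt$ with the principal value and the ensuing near-diagonal and near-boundary singularities (all integrable because $p>0$ and $2s<2$). The remaining work is bookkeeping: passing from $\eta_\zeta$ to $\psi_\zeta$ costs exactly one order of differentiability in $x$, which is precisely absorbed by the ``$-1$'' in the exponents of \eqref{conditionLikeD}, while the $C^\beta_\zeta$ control is untouched since $d$ is independent of $\zeta$.
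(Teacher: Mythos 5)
Your proposal is correct and follows essentially the same route as the paper: the same integration-by-parts identity (imported in the paper from \cite[Proof of Corollary 2.3]{AR20}) rewriting $L(\eta_\zeta d^p)$ as $L_{\psi_\zeta}(d^p)$ with $\psi_\zeta$ a combination of $\eta_\zeta\nabla d$ and $d\,\nabla\eta_\zeta$, the same verification of \eqref{conditionLikeD} for $\psi_\zeta$, splitting off the tail outside $B_1$, and then applying Theorem \ref{mainResultSection2}, with the ``moreover'' case handled exactly as you do, by noting each summand of $\psi_\zeta$ is a scalar ($d$ or $\eta_\zeta$) vanishing at $z$ times a vector field satisfying \eqref{conditionLikeD}. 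Your extra details (deriving the identity, the cut-off, the explicit linearity argument for the moreover part) only flesh out what the paper leaves implicit.
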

	
	\begin{proof}
		We rewrite $L(\eta_\zeta d^p)$ in the same way as in \cite[Proof of Corollary 2.3]{AR20}:
		\begin{align*}
		L(\eta_\zeta d^p)(x) =& -\frac{1}{2s}p.v.\int_{\R^n} \eta_\zeta(y)\nabla (d^p)(y) \cdot (y-x) K(y-x)dy\\
		& - \frac{1}{2s}p.v.\int_{\R^n}d^p(y)\nabla \eta_\zeta (y)\cdot (y-x)K(y-x)dy \\
		=& p.v.\int_{\R^n} d^{p-1}(y)K(y-x)\psi_\zeta(y)\cdot (y-x)dy,
		\end{align*}
		where we denoted 
		$$\psi_\zeta= -\frac{p}{2s}\eta_\zeta\nabla d - \frac{1}{2s}d\nabla\eta_\zeta.$$
		By assumption on $\eta_\zeta,$ the vector function $\psi_\zeta\in C^{\beta-1}(\overline{\Omega}\cap B_1)$, smooth in the interior, and satisfies 
		$|D^j\psi(x)|\leq C_j d^{\beta-1}(x),$ for $j>\beta-1$, as well as $\beta$ regularity of all the derivatives with respect to $\zeta.$
		
		Therefore splitting the integral above in the regions $B_1$ and $\R^n\backslash B_1$, we deduce the result from the above theorem.
		
		Note that the moreover case in the theorem gives the moreover part of the corollary, since both $d$ and $\eta_\zeta$ are $C^\beta$ and vanish at $z$.
	\end{proof}
	
	With not much work, we now prove Theorem \ref{goalOfSection2}.
	
	\begin{proof}[Proof of Theorem \ref{goalOfSection2}] 
		It is a special case of the above corollary.
	\end{proof}

%	\begin{corollary}
%		When all the above assumptions hold, but  $1<\beta<1+2s-p$, we still get the estimate
%		$$|R(x)|\leq C d^{\beta-1-2s+p}(x),\quad\quad x\in \overline{\Omega}\cap B_{1/2},$$
%		or 
%		$$|R(x)|\leq C |x-z|d^{\beta-1-2s+p}(x),\quad\quad x\in \overline{\Omega}\cap B_{1/2},$$
%		when $\eta(z)=0$.
%	\end{corollary}

	To complete the result, we show how to generalise the statement to the powers of distance greater than $2s$. Thanks to the clear representation of the function $\varphi$ and the improvement of the estimate in the "moreover" cases above, we are able to deduce the corollary below. In short words, we treat the surplus $d^{\lfloor p\rfloor}$ as a part of the function $\eta$. That is the reason, why we allow the function $\eta$ to be less smooth than $C^\infty$. Note that the condition $\langle p \rangle<2s$ does not play any role in the set-up $s>1/2$.
	
	\begin{corollary}\label{surjectivityLemma}
		Let $\partial\Omega\cap B_1$ be $C^\beta$, $\zeta\in\partial\Omega\cap B_1$, $\eta$ be a polynomial, 
		and $p\in\R$ with $p> 2s$,  $p-2s\not\in\N$, 
		$p\leq \lfloor \beta\rfloor+ 2s$. Additionally, if $s\leq 1/2$, then we need $p\not\in\N$ and $\left\langle p\right\rangle <2s$. Assume $K\in C^{2\beta+1}(\mathbb{S}^{n-1})$ is the kernel of $L$ satisfying \eqref{kernelConditions}.
		
		Then
		$$L(\eta(\cdot - \zeta) d^p)(x) = \varphi_\zeta(x) d^{p-2s}(x) + R_\zeta(x),$$
		where $\varphi_\zeta\in C^\beta(\overline{\Omega})$  whose all derivatives  in variable $\zeta$ are of bounded $C^\beta_\zeta$ norm. Furthermore, we have
		$||\varphi_\zeta||_{C^\beta(\overline{\Omega}\cap B_{1/2})}\leq C ||\eta(\cdot-\zeta)||_{C^\beta(\overline{\Omega})}$, and $R_\zeta\in C^{\beta-1+\left\langle p-2s \right\rangle}(\overline{\Omega}\cap B_{1/2})$.
	\end{corollary}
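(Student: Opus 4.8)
The plan is to reduce to Corollary \ref{aboveCorollary} by absorbing an integer power of $d$ into the coefficient — this is the ``treat the surplus of $d^p$ as part of $\eta$'' idea. First I would set $m:=\lfloor p-2s\rfloor+1$ and $p_0:=p-m$, so that $m\geq 1$ (because $p>2s$) and, using $p-2s\notin\N$, $p_0-2s=\langle p-2s\rangle-1\in(-1,0)$; hence $p_0\in(2s-1,2s)\subset(0,2s)$ in the regime $s>\tfrac12$, while if $s\leq\tfrac12$ one checks that $m=\lfloor p\rfloor$ and $p_0=\langle p\rangle\in(0,2s)$ by the additional hypothesis $\langle p\rangle<2s$. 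Note that $\langle p_0-2s\rangle=\langle p-2s\rangle$, that $\eta(\cdot-\zeta)\,d^p=\bigl(\eta(\cdot-\zeta)\,d^{m}\bigr)d^{p_0}$, and that the standing hypotheses on $\beta,p$ and the kernel regularity $K\in C^{2\beta+1}(\S^{n-1})$ place us within the range of Corollary \ref{aboveCorollary} with exponent $p_0$ (one may use the integer $k=\lfloor\beta\rfloor$ there, since $\lfloor\beta\rfloor>\beta-1+(p_0-2s)$ and $2\lfloor\beta\rfloor+1\le 2\beta+1$).

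Next I would verify that $\eta_\zeta:=\eta(\cdot-\zeta)\,d^{m}$ satisfies the hypotheses of Corollary \ref{aboveCorollary}: being a polynomial times an integer power of $d$, it lies in $C^{\beta}(\overline{\Omega}\cap B_1)\cap C^\infty(\Omega\cap B_1)$; the bound $|D^j\eta_\zeta|\leq C_j d^{\beta-j}$ for $j>\beta$ follows from Definition \ref{definitionOfGeneralDistance}, the Leibniz rule and $m\geq 1$; and its derivatives are $C^{\beta}$ in $\zeta$ with uniformly bounded $C^\beta_\zeta$ norm because $\eta$ is a fixed polynomial. The key extra feature is that $\eta_\zeta$ vanishes at \emph{every} point of $\partial\Omega$ — indeed $\eta_\zeta=d^{m}\cdot(\text{polynomial in }\cdot-\zeta)$. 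Applying Corollary \ref{aboveCorollary} with exponent $p_0$ and using $\eta_\zeta d^{p_0}=\eta(\cdot-\zeta)d^p$ would give
$$L\bigl(\eta(\cdot-\zeta)d^p\bigr)=\varphi^0_\zeta\,d^{\,p_0-2s}+R^0_\zeta,$$
with $\varphi^0_\zeta$ a composition of the boundary flattening map with a polynomial whose coefficients have bounded $C^\beta_\zeta$ norm, $\varphi^0_\zeta\in C^{\beta}(\overline{\Omega}\cap B_{1/2})$ and $\|\varphi^0_\zeta\|_{C^{\beta}}\leq C\|\eta(\cdot-\zeta)\|_{C^{\beta}(\overline\Omega)}$, and — by the \emph{moreover} part, available with every $z\in\partial\Omega$ since $\eta_\zeta(z)=0$ — $|D^kR^0_\zeta(x)|\leq C|x-z|\,d^{\,\beta-1-2s+p_0-k}(x)$.

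Two things then remain. For the remainder I would take $z=z(x)$ to be the nearest boundary point, so $|x-z(x)|=\operatorname{dist}(x,\partial\Omega)\leq C d(x)$, and obtain $|D^kR^0_\zeta(x)|\leq C d^{\,\beta-2s+p_0-k}(x)$; since $\beta-2s+p_0=\beta-1+\langle p_0-2s\rangle=\beta-1+\langle p-2s\rangle$, this weighted derivative bound together with the boundedness of the lower order derivatives of $R^0_\zeta$ upgrades it to $R^0_\zeta\in C^{\beta-1+\langle p-2s\rangle}(\overline{\Omega}\cap B_{1/2})$, and one sets $R_\zeta:=R^0_\zeta$. For the main term I would invoke the explicit representation of $\varphi^0_\zeta$ from Theorem \ref{mainResultSection2}: it is $P\circ\phi^{-1}$ for a polynomial $P$ assembled from the Taylor coefficients, at boundary points, of the density $\rho_\zeta$ entering that theorem. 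Because $\eta_\zeta$, hence $\rho_\zeta$, vanishes to order $m$ on the boundary (after flattening, $\rho_\zeta=(z_n)^m\tilde\rho_\zeta$), each Taylor polynomial of $\rho_\zeta$ at a boundary point is divisible by $z_n^{m}$, and therefore $P=x_n^{m}\widehat P$ for a polynomial $\widehat P$ of the same quality; as the flattening $\phi$ is built so that $x_n\circ\phi^{-1}$ coincides with $d$ up to a smooth non-vanishing factor, this yields $\varphi^0_\zeta=d^{m}\varphi_\zeta$ with $\varphi_\zeta$ again a composition of the flattening with a polynomial, $\varphi_\zeta\in C^{\beta}(\overline{\Omega}\cap B_{1/2})$, coefficients $C^\beta$ in $\zeta$, and $\|\varphi_\zeta\|_{C^{\beta}}\leq C\|\eta(\cdot-\zeta)\|_{C^{\beta}(\overline\Omega)}$. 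Then $\varphi^0_\zeta\,d^{\,p_0-2s}=\varphi_\zeta\,d^{\,m+p_0-2s}=\varphi_\zeta\,d^{\,p-2s}$, which is the claim.

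The verification of the hypotheses of Corollary \ref{aboveCorollary} and the Hölder upgrade of $R^0_\zeta$ from a weighted derivative estimate are routine. The step I expect to need the most care is the factorization $\varphi^0_\zeta=d^{m}\varphi_\zeta$ with $\varphi_\zeta$ still of class $C^{\beta}$ and with the correct $\zeta$-dependence and norm bound: this does not follow from $\varphi^0_\zeta\in C^{\beta}$ alone, but only from the explicit polynomial form of $\varphi^0_\zeta$ in Theorem \ref{mainResultSection2} (the ``clear representation of the function $\varphi$'' mentioned before the statement) combined with the order-$m$ vanishing of $\rho_\zeta$ on $\partial\Omega$, and a little bookkeeping is needed to see that dividing out $d^m$ preserves $\|\varphi_\zeta\|_{C^{\beta}}\leq C\|\eta(\cdot-\zeta)\|_{C^{\beta}}$.
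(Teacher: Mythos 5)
Your reduction to Corollary~\ref{aboveCorollary} is the same as the paper's (the paper takes $m=\lfloor p\rfloor$, $p_0=\langle p\rangle$; your $m=\lfloor p-2s\rfloor+1$ differs for some $p$ when $s>\tfrac12$ but is equally valid, and the two coincide under the extra hypotheses when $s\le\tfrac12$). Your verification of the hypotheses of Corollary~\ref{aboveCorollary}, the observation that the ``moreover'' case applies at \emph{every} boundary point because $\eta_\zeta$ is a multiple of $d^m$ with $m\ge 1$, and the upgrade of $R^0_\zeta$ to $C^{\beta-1+\langle p-2s\rangle}$ by choosing $z$ as a nearest boundary point also match the paper.

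The gap is in the last step, the factorization $\varphi^0_\zeta=d^{m}\varphi_\zeta$. You propose to read it off from the polynomial structure of $\varphi^0_\zeta$ supplied by Theorem~\ref{mainResultSection2}: since after flattening $\rho_\zeta=z_n^m\tilde\rho_\zeta$, you argue that the Taylor polynomials of $\rho_\zeta$ are divisible by $z_n^m$ and hence the polynomial $P$ must be divisible by $x_n^m$. This does not follow. In Lemma~\ref{lemmaRegularity} the polynomial is built from the quantities $\Psi^{\beta-1}_\alpha(x_0,x)=\sum_{\gamma\geq\alpha}\binom{\gamma}{\alpha}\partial^\gamma\psi_\zeta(x_0)(x-x_0)^{\gamma-\alpha}$ with $x_0$ a fixed \emph{interior} point, multiplied by the polynomials coming from the constants $c_p$ of Lemma~\ref{lostCalculation}, and then summed over $\alpha$ and pushed through the flattening map. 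The derivatives $\partial^\gamma\psi_\zeta(x_0)$ of a function of the form $z_n^m\tilde\rho_\zeta$ at an interior point carry factors $(x_0)_n^{m-j}$ for $0\le j\le m$ and do not vanish, and the resulting polynomial $P$ is a nontrivial (lower-triangular, see Theorem~\ref{surjectivityResult}) image of these data — it is not a Taylor polynomial of $\rho_\zeta$ at a boundary point. So the divisibility of $P$ by $x_n^m$ is not a ``little bookkeeping'' consequence of the vanishing of $\rho_\zeta$; tracking it through the cascade of integrals, anti-differentiations, Leibniz expansions, and the boundary-flattening composition would amount to a substantial separate argument.

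The paper instead proves that $\tilde\varphi_\zeta$ vanishes to order $\lfloor p\rfloor$ at every boundary point by a blow-up/Liouville argument: one sets $u_r(x)=r^{-q}\eta(rx-\zeta)d^p(rx)$ with $q<p$, notes $u_r\to 0$ in $L^\infty_{\mathrm{loc}}$, rescales the identity $Lu_r=\tilde\varphi_\zeta(rx)r^{2s-q}d^{\langle p\rangle-2s}(rx)+r^{-q}R(rx)$, subtracts suitable Taylor polynomials, and invokes \cite[Lemma~3.5]{AR20} to get $0\stackrel{k}{=}T_0^{\lfloor q-\langle p\rangle\rfloor}(\tilde\varphi_\zeta)(x\cdot\nu)_+^{\langle p\rangle-2s}$ in a half-space, which forces the relevant Taylor coefficients of $\tilde\varphi_\zeta$ to vanish. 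Iterating with $q$ up to (just below) $p$ gives the vanishing to order $\lfloor p\rfloor$, and then — because $\tilde\varphi_\zeta$ \emph{is} a polynomial composed with the flattening map — one may divide by $d^{\lfloor p\rfloor}$ and keep the $C^\beta$ regularity and the $\zeta$-dependence. This equation-driven argument is what your proposal is missing; without it, the last (and, as you correctly flag, hardest) step is unjustified.
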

	\begin{proof}
		%We split the cases as follows. When $2s>\left\langle p\right\rangle > 2s-1$, w
		We apply the above corollary on $\tilde{\eta}_\zeta = \eta(\cdot-\zeta) d^{\lfloor p\rfloor }.$ We get 
		$L(\eta(\cdot-\zeta) d^p) = L(\tilde{\eta}_\zeta d^{\left\langle p\right\rangle })= \tilde{\varphi}_\zeta d^{\left\langle p\right\rangle -2s} + R_\zeta.$ Note that $\lfloor p\rfloor\geq 1$, and so $\tilde{\eta}_\zeta$ vanishes at every boundary point, so we can use the moreover case of the above corollary everywhere. Let us also stress that $\tilde{\eta}_\zeta$ satisfies the condition \eqref{conditionLikeD}, since $\eta$ is a polynomial, $\zeta$ varies over the $C^\beta$ boundary, and $d$ satisfies the same condition. Hence $\tilde{\varphi}_\zeta$ is a composition of a polynomial whose coefficients are $C^\beta_\zeta$, with a boundary flattening map. 
		We now argue that $R_\zeta\in C^{\beta-2s+\left\langle p \right\rangle}$. We choose $|x_0-z|=2r$, $x,y\in B_r(x_0)$ and $|\gamma| = \lfloor \beta-2s+\left\langle p \right\rangle\rfloor $. Compute 
		\begin{align*}
			|\partial^\gamma R_\zeta(x)-\partial^\gamma R_\zeta(y)| &\leq  ||D^{|\gamma|+1}R_\zeta||_{L^\infty(B_r(x_0))}|x-y| \\
			&\leq C |x-z|r^{ \beta-2s+\left\langle p \right\rangle-1-|\gamma|-1} r^{1-\left\langle \beta-2s+\left\langle p \right\rangle\right\rangle}|x-y|^{\left\langle \beta-2s+\left\langle p \right\rangle\right\rangle}\\
			&\leq C |x-y|^{\left\langle \beta-2s+\left\langle p \right\rangle\right\rangle},
		\end{align*}
		where we used the moreover case of Corollary \ref{aboveCorollary}.
		
%		In the case $\left\langle p\right\rangle < 2s-1,$ we need to apply the corollary on $\tilde{\eta}_\zeta = \eta(\cdot-\zeta) d^{\lfloor p\rfloor-1 }.$ We get 
%		$L(\eta(\cdot-\zeta) d^p) = L(\tilde{\eta}_\zeta d^{\left\langle p\right\rangle+1 })= \tilde{\varphi}_\zeta d^{\left\langle p\right\rangle+1 -2s} + R_\zeta.$
%		Since $p>2s$, the condition $\left\langle p\right\rangle < 2s-1$ implies $p\geq 2$, so we can use the moreover case again, to get
%		$R_\zeta\in C^{\beta-(2s-1)+\left\langle p \right\rangle}(\overline{\Omega}\cap B_{1/2}).$ (The case $p\in\N$, works only when $p>1/2$. Then we split $p=(p-1) + 1$, and do as in the first case above.)
		
		We can write both regularities as $R_\zeta\in C^{\beta-1+\left\langle p-2s \right\rangle}(\overline{\Omega}\cap B_{1/2}).$
		
		To extract the correct power of $d$ on the right-side, proceed as follows. For simplicity we work with $0$ as the boundary point. We use \cite[Lemma 3.5]{AR20} to do blow-ups on every compact ball inside $\Omega$. 
		%Let us treat only the case when $\left\langle p\right\rangle >2s-1$. Other cases are done in a similar manner. 
		We start with defining $u_r(x) = \frac{1}{r^{\left\langle p\right\rangle }}\eta(rx-\zeta)d^p(rx),$ for $r<1$. We have that $u_r$ converge to $0$ in $L^\infty_\loc(\R^n)$. We can also estimate $|\eta(rx-\zeta)d^p(rx)|\leq C|x|^p$, which implies the convergence of $\int_{\R^n} \frac{|u_r(x)|}{1+|x|^{n+2s+k}}dx$ to $0$, if we choose $k=\lceil p-2s\rceil$. On every compact ball $B$ inside $\Omega$ we have that
		$$L(u_r)(x)-P_r(x) = \tilde{\varphi}_\zeta(rx) \frac{1}{r^{\left\langle p\right\rangle-2s}}d^{\left\langle p\right\rangle-2s}(rx) +  \frac{1}{r^{\left\langle p\right\rangle-2s}} (R(rx)-\tilde{P}_r) \longrightarrow \tilde{\varphi}_\zeta(0) (x\cdot\nu)_+^{\left\langle p\right\rangle-2s}$$
		in $L^\infty(B),$ for $\tilde{P}_r$ being the suitable Taylor polynomials of $R, $ of order $k-1$, and $\nu$ being the unit normal of $\Omega$ at zero. Therefore, the lemma gives us that 
		$$0 = L(0) \stackrel{k}{=} \tilde{\varphi}_\zeta(0) (x\cdot\nu)_+^{\left\langle p\right\rangle-2s} \quad\quad \text{in }B,$$
		which implies that $\tilde{\varphi}_\zeta(0)=0$. Now we change the order of blow-up to $1+\left\langle p\right\rangle $ to conclude also that $\nabla \tilde{\varphi}_\zeta(0) = 0$. We can go on with the procedure as long as the order of blow-up is strictly lower than $p$, so that we have the local uniform convergence of $u_r$ to zero. In the last step the order of blow up has to be taken $p-\varepsilon$, so that $p-\varepsilon<p$, and $p-\varepsilon> \lfloor p\rfloor$, to get that the Taylor polynomial of $\tilde{\varphi}_\zeta$ of order $\lfloor p\rfloor$ vanishes. 
		We also need $p -2s-\varepsilon < \beta -1 + \left\langle p-2s\right\rangle $ (blow-up order has to be smaller than the regularity of $R_\zeta$), so that the blow up of $R_\zeta$ vanishes, after subtracting suitable Taylor polynomials. Hence, when $p\not\in\N$, we conclude that $\tilde{\varphi}_\zeta$ has zero of order $\lfloor p\rfloor $ at every boundary point. This means that before the pre-composition with the boundary-flattening map, $\tilde{\varphi}_\zeta$ was divisible by $x_n^{\lfloor p\rfloor }$, and all the coefficients were bounded with $||\eta(\cdot-\zeta)||_{C^{\beta}}$, and $C^\beta$ in variable $\zeta$. Therefore, if we set $\varphi_\zeta = \frac{\tilde{\varphi}_\zeta}{d^{\lfloor p\rfloor }}$ it is still a $C^\beta(\overline{\Omega}\cap B_{1/2})$ function which satisfies $||\varphi_\zeta||_{C^{\beta}}\leq C ||\eta(\cdot-\zeta)||_{C^{\beta}}$.
		
		Notice also, that in the case $p\in\N$ and $s>1/2$, we can do the above procedure with taking one power of $d$ less into $\eta$. So then we work with $L((\eta(\cdot - \zeta) d^{ p-1}) d^{1}),$ and we obtain the same result.
		%In other cases we proceed analogously, to prove the desired result
	\end{proof}
	
	\begin{remark}
		Note that in the case when $p\in\N+s$, the polynomial part falls off, so we have that $L(\eta d^p) \in C^{\beta - s}(\overline{\Omega}),$ together with the suitable estimate.
	\end{remark}

	It seems that the regularity of the reminder could be improved to $\beta-1+p-2s$. To establish it, we would need to improve the estimates in Lemma \ref{lemmaRegularity}, assuming $\psi$ has a zero of some higher order at $0$. Since the proof is already very cumbersome, and what we have proven is enough for our purposes, we do not dig into the improvement.

	\section{Equations for non-local operators}

	\subsection{Existence and computation}
	
	In this subsection we aim to answer the question of solvability  of $L(Pd^p ) = q d^{p-2s}$, where $q$ is a known polynomial. Briefly, the answer consists of two steps. First we perform a blow-up argument using \cite[Lemma 3.5]{AR20}, which reduces the problem to the flat case. Then we investigate the flat case, which is explicitly computable.
	
	We start with computation in the flat case. We need a preliminary result, which says when can we evaluate $L$ on a function with polynomial growth in the generalised sense, recall Definition \ref{definitionOfGeneralDistance}.
	
	\begin{lemma}\label{existance}
		Let $L$ be an operator whose kernel $K$ satisfy \eqref{kernelConditions} and is $C^{k+1}(\mathbb{S}^{n-1})$. Let for some $\varepsilon>0$ a function $u\colon \R^n\to\R$ be $C^{2s+\varepsilon}(B_2)$ with $$\left|\left|\frac{u}{1+|x|^{k+s+\alpha}}\right|\right|_{L^\infty(\R^n)}<\infty,$$  for some $\alpha<s$.
		Then there exists a function $f\colon B_1\to\R$ and polynomials $P_R\in\textbf{P}_{k-1}$, so that $L(u\chi_R) -P_R\to f$ in $L^\infty(B_1)$, with 
		$$||f||_{L^\infty(B_1)}\leq C\left(\left|\left|\frac{u}{1+|x|^{k+s+\alpha}}\right|\right|_{L^\infty(\R^n)} + ||u||_{C^{2s+\varepsilon}(B_2)}\right).$$ 		
	\end{lemma}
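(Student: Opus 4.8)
The plan is to split $L(u\chi_{B_R})$ into three parts: one that is already well defined and independent of $R$, one that is bounded and independent of $R$ and gets absorbed into the limit $f$, and a tail integral over the annulus $B_R\setminus B_2$ which is the only source of divergence; this last piece is then tamed by subtracting the Taylor polynomial of the kernel in the $x$-variable, whose remainder turns out to be integrable at infinity precisely because $\alpha<s$. Since $x$ ranges over $B_1$ while the truncation is at scale $R>2$, all the singular behaviour of $L$ is confined to $B_2$, and no principal value is needed elsewhere.

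Concretely, using the convention $Lv(x)=\operatorname{p.v.}\int_{\R^n}(v(x)-v(y))K(x-y)\,dy$ and decomposing $\R^n=B_2\cup(B_R\setminus B_2)\cup B_R^c$, one finds, for $x\in B_1$ and $R>2$,
$$
L(u\chi_{B_R})(x)=\underbrace{\operatorname{p.v.}\int_{B_2}(u(x)-u(y))K(x-y)\,dy}_{=:I(x)}+\underbrace{u(x)\int_{B_2^c}K(x-y)\,dy}_{=:g(x)}-\int_{B_R\setminus B_2}u(y)K(x-y)\,dy .
$$
The $u(x)$-term over $B_2$ must of course remain inside the principal value, where it cancels against $u(y)$; the $u(x)$-terms over $B_R\setminus B_2$ and over $B_R^c$, on the other hand, are genuine integrals over regions avoiding $x$ and so recombine \emph{exactly} (with no limit) into $g$, which is $R$-independent and satisfies $\|g\|_{L^\infty(B_1)}\le C\|u\|_{L^\infty(B_1)}$ since $\int_{B_2^c}K(x-y)\,dy\le C$ for $x\in B_1$ by \eqref{kernelConditions}. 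The term $I$ is likewise $R$-independent, and the bound $\|I\|_{L^\infty(B_1)}\le C\|u\|_{C^{2s+\varepsilon}(B_2)}$ is the standard interior estimate for $L$: split $B_2=B_{1/4}(x)\cup(B_2\setminus B_{1/4}(x))$, bound the far part by $C\|u\|_{L^\infty(B_2)}$, and on $B_{1/4}(x)$ use the cancellation of the even kernel together with the $C^{2s+\varepsilon}$ modulus of continuity of $u$, subtracting the Taylor polynomial of $u$ at $x$ of order $0$ if $2s<1$ and of order $1$ if $2s\ge1$. This is the only genuinely analytic step.

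It remains to treat the tail. For $|x|\le1$ and $|y|\ge2$ we have $|x-y|\ge|y|/2$; since $K$ is even, homogeneous of degree $-n-2s$ and $C^{k+1}(\S^{n-1})$, we have $|\partial^\gamma K(z)|\le C|z|^{-n-2s-|\gamma|}$ for $|\gamma|\le k$, so Taylor-expanding $K(x-y)$ in $x$ about the origin to order $k-1$ gives
$$
K(x-y)=\sum_{|\gamma|\le k-1}\frac{(-x)^\gamma}{\gamma!}\,\partial^\gamma K(y)+E_k(x,y),\qquad |E_k(x,y)|\le C|x|^k|y|^{-n-2s-k}.
$$
Hence $-\int_{B_R\setminus B_2}u(y)K(x-y)\,dy=P_R(x)-\int_{B_R\setminus B_2}u(y)E_k(x,y)\,dy$, where
$$
P_R(x):=-\sum_{|\gamma|\le k-1}\frac{(-x)^\gamma}{\gamma!}\int_{B_R\setminus B_2}u(y)\,\partial^\gamma K(y)\,dy\ \in\ \textbf{P}_{k-1},
$$
the coefficients being finite for every fixed $R$ because $B_R\setminus B_2$ is a bounded region not containing the origin, so $\partial^\gamma K$ is bounded on it.

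Finally, the growth hypothesis combined with $\alpha<s$ gives $|u(y)E_k(x,y)|\le C\|u/(1+|\cdot|^{k+s+\alpha})\|_{L^\infty(\R^n)}|x|^k|y|^{-n-s+\alpha}$ with exponent $-n-s+\alpha<-n$, so $\int_{B_R\setminus B_2}u(y)E_k(x,y)\,dy$ converges uniformly on $B_1$ as $R\to\infty$ to some $h$ with $\|h\|_{L^\infty(B_1)}\le C\|u/(1+|\cdot|^{k+s+\alpha})\|_{L^\infty(\R^n)}$. Collecting the pieces, $L(u\chi_{B_R})-P_R\to f:=I+g-h$ uniformly on $B_1$, and since $\|u\|_{L^\infty(B_1)}\le\|u\|_{C^{2s+\varepsilon}(B_2)}$ the three estimates above combine to the claimed bound on $\|f\|_{L^\infty(B_1)}$. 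I do not expect any serious obstacle here: apart from the classical interior estimate for $I$, the content lies entirely in the bookkeeping — tracking which pieces are $R$-independent, which are polynomials of degree $\le k-1$ in $x$, and which converge — together with the observation that $\alpha<s$ is exactly the threshold that makes the kernel's Taylor remainder integrable at infinity.
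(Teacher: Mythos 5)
Your proof is correct. The decomposition itself coincides with the paper's: your $I+g$ is exactly $L(u\chi_{B_2})$, and your annulus term is exactly the paper's $g_R = L(u\chi_{B_R\setminus B_2})$ (which vanishes at $x\in B_1$ except for the far integral), so up to that point the two arguments are identical, as are the bounds for these pieces. Where you genuinely diverge is in how the divergent tail is tamed. The paper differentiates $g_R$ exactly $k$ times, uses $|\partial^\gamma K(z)|\le C|z|^{-n-2s-k}$ together with the growth hypothesis and $\alpha<s$ to get a uniform bound and uniform convergence of $D^k g_R$, then reconstructs a limit $f_0$ by checking compatibility of the mixed partials and integrating $k$ times over the simply connected ball $B_1$, finally taking $P_R$ to be a difference of Taylor polynomials of order $k-1$. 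You instead Taylor-expand $K(x-y)$ in $x$ to order $k-1$ inside the tail integral, which isolates the polynomial $P_R$ explicitly (with coefficients $-\int_{B_R\setminus B_2}u\,\partial^\gamma K$, finite for each fixed $R$) and reduces the convergence to an absolutely convergent remainder, since $(1+|y|^{k+s+\alpha})|y|^{-n-2s-k}\simeq|y|^{\alpha-s-n}$ with $\alpha<s$. Both routes hinge on the same decay threshold, but yours buys an explicit formula for $P_R$ and avoids the slightly delicate integrate-back step (the Poincar\'e-lemma-type argument), at the cost of invoking the pointwise derivative bounds on $K$ along the segment $[0,x]-y$, which your estimate $|tx-y|\ge|y|/2$ justifies; it also only uses $K\in C^k(\S^{n-1})$, marginally less than the stated $C^{k+1}$. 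One cosmetic difference: the paper's construction, as noted in Remark \ref{existenceRemark}, makes it transparent that $P_R$ can be taken as Taylor polynomials of $L(u\chi_{B_R})$ at $0$, a normalization used later; your explicit $P_R$ differs from that choice by a quantity that stays bounded, so nothing is lost, but it is worth being aware of.
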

	
	\begin{proof}
		Denote $\tilde{u} = u\chi_2$. Then we can compute $L\tilde{u} = \tilde{f}$, and we have $||\tilde{f}||_{L^\infty(B_1)}\leq C(||u||_{C^{2s+\varepsilon}(B_2)})$. For $R>2$ we have $L(u\chi_R) = L(u\chi_2)+ L(u\chi_{B_R\backslash B_2}) = \tilde{f}+g_R$. Choose now $\gamma\in\N_0^n$ with $|\gamma|=k$. Then
		$$|\partial^\gamma g_R(x)| = |\partial^\gamma\int_{B_R\backslash B_2}u(z)K(z-x)dz|=|\int_{B_R\backslash B_2}u(z)\partial^\gamma K(z-x)dz|$$
		$$\leq C \int_{\R^n\backslash B_2}\frac{u(z)}{|z-x|^{n+k+2s}}dz
		\leq C\left|\left|\frac{u}{1+|x|^{k+s+\alpha}}\right|\right|_{L^\infty(\R^n)}<\infty.$$
		Also, the same estimate with the dominated convergence theorem gives that $\partial^\gamma g_R(x)\rightarrow f_\gamma$ as $R\to\infty,$
		where we denoted $f_\gamma(x)= \int_{\R^n\backslash B_2}u(z)\partial^\gamma K(z-x)dz$. (This convergence is uniform for $x\in B_1$, which we can check with an $\varepsilon$, $R_0$ calculus, and some estimates of the kernel.) With another dominated convergence argument we see, that whenever $\gamma + e_i = \gamma'+e_j$ then $\partial_if_\gamma = \partial_j f_{\gamma'}$. Since $B_1$ is simply connected, we can integrate $k$ times to get a function $f_0$ such that $\partial^\gamma f_0 = f_\gamma.$ Denote with $T^j\phi$ the $j$-th Taylor polynomial of $\phi$ centred at $0$. Then 
		$$||g_R +\tilde{f}- T^{k-1}g_R - f_0+T^{k-1}f_0-\tilde{f}||_{L^\infty(B_1)} \leq ||D^{k}(\ldots)||_{L^\infty(B_1)} \rightarrow 0,$$
		and so $f$ can be taken as $f_0+\tilde{f}$. 
		Since $||f_0||_{L^\infty(B_1)}\leq C\left|\left|\frac{u}{1+|x|^{k+s+\alpha}}\right|\right|_{L^\infty(\R^n)}$ by construction, we have
		$$||f||_{L^\infty(B_1)}\leq C\left(\left|\left|\frac{u}{1+|x|^{k+s+\alpha}}\right|\right|_{L^\infty(\R^n)} + ||u||_{C^{2s+\varepsilon}(B_2)}\right)$$
		as wanted.
	\end{proof}

	\begin{remark}\label{existenceRemark}
		Note that if $u$ is smooth as the growth (in $B_2$), the polynomials $P_R$ can be taken as the Taylor polynomial of $Lu\chi_R$ of the correct order centred at $0$. If we take a smooth cut-off $\chi_2$ above, then $\tilde{f}$ becomes smooth enough. Note also, that if additionally for some positive $\varepsilon$ we have $u\in C^{k+1+2s+\varepsilon}(B_2)$, the convergence is actually happening in $C^{k+1}(B_1)$.
	\end{remark}
	
	Next, we compute $L(Pd^p)$ in the flat case, for any polynomial $P$. All the equations should be understood in the sense of Definition \ref{definitionGeneralised}, when the function inside the operator grows too much at infinity. Let $\Omega = \left\lbrace x_n>0\right\rbrace$. Then $d(x) = \max\left\lbrace x_n,0\right\rbrace = (x_n)_+$.

	Because $L$ is linear, it is enough to compute $L(M(x_n)_+^p)$ for every monomial $M$. 	
	
	\begin{proposition}
		Suppose $p-2s\not\in\N$. Let $\gamma \in \N^{n-1}\times 0$ and $k>|\gamma|+p-2s$. If $L$ is an operator whose kernel is $C^k(\mathbb{S}^{n-1})$ and satisfies \eqref{kernelConditions}, we have that $$L\left(x^\gamma(x_n)_+^p\right) \stackrel{k}{=} \sum_{\alpha\leq\gamma}c_{\gamma,\alpha,s,p}x^\alpha x_n^{p-2s+|\gamma-\alpha|} \quad \text{ in } \left\lbrace x_n>0\right\rbrace .$$
		Furthermore, if and only if $p-s\not\in\N$, then the coefficient $c_{\gamma,\gamma,s,p}\neq0$.
	\end{proposition}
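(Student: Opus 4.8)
The plan is to compute $L(x^\gamma(x_n)_+^p)$ directly, by shifting the integration variable and expanding the monomial, so that everything collapses onto the one–dimensional identities already available in Lemma~\ref{LonPower} and Lemma~\ref{lostCalculation}. By linearity it suffices to treat a single monomial $M(x)=x^\gamma$. All the equalities below are understood in the generalised sense of Definition~\ref{definitionGeneralised}; the hypothesis $k>|\gamma|+p-2s$ is exactly the growth bound that makes $x^\gamma(x_n)_+^p$ eligible for such an evaluation, and the hypothesis $p-2s\notin\N$ will be used both to rule out logarithmic terms (which appear precisely when a power lands in $\N+2s$) and to keep the denominators below from vanishing.

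First I would handle $\gamma=0$. For $p\in(0,2s)$ this is Lemma~\ref{LonPower} with $a$ the angular part of $K$: it gives $L((x_n)_+^p)=c_{0,0,s,p}\,x_n^{p-2s}$ with $c_{0,0,s,p}=c_p\int_{\S^{n-1}}|\theta_n|^{2s}a(\theta)\,d\theta$, and since $a\geq\lambda>0$ that last factor is positive, so $c_{0,0,s,p}=0\iff c_p=0\iff p=s$. For general $p$ with $p-2s\notin\N$, I would use that (as $2s>1$) the interval $(p-2s,p)$ contains an integer $m$ with $0\le m<p$; applying $\partial_n^m$, using that $\partial_n$ commutes with $L$ in the generalised sense and that $\partial_n^m(x_n)_+^p=\big(\prod_{j=0}^{m-1}(p-j)\big)(x_n)_+^{p-m}$ with $p-m\in(0,2s)$, one arrives at
$$c_{0,0,s,p}\prod_{j=0}^{m-1}(p-2s-j)=\Big(\prod_{j=0}^{m-1}(p-j)\Big)\,c_{0,0,s,p-m}.$$
The left product is nonzero since $p-2s\notin\{0,\dots,m-1\}$, and $\prod_{j=0}^{m-1}(p-j)\neq0$ since $p>m$; hence $c_{0,0,s,p}$ is a nonzero multiple of $c_{0,0,s,p-m}$, which vanishes iff $p-m=s$. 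Moreover, if $p-s\in\N$ then, because $s\in(\frac12,1)$, the only integer in $(p-2s,p)$ is $p-s$, forcing $m=p-s$ and thus $c_{0,0,s,p}=0$; conversely $c_{0,0,s,p}=0$ forces $p-m=s$, i.e.\ $p-s=m\in\N$. This proves the case $\gamma=0$ together with the ``if and only if'' in that case.

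For general $\gamma$ I would write
$$L(x^\gamma(x_n)_+^p)(x)=\mathrm{p.v.}\!\int_{\R^n}\!\big(x^\gamma(x_n)_+^p-(x+y)^\gamma(x_n+y_n)_+^p\big)K(y)\,dy,$$
and expand $(x+y)^\gamma=\sum_{\beta\le\gamma}\binom{\gamma}{\beta}x^{\gamma-\beta}y^\beta$. The term $\beta=0$ equals $x^\gamma\,\mathrm{p.v.}\!\int\!\big((x_n)_+^p-(x_n+y_n)_+^p\big)K(y)\,dy=c_{0,0,s,p}\,x^\gamma x_n^{p-2s}$ by the previous step; this is the $\alpha=\gamma$ term, so $c_{\gamma,\gamma,s,p}=c_{0,0,s,p}$, which combined with the previous paragraph gives the ``furthermore'' assertion. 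For $0<\beta\le\gamma$ the term is $-\binom{\gamma}{\beta}x^{\gamma-\beta}H_\beta$ with $H_\beta(x):=\int_{\R^n}y^\beta(x_n+y_n)_+^p K(y)\,dy$. Since $\gamma$, hence $\beta$, is supported in the first $n-1$ coordinates, the integrand depends on $x$ only through $x_n$, so $H_\beta$ is a function of $x_n$ alone; the substitution $y\mapsto\lambda y$, $x_n\mapsto\lambda x_n$ shows $H_\beta$ is homogeneous of degree $p+|\beta|-2s$, which is never a nonnegative integer because $p-2s\notin\N$. Arguing as in \cite[Section~3]{AR20} (scaling and uniqueness for generalised solutions), it follows that $H_\beta=\tilde c_\beta\,x_n^{p+|\beta|-2s}$ for some constant $\tilde c_\beta$. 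Re-indexing with $\alpha:=\gamma-\beta<\gamma$, so that $|\beta|=|\gamma-\alpha|$, and putting $c_{\gamma,\alpha,s,p}:=-\binom{\gamma}{\alpha}\tilde c_{\gamma-\alpha}$ for $\alpha<\gamma$ produces exactly the claimed identity.

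The part I expect to be most delicate is the bookkeeping hidden in the phrase ``in the generalised sense'': one must justify the term-by-term expansion above (each $H_\beta$ is itself only defined after subtracting an appropriate polynomial of degree $\le k-1$), the commutation $\partial_n L=L\partial_n$, and the identification of each scaling-homogeneous piece with the pure power $x_n^{\mu}$ --- all of which rest on the framework of \cite[Section~3]{AR20}. Everything else is a chain of elementary reductions to Lemma~\ref{LonPower} and Lemma~\ref{lostCalculation}, with $p-2s\notin\N$ guaranteeing that no logarithms intervene and that the leading coefficient $c_{\gamma,\gamma,s,p}=c_{0,0,s,p}$ vanishes precisely when $p-s\in\N$.
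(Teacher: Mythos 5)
Your argument is correct in substance, but it follows a genuinely different route from the paper. You expand $(x+y)^\gamma$ binomially inside the integral and identify each resulting piece $H_\beta(x_n)$ as a pure power by translation invariance in $x'$ and scaling homogeneity (modulo the polynomial subtractions of the generalised framework), which in particular makes the coefficient structure explicit and gives $c_{\gamma,\gamma,s,p}=c_{0,0,s,p}$ directly; the paper instead proceeds by induction on $|\gamma|$, differentiating tangentially, integrating back, and killing the resulting "constant of integration" $f(x_n)$ through a joint scaling argument in $(x,R)$ for the cut-off evaluations, and it gets the nonvanishing of $c_{\gamma,\gamma,s,p}$ by differentiating $\gamma$ times and quoting the half-space Liouville theorem \cite[Theorem 3.10]{AR20}. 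What your route buys is transparency of the coefficients and a reduction of the "furthermore" part to the elementary $(0,2s)$ case of Lemma \ref{LonPower} via the differentiation ladder; what the paper's route buys is that it never manipulates the individually divergent moment integrals $H_\beta$, at the price of the integrate-back step. Two caveats, neither fatal: (i) your base-case reduction needs an integer $m$ with $p-m\in(0,2s)$, which is automatic for $s>\frac12$ (and is exactly the restriction $\langle p\rangle<2s$ the paper itself imposes elsewhere when $s\le\frac12$), whereas the paper's proof of the proposition does not need it; and (ii) your claim that $p+|\beta|-2s$ is never a nonnegative integer tacitly excludes $p-2s$ being a negative integer (e.g.\ $p=2s-1$), but the paper's own proof carries the same implicit restriction ($|\gamma|+p-2s\notin\N$), and the powers arising in the applications ($s$ irrational) avoid it. The remaining bookkeeping you flag — term-by-term polynomial subtractions, commuting $\partial_n$ with the generalised evaluation, and identifying homogeneous-modulo-polynomial functions with pure powers — is of the same nature and level of detail as in the paper's proof, so the proposal stands as a valid alternative argument.
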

	
	\begin{proof}
		We prove it with induction on $|\gamma|$. When $|\gamma|=0$, we have that $L((x_n)_+^p)$ is a $p-2s$-homogeneous function dependent only on $x_n$. Hence it equals $c_{s,p}x_n^{p-2s}.$ Additionally, when $p-s\not\in\N$, then $c_{s,p}\neq 0$, which we conclude from the Liouville theorem \cite[Theorem 3.10]{AR20}. 
		
		Take now $|\gamma|>0$. Let $\gamma_i>0$. Differentiating on $x_i$ and using the induction we get 
		$$\partial_iL\left(x^\gamma (x_n)_+^p\right) =\gamma_iL\left(x^{\gamma-e_i}(x_n)_+^p\right) = \gamma_i \sum_{\alpha\leq\gamma-e_i}c_{\gamma-e_i,\alpha,s,p}x^\alpha x_n^{p-2s+|\gamma-e_i-\alpha|}.$$ Integrating back we get
		\begin{align*}
			L\left(x^\gamma (x_n)_+^p\right)& = \sum_{\alpha\leq\gamma-e_i}\frac{\gamma_i c_{\gamma-e_i,\alpha,s,p}}{\alpha_i+1}x^{\alpha+e_i} x_n^{p-2s+|\gamma-e_i-\alpha|}+f(x_1,\ldots,\hat{x_i},\ldots,x_n)\\
			& = 		 \sum_{e_i\leq\alpha\leq\gamma}\frac{\gamma_i c_{\gamma-e_i,\alpha-e_i,s,p}}{\alpha_i}x^{\alpha} x_n^{p-2s+|\gamma-\alpha|}+f(x_1,\ldots,\hat{x_i},\ldots,x_n),
		\end{align*}
		for some function $f$ independent of $x_i$. Differentiating on other $x_j,$ for $ j<n$ and comparing the terms, we get
		$$L\left(x^\gamma (x_n)_+^p\right) =		 \sum_{0<\alpha\leq\gamma}c_{\gamma,\alpha,s,p}x^{\alpha} x_n^{p-2s+|\gamma-\alpha|}+f(x_n)=:h(x)+f(x_n).$$
		
		This means that for some polynomials $P_R$ of degree  $\lfloor|\gamma|+p-2s\rfloor$, we have
		$$L(x^\gamma (x_n)_+^{p}\chi_{B_R}) (x)- P_R(x) \longrightarrow h(x)+f(x_n).$$ If $|\gamma|+p-2s$ is a natural number, the polynomials are of one degree bigger.
		Let us now use the homogeneity of $L(x^\gamma (x_n)_+^{p}\chi_{B_R}) (x)$ in $(x,R)$ and of $h$. Concretely, evaluate the above formula at $\lambda R$ and $\lambda x$. We get
		$$f(\lambda x_n) + h(\lambda x) = \lim\limits_{R\to\infty} L(x^\gamma (x_n)_+^{p}\chi_{B_{\lambda R}}) (\lambda x)- P_{\lambda R}(\lambda x) $$
		$$=
		\lambda^{|\gamma|+p-2s}\lim\limits_{R\to\infty} L(x^\gamma (x_n)_+^{p}\chi_{B_R}) (x)  - P_R(x) + \lim\limits_{R\to\infty} \lambda^{|\gamma|+p-2s}P_R(x) - P_{\lambda R}(\lambda x) $$
		$$ = \lambda^{|\gamma|+p-2s} (f(x_n)+h(x)) + P_\lambda (x),$$
		for some polynomial $P_\lambda$ of the same degree. But since $h$ is homogeneous, this implies
		$$f(\lambda x_n) = \lambda^{|\gamma|+p-2s}f(x_n) + P_\lambda(x),$$
		and hence $P_\lambda$ depends only on $x_n$. Differentiating it $\lceil|\gamma|+p-2s\rceil$, we conclude that $f$ equals a sum of a polynomial and a $(|\gamma|+p-2s)$-homogeneous function. This proves the claim, because the equalities in the generalised sense hold up to polynomials of this order. Here we need, that $|\gamma|+p-2s$ is not a natural number, since otherwise we could get logarithmic terms. 
		
		Finally, differentiating the result $\gamma$-times, we get the desired inequality (See \cite[Theorem 3.10]{AR20}). 
	\end{proof}

	Observing the outcome, we deduce the following.
	
	\begin{corollary}\label{computationFlat}
		When $p-2s\not\in\N$, for every polynomial $P$ we have $L\left(P\cdot (x_n)_+^p\right) = qx_n^{p-2s}$ for some polynomial $q$ of the same degree. Furthermore, if $P$ is homogeneous, so is $q$, and if $p-s\not\in\N$, then if $P$ is non-zero, also $q$ is.
	\end{corollary}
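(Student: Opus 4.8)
The plan is to deduce everything from the preceding Proposition by linearity. Since $L$ is linear it suffices to compute $L\big(x^\gamma(x_n)_+^p\big)$ for a single monomial $x^\gamma$. First I would split $\gamma=\gamma'+\gamma_ne_n$ with $\gamma'\in\N^{n-1}\times\{0\}$ and use the pointwise identity $x^\gamma(x_n)_+^p=x^{\gamma'}(x_n)_+^{\,p+\gamma_n}$, which holds on all of $\R^n$ (both sides vanish on $\{x_n\le0\}$ and agree on $\{x_n>0\}$). This brings the function into the form handled by the Proposition, now with exponent $p+\gamma_n$; one checks that the hypothesis $p-2s\notin\N$ — and, for the non-degeneracy, $p-s\notin\N$ — persists for the shifted exponent in the regime of interest. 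Applying the Proposition then gives, in the generalised sense of Definition~\ref{definitionGeneralised},
\[
L\big(x^{\gamma'}(x_n)_+^{\,p+\gamma_n}\big)\;\stackrel{k}{=}\;\sum_{\alpha\le\gamma'}c_{\gamma',\alpha,s,p+\gamma_n}\,x^\alpha\,x_n^{\,p-2s+\gamma_n+|\gamma'-\alpha|}.
\]

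Next I would factor out $x_n^{p-2s}$: each summand equals $x_n^{p-2s}$ times $x^\alpha x_n^{\gamma_n+|\gamma'-\alpha|}$, which is an honest polynomial in $x$ of total degree $|\alpha|+\gamma_n+|\gamma'|-|\alpha|=|\gamma|$. Summing over the monomials of $P$ and invoking linearity yields $L\big(P\,(x_n)_+^p\big)=q\,x_n^{p-2s}$ with $q\in\textbf{P}_{\deg P}$. The homogeneity assertion is then immediate: if $P$ is homogeneous of degree $d$, every monomial of $P$ has $|\gamma|=d$ and every term it contributes to $q$ again has degree $d$ by the computation just made, so $q$ is homogeneous of degree $d$.

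The one point that needs real care is the non-vanishing, namely that $P\neq0$ and $p-s\notin\N$ force $q\neq0$ (whence $\deg q=\deg P$, which is what ``same degree'' should mean). Since a monomial $x^\gamma$ of $P$ contributes to $q$ only terms of degree exactly $|\gamma|$, the top-degree part of $q$ is determined by the top-degree homogeneous part of $P$, so I would reduce to $P$ homogeneous of degree $d$. For such $P$ the tangential multi-index $\gamma'$ determines $\gamma$, and I would pick the monomial $x^\gamma$ of $P$ with lexicographically largest $\gamma'$. Inspecting the display, a monomial $x^{\tilde\gamma}$ of $P$ can contribute to the coefficient of $x^\gamma$ in $q$ only if some $\tilde\alpha\le\tilde\gamma'$ equals $\gamma'$, i.e.\ only if $\tilde\gamma'\ge\gamma'$ componentwise; lexicographic maximality forces $\tilde\gamma'=\gamma'$, so that coefficient is exactly $c_\gamma\,c_{\gamma',\gamma',s,p+\gamma_n}$, nonzero because $c_\gamma\neq0$ and, by the last assertion of the Proposition, $c_{\gamma',\gamma',s,p+\gamma_n}\neq0$ once $p+\gamma_n-s\notin\N$. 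Hence $q_d\neq0$. I expect the genuinely fiddly part to be precisely this bookkeeping: verifying that the integrality conditions on the exponent survive the shifts $p\mapsto p+\gamma_n$, and that the off-diagonal terms ($\alpha<\gamma'$, which always sit at strictly smaller tangential weight) can never cancel this leading monomial.
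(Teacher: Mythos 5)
Your proof is correct and takes essentially the route the paper intends: the paper's own proof is the one-line ``direct consequence of the above proposition'', and your device of absorbing $x_n^{\gamma_n}$ into the exponent, factoring out $x_n^{p-2s}$, and isolating the lexicographically leading tangential monomial is exactly the ``lower triangular with non-zero diagonal'' structure the paper records in the remark right after the corollary. The one caveat you flag is real but is a looseness of the paper's statement rather than of your argument: the shifted hypotheses $p+\gamma_n-2s\not\in\N$ and $p+\gamma_n-s\not\in\N$ genuinely require $p-2s,\,p-s\not\in\Z$ (not merely $\not\in\N$), which is automatic in the regime where the corollary is actually used ($p>0$, indeed $p>\varepsilon_0$ with $s\not\in\Q$), so your ``in the regime of interest'' verification is the right and sufficient resolution.
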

\begin{proof}
	The corollary is a direct consequence of the above proposition.
\end{proof}
	\begin{remark}
		The mapping $P\mapsto q$ is a linear map, let us denote it $\Phi_{e_n}$. When $p-s\not\in\N$, it is also bijective. If we order the monomials first by order and then lexicographically, we get a lower triangular matrix with non-zero diagonals.
		\footnote{Note also, that if $p-2s = m\in\N$, we can get terms of the form $f(x) = x^m\log x,$ since it satisfies the condition 
			$$f(\lambda x) = (\lambda x)^m\log(\lambda x) = \lambda^m x^m\log x + \lambda^m\log \lambda \cdot x^m = \lambda^n f(x) + P_{\lambda}(x).$$
		This is why we need $p-2s\not\in\N$ in the assumptions.}
	\end{remark}
	
	Let us connect the above computation with the computation of $L(P\cdot(xe)_+^p)$ in $\left\lbrace xe>0\right\rbrace $, for a polynomial $P$ and a unit vector $e$. For a polynomial $P$ we denote $\Phi_e (P):= q$, when $L(P(xe)_+^p) = q (xe)^{p-2s}$ in $\{xe>0\}.$ Similarly, for a linear map $Q$, we denote $\Phi_e^Q (P):= q$, when $L^Q(P(xe)_+^p) = q (xe)^{p-2s}$ in $\{xe>0\},$ where the kernel of  $L^Q$ is the kernel of $L$ pre-composed with $Q$.
	
	\begin{lemma}\label{orthogonalTransformationLemma}
		Let $p-2s\not\in\N$, let $L$ be an operator whose kernel $K$ satisfies the condition \eqref{kernelConditions} and let $e\in \S^{n-1}.$ Let $Q$ be an orthogonal matrix which maps $e_n$ into $e$. Then the following formula holds:
		\begin{equation*}\label{linearMap}
		\Phi_e^I(P) = \Phi_{e_n}^Q(P\circ Q)\circ Q^T,
		\end{equation*} 
		where $\Phi_e^I$ and $\Phi_{e_n}^Q$ are as described above.
	\end{lemma}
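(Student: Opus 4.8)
The plan is to reduce the evaluation of $L^I$ on $P(x\cdot e)_+^p$ in the half-space $\{x\cdot e>0\}$ to the evaluation of an operator with a rotated kernel on a polynomial in the half-space $\{x_n>0\}$, by performing the orthogonal change of variables $x = Qy$. First I would write out the definition: $\Phi_e^I(P)=q$ means $L^I\big(P(\cdot\,e)_+^p\big)(x) = q(x)\,(x\cdot e)^{p-2s}$ for $x\cdot e>0$, where $L^I$ has kernel $K$. Substituting $x=Qy$ and using $Qe_n=e$, so that $x\cdot e = Qy\cdot Qe_n = y\cdot e_n = y_n$, the function $x\mapsto P(x\cdot e)_+^p$ pulls back to $y\mapsto (P\circ Q)(y)(y_n)_+^p$.

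The key computational step is to track what happens to the operator under this substitution. For an integro-differential operator with kernel $K$, the change of variables $x=Qy$, $z=Qw$ in the principal-value integral
\[
L^I u(x) = \mathrm{p.v.}\int_{\R^n}\big(u(x)-u(z)\big)K(x-z)\,dz
\]
turns it, using $|\det Q|=1$ and $x-z = Q(y-w)$, into
\[
\mathrm{p.v.}\int_{\R^n}\big((u\circ Q)(y)-(u\circ Q)(w)\big)K(Q(y-w))\,dw,
\]
which is exactly $L^Q(u\circ Q)(y)$, where $L^Q$ is the operator whose kernel is $K$ pre-composed with $Q$ (note that since $K$ is homogeneous of degree $-(n+2s)$, pre-composing with $Q$ only affects the angular profile, and evenness/homogeneity are preserved, so $L^Q$ is again an admissible operator and, since $p-2s\notin\N$, Corollary \ref{computationFlat} applies to it). Thus
\[
L^I\big(P(\cdot\,e)_+^p\big)(Qy) = L^Q\big((P\circ Q)(\cdot)(\cdot_n)_+^p\big)(y) = \Phi_{e_n}^Q(P\circ Q)(y)\,(y_n)_+^{p-2s},
\]
valid for $y_n>0$, i.e. for $Qy$ in $\{x\cdot e>0\}$. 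Writing $x=Qy$, so $y=Q^Tx$ and $y_n = x\cdot e$, the right-hand side becomes $\Phi_{e_n}^Q(P\circ Q)(Q^Tx)\,(x\cdot e)^{p-2s}$. Comparing with the defining relation $L^I\big(P(\cdot\,e)_+^p\big)(x)=\Phi_e^I(P)(x)(x\cdot e)^{p-2s}$ and cancelling the nonzero factor $(x\cdot e)^{p-2s}$ on the open half-space gives $\Phi_e^I(P) = \Phi_{e_n}^Q(P\circ Q)\circ Q^T$, as claimed.

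The only genuine subtlety is that $P(\cdot\,e)_+^p$ has polynomial growth at infinity, so all these identities must be read in the generalised sense of Definition \ref{definitionGeneralised}; the point is that the class of admissible test functions, the truncations $u\chi_{B_R}$ (and $B_R$ is rotation-invariant), and the correction polynomials $p_R\in\mathbf{P}_{k-1}$ are all preserved under the orthogonal substitution, since $Q$ maps polynomials of degree $\le k-1$ to polynomials of the same degree and maps $B_R$ to $B_R$. So the equality $L(u\chi_{B_R})=f_R+p_R$ transforms into $L^Q((u\circ Q)\chi_{B_R}) = f_R\circ Q + p_R\circ Q$ with $f_R\circ Q\to f\circ Q$ uniformly, which is exactly the generalised-sense statement for $L^Q(u\circ Q)$. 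I expect this bookkeeping with the truncations and the growth condition to be the main (though still routine) obstacle; the algebraic heart of the lemma is just the chain-rule computation above.
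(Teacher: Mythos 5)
Your proof is correct and follows essentially the same route as the paper: you establish the conjugation identity $Lu(x)=L^Q(u\circ Q)(Q^Tx)$ by an orthogonal change of variables, apply it to $P(\cdot\,e)_+^p$ using $Qe_n=e$, and read off the formula for $\Phi_e^I$, with the generalised-sense issue handled by noting that truncations on balls $B_R$ and polynomial correction terms are preserved under $Q$. The paper's proof is identical in substance; your remarks about the polynomial-growth bookkeeping are a slightly more explicit version of the paper's one-line comment.
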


	\begin{proof}		
	In general we have
	\begin{align*}
		Lu(x) &= \int_{\R^n} (u(x)-u(x+y))K(y)dy = \int_{\R^n}\big (u(QQ^Tx)-u(Q(Q^Tx+Q^Ty)\big)K(QQ^Ty)dy\\
		&=\int_{\R^n}\big(u\circ Q(Q^Tx) u\circ Q(Q^Tx+y)\big)K\circ Q(y)dy = L^{Q}(u\circ Q)(Q^Tx),
	\end{align*}
	where the kernel of the operator $L^{Q}$ is $K\circ Q$. To get the same equality for the equation for function with polynomial growth, we just use the above equality on every ball $B_R$.
	 
	Let now $Q$ be an orthogonal matrix whose last column equals $e$, meaning that we find an orthonormal basis of $\R^n$ so that the last vector is $e$. Then $Qxe = xQ^Te = x e_n$. So if we denote $u(x) = P(x)(xe)_+^p$ then $u\circ Q (x) = P\circ Q (x) (xe_n)_+^p.$ Since $Q$ is linear, $L^{Q}$ has the same properties as $L$ needed for computation of $L(P(x_n)_+^p)$, we conclude 
	$$L(P(xe)^p_+)(x) = L^{Q}  (P\circ Q (x_n)_+^p)(Q^Tx) = \Phi^Q_{e_n}(P\circ Q) (Q^Tx)\cdot (Q^Txe_n)^{p-2s} =q(x)(xe)^{p-2s}$$
	where $q$ is still polynomial, because pre-composition with linear map preserves polynomials. Therefore we have
	\begin{equation*}
	\Phi_e^I(P) = \Phi_{e_n}^Q(P\circ Q)\circ Q^T,
	\end{equation*} 
	as wanted.
\end{proof}
	
	We are now in position to state the result, which answers the question of the beginning of this subsection. The result is of great use when proving the main statement of the paper, Theorem \ref{1.1}.
	
	\begin{theorem}\label{surjectivityResult}
		Assume $L$ is an operator with kernel $K$ satisfying \eqref{kernelConditions}. Let $\Omega\subset \R^n$ be a domain with $\partial\Omega\cap B_1\in C^\beta$ and $p>\varepsilon_0$, $p-2s\not\in\N$, $p-s\not\in\N$. Let for every $z\in \partial\Omega\cap B_1$ we have a polynomial $Q_z\in \textbf{P}_{\lfloor \alpha\rfloor}$, for some $\alpha\leq\beta-1$, such that the coefficient in front of $x^\gamma$ is $C^{\alpha-|\gamma|}$ as a function in $z$ with
		$\left|\left|(Q_z)^{(\gamma)}\right|\right|_{C^{\alpha-|\gamma|}_z(\partial\Omega\cap B_1)}\leq C_0$. Let also $\lfloor \alpha \rfloor+\lfloor p-2s\rfloor\lor 0 < \beta-1.$
		
		Then there exists a polynomial $\tilde{Q}_z\in\textbf{P}_{\lfloor \alpha\rfloor},$ with  $\left|\left|(\tilde{Q}_z)^{(\gamma)}\right|\right|_{C^{\alpha-|\gamma|}_z(\partial\Omega\cap B_1)}\leq CC_0$, so that
		$$L(\tilde{Q}_z(\cdot-z)d^p) = Q_z(\cdot-z)d^{p-2s} + R_z+\eta_z,$$
		where the function $R_z\in C^{\beta-1-\langle p-2s\rangle - \textbf{1}_{(p<2s)}}(\overline{\Omega}\cap B_{1/2})$ and $\eta_z\in C^\beta(\overline{\Omega}\cap B_{1/2})$ with $|\eta_z(x)|\leq C |x-z|^{\lfloor \alpha\rfloor + 1}$.
	\end{theorem}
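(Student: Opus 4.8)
\emph{Strategy and the flat model.} The plan is to solve the analogous equation exactly in the flat (half-space) model by linear algebra, then run the resulting polynomial through the machinery of Section~3, and finally use an iterated blow-up at $z$ to certify that the polynomial it produces has the prescribed Taylor expansion at $z$, collecting the mismatch into $R_z$ and $\eta_z$. For the flat model, fix $z\in\partial\Omega\cap B_1$ with inner unit normal $\nu=\nu(z)$. By Corollary~\ref{computationFlat}, for the half-space $\{x\cdot e_n>0\}$ the assignment $P\mapsto q$ determined by $L\big(P\,(x_n)_+^p\big)=q\,x_n^{p-2s}$ is a degree-preserving linear map on $\textbf{P}_{\lfloor\alpha\rfloor}$, triangular in the monomial basis, and --- because $p-s\notin\N$ --- a bijection. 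By Lemma~\ref{orthogonalTransformationLemma}, for a general $\nu$ the corresponding map $\Phi_\nu$ (for $\{x\cdot\nu>0\}$) is obtained from the $e_n$-map of the rotated kernel $K\circ O$, with $O$ orthogonal and $Oe_n=\nu$, by conjugation with $O$; hence $\Phi_\nu\colon\textbf{P}_{\lfloor\alpha\rfloor}\to\textbf{P}_{\lfloor\alpha\rfloor}$ is again a degree-preserving triangular bijection whose matrix depends smoothly on $\nu$. I set $\tilde Q_z:=\Phi_{\nu(z)}^{-1}(Q_z)$.

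\emph{Regularity in $z$ and transfer to $\Omega$.} Since $\partial\Omega\cap B_1\in C^\beta$, the normal $z\mapsto\nu(z)$ is $C^{\beta-1}\subset C^\alpha$; as the entries of $\Phi_\nu^{-1}$ are smooth in $\nu$ and $\Phi_\nu$ preserves the degree, the degree-$d$ part of $\tilde Q_z$ is a combination of smooth functions of $\nu(z)$ with the degree-$d$ coefficients of $Q_z$, which are $C^{\alpha-d}$ in $z$; the Leibniz rule then gives $\big\|(\tilde Q_z)^{(\gamma)}\big\|_{C^{\alpha-|\gamma|}_z}\le CC_0$. I then apply Corollary~\ref{surjectivityLemma} (if $p\ge2s$) or Corollary~\ref{aboveCorollary} (if $\varepsilon_0<p<2s$) to $\tilde Q_z(\cdot-z)\,d^p$ --- the hypothesis $\lfloor\alpha\rfloor+\lfloor p-2s\rfloor^{+}<\beta-1$ is exactly what makes their assumptions hold --- obtaining
\[ L\big(\tilde Q_z(\cdot-z)\,d^p\big)=\varphi_z\,d^{p-2s}+R_z^{(0)}, \]
where $\varphi_z$ is a polynomial pre-composed with the boundary-flattening map, $\|\varphi_z\|_{C^\beta}\le C\|\tilde Q_z(\cdot-z)\|_{C^\beta}$, all its $z$-derivatives of bounded $C^\beta_z$ norm, and $R_z^{(0)}$ at least as regular as $C^{\beta-1-\langle p-2s\rangle-\textbf{1}_{(p<2s)}}(\overline\Omega\cap B_{1/2})$ (in fact a bit more).

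\emph{Identification and conclusion.} It remains to prove $T^{\lfloor\alpha\rfloor}_z\varphi_z=Q_z(\cdot-z)$, which I would establish inductively in the degree $k=0,1,\dots,\lfloor\alpha\rfloor$ by blow-ups at $z$, just as in the proof of Corollary~\ref{surjectivityLemma}: having matched the homogeneous parts of degree $<k$, one rescales at order $k+p-2s$ the equation above with the already-matched lower-order part of $\tilde Q_z(\cdot-z)\,d^p$ subtracted; the rescaled left-hand side tends to $0$ locally uniformly (since $k+p-2s<k+p$) and in the weighted sense, while --- by \cite[Lemma~3.5]{AR20}, the flat computation of Section~4 (after the rotation $O$), and the fact that $R_z^{(0)}$ is strictly more regular than the blow-up order (guaranteed by $\lfloor\alpha\rfloor+\lfloor p-2s\rfloor^{+}<\beta-1$) --- the rescaled right-hand side tends to $\big([\varphi_z]_k-[Q_z(\cdot-z)]_k\big)(x\cdot\nu)_+^{p-2s}$, where $[\cdot]_k$ denotes the degree-$k$ homogeneous part. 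The resulting generalized-sense identity $0=\big([\varphi_z]_k-[Q_z(\cdot-z)]_k\big)(x\cdot\nu)_+^{p-2s}$ forces $[\varphi_z]_k=[Q_z(\cdot-z)]_k$, completing the induction. Finally $g_z:=\varphi_z-Q_z(\cdot-z)\in C^\beta(\overline\Omega\cap B_{1/2})$ inherits the $C^\beta_z$-bounds and vanishes to order $\lfloor\alpha\rfloor+1$ at $z$, so $|g_z(x)|\le C|x-z|^{\lfloor\alpha\rfloor+1}$; splitting $g_z\,d^{p-2s}+R_z^{(0)}$ into the part of the stated H\"older regularity (taken as $R_z$) and the $C^\beta$ remainder $\eta_z$ carrying the pointwise bound from $g_z$ finishes the argument.

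\emph{Main obstacle.} The crux is the blow-up identification: the blow-up orders must be calibrated so that simultaneously the rescaled functions have a (here, zero) locally uniform limit, the limiting equation is the flat one so that the explicit Section~4 computation applies, and the H\"older remainder $R_z^{(0)}$ disappears --- and $\lfloor\alpha\rfloor+\lfloor p-2s\rfloor^{+}<\beta-1$ is precisely what reconciles these three requirements. A further subtlety is that $L$ is not rotation invariant, so every blow-up must be accompanied by the orthogonal change of variables of Lemma~\ref{orthogonalTransformationLemma}; and the $C^\beta_z$-bookkeeping has to be propagated through the smooth dependence of $\Phi_\nu^{-1}$ on $\nu(z)$ together with the $C^{\beta-1}$ regularity of $z\mapsto\nu(z)$.
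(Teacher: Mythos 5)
Your overall toolkit (the flat computation of Corollary \ref{computationFlat}, the rotation Lemma \ref{orthogonalTransformationLemma}, Corollary \ref{surjectivityLemma}, and blow-ups via \cite[Lemma 3.5]{AR20}) is the right one, but there is a genuine gap in the identification step, and it originates in your very first choice: setting $\tilde Q_z:=\Phi_{\nu(z)}^{-1}(Q_z)$, i.e.\ inverting only the \emph{flat-model} map. For a curved boundary, the map that actually matters is $\Phi_z(P):=T^{\lfloor\alpha\rfloor}_z(\varphi_z)$, where $L(P(\cdot-z)d^p)=\varphi_z d^{p-2s}+R_z$ on $\Omega$ itself, and this map is only \emph{block lower triangular} with respect to the grading by degree: the blow-up argument shows that for a monomial $(\cdot-z)^\gamma$ the Taylor coefficients of $\varphi_\gamma$ at $z$ of degree $<|\gamma|$ vanish and the degree-$|\gamma|$ part is the flat answer, but the coefficients of degree between $|\gamma|+1$ and $\lfloor\alpha\rfloor$ are genuinely nonzero curvature contributions (already for $P\equiv 1$ one has $\nabla\varphi_z(z)\neq 0$ in general, cf.\ Lemma \ref{LonDistance}, where $\varphi=c_p|\nabla d|^{2s}$ is not constant along a curved boundary). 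Consequently your inductive blow-up at order $k+p-2s$ does not give $[\varphi_z]_k=[Q_z(\cdot-z)]_k$: after subtracting the lower-degree part of $\tilde Q_z(\cdot-z)d^p$, the limit identity reads $[\varphi_z]_k=\Phi_{\nu(z)}\big([\tilde Q_z]_k\big)+\big[\varphi_z^{<k}\big]_k$, and the second term (the degree-$k$ Taylor part of the output of the lower-degree monomials) does not vanish and cannot be absorbed into $\eta_z$, since $\eta_z$ must vanish to order $\lfloor\alpha\rfloor+1$ at $z$. So with your $\tilde Q_z$ the conclusion fails as soon as $\lfloor\alpha\rfloor\geq 1$, which is exactly the range needed in the applications.

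The repair is the paper's proof: define $\Phi_z$ on all of $\textbf{P}_{\lfloor\alpha\rfloor}$ through the curved-domain evaluation (Corollary \ref{surjectivityLemma}) followed by Taylor truncation at $z$, use your blow-up scheme only to prove that $\Phi_z$ is block lower triangular with the flat maps $\Phi_{\nu(z)}$ (which are invertible since $p-s\notin\N$) as diagonal blocks, conclude that $\Phi_z$ is bijective, and set $\tilde Q_z:=\Phi_z^{-1}(Q_z)$, with $\eta_z:=\varphi_z-T^{\lfloor\alpha\rfloor}_z(\varphi_z)$. Equivalently, you may keep your flat inverse but must then correct $\tilde Q_z$ degree by degree (a Gauss elimination that is precisely the inversion of the triangular $\Phi_z$). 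Note also that the regularity in $z$ of the coefficients of $\tilde Q_z$ is then obtained from the triangular structure: the entries of $\Phi_z$ in the $\gamma$-th row are $C^{\beta-|\gamma|}_z$ because $\partial^\gamma\varphi_z(z)$ is, and the inverse of a triangular matrix inherits this; your argument via smooth dependence of $\Phi_\nu^{-1}$ on $\nu$ alone does not see the off-diagonal entries at all.
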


	\begin{proof}
		We analyse the map $\phi_z\colon \textbf{P}_{\lfloor \alpha\rfloor}\to\textbf{P}_{\lfloor \alpha\rfloor},$ defined as $\Phi_z(P) = q$, when $L(P(\cdot-z)d^p) = \varphi_z d^{p-2s}+ R_z$, as before (see Corollary \ref{surjectivityLemma}) and $q = T^{\lfloor \alpha\rfloor}_z(\varphi_z).$ The map $\Phi_z$ is linear. 
		
		Let us show that $\Phi_z$ is surjective. Therefore we choose a multi-index $\gamma$ of order less or equal than $\lfloor\alpha\rfloor$. 
		With straight forward computation we see that
		$$L((\cdot-z)^\gamma d^p)(x) = L((\cdot)^\gamma d^p_z)(x-z) = \varphi_\gamma(x-z) d^p_z(x-z)+R_\gamma(x-z),$$
		where $d^p_z(x) = d^p(x+z).$ We want to perform a blow-up of the function $(x)^\gamma d^p_z(x)$, to get the $|\gamma|-$th Taylor polynomial of $\varphi_\gamma.$ We use \cite[Lemma 3.5]{AR20}. 
		
		Define $u_r(x) = \frac{1}{r^{|\gamma|+p}}u(rx)$, where $u(x) = x^\gamma d_z^p(x)$. We have the $C^p_\loc (\R^n)$ convergence of $u_r \to u_0$, for $u_0 = x^\gamma (x\nu_z)^p.$ 
		Together with the estimate $|u_r(x)|\leq C |x|^{|\gamma|}|x|^p$, we get all the assumptions of the lemma, with $k' = \lceil |\gamma|+p\rceil$.
		True: $u_r\to u_0$ in $L^\infty_\loc$ follows from the convergence above and $\int_{\R^n } \frac{|u_r|}{1+|x|^{n+2s+k}}<C$ independently or $r$, due to the growth estimate. Consequently, $\int_{\R^n}\frac{|u_r-u_0|}{1+|x|^{n+2s+k}} \to 0$ follows from both the growth estimate and the convergence; we obtain it with splitting the integral on $B_R$ and the complement. On $B_R$ we have uniform convergence, on the complement the integral is small.   
		Taking into account the right-hand side as well, we get 
		$$L(u_r)(x) = \frac{1}{r^{|\gamma|}} \varphi_\gamma(rx) \frac{1}{r^{p-2s}} d^{p-2s}(rx) + \frac{1}{r^{|\gamma|+p}} R_\gamma(rx).$$
		So for suitable rescaled Taylor polynomials of $R_\gamma$ of order $\lceil|\gamma|+p-2s\rceil$, we get that $L(u_r)-P_r$ converges in $L^\infty_\loc(\R^n)$, similarly as in the proof of Corollary \ref{surjectivityLemma}.
		
		The lemma gives that $L(u_0) \stackrel{k}{=} T_0^{|\gamma|}(\varphi_\gamma) (x\nu_z)_+^{p-2s}$, where $\nu_z$ is the unit normal to $\partial\Omega$ at $z$. Now we use Corollary \ref{computationFlat} and Lemma \ref{orthogonalTransformationLemma}, to get that
		$$T^{|\gamma|}_0(\varphi_\gamma) = \phi^I_{\nu_z}(x^\gamma) = \Phi^Q_{e_n}(\varphi_\gamma\circ Q)\circ Q^T,$$
		for any orthogonal matrix $Q$ mapping $e_n$ into $\nu_z$.
		Since $T_0^{|\gamma|}(\phi_\gamma)$ is the projection of $\Phi_z(x^\gamma)$ on $\textbf{P}_{|\gamma|}$, we deduce, that $\Phi_z$ is in some basis a lower triangular operator (with ordering the monomials first by homogeneity and then lexicographically), and surjective (due to the assumption $p-s\not\in\N$) - and hence bijective, since the dimensions of the domain and codomain agree. 
		
		Let us now turn to the regularity of entries of $\Phi_z$. Since $P(x-z)$ is a polynomial in $x$, all its derivative are $C^\beta$ smooth in $z$. Therefore, Corollary \ref{surjectivityLemma} renders that when $L(P(\cdot-z)d^p) = \varphi_z d^{p-2s}+R_z$, the derivatives of $\varphi_z$ at any point are $C^\beta$ in $z$ as well. Since $\varphi_z$ is a $C^\beta$ function, then $\partial^\gamma \varphi_z(z)$ is $C^{\beta-|\gamma|}$ regular in $z$. Therefore the coefficient of $\Phi_z$ in the $\gamma$-th row is $C^{\beta-|\gamma|}_z$. Now we define the inverse map, 
		$$\Psi_z := \Phi_z^{-1},$$
		which has the same properties, since $\Phi_z$ is a lower triangular and surjective. Note that the entry of inverse of the lower diagonal matrix depends only on the entries which lie in the rows of lower or equal index in the original matrix.
		
		Finally, take $Q_z$ as in the assumptions. We define $\tilde{Q}_z := \Psi_z(Q_z)$ and $\eta_z := \varphi_z - T^{\lfloor\alpha\rfloor}_z(\varphi_z),$ where
		$$L(\tilde{Q}_z(\cdot -z)d^p) = \varphi_z d^{p-2s}+R_z,$$
		which proves the claim.
	\end{proof}

	\subsection{Regularity estimates}
	In the second part of this section, we establish some regularity results for equations with polynomial growth, where the right-hand side explodes at the boundary. The ideas are  taken from the regularity results in \cite{RS16,AR20,AuR20}.
	
	The strategy for treating functions with polynomial growth is often through cut-off. Then we have a term which is compactly supported and another one which grows at infinity, but vanishes around the point of evaluation. We start with the cut-off lemma, which is a generalisation of \cite[Lemma 3.6]{AR20}. 

	\begin{lemma}\label{generalised3.6}
		Assume $U\subset B_1$ is a $C^{\beta}$ domain with $\beta>1$. For some $k\in\N$ let the kernel $K$ of operator $L$ be $C^{k+1}(S^n)$ and satisfy condition \eqref{kernelConditions}. Let $u$ be the solution of 
		$$\left\lbrace \begin{array}{rcl l}
		Lu&\stackrel{k}{=}&f& \text{in }U,\\
		\displaystyle\left|\left|\frac{u}{1+|\cdot|^{k+s+\alpha}}\right|\right|_{L^\infty(\R^n)}&\leq &C&\text{with }\alpha<s,
		\end{array}\right. $$
		with $|f|\leq K_0 d^{\varepsilon-2s}$, for some $\varepsilon\in (0,s)$. 
		
		Then the function defined as 
		$$\tilde{u}:= u\chi_{B_2},$$
		satisfies $L\tilde{u}=\tilde{f}$, with
		$$||\tilde{f}d^{2s-\varepsilon}||_{L^\infty(U)}\leq C\left(||fd^{2s-\varepsilon}||_{L^\infty(U)} + \left|\left|\frac{u}{1+|\cdot|^{k+s+\alpha}}\right|\right|_{L^\infty(\R^n)} \right).$$
	\end{lemma}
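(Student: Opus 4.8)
Proof plan for Lemma \ref{generalised3.6}.

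\medskip

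The plan is to decompose $u=\tilde u+v$ with $\tilde u=u\chi_{B_2}$ and $v=u-\tilde u$, which is supported outside $B_2$. Since $U\subset B_1$, the tail $v$ vanishes on a neighbourhood of $\overline U$, and $\tilde f:=L\tilde u$ is a well-defined function on $U$ (using interior regularity of $u$; moreover $\tilde u$ is bounded, with $\|\tilde u\|_{L^\infty(\R^n)}\le CM$ where $M:=\|u/(1+|\cdot|^{k+s+\alpha})\|_{L^\infty(\R^n)}$, and compactly supported). First I would use linearity of $L$ in the generalised sense: writing $L(u\chi_{B_R})=f_R+p_R$ with $p_R\in\textbf{P}_{k-1}$ and $f_R\to f$ uniformly on $U$, and noting that for $x\in U$ the function $u\chi_{B_R\setminus B_2}$ vanishes near $x$, so that $L(u\chi_{B_R\setminus B_2})(x)=-\int_{B_R\setminus B_2}u(z)K(z-x)\,dz$, I obtain
$$\tilde f(x)=f_R(x)+p_R(x)+\int_{B_R\setminus B_2}u(z)K(z-x)\,dz,\qquad x\in U,\ R>2.$$

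Next I would Taylor–expand $z\mapsto K(z-x)$ in the variable $x$ at $0$ to order $k$, namely $K(z-x)=\sum_{|\gamma|\le k-1}\tfrac{(-1)^{|\gamma|}}{\gamma!}(\partial^\gamma K)(z)\,x^\gamma+E_k(z,x)$, using $K\in C^{k+1}(\S^{n-1})$ and the homogeneity of $K$ to get $|E_k(z,x)|\le C|x|^k|z|^{-n-2s-k}$ for $|z|\ge2$, $|x|\le 1$. Substituting, the Taylor part contributes a polynomial in $x$ of degree $\le k-1$, while the remainder contributes $h_R(x)=\int_{B_R\setminus B_2}u(z)E_k(z,x)\,dz$, whose integrand is bounded by $CM|x|^k|z|^{-n-(s-\alpha)}$; since $s-\alpha>0$, dominated convergence gives $h_R\to h$ uniformly on $B_1$ with $|h(x)|\le CM|x|^k\le CM$ there. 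As $\tilde f$ is independent of $R$ while $f_R\to f$ and $h_R\to h$, the polynomial parts must converge as well, and we conclude that on $U$
$$\tilde f=f+h+P\qquad\text{for some }P\in\textbf{P}_{k-1}.$$

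The hard part is to control $P$, which is not bounded a priori: the polynomials $p_R$ and the $x^\gamma$–coefficients $\int_{B_R\setminus B_2}u(z)(\partial^\gamma K)(z)\,dz$ individually diverge as $R\to\infty$, and only their sum stays bounded. Here I would invoke interior regularity: from $Lu\stackrel{k}{=}f$ with $f$ bounded away from $\partial U$ one gets $u\in C^{2s+\varepsilon}_{\loc}(U)$, and the interior estimate for solutions with polynomial growth (cf. \cite[Section 3]{AR20}) gives, on a fixed ball $B_\delta(x_*)\Subset U$ with $\delta,x_*$ depending only on $U$ (so that $d\ge c_0>0$ there and hence $\|f\|_{L^\infty(B_{2\delta}(x_*))}\le CK_0$ with $K_0:=\|f\,d^{2s-\varepsilon}\|_{L^\infty(U)}$), the bound $\|u\|_{C^{2s+\varepsilon}(B_\delta(x_*))}\le C(K_0+M)$. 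Classical pointwise estimates for the nonlocal operator, splitting $L\tilde u(x)$ at scale $\delta$ and using evenness of $K$ for the lower–order Taylor terms in the principal value and $\|\tilde u\|_{L^\infty(\R^n)}\le CM$ for the tail, then yield $|\tilde f|=|L\tilde u|\le C(K_0+M)$ on $B_{\delta/2}(x_*)$. Since $|f|,|h|\le C(K_0+M)$ there as well, $\|P\|_{L^\infty(B_{\delta/2}(x_*))}\le C(K_0+M)$, and because all norms on the finite–dimensional space $\textbf{P}_{k-1}$ are equivalent, $\|P\|_{L^\infty(B_1)}\le C(K_0+M)$.

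Finally, on $U$ we have $|\tilde f|\le|f|+|h|+|P|\le K_0\,d^{\varepsilon-2s}+CM+C(K_0+M)$, and multiplying by $d^{2s-\varepsilon}$ and using $2s-\varepsilon>0$ together with $d\le\operatorname{diam}(U)\le C$ (so $d^{2s-\varepsilon}\le C$ on $U$) gives
$$\|\tilde f\,d^{2s-\varepsilon}\|_{L^\infty(U)}\le C(K_0+M)=C\Big(\|f\,d^{2s-\varepsilon}\|_{L^\infty(U)}+\big\|u/(1+|\cdot|^{k+s+\alpha})\big\|_{L^\infty(\R^n)}\Big),$$
which is the claimed estimate. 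The main obstacle throughout is the spurious polynomial $P$ produced by the generalised evaluation of $L$ on the growing tail; everything else is a routine combination of the Taylor expansion of the kernel with interior Schauder-type estimates.
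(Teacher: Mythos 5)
Your decomposition $u=\tilde u+u\chi_{B_2^c}$ and your treatment of the tail (Taylor expansion of the kernel in $x$, absolute convergence thanks to $\alpha<s$, so that $L(u\chi_{B_R\setminus B_2})$ contributes a uniformly convergent bounded part plus a polynomial of degree $\le k-1$) is in substance exactly Lemma \ref{existance} and Remark \ref{existenceRemark}, which is also how the paper begins; up to the identity $\tilde f=f+h+P$ on $U$ your argument is fine.

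The gap is in how you bound the stray polynomial $P$. You claim that, since $f$ is bounded away from $\partial U$, interior estimates give $\|u\|_{C^{2s+\varepsilon}(B_\delta(x_*))}\le C(K_0+M)$, and you then estimate $L\tilde u$ pointwise on $B_{\delta/2}(x_*)$ by the usual splitting of the singular integral. But the hypothesis on $f$ is only the pointwise bound $|f|\le K_0 d^{\varepsilon-2s}$, with no H\"older continuity; interior regularity for $Lu\stackrel{k}{=}f$ with merely bounded right-hand side gives at most $C^{2s}$ (the analogue of \cite[Corollary 3.6]{RS16b}), never $C^{2s+\varepsilon}$. At exactly $C^{2s}$ regularity the principal value defining $L\tilde u(x)$ need not converge absolutely, so the pointwise bound $|L\tilde u|\le C(K_0+M)$ on the interior ball — and with it $\|P\|_{L^\infty}\le C(K_0+M)$ — is unjustified as written. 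The step can be repaired without pointwise evaluation, for instance by testing $L\tilde u=f+h+P$ against finitely many $\varphi\in C^\infty_c(B_{\delta/2}(x_*))$ dual to a basis of $\textbf{P}_{k-1}$ and using $\left|\int\tilde u\,L\varphi\right|\le\|\tilde u\|_{L^\infty}\|L\varphi\|_{L^1(\R^n)}\le CM$ together with $|f|\le CK_0$ on that ball; but this is a different argument from the one you wrote.

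For comparison, the paper bounds $P$ by another mechanism entirely: it splits $\tilde u=u_1+u_2+u_3$ into solutions of three Dirichlet problems (interior right-hand sides $f-\hat f$, $0$ with exterior datum $\tilde u$, and $P$), controls $u_1$ in $L^\infty$ by a barrier-type estimate for right-hand sides bounded by $d^{\varepsilon-2s}$, controls $u_2$ by the maximum principle, and then invokes the inverse estimate \cite[Lemma 3.7]{AR20}, which bounds $\|P\|_{L^\infty(U)}$ by the sup norm of the corresponding solution; no interior pointwise evaluation of $L\tilde u$, and hence no regularity of $u$ beyond the hypotheses, is ever used.
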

	\begin{proof}
		We compute
		$$L\tilde{u}\stackrel{k}{=}Lu-L\hat{u},$$
		for $\hat{u}:=u\chi_{B_2^c}$. From Lemma \ref{existance}, and Remark \ref{existenceRemark}, we know that  $L\hat{u}\stackrel{k}{=}\hat{f}$, for some $\hat{f}\in C^{k+1}(U),$ satisfying
		$$||\hat{f}||_{L^\infty(U)}\leq C \left|\left|\frac{u}{1+|\cdot|^{k+s+\alpha}}\right|\right|_{L^\infty(\R^n)}.$$
		Therefore,
		$$L\tilde{u}\stackrel{k}{=}f-\hat{f}.$$ 
		But since $\tilde{u}$ is compactly supported it follows from the definition of evaluation of $L$ in the generalised sense, that 
		$$L\tilde{u} = f-\hat{f}-P,$$
		for some polynomial $P$ of degree $k$.
		
		Now, we split $\tilde{u} = u_1+u_2+u_3$, so that 
		$$\left\lbrace \begin{array}{rl l}
		L(u_1)=&f-\hat{f}& \text{in }U,\\
		u_1=&0&\text{in } U^c,
		\end{array}\right.  \quad 
		\left\lbrace \begin{array}{rl l}
		L(u_2)=&0& \text{in }U,\\
		u_2=&\tilde{u}&\text{in } U^c,
		\end{array}\right.\quad\text{and}\quad
		\left\lbrace \begin{array}{rl l}
		L(u_3)=&P& \text{in }U,\\
		u_3=&0&\text{in } U^c.
		\end{array}\right.
		$$
		The existence of $u_2$ and $u_3$ is provided, and then we define $u_1 = \tilde{u}-u_2-u_3$. By the above proposition %(maybe do it a bit better - when the function is $0$ outside, you really get the bound $|u|\leq ||fd^{2s-\varepsilon}|| d^{\varepsilon}$), 
		we have 
		$$||u_1||_{L^{\infty}(U)}\leq C_U(||fd^{2s-\varepsilon}||_{L^\infty(U)} +  ||\hat{f}||_{L^\infty(U)}).$$
		By standard elliptic estimates,
		$$||u_2||_{L^{\infty}(U)}\leq||\tilde{u}||_{L^{\infty}(\R^n)},$$
		and so applying \cite[Lemma 3.7]{AR20}, we get
		\begin{align*}
			 ||P||_{L^\infty(U)}\leq C||u_1||_{L^\infty(U)}&\leq ||\tilde{u}||_{L^\infty(U)}+||u_1||_{L^\infty(U)}+||u_2||_{L^\infty(U)} \\
			&\leq C\left(||fd^{2s-\varepsilon}||_{L^\infty(U)} + \left|\left|\frac{u}{1+|\cdot|^{k+s+\alpha}}\right|\right|_{L^\infty(\R^n)} \right).
		\end{align*}
				
		Hence defining $\tilde{f}:= f-\hat{f}-P$, the lemma is proven.
	\end{proof}
	
	This immediately gives the boundary regularity result with which we conclude this section.
	
	\begin{lemma}\label{boundaryRegGeneralised}
		Let $L$ be an operator whose kernel $K$ satisfies \eqref{kernelConditions}. Let $\Omega$ be a domain of class $C^\beta$, $\beta>1$. Let for some $k\in\N$ a function $u$ be the solution to 
		$$\left\lbrace \begin{array}{rcl  l}
		Lu&\stackrel{k}{=}&f& \text{in }\Omega\cap B_1,\\
		u&=&0&\text{in } B_1\backslash \Omega,\\
		\displaystyle\left|\left|\frac{u}{1+|\cdot|^{k+s+\alpha}}\right|\right|_{L^\infty(\R^n)}&\leq& C&\text{with }\alpha<s,
		\end{array}\right. $$
		with $|f|\leq C d^{\varepsilon-2s}$ and $\varepsilon\in(0,s)$.
		
		Then we have
		$$||u||_{C^\varepsilon(\overline{B}_{1/2})}\leq C\left(||fd^{2s-\varepsilon}||_{L^\infty(U)} + \left|\left|\frac{u}{1+|\cdot|^{k+s+\alpha}}\right|\right|_{L^\infty(\R^n)} \right),$$
		where $C$ does not depend on $u$.
	\end{lemma}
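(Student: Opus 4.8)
The plan is to reduce, via the cut-off Lemma \ref{generalised3.6}, to the known $C^\varepsilon$ up-to-the-boundary estimate for solutions of the nonlocal Dirichlet problem that are honestly defined on all of $\R^n$ and vanish outside the domain.

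First I would set $U:=\Omega\cap B_1$ and $\tilde u:=u\chi_{B_2}$, and invoke Lemma \ref{generalised3.6}. This replaces the generalised-sense equation by an honest one, $L\tilde u=\tilde f$ in $U$, with
$$\|\tilde f\,d^{2s-\varepsilon}\|_{L^\infty(U)}\ \le\ C\left(\|f\,d^{2s-\varepsilon}\|_{L^\infty(U)}+\left\|\frac{u}{1+|\cdot|^{k+s+\alpha}}\right\|_{L^\infty(\R^n)}\right),$$
and in particular $|\tilde f|\le C\,d^{\varepsilon-2s}$ in $U$. Moreover $\tilde u$ is now compactly supported, it coincides with $u$ on $B_1$ (so $\tilde u=0$ in $B_1\setminus\Omega$), and, since $1+|\cdot|^{k+s+\alpha}$ is bounded on $B_2$, we have
$$\|\tilde u\|_{L^\infty(\R^n)}=\|u\|_{L^\infty(B_2)}\le C\left\|\frac{u}{1+|\cdot|^{k+s+\alpha}}\right\|_{L^\infty(\R^n)}.$$

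Next I would apply the $C^\varepsilon$ boundary estimate for the Dirichlet problem in $C^{1,\alpha}$ domains (as in \cite{RS16}), whose hypotheses are now in place: $\tilde u$ is bounded and vanishes in $B_1\setminus\Omega$, $\Omega$ is of class $C^\beta\subset C^{1,\alpha}$ for some $\alpha\in(0,1)$, and $|\tilde f|\le C\,d^{\varepsilon-2s}$ with $\varepsilon\in(0,s)$. This gives
$$\|\tilde u\|_{C^\varepsilon(\overline{B}_{1/2})}\ \le\ C\left(\|\tilde f\,d^{2s-\varepsilon}\|_{L^\infty(U)}+\|\tilde u\|_{L^\infty(\R^n)}\right).$$
Since $u=\tilde u$ on $B_1$, the left-hand side equals $\|u\|_{C^\varepsilon(\overline{B}_{1/2})}$, and inserting the two bounds from the previous paragraph yields exactly the asserted inequality.

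The proof is essentially bookkeeping; the only point deserving a little care is that $U=\Omega\cap B_1$ is not literally a $C^\beta$ domain, because of the artificial corner where $\partial\Omega$ meets $\partial B_1$. I would handle this in the standard way --- running the argument in a slightly smaller ball $B_{1-\delta}$, localizing near $\partial\Omega$ (where the boundary is genuinely $C^\beta$) and using interior estimates away from it, and absorbing the resulting $\|u\|_{L^\infty(B_1)}$ term, itself controlled by the weighted norm, into the right-hand side. I do not expect any genuine obstacle beyond this: the one delicate analytic ingredient, the weighted $L^\infty$ control of the tail $L(u\chi_{B_2^c})$ near $\partial\Omega$, is exactly what Lemma \ref{generalised3.6} already supplies.
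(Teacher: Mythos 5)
Your argument follows exactly the same route as the paper's proof: truncate $u$ to $\tilde u=u\chi_{B_2}$, use Lemma \ref{generalised3.6} to control $\tilde f$, and then invoke a boundary H\"older estimate up to $\partial\Omega$. The only substantive problem is the reference for that last, decisive step. You attribute the needed $C^\varepsilon$ boundary estimate to \cite{RS16}, but the relevant estimate there (Proposition 3.1) lives in a different singularity regime: it assumes $|f|\le K_0\,d^{a-s}$ with $a\in(0,s)$ and yields the \emph{optimal} $C^s$ regularity. Here, instead, we have the strictly more singular $|f|\le C\,d^{\varepsilon-2s}$ with $\varepsilon\in(0,s)$ (so the exponent lies in $(-2s,-s)$, outside the $d^{a-s}$ range), and the corresponding conclusion is only $C^\varepsilon$ with $\varepsilon<s$; \cite{RS16} does not contain this statement. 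The paper uses \cite[Theorem 1.1]{AuR20}, noting in a footnote that although that theorem is stated with zero right-hand side, its proof goes through with $|f|\le C d^{\varepsilon-2s}$. So your proof needs exactly one repair: replace the invocation of \cite{RS16} by \cite[Theorem 1.1]{AuR20} (with the remark about nonzero $f$). Your closing caveat about $\Omega\cap B_1$ not being a global $C^\beta$ domain is sensible but harmless; the \cite{AuR20} estimate is already formulated for the local Dirichlet problem in $\Omega\cap B_1$ with conclusion in $B_{1/2}$, so no extra localization is needed.
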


	\begin{proof}
		We define $\tilde{u} := u\chi_{B_2}$. It suffices to prove the claim for $\tilde{u}$, since functions agree on $B_{1/2}$. By \cite[Theorem 1.1]{AuR20},\footnote{Even though the result is stated with the right-hand side equal to zero, the proof is done under the assumptions we have here.} we get 
		$$||u||_{C^\varepsilon(B_{1/2})}\leq \left(||\tilde{f}d^{2s-\varepsilon}||_{L^\infty(\Omega\cap B_1)} + ||\tilde{u}||_{L^\infty(\R^n)}\right),$$
		and by Lemma \ref{generalised3.6}, we get the result.
	\end{proof}

	\section{Expansion results}
	
	In this section we investigate how well can functions be approximated with the generalised distance function, depending on the regularity of the right-hand side and the regularity of the boundary of the domain. We sort the results according to the increasing regularity of the boundary.  
	
	The proofs are all done in the same way. We perform a blow-up with compactness argument. First proofs are done with all the details, and in the later ones we just explain what are the differences from the previous cases.
	
	We start with a result, similar to \cite[Proposition 3.2]{RS16}. 
	
	\begin{lemma}\label{1function1.2}
		Let $\alpha= \varepsilon_0 + \varepsilon \in(0,s)$, and $\Omega$ be a $C^{1,\alpha}$ domain. Let $0\in\partial \Omega,$ and let $\partial\Omega\cap B_1$ be a graph of some $C^{1,\alpha}$ function of $C^{1,\alpha}$ norm less than $1$. 
		
		Let $L$ be an operator with kernel $K$ satisfying \eqref{kernelConditions}. Suppose that for every $z\in\partial\Omega\cap B_{3/4}$ there is a function $g_z$, so that 
		$$\left\lbrace \begin{array}{r c l l}
		L(u-g_z)&=&f_z,& \text{in }\Omega\cap B_1\\
		u-g_z&=&0,&\text{in } B_1\backslash \Omega,
		\end{array}\right. $$
		whit $f_z$ satisfying $|f_z(x)|\leq C_0 d^{\varepsilon_0 - s}|x-z|^{\varepsilon}$ in $\Omega\cap B_1$. We set $K_0 = C_0 + \sup_z||u-g_z||_{L^\infty(\R^n)}<\infty$. 
		
		Then, for every $z\in\partial\Omega\cap B_{1/2}$ there exists a constant $Q_z$ satisfying $|Q_z|\leq CK_0$ so that 
		$$|u(x)-g_z(x)-Q_zd^s(x)|\leq CK_0|x-z|^{\alpha + s}.$$
		
		Moreover, whenever $d(x_0) = |x_0-z|=2r$, we have $$\left[u-g_z-Q_zd^s \right]_{C^{2s}(B_r(x_0))} \leq CK_0 r^{\alpha-s}$$
		and if $d(x_1) = 2r_1 = |x_1-z_1|$, we have
		$$\left[u-g_z-Q_zd^s \right]_{C^{\alpha+s}(B_{r_1}(x_1))} \leq CK_0 \left(\frac{|x_1-z|}{r_1}\right)^{\alpha+s}.$$
		The constant $C$ depends only on $n,s,\alpha$ and ellipticity constants.
	\end{lemma}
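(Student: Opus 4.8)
The plan is to run the contradiction--compactness (blow-up) scheme of \cite[Proposition 3.2]{RS16}, adapted to the generalized distance function and to the singular right-hand side allowed here. After a translation we may assume $z=0$, and set $v:=u-g_0$, so that $Lv=f_0$ in $\Omega\cap B_1$, $v=0$ in $B_1\setminus\Omega$, $\|v\|_{L^\infty(\R^n)}\le K_0$, and $|f_0(x)|\le C_0\,d^{\varepsilon_0-s}(x)\,|x|^{\varepsilon}$; by homogeneity we normalize $K_0=1$. For $r\in(0,\tfrac12]$ let $Q_r\in\R$ minimize $Q\mapsto\|v-Q\,d^s\|_{L^\infty(B_r)}$. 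Everything reduces to the \emph{decay estimate}
$$\|v-Q_r d^s\|_{L^\infty(B_r)}\le C\,r^{\alpha+s},\qquad 0<r\le\tfrac12 .$$
Granting it, evaluation of $Q_r d^s-Q_{r/2}d^s$ at a point $x_r\in B_{r/2}$ with $d(x_r)\sim r$ gives $|Q_r-Q_{r/2}|\le C r^{\alpha}$, hence $Q_r\to Q_0$ as $r\to0$ with $|Q_0|\le C$ and $|Q_0-Q_r|\le Cr^{\alpha}$; combining with the decay estimate and taking $r=2|x|$ yields $|v(x)-Q_0 d^s(x)|\le C|x|^{\alpha+s}$, which is the pointwise expansion with $Q_z:=Q_0$.

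To prove the decay estimate I argue by contradiction. Suppose it fails along a sequence of $C^{1,\alpha}$ domains $\Omega_k$ (with graph norm $<1$), operators $L_k$ with kernels obeying \eqref{kernelConditions} and the ellipticity constants, and functions $v_k,f_k$ as above with $K_0=1$. Since $|f_0|\le C_0 d^{\varepsilon_0-s}\le C_0 d^{\varepsilon_0-2s}$ on $B_1$ with $\varepsilon_0\in(0,s)$, Lemma~\ref{boundaryRegGeneralised} gives a uniform bound $\|v_k\|_{C^{\varepsilon_0}(\overline B_{1/2})}\le C$, which together with interior Schauder-type estimates for $L_k$ supplies compactness. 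Failure of the estimate means that the nonincreasing quantities $\theta_k(r):=\sup_{r'\ge r}(r')^{-\alpha-s}\|v_k-Q_{k,r'}d_k^s\|_{L^\infty(B_{r'})}$ are unbounded; a diagonal argument produces scales $\rho_k\to0$ at which $\theta_k$ is essentially attained, and one sets
$$w_k(x):=\frac{\big(v_k-Q_{k,\rho_k}d_k^s\big)(\rho_k x)}{\rho_k^{\alpha+s}\,\theta_k(\rho_k)} .$$
By near-optimality of $\rho_k$ one has $\|w_k\|_{L^\infty(B_1)}\sim 1$, while the monotonicity of $\theta_k$ together with the scale-by-scale optimality of $Q_{k,r}$ yields the growth control $\|w_k\|_{L^\infty(B_R)}\le C R^{\alpha+s}$ for $R\ge1$. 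Finally, writing $V_k:=v_k-Q_{k,\rho_k}d_k^s$ and using Lemma~\ref{LonDistance} with $\varepsilon=0$ --- which gives $|L_k(d_k^s)|\le C d_k^{\alpha-s}$ since $\alpha<s$, hence $\le C d_k^{\varepsilon_0-s}|x|^{\varepsilon}$ near the boundary because $d_k\le C|x|$ --- the equation satisfied by $w_k$ has right-hand side bounded by $C\,\theta_k(\rho_k)^{-1}\,\tilde d_k^{\,\varepsilon_0-s}(x)\,|x|^{\varepsilon}$, where $\tilde d_k$ is the generalized distance of $\Omega_k/\rho_k$; the powers of $\rho_k$ cancel exactly (this is precisely what pins down the exponent $\alpha+s$), and since $\theta_k(\rho_k)\to\infty$ it tends to $0$ locally uniformly away from the boundary.

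Letting $k\to\infty$, the rescaled domains converge to a half-space $\{x\cdot\nu>0\}$, the kernels to an admissible limit kernel, and by the uniform $C^{\varepsilon_0}$ bound up to the boundary together with the interior estimates, $w_k\to w$ locally uniformly, with $w=0$ outside the half-space, $L_\infty w=0$ inside, and $\|w\|_{L^\infty(B_R)}\le CR^{\alpha+s}$; since $\alpha+s<2s$, the Liouville theorem in a half-space \cite[Theorem 3.10]{AR20} forces $w=c\,(x\cdot\nu)_+^s$ for some constant $c$. On the other hand the optimality of $Q_{k,\rho_k}$ passes to the limit, so $0$ minimizes $t\mapsto\|w-t(x\cdot\nu)_+^s\|_{L^\infty(B_1)}=|c-t|\,\|(x\cdot\nu)_+^s\|_{L^\infty(B_1)}$, which forces $c=0$ and hence $w\equiv0$, contradicting $\|w\|_{L^\infty(B_1)}\sim1$. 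This proves the decay estimate, hence the pointwise expansion, with the constant uniform in $z\in\partial\Omega\cap B_{1/2}$ (the translated domains $\Omega-z$ keep the same $C^{1,\alpha}$ bound). I expect the main obstacle to be exactly the growth bound $\|w_k\|_{L^\infty(B_R)}\le CR^{\alpha+s}$, i.e. the simultaneous bookkeeping of the monotonicity of $\theta_k$ and the scale-by-scale optimality of the subtracted multiple of $d_k^s$; the Liouville step and the stability of the equations under the limit are then routine.

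For the two seminorm bounds one rescales and invokes interior estimates for $L$ applied to $v-Q_z d^s$, which solves an equation with right-hand side $f_0-Q_z L(d^s)$ controlled by Lemma~\ref{LonDistance} and by $|Q_z|\le C$. If $d(x_0)=|x_0-z|=2r$, then on $B_r(x_0)$ the oscillation of $v-Q_z d^s$ is $\le Cr^{\alpha+s}$ by the pointwise expansion, while the right-hand side is $\le Cr^{\varepsilon_0-s+\varepsilon}=Cr^{\alpha-s}$; rescaling $B_r(x_0)$ to the unit ball and applying the interior estimate gives $[\,v-Q_z d^s\,]_{C^{2s}(B_r(x_0))}\le C r^{\alpha+s-2s}=Cr^{\alpha-s}$. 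If instead $d(x_1)=2r_1$ and $R_1:=|x_1-z|$ (with $2r_1\le C R_1$), then on $B_{r_1}(x_1)$ the oscillation is $\le CR_1^{\alpha+s}$ and, after multiplying by $r_1^{2s}$, the rescaled right-hand side is $\le Cr_1^{\varepsilon_0+s}R_1^{\varepsilon}\le C R_1^{\alpha+s}$ (using $r_1\le C R_1$ and $\varepsilon_0+s>0$); the interior $C^{\alpha+s}$ estimate (legitimate since $\alpha+s<2s$) then yields $[\,v-Q_z d^s\,]_{C^{\alpha+s}(B_{r_1}(x_1))}\le C(R_1/r_1)^{\alpha+s}$, which is the claimed bound.
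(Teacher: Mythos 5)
Your proposal follows essentially the same route as the paper's proof: the contradiction/blow-up scheme of \cite[Proposition 3.2]{RS16} with a subtracted multiple of $d^s$, growth control by dyadic telescoping of the $Q_r$'s against the monotone quantity $\theta$, a Liouville classification $w=c(x\cdot\nu)_+^s$ in the limit half-space, and rescaled interior estimates (with Lemma \ref{LonDistance} controlling $L(d^s)$) for the two seminorm bounds; the only real variant is that you choose $Q_r$ as an $L^\infty(B_r)$-minimizer and kill $c$ by passing optimality to the limit, whereas the paper uses the $L^2$-type projection $Q_{k,r}=\int_{B_r}u_k d^s/\int_{B_r}d^{2s}$ and passes the orthogonality integral to the limit --- both normalizations work. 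The one point you gloss is that in the equation for $w_k$ the term $Q_{k,\rho_k}L_k(d_k^s)$ enters with the factor $|Q_{k,\rho_k}|/\theta_k(\rho_k)$, which is not a priori a harmless constant (a priori $|Q_{k,\rho_k}|$ may grow as $\rho_k\to0$): you need the telescoping bound $|Q_{k,\rho_k}|\le C\bigl(1+\theta_k(\rho_k)\bigr)$ and in fact $|Q_{k,\rho_k}|/\theta_k(\rho_k)\to0$ (proved in the paper by the dyadic/Ces\`aro argument spelled out in the proof of Lemma \ref{2functions1.1}) in order to conclude that the limit equation is homogeneous rather than merely having a bounded right-hand side.
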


	\begin{proof}
		First, we prove the existence claim. Due to the assumption on the norm of the boundary defining function, we can prove the claim for $z=0$. For simplicity we denote $u-g_0 = \tilde{u}.$ We proceed in the same way as in \cite[Proposition 3.2]{RS16}. We can assume that $K_0=1$ and that the normal vector of $\partial \Omega$ at $0$ is $e_n$. Let us assume by contradiction there exist $\Omega_k,L_k,u_k,f_k$ satisfying the assumptions, so that for every $Q_k$ we have 
		$$\sup_{r>0}\frac{1}{r^{s+\alpha}}||u_k-Q_k d_k^s||_{L^\infty(B_r)}>k.$$
		Now we define the constants $Q_{k,r} = \frac{\int_{B_r}u_kd^s}{\int_{B_r}d^{2s}}$, and the monotone function 
		$$\theta(r) = \sup_{k}\sup_{\rho>r}\frac{1}{\rho^{\alpha+s}}||u_k-Q_{k,r} d_k^s||_{L^\infty(B_\rho)},$$
		which by \cite[Lemma 3.3]{RS16} converges to $\infty$ as $r\downarrow 0$. Hence there are sequences $r_m,k_m$, such that 
		$$\theta(r_m)\geq \frac{1}{r_m^{\alpha+s}}||u_{k_m}-Q_{k_m,r_m} d_{k_m}^s||_{L^\infty(B_{r_m})}\geq \frac{1}{2}\theta(r_m).$$
		With them, we define the blow-up sequence
		$$v_m(x):=\frac{1}{r_m^{s+\alpha}\theta(r_m)}(u_{k_m}(r_mx) - Q_{r_m,k_m} d_{k_m}^s(r_mx)),$$
		which by the definition of the constants $Q_{r,k}$ satisfies
		$$\int_{B_1} v_m(x)d^s(r_mx)dx =0,$$
		and by the definition of $\theta$ also $||v_m||_{L^\infty(B_1)}\geq \frac{1}{2}$.
		
		With the same arguments as in \cite[Proposition 3.2]{RS16}, we get the estimates $|Q_{k,r} - Q_{k,2r}|\leq Cr^\alpha \theta(r)$ and $|Q_{k,r} - Q_{k,Rr}|\leq C(Rr)^\alpha \theta(r)$, where both $C$ are independent of $k$, which furthermore give that $||v_m||_{L^\infty(B_R)}\leq CR^{s+\alpha}.$

		We proceed to computing $L(v_m)(x) = \frac{r_m^{s-\alpha}}{\theta(r_m)}\big(f(r_mx)+ Q_{r_m,k_m} L(d_m^s)(x)\big),$ which we estimate using \cite[Proposition 2.3]{RS16} with
		$$|L(v_m)(x)|\leq \frac{r_m^{s-\alpha}}{\theta(r_m)}d_m^{\varepsilon_0-s}(r_mx)|r_mx|^\varepsilon + \frac{Q_{r_m,k_m}}{\theta(r_m)} r_m^{s-\alpha}d_m^{\alpha-s}(r_mx)$$
		$$\leq \frac{C}{\theta(r_m)}d_m^{\varepsilon_0-s}(x)|x|^\varepsilon + \frac{Q_{k_m,r_m}}{\theta(r_m)}d_m^{\alpha-s}(x),$$
		which converges to $0$ as $m\to\infty$ in every compact set in $\left\lbrace x_n>0 \right\rbrace $, since $\theta(r_m)^{-1}\to0$ and $\frac{Q_{r_m,k_m}}{\theta(r_m)}\to 0$. This implies that $|L(v_m)(x)|\leq C_M d^{\varepsilon_0-s}(x)$ on $B_M$, which is needed to get the uniform bound (in $m$) on $||v_m||_{C^s(B_M)}$, using \cite[Proposition 3.1]{RS16}. 
		
		The rest of the proof is analogous to the original one. We conclude that $v_m$ converge to $\kappa (x_n)_+^s,$ for some $\kappa \in \R$, which together with passing the integral quantity to the limit gives $\kappa = 0$. But this is a contradiction with the non-triviality of $||v||_{L^\infty(B_1)}$.
		
		Let us now show, that the growth control proven above gives the interior regularity we want. For this, we use \cite[Corollary 3.6]{RS16b} and \cite[Proposition 2.3]{RS16}. Concretely, take $2r = d(x_0) = |x_0-z|$. Then the mentioned corollary, used on $(u-g_z-Q_zd^s)(x_0+r\cdot)$, gives
		$$r^{2s}\left[ u-g_z-Q_zd^s\right]_{C^{2s}(B_r(x_0))}\leq C(K_0r^{\alpha+s}+ K_0r^{2s}r^{\alpha-s})\leq CK_0r^{\alpha+s}. $$
		We bound the first term using Lemma \ref{zoomGrowthLemma} while for the other one we use the assumption on $L(u-g_z)$ and Lemma \ref{LonDistance}.
		
		Let now $d(x_1) = 2r_1 = |x_1-z_1|$. Then denoting $u_r(x) := (u-g_z-Q_zd^s)(x_1+2r_1x)$, and applying the same two results, we get
		$$r_1^{\alpha+s}\left[ u-g_z-Q_zd^s\right]_{C^{\alpha+s}(B_{r_1}(x_1))}\leq C(K_0|x_1-z|^{\alpha+s}+ K_0r_1^{\varepsilon_0+s}|x_1-z|^{\varepsilon})\leq CK_0|x_1-z|^{\alpha+s}, $$
		again with aid of Lemma \ref{zoomGrowthLemma} and Lemma \ref{LonDistance}.
	\end{proof}

	This result, together with  the generalisation of \cite[Proposition 4.1]{AR20} cover all cases when doing expansions with $d^s$ varying the regularity of $\Omega$. The result states as follows:

	\begin{lemma}\label{1function2.2}
		Let $\beta>1+s$, $\beta\not\in\N$, and let $\Omega\subset \R^n$ be $C^\beta$ domain. Assume $0\in\partial \Omega,$ and let $\partial\Omega\cap B_1$ be a graph of some $C^{\beta}$ function of $C^{\beta}$ norm less than $1$.   Let $L$ be an operator whose kernel $K$ is $C^{2\beta+1}(\S^{n-1})$ and satisfies \eqref{kernelConditions}. Let $u\colon \R^n \to \R$ and assume for every $z\in\partial\Omega\cap B_{3/4}$ we have a function $g_z$ so that 
		$$\left\lbrace\begin{array}{rcll}
		L(u-g_z)&=&f_z&\text{in  } \Omega\\
		u-g_z &=&0 & \text{in  } \Omega^c\cap B_1.
		\end{array}\right.$$
		For some $\varepsilon>0$, let us have $|f_z(x)-P_z(x)|\leq C|x-z|^{\beta - 1+s-\varepsilon}d^{\varepsilon-2s}$, for $x\in B_1\cap\Omega$, where $C$ does not depend on the boundary point $z$, for some $P_z\in\textbf{P}_{\left[ \beta-1\right] }$. Furthermore, let also 
		$$\left[f_z \right] _{C^{\beta-1-s}(B_{\frac{3r}{2}}(x_1))}\leq C\left(\frac{|x_1-z|}{r_1}\right)^{\beta - s} ,$$
		whenever $d(x_1) = 2r$, independently of $z,r,x_1$. Then for every $z\in\partial\Omega\cap B_{1/2}$ there exists a polynomial $Q_z\in \textbf{P}_{\lfloor\beta-1\rfloor}$, so that 
		$$|u(x)-g_z(x) - Q_z(x) d^s(x)|\leq C|x-z|^{\beta-1+s} \quad \text{for every }x\in B_1, $$ 
		and $C$ is independent of $z$.
		Also for if $x_1\in\Omega\cap B_1$, $d(x_1) = 2r_1$, then
		$$\left[ u-g_z-Q_zd^s\right] _{C^{\beta-1+s}(B_r(x_1))}\leq C\left(\frac{|x_1-z|}{r_1}\right)^{(\beta-1+ s)\lor (\beta-s)}$$
		with $C$ depending only on $n,s,\beta$ and $||K||_{C^{2\beta+1}(\S^{n-1})}$.
	\end{lemma}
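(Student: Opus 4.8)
\emph{Overall strategy.} I would prove this by a contradiction--compactness (blow-up) argument carried out at the order $\beta-1+s$, in the spirit of Lemma~\ref{1function1.2} and of \cite[Proposition 4.1]{AR20}; the new features relative to those references ($g_z$, the polynomial part $P_z$, the parametrised boundary point $z$, and the weighted singular right-hand side) affect only the bookkeeping. First I reduce to $z=0$: since $\partial\Omega\cap B_1$ is a graph of a $C^\beta$ function of norm $<1$, all hypotheses hold with uniform constants for every $z\in\partial\Omega\cap B_{1/2}$, so it suffices to build $Q_0$. Next I peel off the polynomial part of the right-hand side: using the flat-case computation Corollary~\ref{computationFlat} together with the surjectivity statement Theorem~\ref{surjectivityResult} (with power $2s$), subtract a corrector so that the contribution of $P_0$ is removed up to an error of the same small order; after this preliminary reduction we may assume $|f_0(x)|\le C|x|^{\beta-1+s-\varepsilon}d^{\varepsilon-2s}$, i.e.\ a right-hand side that genuinely vanishes at the origin and whose normal singularity $d^{\varepsilon-2s}$ is integrable against the kernel weight.

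\emph{The blow-up sequence.} Suppose the claim fails: there are $\Omega_k,L_k,u_k,g_k,f_k$ satisfying the (reduced) hypotheses with uniform constants, yet for all $Q\in\textbf{P}_{\lfloor\beta-1\rfloor}$ and all $k$, $\sup_{r>0} r^{-(\beta-1+s)}\bigl\|u_k-g_k-Q\,d_k^s\bigr\|_{L^\infty(B_r)}>k$. For each $k,r$ let $Q_{k,r}\in\textbf{P}_{\lfloor\beta-1\rfloor}$ minimise $\int_{B_r}(u_k-g_k-Q\,d_k^s)^2$, so that $\int_{B_r}(u_k-g_k-Q_{k,r}d_k^s)\,P\,d_k^s=0$ for every $P\in\textbf{P}_{\lfloor\beta-1\rfloor}$, and set $\theta(r):=\sup_k\sup_{\rho>r}\rho^{-(\beta-1+s)}\bigl\|u_k-g_k-Q_{k,\rho}d_k^s\bigr\|_{L^\infty(B_\rho)}$. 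As in \cite[Lemma 3.3]{RS16}, $\theta$ is finite, monotone, and $\theta(r)\to\infty$ as $r\downarrow0$; pick $r_m\downarrow0$, $k_m$ realising $\theta(r_m)/2\le r_m^{-(\beta-1+s)}\bigl\|u_{k_m}-g_{k_m}-Q_{k_m,r_m}d_{k_m}^s\bigr\|_{L^\infty(B_{r_m})}$, and define $v_m(x):=\bigl(r_m^{\beta-1+s}\theta(r_m)\bigr)^{-1}\bigl(u_{k_m}(r_mx)-g_{k_m}(r_mx)-Q_{k_m,r_m}(r_mx)\,d_{k_m}^s(r_mx)\bigr)$. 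By construction $\tfrac12\le\|v_m\|_{L^\infty(B_1)}\le1$, $\int_{B_1}v_m\,P\,d_{k_m}^s(r_m\cdot)=0$ for all $P\in\textbf{P}_{\lfloor\beta-1\rfloor}$, and $v_m=0$ outside $\tfrac1{r_m}\Omega_{k_m}$; iterating the standard increment bounds $|Q_{k,r}-Q_{k,2r}|\le Cr^{\beta-1}\theta(r)$ and $|Q_{k,r}-Q_{k,Rr}|\le C(Rr)^{\beta-1}\theta(r)$ yields the growth control $\|v_m\|_{L^\infty(B_R)}\le CR^{\beta-1+s}$ for $R\le r_m^{-1}$.

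\emph{Compactness, limit, contradiction.} Rescaling the equation gives $L_{k_m}v_m(x)=\bigl(r_m^{\,1-s-\beta}\theta(r_m)^{-1}\bigr)\bigl(f_{k_m}(r_mx)-L_{k_m}(Q_{k_m,r_m}d_{k_m}^s)(r_mx)\bigr)$, and using $|f_{k_m}(x)|\le C|x|^{\beta-1+s-\varepsilon}d^{\varepsilon-2s}$ together with the fact that $L_{k_m}(Q_{k_m,r_m}d_{k_m}^s)$ is bounded (here $p=s\in\N+s$, so by Corollary~\ref{aboveCorollary} and the Remark after Corollary~\ref{surjectivityLemma} it lies in $C^{\beta-s}$), one checks $|L_{k_m}v_m(x)|\le C\theta(r_m)^{-1}d^{\varepsilon-2s}(x)(1+|x|)^{\beta-1+s}$, which tends to $0$ locally uniformly away from $\{x_n=0\}$ and stays $\le C_M d^{\varepsilon-2s}$ on $B_M$. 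This weighted bound is exactly what the regularity machinery of Section~3 needs: Lemma~\ref{boundaryRegGeneralised} (via Lemma~\ref{generalised3.6}) provides uniform-in-$m$ interior $C^\varepsilon$ estimates, and combined with \cite[Corollary 3.6]{RS16b} and the growth control one upgrades to uniform interior $C^{2s}$ and then $C^{\beta-1+s}$ estimates on compact subsets; the domains $\tfrac1{r_m}\Omega_{k_m}$ converge to a half-space $\{x\cdot\nu>0\}$ and, along a subsequence, the kernels $K_{k_m}$ converge within the class \eqref{kernelConditions}. Passing to the limit, $v_m\to v_0$ with $\|v_0\|_{L^\infty(B_1)}\ge\tfrac12$, $v_0=0$ in $\{x\cdot\nu<0\}$, $|v_0|\le C|x|^{\beta-1+s}$, $L_\infty v_0=0$ in $\{x\cdot\nu>0\}$, and $\int_{B_1}v_0\,P\,(x\cdot\nu)_+^s=0$ for all $P\in\textbf{P}_{\lfloor\beta-1\rfloor}$. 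The half-space Liouville theorem \cite[Theorem 3.10]{AR20} together with the growth bound forces $v_0=P_0(x)(x\cdot\nu)_+^s$ with $P_0\in\textbf{P}_{\lfloor\beta-1\rfloor}$, and the orthogonality then forces $P_0\equiv0$, contradicting $\|v_0\|_{L^\infty(B_1)}\ge\tfrac12$. This produces $Q_z$ with the stated $L^\infty$ estimate; the ``moreover'' interior $C^{\beta-1+s}$ seminorm bound follows exactly as at the end of the proof of Lemma~\ref{1function1.2}, by applying interior estimates to the rescaling $(u-g_z-Q_zd^s)(x_1+2r_1\,\cdot)$, using the $L^\infty$-growth bound just proved and the bound on $L(u-g_z-Q_zd^s)$ coming from the hypothesis on $[f_z]_{C^{\beta-1-s}}$ and from $L(Q_zd^s)\in C^{\beta-s}$.

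\emph{Main obstacle.} The delicate point is the compactness step: obtaining interior estimates for the $v_m$ that are uniform in $m$ despite the right-hand side being controlled only by the singular weight $d^{\varepsilon-2s}$ and the kernels varying --- this is precisely why the weighted regularity results of Section~3 (Lemmas~\ref{generalised3.6} and \ref{boundaryRegGeneralised}) are needed --- and then checking that the limit $v_0$ retains exactly the polynomial growth required to invoke the half-space Liouville theorem. A secondary technical point requiring care is the preliminary removal of the polynomial part $P_z$ of $f_z$, so that the reduced problem has a genuinely small right-hand side without introducing any spurious lower-order term into the final expansion.
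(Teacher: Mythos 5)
Your overall strategy — the contradiction/compactness blow-up at order $\beta-1+s$, replacing the interior estimates of \cite{AR20} by Lemma~\ref{boundaryRegGeneralised} to handle the weighted right-hand side $d^{\varepsilon-2s}$, and concluding via the half-space Liouville theorem together with the $L^2$-orthogonality — is indeed the same as the paper's, which says exactly that the argument follows \cite[Proposition 4.1]{AR20} with the replacements you name. The ``moreover'' interior estimate via \cite[Proposition 3.9]{AR20} is also as in the paper.

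However, there is a genuine gap in your preliminary ``peeling off'' of $P_z$. You propose to remove the polynomial part of $f_z$ by subtracting a corrector of the form $\tilde{Q}\,d^{2s}$, invoking Corollary~\ref{computationFlat} and Theorem~\ref{surjectivityResult} with power $p=2s$. Both of those results explicitly require $p-2s\notin\N$: Corollary~\ref{computationFlat} is stated under this hypothesis, and Theorem~\ref{surjectivityResult} lists $p-2s\notin\N$ among its assumptions (indeed the footnote to the remark after Corollary~\ref{computationFlat} points out that $p-2s=m\in\N$ produces logarithmic terms). With $p=2s$ one has $p-2s=0\in\N$, so this step cannot be carried out with the tools you cite. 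Moreover, since your compactness step does not subtract a Taylor polynomial $P_m$ from $L_{k_m}v_m$, it relies on this pre-removal having been done; without it, the rescaled right-hand side contains a non-decaying polynomial contribution (from $P_z$ and from the Taylor expansion of $L(Q_{k_m,r_m}d^s)$, whose coefficients grow with $\theta(r_m)$), and the claimed bound $|L_{k_m}v_m|\le C\theta(r_m)^{-1}d^{\varepsilon-2s}(1+|x|)^{\beta-1+s}$ fails. The paper handles $P_z$ in the standard way, inside the blow-up: one proves $|L_m v_m - P_m|\le \frac{C}{\theta(r_m)}|x|^{\beta-1+s-\varepsilon}d_m^{\varepsilon-2s}$ for a suitable polynomial $P_m$ (which absorbs both the rescaled $P_z$ and the Taylor polynomial of $L(Q_{m}d^s)$), and then invokes the generalized equation $L_\star v\stackrel{k}{=}0$ for the limit, which is designed precisely to allow polynomial corrections. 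You should either drop the pre-removal and argue this way, or, if you insist on pre-removal, solve the auxiliary Dirichlet problem $LW=P_0$ in $\Omega\cap B_1$, $W=0$ in $\Omega^c\cap B_1$ via the boundary Schauder estimates (not via $d^{2s}$ and Theorem~\ref{surjectivityResult}) and absorb $W$ into $g_0$ before blowing up; but note this shifts $Q_0$ by the trace of $W/d^s$, which then needs to be re-added.
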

	
	\begin{proof}
		First we argue that without loss of generality we work at $z=0$, because of the assumption on the norm of the domain. Then the proof goes exactly the same as in \cite[Proposition 4.1]{AR20} denoting $\tilde{u}= u-g_0$. Notice that in the paper, we needed that $Lu\in C^{\beta-1-s}$ to find a polynomial $P$ such that $Lu-P = O(|x|^{\beta-1-s})$.
		Now by assumption we have $Lu-P = O(|x|^{\beta-1+s-\varepsilon}d^{\varepsilon-2s})$, which gives that the blow up sequence satisfies
		$$|L_mv_m(x) -P_m(x)|\leq \frac{C}{\theta(r_m)}|x|^{\beta-1+s-\varepsilon}d_m^{\varepsilon -2s}(x),$$
		which still converges locally uniformly to $0$ in the half-space. Instead of \cite[Proposition 3.8]{AR20} we now use Lemma \ref{boundaryRegGeneralised}, which allows us to apply Arzela-Ascoli Theorem, so that $v_m$ converges locally uniformly in $\R^n$ to some function $v$. Hence by \cite[Lemma 3.5]{AR20} in every compact set in $\{x_n>0\}$ the limit function satisfies $L_\star v \stackrel{k}{=} 0$. The conclusion of the proof is then the same.
		
		Second part, where we needed the regularity of $f$ is for the interior regularity estimate. Using \cite[Proposition 3.9]{AR20}, similarly as at the end of the previous lemma, we obtain
		\begin{align*}
			r_1^{\beta-1+s}\left[ u-g_z-Q_zd^s\right]_{C^{\beta-1+s}(B_{r_1}(x_1))}&\leq C\left(|x_1-z|^{\beta-1+s}+ r_1^{\beta-1+s}\left(\frac{|x_1-z|}{r_1}\right)^{\beta - s}  \right)\\
			&\leq C\left(\frac{|x_1-z|}{r_1}\right)^{(\beta-1+s)\lor(\beta - s)},
		\end{align*}
		as desired.
	\end{proof}

	In the similar manner we also want to give the complete answer to expansions of one solution with respect to another, non-trivial one. Replacing the distance function with a solution basically increases the order of approximation by one. Now the cases split in three categories with respect to the smoothness of the right-hand side. First one is when the right-hand side $f$ is only bounded by $|f|\leq C d^{\varepsilon-s}$, for some $\varepsilon\in(0,s)$, the second one is $f\in C^{\varepsilon-s}$, $\varepsilon\in(s,1)$, and the third one is $\varepsilon>1$. In the first two cases we only need that the boundary $\partial\Omega$ is $C^{1},$  while in the third case we need to assume that the boundary is $C^\beta$, with $\beta = \varepsilon+s$. The model result here is \cite[Proposition 4.4]{AR20}, which deals with the third case.
	
	First we prove the basic claims in all settings. Let us begin with the case $\varepsilon<s$.

	\begin{lemma}\label{2functions1.1}
		Let $L$ be an operator with kernel $K$ satisfying \eqref{kernelConditions}.  Let $\Omega$ be a domain with $0\in\partial\Omega$, so that $\partial \Omega\cap B_1$ is a graph of a function whose $C^{1}$ norm is smaller than $1$. For $i=1,2$, let $u_i$ be the solution  to
		$$\left\lbrace \begin{array}{rcll}
		Lu_i&=&f_i& \text{in }\Omega\cap B_1\\
		u_i&=&0&\text{in } B_1\backslash \Omega,
		\end{array}\right. $$
		for some $f_i$ satisfying $|f_i|\leq C_0d^{\varepsilon-s}, $ $\varepsilon\in(0,s)$. Furthermore assume the existence of $C_2,c_2>0$, such that $C_2d^s\geq u_2\geq c_2d^s$ in $B_1$. Denote $K_0 = C_0 + ||u_1||_{L^\infty(\R^n)}+||u_2||_{L^\infty(\R^n)}$.
		
		Then for every $z\in\partial\Omega\cap B_{1/2}$ there exist a constant $Q_z$, such that 
		$$|u_1(x)-Q_zu_2(x)|\leq CK_0|x-z|^{\varepsilon+s},\quad x\in B_{1/2}(z)$$
		and 
		$$\left[u_1-Q_zu_2 \right]_{C^{s+\varepsilon}(B_r(x_0))} \leq CK_0 ,$$
		whenever $d(x_0)=|x_0-z|=2r.$ The constant $C$ depends only on $n,s,c_2,C_2,\varepsilon$ and ellipticity constants.
	\end{lemma}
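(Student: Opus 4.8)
The plan is to mirror the blow-up and compactness scheme of \cite[Proposition 4.4]{AR20} and \cite[Proposition 3.2]{RS16}, adapted to the low-regularity regime $\varepsilon<s$ in which the right-hand sides are only controlled by $d^{\varepsilon-s}$, and with $d^s$ replaced by the comparison solution $u_2$. First I would reduce to the boundary point $z=0$: since $\partial\Omega\cap B_1$ is a graph with $C^1$-norm below $1$, after a translation the very same argument applies at every $z\in\partial\Omega\cap B_{1/2}$, and the bound $|Q_z|\le CK_0$ follows a posteriori by evaluating the growth estimate at an interior point at distance $\sim 1$ from $z$ and using $u_2\ge c_2 d^s$. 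Normalising $K_0=1$ and assuming the inner normal at $0$ is $e_n$, I argue by contradiction: suppose there are $\Omega_k,L_k,u_{1,k},u_{2,k},f_{1,k},f_{2,k}$ satisfying the hypotheses with $\sup_{r>0} r^{-(s+\varepsilon)}\|u_{1,k}-Q\,u_{2,k}\|_{L^\infty(B_r)}>k$ for \emph{every} constant $Q$. I set $Q_{k,r}=\bigl(\int_{B_r}u_{1,k}u_{2,k}\bigr)\bigl/\bigl(\int_{B_r}u_{2,k}^2\bigr)$ — the $L^2$-optimal constant, which plays the role of $\int u_1 d^s/\int d^{2s}$ in the $d^s$-case — and form the monotone quantity $\theta(r)=\sup_k\sup_{\rho>r}\rho^{-(s+\varepsilon)}\|u_{1,k}-Q_{k,r}u_{2,k}\|_{L^\infty(B_\rho)}$, which tends to $+\infty$ as $r\downarrow 0$ by \cite[Lemma 3.3]{RS16}.

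Next I pick $r_m\downarrow 0$ and $k_m$ realising $\theta(r_m)$ up to a factor $\tfrac12$ and set
$$v_m(x):=\frac{1}{r_m^{s+\varepsilon}\theta(r_m)}\bigl(u_{1,k_m}(r_m x)-Q_{r_m,k_m}u_{2,k_m}(r_m x)\bigr),$$
which by construction satisfies the orthogonality $\int_{B_1}v_m\,u_{2,k_m}(r_m\,\cdot)=0$ and the nondegeneracy $\|v_m\|_{L^\infty(B_1)}\ge\tfrac12$. The telescoping estimates $|Q_{k,r}-Q_{k,2r}|\le Cr^{\varepsilon}\theta(r)$ and $|Q_{k,r}-Q_{k,Rr}|\le C(Rr)^{\varepsilon}\theta(r)$, obtained from $c_2 d^s\le u_{2,k}\le C_2 d^s$ as in \cite{RS16}, give the growth control $\|v_m\|_{L^\infty(B_R)}\le CR^{s+\varepsilon}$ and, together with the $O(d^s)$-bound for $u_{1,k}$ (so that $|Q_{k,r}|\le C+Cr^{\varepsilon}\theta(r)$), that $Q_{r_m,k_m}/\theta(r_m)\to 0$. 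The rescaled equation reads $L_{k_m}v_m=\frac{r_m^{s-\varepsilon}}{\theta(r_m)}\bigl(f_{1,k_m}(r_m\,\cdot)-Q_{r_m,k_m}f_{2,k_m}(r_m\,\cdot)\bigr)$, so $|f_{i,k}|\le C d^{\varepsilon-s}$ yields $|L_{k_m}v_m(x)|\le \frac{C(1+|Q_{r_m,k_m}|)}{\theta(r_m)}\,d_m^{\varepsilon-s}(x)$, which is $\le C_M d_m^{\varepsilon-s}(x)$ on $B_M$ and tends to $0$ locally uniformly on $\{x_n>0\}$. Using the interior-plus-boundary regularity estimates for nonlocal equations whose right-hand side blows up like $d^{\varepsilon-s}$ with $\varepsilon\in(0,s)$ (as in \cite[Proposition 3.1]{RS16}, respectively Lemma \ref{boundaryRegGeneralised}), I extract a subsequence converging locally uniformly in $\R^n$ to some $v_\infty$; by \cite[Lemma 3.5]{AR20} it satisfies $L_\star v_\infty\stackrel{k}{=}0$ in $\{x_n>0\}$, vanishes on $\{x_n\le0\}$, and grows at most like $C|x|^{s+\varepsilon}$ with $s+\varepsilon<2s$.

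By the Liouville theorem in a half-space \cite[Theorem 3.10]{AR20}, this forces $v_\infty=\kappa(x_n)_+^s$ for some $\kappa\in\R$. Passing the orthogonality relation to the limit and noting that, along the same subsequence, $u_{2,k_m}(r_m\,\cdot)/r_m^s$ converges to a function bounded below by $c_2(x_n)_+^s$, I obtain $\kappa\int_{B_1\cap\{x_n>0\}}(x_n)_+^s\cdot(\text{positive weight})=0$, hence $\kappa=0$; but then $v_\infty\equiv0$, contradicting $\|v_\infty\|_{L^\infty(B_1)}\ge\tfrac12$. This proves the existence of $Q_z$ and the pointwise growth bound. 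Finally, the interior seminorm estimate follows exactly as at the end of Lemma \ref{1function1.2}: rescaling $(u_1-Q_zu_2)(x_0+r\,\cdot)$ at $x_0$ with $d(x_0)=|x_0-z|=2r$, the growth bound gives $\|u_1-Q_zu_2\|_{L^\infty(B_\rho(x_0))}\le CK_0(\rho+r)^{s+\varepsilon}$ (polynomial growth of order $s+\varepsilon<2s$), while $L(u_1-Q_zu_2)=f_1-Q_zf_2$ is bounded by $CK_0 r^{\varepsilon-s}$ on $B_{r/2}(x_0)$ since $d\sim r$ there; an interior estimate of order $s+\varepsilon$ (e.g. \cite[Corollary 3.6]{RS16b}) then yields $[u_1-Q_zu_2]_{C^{s+\varepsilon}(B_r(x_0))}\le CK_0$ after undoing the scaling. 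I expect the main obstacle to be the uniform-in-$m$ regularity of the blow-up sequence up to the boundary in the presence of the $d^{\varepsilon-s}$-singular right-hand side, together with the careful bookkeeping of the scaling factors that guarantees that both terms of the rescaled right-hand side vanish in the limit — in particular establishing the $O(d^s)$-bound for $u_{1,k}$ and the consequent $Q_{r_m,k_m}/\theta(r_m)\to0$.
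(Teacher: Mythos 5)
Your overall scheme is exactly the paper's: the same $L^2$-optimal constants $Q_{k,r}=\int_{B_r}u_{1,k}u_{2,k}\big/\int_{B_r}u_{2,k}^2$, the same monotone quantity $\theta(r)$, the same blow-up $v_m$ with orthogonality and nondegeneracy, convergence via the $C^s$ estimates for right-hand sides bounded by $d^{\varepsilon-s}$, the Liouville classification $v_\infty=\kappa(x_n)_+^s$, and the final interior $C^{s+\varepsilon}$ seminorm via \cite[Corollary 3.6]{RS16b} after rescaling at $x_0$ with $d(x_0)=|x_0-z|=2r$. However, there is a genuine gap in the step you yourself flag as delicate, namely $Q_{r_m,k_m}/\theta(r_m)\to 0$. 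You justify it by invoking an $O(d^s)$ bound for $u_{1,k}$, but no such bound is among the hypotheses: $u_1$ is only assumed bounded, the right-hand side only satisfies $|f_1|\leq C_0 d^{\varepsilon-s}$, and the domain is merely $C^1$ (with norm below $1$), so the $d^s$ barrier/boundary-regularity argument that would give $|u_1|\leq Cd^s$ is not available (it requires $C^{1,\alpha}$, cf. Proposition \ref{optimalBoundaryRegularity}); moreover, in the compactness argument you would need this bound uniformly along the contradiction sequence $u_{1,k}$, $\Omega_k$. In addition, the intermediate inequality you state, $|Q_{k,r}|\leq C+Cr^{\varepsilon}\theta(r)$, does not follow from the telescoping estimates: summing $|Q_{k,2^ir}-Q_{k,2^{i+1}r}|\leq C\theta(2^ir)(2^ir)^{\varepsilon}$ from scale $r$ up to scale $1$ only yields a bound of the form $C\big(1+\sum_{i}\theta(2^{-i})2^{-i\varepsilon}\big)$, which need not be comparable to $r^{\varepsilon}\theta(r)$ (e.g.\ if $\theta(\rho)\sim \rho^{-\varepsilon}/\log(1/\rho)$ the sum diverges while $r^{\varepsilon}\theta(r)\to0$), so dividing by $\theta(r)$ does not immediately give decay.

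The paper closes exactly this point with a different, self-contained argument that you should adopt: first, $|Q_{r,j}|\leq C$ for $r\in(1/2,1)$ follows from nothing more than $\|u_{1,j}\|_{L^\infty}$, $\|u_{2,j}\|_{L^\infty}$ and $u_{2,j}\geq c_2 d^s$ (which bounds $\int_{B_r}u_{2,j}^2$ from below at unit scale); then, for $2^{-k-1}\leq r\leq 2^{-k}$, the dyadic telescoping gives $|Q_{j,r}|\leq C\big(1+\sum_{i=0}^{k-1}\theta(2^{-i})2^{-i\varepsilon}\big)$, and dividing by $\theta(r)\geq\theta(2^{-k})$ one gets a sum whose coefficients $\theta(2^{-i})/\theta(2^{-k})$ are bounded by $1$ (monotonicity of $\theta$) and tend to $0$ for each fixed $i$ (since $\theta(2^{-k})\to\infty$), so the sum tends to $0$ by comparison with the convergent series $\sum_i 2^{-i\varepsilon}$. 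This yields $Q_{j,r}/\theta(r)\to0$ uniformly in $j$ without any $d^s$ upper bound on $u_1$, and it is needed not only for the vanishing of the rescaled right-hand side but also to keep the proof valid under the stated hypotheses (in the application, Proposition \ref{harnack2.1}, the bound $|u_i|\leq C_1d^s$ is an extra assumption, not part of this lemma). With that replacement, the rest of your argument goes through as written.
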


	\begin{proof}
		We start with the expansion estimate part. Without loss of generality assume $K_0=1$, $z=0$ and the normal vector $\nu_0=e_n.$ Assume by contradiction that there exist $L_j,\Omega_j,u_{i,j},f_{i,j}$ satisfying the assumptions of the lemma, but for every $Q_j$ we have $$\sup_{r>0}\frac{1}{r^{s+\varepsilon}}||u_{1,j}-Q_ju_{2,j}||_{L^\infty(B_r)}>j.$$
		We define 
		$$Q_{r,j} := \frac{\int_{B_r}u_{1,j}u_{2,j}}{\int_{B_r}u_{2,j}^2},$$
		so that $\int_{B_r}(u_{1,j}-Q_{r,j}u_{2,j})u_{2,j} dx = 0$. Define now the monotone quantity 
		$$\theta(r) := \sup_j\sup_{\rho>r} \frac{1}{\rho^{\varepsilon+s}}||u_{1,j}-Q_{\rho,j}u_{2,j}||_{L^\infty(B_\rho)},$$
		which by contradiction assumption and \cite[Lemma 4.5]{AR20} converges to infinity as $r$ goes to zero. We find the realising sequences $r_m,j_m$ so that 
		$$m\leq\frac{\theta(r_m)}{2}\leq \frac{1}{r_m^{\varepsilon+s}}||u_{1,j_m}-Q_{r_m,j_m}u_{2,j_m}||_{L^\infty(B_{r_m})}\leq \theta(r_m).$$ %(The $m\leq ...$ is there to force $\theta(r_m)$ to infinity and $r_m$ to $0$.) 
		The blow-up sequence is defined with
		$$v_m(x):=\frac{1}{\theta(r_m)r_m^{\varepsilon+s}}\big(u_{1,j_m}(r_mx) - Q_{j_m,r_m}u_{2,j_m}(r_mx)\big).$$
		By construction, we have both $||v_m||_{L^\infty(B_1)}\geq \frac{1}{2}$ and $\int_{B_1} v_m(x)u_{2,j_m}(r_mx)dx = 0$. In the limit of some subsequence of $v_m$, these quantities will contradict each other.
		
		We proceed with some estimations
		\begin{align*}
			|Q_{r,j}-Q_{2r,j}|&\leq C\frac{2^s}{r^s}||Q_{r,j}u_{2,j}-Q_{2r,j}u_{2,j}||_{L^\infty(B_r\cap\{d>r/2\})}\\
			&\leq \frac{C}{r^s}\big(||u_{1,j}-Q_{r,j}u_{2,j}||_{L^\infty(B_r)} + ||u_{1,j}-Q_{2r,j}u_{2,j}||_{L^\infty(B_{2r})}\big)\\
			&\leq Cr^{-s}\big (\theta(r)r^{s+\varepsilon}+ \theta(2r)(2r)^{s+\varepsilon}\big )\leq C\theta(r)r^\varepsilon,
		\end{align*}
		where we used the monotonicity of $\theta$.
		Summing the geometric series, this gives
		\begin{align*}
			|Q_{r,j}-Q_{2^kr,j}|\leq\sum_{i=0}^{k-1}|Q_{2^{i+1}r,j}-Q_{2^{i}r,j}|&\leq \sum_{i=0}^{k-1}C\theta(2^ir)(2^ir)^\varepsilon\\
			&\leq C\theta(r)r^\varepsilon C_\varepsilon 2^{\varepsilon k}\leq C\theta(r)(2^kr)^\varepsilon,
		\end{align*}
		which furthermore gives $$ |Q_{r,j}-Q_{Rr,j}|\leq C_\varepsilon\theta(r)(Rr)^\varepsilon.$$
		The latter estimate implies the growth of $v_m$ at infinity, as follows
		\begin{align*}
			||v_m||_{L^\infty(B_R)} =& \left|\left|\frac{1}{\theta(r_m)r_m^{\varepsilon+s}}(u_{1,j_m}-Q_{j_m,r_m}u_{2,j_m})\right|\right|_{L^\infty(B_{Rr_m})}\\
			\leq& \frac{1}{\theta(r_m)r_m^{\varepsilon+s}}\left|\left|u_{1,j_m}-Q_{j_m,Rr_m}u_{2,j_m}\right|\right|_{L^\infty(B_{Rr_m})}+\\
			& + \frac{1}{\theta(r_m)r_m^{\varepsilon+s}} \left|Q_{r_m,j_m}-Q_{Rr_m,j_m}\right|\cdot\left|\left|u_{2,j_m}\right|\right|_{L^\infty(B_{Rr_m})}\\
			\leq& \frac{\theta(Rr_m)(Rr_m)^{\varepsilon+s}}{\theta(r_m)r_m^{\varepsilon+s}} + \frac{C_\varepsilon\theta(r_m)(Rr_m)^\varepsilon C_2(Rr_m)^s}{\theta(r_m)r_m^{\varepsilon+s}}\leq CR^{s+\varepsilon}.
		\end{align*}
		We proceed to computing the $L_{j_m}$ on the blow-up sequence:
		$$L_{j_m}v_m(x) = \frac{1}{\theta(r_m)r_m^{\varepsilon+s}}r_m^{2s}(f_{1,j_m}-Q_{j_m,r_m}f_{2,j_m})(r_mx) =\frac{r_m^{s-\varepsilon}}{\theta(r_m)}(f_{1,j_m}-Q_{j_m,r_m}f_{2,j_m})(r_mx),$$
		so when estimating the modulus we get
		$$|L_{j_m}v_m(x)|\leq\frac{C}{\theta(r_m)}r_m^{s-\varepsilon}(1 + Q_{j_m,r_m})d_{j_m}^{\varepsilon-s}(r_mx) \leq\frac{C}{\theta(r_m)}(1+Q_{j_m,r_m})d_{j_m}^{\varepsilon-s}(x).$$
		Let us now show, that $\frac{Q_{j,r}}{\theta(r)}\to 0$ uniformly in $j$, as $r\to 0$. Let $2^{-k-1}\leq r \leq 2^{-k}$. Then
		\begin{align*}
			|Q_{j,r}|&\leq |Q_{j,2^kr}|+\sum_{i=0}^{k-1}|Q_{j,2^ir}-Q_{j,2^{i+1}r}|\leq C + \sum_{i=0}^{k-1} C \theta(2^ir)(2^ir)^\varepsilon\\
			&\leq C(1 + \sum_{i=0}^{k-1}  \theta(2^{i-k})(2^{i-k})^\varepsilon)\leq  C\left(1 + \sum_{i=0}^{k-1}  \theta(2^{-i})(2^{-i})^\varepsilon\right).
		\end{align*}
		Now we divide with $\theta(r)$, which is better then dividing with $\theta(2^{-k})$, we get a series $\sum_{i=0}^{k-1}  \frac{\theta(2^{-i})}{\theta(2^{-k})}(2^{-i})^\varepsilon$, which is of the form "convergent series times coefficients which all go to $0$", and so we can show (with an $\epsilon,r_0$ calculus) that it goes to $0$ indeed. Above we also used the uniform bound $|Q_{r,j}|\leq C$, $r\in(1/2,1)$ which follows by the definition of $Q_{r,j}$, uniform boundedness of $u_{1,j}$, $u_{2,j}$ and the uniform boundedness from below on $u_{2,j}.$ As a consequence, we get that $L_{j_m}v_m\to 0$ uniformly in every compact set in $\{x_n>0\}$ as well as $|L_{j_m}v_m|\leq Cd_{j_m}^{\varepsilon-s}.$
		This bound, together with previously obtained growth control of $v_m$, allows us to apply \cite[Proposition 3.1]{RS16}, to get a uniform bound  $\left[ v_m\right] _{C^s(B_M)}\leq C(M) $.
		
		The same argumentation as in \cite[Proposition 3.2]{RS16} we get that a subsequence still denoted $v_m$ converges locally uniformly to $v$ in $\R^n$, and $v$ solves $L_*v = 0$ in $\{x_n>0\},$ and $v=0$ in $\{x_n\leq 0\},$ which implies that $v(x)=\kappa(x_n)_+^s$ (see \cite[Proposition 5.1]{RS16c}.) But passing the integral quantity to the limit, gives $\kappa= 0$, which gives contradiction with $||v||_{L^\infty(B_1)}\geq \frac{1}{2}.$
		
		To get the interior regularity from the claim, we use the growth estimate just proven together with \cite[Corollary 3.6]{RS16b}, as follows. Take $d(x_0) = |x_0-z|=2r$, and denote $v_r(x) = (u_1-Q_zu_2)(x_0+2rx)$. The growth estimate implies $v_r(x)\leq CK_0|2rx+x_0-z|^{\varepsilon+s}\leq Cr^{\varepsilon+s}(1+|x|)^{\varepsilon+s}$.
		We apply the corollary, to get
		$$	\left[ v_r\right] _{C^{2s}(B_{1/2})}\leq C(CK_0r^{\varepsilon+s}+ r^{2s}CK_0r^{\varepsilon-s}),$$
		which after passing to smaller exponent and the rescaling gives 
		$$\left[ u_1-Q_zu_2\right] _{C^{\varepsilon+s}(B_{r}(x_0))}\leq CK_0,$$
		as wanted.
	\end{proof}
	
	Notice, that we need the boundary to be at least $C^1,$ so that the blow-up of the domain converges to the half space. 
	%TODO: add a comment about the non-triviality assumption 
	
	We now state the analogous result, when $\varepsilon\in (s,1)$. 
	
	\begin{lemma}\label{2functions2.1}
		Let $L$ be an operator with kernel $K$ satisfying \eqref{kernelConditions}.  Let $\Omega$ be a domain with $0\in\partial\Omega$, so that $\partial \Omega\cap B_1$ is a graph of a function whose $C^{1}$ norm is smaller than $1$. For $i=1,2$, let $u_i$ be the solution  to
		$$\left\lbrace \begin{array}{rcll}
		Lu_i&=&f_i& \text{in }\Omega\cap B_1\\
		u_i&=&0&\text{in } B_1\backslash \Omega,
		\end{array}\right. $$
		for some $f_i\in C^{\varepsilon-s}(\Omega\cap B_1), $ $\varepsilon\in(s,1)$. Furthermore assume the existence of $C_2,c_2>0$, such that $C_2d^s\geq u_2\geq c_2d^s$ in $B_1$. Denote $K_0 = ||f_1||_{C^{\varepsilon-s}}+||f_2||_{C^{\varepsilon-s}}+ ||u_1||_{L^\infty(\R^n)}+||u_2||_{L^\infty(\R^n)}$.
		
		Then for every $z\in\partial\Omega\cap B_{1/2}$ there exist a constant $Q_z$, such that 
		$$|u_1(x)-Q_zu_2(x)|\leq CK_0|x-z|^{\varepsilon+s},\quad x\in B_{1/2}(z)$$
		and 
		$$\left[u_1-Q_zu_2 \right]_{C^{s+\varepsilon}(B_r(x_0))} \leq CK_0 ,$$
		whenever $d(x_0)=|x_0-z|=2r.$ The constant $C$ depends only on $n,s,c_2,C_2,\varepsilon$ and ellipticity constants.
	\end{lemma}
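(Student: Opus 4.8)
The plan is to rerun the blow-up and compactness scheme used in the proof of Lemma~\ref{2functions1.1}, isolating the one place where the hypothesis $f_i\in C^{\varepsilon-s}$ — rather than $|f_i|\le C_0 d^{\varepsilon-s}$ — forces a change. By the smallness of the $C^1$ norm of the graph, after a rotation it is enough to produce $Q_z$ at $z=0$, with $\nu_0=e_n$ and $K_0=1$. First I would argue by contradiction: suppose there are $L_j,\Omega_j,u_{1,j},u_{2,j}$ satisfying the hypotheses for which, for every constant $Q$,
$$\sup_{r>0}\frac{1}{r^{s+\varepsilon}}\|u_{1,j}-Q\,u_{2,j}\|_{L^\infty(B_r)}>j .$$
Set $Q_{r,j}:=\frac{\int_{B_r}u_{1,j}u_{2,j}}{\int_{B_r}u_{2,j}^2}$, so that $\int_{B_r}(u_{1,j}-Q_{r,j}u_{2,j})u_{2,j}=0$, put
$$\theta(r):=\sup_j\sup_{\rho>r}\frac{1}{\rho^{\varepsilon+s}}\|u_{1,j}-Q_{\rho,j}u_{2,j}\|_{L^\infty(B_\rho)},$$
which increases to $+\infty$ as $r\downarrow0$ by \cite[Lemma 4.5]{AR20}, pick realizing sequences $r_m\downarrow0,\ j_m$, and form $v_m(x):=\frac{1}{\theta(r_m)r_m^{\varepsilon+s}}\big(u_{1,j_m}(r_mx)-Q_{r_m,j_m}u_{2,j_m}(r_mx)\big)$, which satisfies $\|v_m\|_{L^\infty(B_1)}\ge\tfrac12$ and the orthogonality $\int_{B_1}v_m\,u_{2,j_m}(r_m\,\cdot)=0$.

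The $Q$-difference estimates and the growth of $v_m$ go through verbatim: using $c_2 d^s\le u_{2,j}\le C_2 d^s$ and monotonicity of $\theta$ one gets $|Q_{r,j}-Q_{Rr,j}|\le C_\varepsilon\theta(r)(Rr)^\varepsilon$, hence $\|v_m\|_{L^\infty(B_R)}\le CR^{s+\varepsilon}$ and $|Q_{j,r}|/\theta(r)\to0$ uniformly in $j$ as $r\to0$. The delicate new point is $L_{j_m}v_m\to0$ locally uniformly in $\{x_n>0\}$: here $L_{j_m}v_m(x)=\frac{r_m^{\,s-\varepsilon}}{\theta(r_m)}\,h_m(r_mx)$ with $h_m:=f_{1,j_m}-Q_{r_m,j_m}f_{2,j_m}\in C^{\varepsilon-s}$, $\|h_m\|_{C^{\varepsilon-s}}\le C(1+|Q_{r_m,j_m}|)$, and — in contrast with the $\varepsilon<s$ regime — the prefactor $r_m^{\,s-\varepsilon}$ now diverges. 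My plan is to split $h_m$ at the boundary point $0$: the oscillation satisfies $|h_m(r_mx)-h_m(0)|\le C(1+|Q_{r_m,j_m}|)r_m^{\varepsilon-s}|x|^{\varepsilon-s}$, whose contribution is $\le\frac{C(1+|Q_{r_m,j_m}|)}{\theta(r_m)}|x|^{\varepsilon-s}\to0$ just as before, while the constant part $h_m(0)$ is killed by passing from $u_{i,j}$ to $\hat u_{i,j}:=u_{i,j}-w_{i,j}$, where $w_{i,j}$ solves $Lw_{i,j}=f_{i,j}(0)$ in $\Omega_j\cap B_1$, $w_{i,j}=0$ in $B_1\setminus\Omega_j$ (and $w_{i,j}=0$ outside $B_1$). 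Then $L\hat u_{i,j}=f_{i,j}-f_{i,j}(0)$ vanishes at $0$ and $|f_{i,j}(x)-f_{i,j}(0)|\le C|x|^{\varepsilon-s}\le Cd^{\,\varepsilon_0-s}|x|^{\,\varepsilon-\varepsilon_0}$ (using $d\le|x|$ and $\varepsilon_0=2s-1$), a right-hand side of the type handled in Lemma~\ref{1function1.2}; running the whole scheme with $\hat u_{i,j}$ in place of $u_{i,j}$, the blow-up right-hand side becomes $\frac{r_m^{\,s-\varepsilon}}{\theta(r_m)}\hat h_m(r_mx)$ with $\hat h_m(0)=0$, so it tends to $0$ locally uniformly. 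Because $c_2 d^s\le u_{2,j}\le C_2 d^s$, passing to $\hat u_{i,j}$ changes $u_1-Q_zu_2$ only by the term $w_1-Q_zw_2$, whose $d^s$–component is absorbed into the choice of $Q_z$ and whose remainder should be $o(|x-z|^{\varepsilon+s})$ — checking this last point is the analytic heart of the argument.

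With $L_{j_m}v_m\to0$ in hand the argument closes as in Lemma~\ref{2functions1.1}: the growth control and \cite[Proposition 3.1]{RS16} give a uniform bound $[v_m]_{C^s(B_M)}\le C(M)$, Arzelà–Ascoli produces a locally uniform limit $v_m\to v$, \cite[Lemma 3.5]{AR20} yields $L_\star v=0$ in $\{x_n>0\}$ and $v=0$ in $\{x_n\le0\}$ with $|v(x)|\le C|x|^{s+\varepsilon}$, and since $s+\varepsilon<1+s$ the Liouville theorem in a half-space (\cite[Proposition 5.1]{RS16c}) forces $v=\kappa\,(x_n)_+^s$; passing the orthogonality to the limit (with $r_m^{-s}u_{2,j_m}(r_m\,\cdot)\to c\,(x_n)_+^s$, $c>0$) gives $\kappa=0$, contradicting $\|v\|_{L^\infty(B_1)}\ge\tfrac12$. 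For the interior estimate, if $d(x_0)=|x_0-z|=2r$ I would rescale $(u_1-Q_zu_2)(x_0+2r\,\cdot)$ and apply an interior Schauder estimate in the spirit of \cite[Proposition 3.9]{AR20} together with the growth bound just proved; this is legitimate because on $B_{2r}(x_0)$ the right-hand side obeys $|f_1-Q_zf_2|\le Cr^{\varepsilon-s}$ and $[f_1-Q_zf_2]_{C^{\varepsilon-s}(B_{2r}(x_0))}\le Cr^{\varepsilon-s}$, i.e.\ it is comparable to $d^{\varepsilon-s}$ on that ball, and scaling back yields $[u_1-Q_zu_2]_{C^{s+\varepsilon}(B_r(x_0))}\le CK_0$.

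I expect the main obstacle to be exactly this step $L_{j_m}v_m\to0$ and, intertwined with it, the control of the auxiliary constant–data solutions $w_{i,j}$: the genuinely new feature compared with Lemma~\ref{2functions1.1} is that $f_i$ is Hölder \emph{up to} the boundary rather than decaying there, so the blow-up right-hand side no longer scales cleanly and carries the diverging factor $r_m^{\,s-\varepsilon}$. One must peel off the boundary value of the right-hand side and then verify that the solution of $L(\cdot)=\text{const}$, vanishing outside a bounded $C^1$ set, differs from a multiple of $d^s$ by a term that is $o(|x-z|^{\varepsilon+s})$ near the boundary — which is precisely where the structure of nonlocal boundary expansions and the restriction $\varepsilon<1$ are used.
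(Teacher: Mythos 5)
You have correctly located the one place where the $\varepsilon>s$ regime genuinely changes the blow-up argument: the prefactor $r_m^{\,s-\varepsilon}$ in $L_{j_m}v_m = \frac{r_m^{\,s-\varepsilon}}{\theta(r_m)}h_m(r_m\cdot)$ now diverges, and with $f_i$ merely H\"older up to the boundary (not decaying there), the constant part $h_m(0)$ is not killed by scaling. But the fix you build is substantially more complicated than what is needed, and it has a real gap. The paper's route is to let this constant be swallowed by the generalised evaluation $\stackrel{k}{=}$ (Definition \ref{definitionGeneralised}): since the growth $\|v_m\|_{L^\infty(B_R)}\le CR^{s+\varepsilon}$ with $s+\varepsilon>2s$ already forces passage to the generalised sense, one applies \cite[Lemma 3.5]{AR20} with the polynomials $P_m:=\tfrac{r_m^{\,s-\varepsilon}}{\theta(r_m)}h_m(0)$ (whose divergence is irrelevant — the definition allows arbitrary polynomials of degree $\le k-1$) and concludes $L_\star v\stackrel{k}{=}0$ from the oscillation bound $\frac{1}{\theta(r_m)}|x|^{\varepsilon-s}\to 0$. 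Then the generalised-sense Liouville theorem \cite[Theorem 3.10]{AR20} yields $v=p(x)(x_n)_+^s$ with $\deg p\le 1$, and the growth $|v|\le C|x|^{s+\varepsilon}<C|x|^{1+s}$ forces $\deg p=0$. This is the whole change; no auxiliary solutions are required.

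Your subtraction of $w_{i,j}$ solving $Lw_{i,j}=f_{i,j}(0)$ has two concrete problems that you yourself flag as ``the analytic heart'' but leave open, and they are in fact fatal in the stated form. First, the blow-up machinery for a quotient needs the lower bound $\hat u_{2,j}\ge c\,d^s$, but after subtracting $w_{2,j}$ — which itself behaves like $d^s$ near the boundary — this nondegeneracy can fail entirely; you only assumed it for $u_{2,j}$. Second, even if the blow-up for $\hat u_i$ produces $|\hat u_1-Q'\hat u_2|\le C|x-z|^{s+\varepsilon}$, one cannot conclude for $u_i=\hat u_i+w_i$, since $w_1-Q'w_2$ contributes a term of order $d^s\sim|x-z|^s\gg|x-z|^{s+\varepsilon}$. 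Absorbing its $d^s$-coefficient into $Q_z$ would change the constant in front of $u_2$, which changes \emph{both} $\hat u_1-Q_z\hat u_2$ and $w_1-Q_zw_2$ simultaneously; these two linear combinations do not have matching optimal constants in general, so there is no single $Q_z$ that makes both remainders small. Finally, the Liouville theorem you invoke, \cite[Proposition 5.1]{RS16c}, applies only under growth strictly below $2s$; here $s+\varepsilon>2s$, so one must use the generalised-sense version \cite[Theorem 3.10]{AR20} and then kill the degree-one factor by the growth bound, as the paper does.
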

	
	\begin{proof}
		The proof goes along the same lines of the proof of the lemma above, just the converging arguments changes a little. To obtain the uniform $C^s$ bound on the blow-up sequence $v_m$, we use \cite[Proposition 3.8]{AR20} instead, and since the growth of the limit function is too large, we use Liouville theorem \cite[Theorem 3.10]{AR20}, to get that the limit $v(x) = p(x)(x_n)_+^s,$ for some polynomial $p$ of degree one. But then the growth control of $v$ ensures that the polynomial is indeed of degree zero.
		
		To obtain the interior regularity estimate we now apply \cite[Proposition 3.9]{AR20}.
	\end{proof}

	The third case, $\varepsilon>1$ is done in \cite[Proposition 4.4]{AR20}.

	Proceed now to the generalisations. First we state the result with the least regularity assumptions.
	
	\begin{lemma}\label{2functions1.2}
		Let $L$ be an operator whose kernel $K$ satisfies \eqref{kernelConditions}. Let $\Omega$ be a domain with $0\in\partial\Omega$, so that $\partial \Omega\cap B_1$ is a graph of a function whose $C^{1}$ norm is smaller than $1$. For $i=1,2$ and $z\in\partial\Omega\cap B_{3/4}$, let $u_i-g_{i,z}$ be  solutions  to
		$$\left\lbrace \begin{array}{rcll}
		L(u_i-g_{i,z})&=&f_{i,z}& \text{in }\Omega\cap B_1,\\
		u_i-g_{i,z}&=&0&\text{in } B_1\backslash \Omega,
		\end{array}\right. $$
		for some $f_{i,z}$ satisfying $|f_{i,z}(x)|\leq C_0d^{\varepsilon_0-s}|x-z|^\varepsilon, $ $\varepsilon_0+\varepsilon\in(0,s)$. Furthermore assume the existence of $C_2,c_2>0$, such that $C_2d^s\geq u_2-g_{2,z}\geq c_2d^s$ in $B_1$. Denote $$K_0 = C_0 + \sup_z||u_1-g_{1,z}||_{L^\infty(\R^n)}+ \sup_z||u_2-g_{2,z}||_{L^\infty(\R^n)}.$$
		
		Then for every $z\in\partial\Omega\cap B_{1/2}$ there exist a constant $Q_z$, such that 
		$$|u_1(x)-g_{1,z}(x)-Q_z(u_2(x)-g_{2,z}(x))|\leq CK_0|x-z|^{\varepsilon_0+\varepsilon+s},\quad x\in B_{1/2}(z)$$
		and 
		$$\left[u_1-g_{1,z}-Q_z(u_2-g_{2,z}) \right]_{C^{s+\varepsilon_0+\varepsilon}(B_r(x_0))} \leq CK_0 ,$$
		whenever $d(x_0)=|x_0-z|=2r.$ The constant $C$ depends only on $n,s,c_2,C_2,\varepsilon$ and ellipticity constants.
	\end{lemma}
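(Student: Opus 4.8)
The plan is to run the same blow-up/compactness contradiction argument as in the proof of Lemma \ref{2functions1.1}, but applied to the pairs $\tilde u_i:=u_i-g_{i,z}$, and with the extra factor $|x-z|^\varepsilon$ in the right-hand side handled exactly as in Lemma \ref{1function1.2}. As usual, the bound on the $C^1$ norm of the graph of $\partial\Omega\cap B_1$ lets us reduce to $z=0$ with $\nu_0=e_n$, and we normalise $K_0=1$; then $\tilde u_1,\tilde u_2$ solve $L\tilde u_i=f_i$ in $\Omega\cap B_1$, vanish in $B_1\setminus\Omega$, satisfy $|f_i(x)|\le C_0 d^{\varepsilon_0-s}|x|^\varepsilon$, and $c_2 d^s\le\tilde u_2\le C_2 d^s$. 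Suppose the conclusion fails: there exist $L_j,\Omega_j,\tilde u_{i,j},f_{i,j}$ as above so that for every constant $Q_j$,
\[
\sup_{r>0}\frac{1}{r^{s+\varepsilon_0+\varepsilon}}\|\tilde u_{1,j}-Q_j\tilde u_{2,j}\|_{L^\infty(B_r)}>j.
\]

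Following Lemma \ref{2functions1.1}, set $Q_{r,j}:=\big(\int_{B_r}\tilde u_{1,j}\tilde u_{2,j}\big)\big/\big(\int_{B_r}\tilde u_{2,j}^2\big)$ and
\[
\theta(r):=\sup_j\sup_{\rho>r}\frac{1}{\rho^{\varepsilon_0+\varepsilon+s}}\|\tilde u_{1,j}-Q_{\rho,j}\tilde u_{2,j}\|_{L^\infty(B_\rho)},
\]
which by \cite[Lemma 4.5]{AR20} tends to $\infty$ as $r\downarrow0$; picking realising sequences $r_m,j_m$ we form the blow-up
\[
v_m(x):=\frac{1}{\theta(r_m)r_m^{\varepsilon_0+\varepsilon+s}}\big(\tilde u_{1,j_m}(r_m x)-Q_{r_m,j_m}\tilde u_{2,j_m}(r_m x)\big),
\]
so that $\|v_m\|_{L^\infty(B_1)}\ge\frac12$ and $\int_{B_1}v_m(x)\,\tilde u_{2,j_m}(r_m x)\,dx=0$. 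The dyadic estimates $|Q_{r,j}-Q_{2r,j}|\le C\theta(r)r^{\varepsilon_0+\varepsilon}$ and $|Q_{r,j}-Q_{Rr,j}|\le C\theta(r)(Rr)^{\varepsilon_0+\varepsilon}$ are obtained verbatim (with $\varepsilon$ replaced by $\varepsilon_0+\varepsilon$), giving $\|v_m\|_{L^\infty(B_R)}\le CR^{s+\varepsilon_0+\varepsilon}$. Computing
\[
L_{j_m}v_m(x)=\frac{r_m^{s-\varepsilon_0-\varepsilon}}{\theta(r_m)}\big(f_{1,j_m}-Q_{r_m,j_m}f_{2,j_m}\big)(r_m x)
\]
and using $|f_{i,j_m}(r_m x)|\le C_0 r_m^{\varepsilon_0-s}d_{j_m}^{\varepsilon_0-s}(x)\,r_m^{\varepsilon}|x|^{\varepsilon}$, the $r_m$–powers cancel and $|L_{j_m}v_m(x)|\le C\theta(r_m)^{-1}(1+Q_{r_m,j_m})d_{j_m}^{\varepsilon_0-s}(x)|x|^{\varepsilon}$. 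Since $Q_{r_m,j_m}/\theta(r_m)\to0$ (uniformly in $j$, by the same $\epsilon$–$r_0$ argument on a convergent dyadic series as in Lemma \ref{2functions1.1}), we get $L_{j_m}v_m\to0$ locally uniformly in $\{x_n>0\}$ and $|L_{j_m}v_m|\le C_M d_{j_m}^{\varepsilon_0-s}$ on $B_M$. With the growth control this allows us to invoke \cite[Proposition 3.1]{RS16} for uniform $C^s_\loc$ bounds, extract a locally uniform limit $v$ with $L_\star v=0$ in $\{x_n>0\}$ and $v=0$ in $\{x_n\le0\}$, hence $v(x)=\kappa(x_n)_+^s$ by \cite[Proposition 5.1]{RS16c}. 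Passing the orthogonality relation to the limit — using that $r_m^{-s}\tilde u_{2,j_m}(r_m\cdot)$ converges locally uniformly to a strictly positive multiple of $(x_n)_+^s$, thanks to $c_2 d^s\le\tilde u_2\le C_2 d^s$, interior estimates, and the convergence of the rescaled domains to $\{x_n>0\}$ — gives $\kappa\int_{B_1}(x_n)_+^{2s}\,dx=0$, so $\kappa=0$, contradicting $\|v\|_{L^\infty(B_1)}\ge\frac12$ and proving the expansion estimate.

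For the interior seminorm bound, fix $x_0$ with $d(x_0)=|x_0-z|=2r$ and set $v_r(x):=(\tilde u_1-Q_z\tilde u_2)(x_0+2rx)$. The expansion estimate gives $|v_r(x)|\le CK_0 r^{s+\varepsilon_0+\varepsilon}(1+|x|)^{s+\varepsilon_0+\varepsilon}$, while $L(\tilde u_1-Q_z\tilde u_2)=f_{1,z}-Q_z f_{2,z}$ is bounded by $CK_0 d^{\varepsilon_0-s}|\cdot-z|^{\varepsilon}\le CK_0 r^{\varepsilon_0+\varepsilon-s}$ on $B_r(x_0)$. Applying \cite[Corollary 3.6]{RS16b} and rescaling yields $[\tilde u_1-Q_z\tilde u_2]_{C^{2s}(B_r(x_0))}\le CK_0 r^{\varepsilon_0+\varepsilon-s}$; since $s+\varepsilon_0+\varepsilon<2s$, passing to this smaller Hölder exponent absorbs the negative power of $r$ and gives $[\tilde u_1-Q_z\tilde u_2]_{C^{s+\varepsilon_0+\varepsilon}(B_r(x_0))}\le CK_0$, as claimed.

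The only genuinely new ingredient relative to Lemma \ref{2functions1.1} is the factor $|x-z|^{\varepsilon}$ on the right-hand side, which merely produces the extra power $r_m^{\varepsilon}$ (absorbed into the shifted homogeneity $\varepsilon_0+\varepsilon$) together with a harmless, locally bounded factor $|x|^{\varepsilon}$ in the limiting equation; so I do not expect a real obstacle here. The two points demanding the most care — both already present in the proof of Lemma \ref{2functions1.1} — are the uniform-in-$j$ decay $Q_{r,j}/\theta(r)\to0$ and the passage of the orthogonality condition to the limit, which relies on the uniform two-sided comparison $c_2 d^s\le\tilde u_2\le C_2 d^s$.
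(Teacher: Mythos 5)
Your proposal is correct and takes essentially the same approach as the paper, which in fact only remarks that the proof combines the blow‑up argument of Lemma~\ref{2functions1.1} with the exponent modification from Lemma~\ref{1function1.2}; you have simply expanded that remark into full detail, including the verification that the $r_m$‑powers cancel when the right‑hand side carries the extra factor $|x-z|^{\varepsilon}$.
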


	\begin{proof}
		The proof works in the same way as the the one in Lemma \ref{1function1.2}, but following the proof of Lemma \ref{2functions1.1}, just that the exponent changes to $\varepsilon_0+\varepsilon$ instead of $\varepsilon$. 
	\end{proof}

	Similarly happens with Lemma \ref{2functions2.1}.
	
	\begin{lemma}\label{2functions2.2}
		Let $L$ be an operator whose kernel $K$ satisfies condition \eqref{kernelConditions}. Let $\Omega$ be a domain with $0\in\partial\Omega$, so that $\partial \Omega\cap B_1$ is a graph of a function whose $C^{1}$ norm is smaller than $1$. For $i=1,2$ and $z\in\partial\Omega\cap B_{3/4}$, let $u_i-g_{i,z}$ be  solutions  to
		$$\left\lbrace \begin{array}{rcll}
		L(u_i-g_{i,z})&=&f_{i,z}& \text{in }\Omega\cap B_1,\\
		u_i-g_{i,z}&=&0&\text{in } B_1\backslash \Omega.
		\end{array}\right. $$
		Let there exist constants $P_z$, so that $|f_{i,z}(x)-P_z|\leq C_0d^{\varepsilon_0-s}|x-z|^\varepsilon, $ $\varepsilon,\varepsilon_0>0,$ $\varepsilon_0<s$, $\varepsilon_0+\varepsilon\in(s,1)$.
		Assume also that whenever $d(x_0)=2r=|x_0-z|$, we have the bound
		$$ \left[ f_z\right] _{C^{\varepsilon_0+\varepsilon-s}(B_{\frac{3r}{2}})}\leq C_0. $$
		Furthermore assume the existence of $C_2,c_2>0$, such that $C_2d^s\geq u_2-g_{2,z} \geq c_2d^s$ in $B_1$. Denote $K_0 = C_0 + \sup_z||u_1-g_{1,z}||_{L^\infty(\R^n)}+\sup_z||u_2-g_{2,z}||_{L^\infty(\R^n)}$.
		
		Then for every $z\in\partial\Omega\cap B_{1/2}$ there exist a constant $Q_z$, such that 
		$$|u_1(x)-g_{1,z}(x)-Q_z(u_2(x)-g_{2,z}(x))|\leq CK_0|x-z|^{\varepsilon+\varepsilon_0+s},\quad x\in B_{1/2}(z)$$
		and 
		$$\left[ u_1-g_{1,z}-Q_z(u_2-g_{2,z}) \right]_{C^{s+\varepsilon+\varepsilon_0}(B_r(x_0))} \leq CK_0 ,$$
		whenever $d(x_0)=|x_0-z|=2r.$ The constant $C$ depends only on $n,s,c_2,C_2,\varepsilon$ and ellipticity constants.
	\end{lemma}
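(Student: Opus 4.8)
The plan is to run the blow-up and compactness contradiction argument exactly as in the proof of Lemma \ref{2functions2.1}, incorporating the translation functions $g_{i,z}$ as in the proof of Lemma \ref{1function1.2}, and with the exponent $\varepsilon$ there replaced throughout by $\varepsilon_0+\varepsilon$. Since $\partial\Omega\cap B_1$ is a sufficiently flat graph we may work at $z=0$, normalise $K_0=1$ and assume $\nu_0=e_n$; write $\tilde u_i=u_i-g_{i,0}$. If the expansion estimate failed there would be $L_j,\Omega_j,\tilde u_{i,j},f_{i,j}$ as in the statement with $\sup_{r>0}r^{-(s+\varepsilon_0+\varepsilon)}||\tilde u_{1,j}-Q_j\tilde u_{2,j}||_{L^\infty(B_r)}>j$ for all constants $Q_j$. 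Then one sets $Q_{r,j}=\big(\int_{B_r}\tilde u_{1,j}\tilde u_{2,j}\big)\big/\big(\int_{B_r}\tilde u_{2,j}^2\big)$ and the monotone quantity $\theta(r)=\sup_j\sup_{\rho>r}\rho^{-(s+\varepsilon_0+\varepsilon)}||\tilde u_{1,j}-Q_{\rho,j}\tilde u_{2,j}||_{L^\infty(B_\rho)}$, which blows up as $r\downarrow0$ by \cite[Lemma 4.5]{AR20}; choosing realising sequences $r_m,j_m$ one defines $v_m(x)=\theta(r_m)^{-1}r_m^{-(s+\varepsilon_0+\varepsilon)}\big(\tilde u_{1,j_m}(r_mx)-Q_{r_m,j_m}\tilde u_{2,j_m}(r_mx)\big)$, so that $||v_m||_{L^\infty(B_1)}\geq\tfrac12$ and $\int_{B_1}v_m(x)\tilde u_{2,j_m}(r_mx)\,dx=0$.

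The estimate $|Q_{r,j}-Q_{2r,j}|\leq C\theta(r)r^{\varepsilon_0+\varepsilon}$ and its iteration give $|Q_{r,j}-Q_{Rr,j}|\leq C_\varepsilon\theta(r)(Rr)^{\varepsilon_0+\varepsilon}$, hence the growth bound $||v_m||_{L^\infty(B_R)}\leq CR^{s+\varepsilon_0+\varepsilon}$, exactly as in Lemma \ref{2functions1.1}. Computing the operator, $L_{j_m}v_m(x)=\theta(r_m)^{-1}r_m^{s-\varepsilon_0-\varepsilon}\big(f_{1,j_m}-Q_{r_m,j_m}f_{2,j_m}\big)(r_mx)$; subtracting the rescaled polynomials coming from $P_z$, using $|f_{i,j}-P_j|\leq C_0d^{\varepsilon_0-s}|x|^\varepsilon$ and $\tfrac{Q_{r,j}}{\theta(r)}\to0$ (proven by the same geometric-series argument as in Lemma \ref{2functions1.1}), one shows that $L_{j_m}v_m-P_m\to0$ locally uniformly in $\{x_n>0\}$ with $|L_{j_m}v_m-P_m|\leq Cd^{\varepsilon_0-s}$; the interior H\"older hypothesis on $f_z$ at scale $2r=d(x_0)$ is precisely what controls these right-hand sides away from the boundary. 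Since now $\varepsilon_0+\varepsilon>s$, one uses \cite[Proposition 3.8]{AR20} (rather than \cite[Proposition 3.1]{RS16}) to get uniform $C^s$ bounds for $v_m$ on compact sets, and Arzel\`a--Ascoli yields a subsequence converging locally uniformly in $\R^n$ to some $v$ with $L_\star v\stackrel{k}{=}0$ in $\{x_n>0\}$ and $v=0$ in $\{x_n\leq0\}$.

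Because the growth of $v$ exceeds $d^s$ we invoke the Liouville theorem \cite[Theorem 3.10]{AR20}: the limit is $v(x)=p(x)(x_n)_+^s$ for a polynomial $p$, and the growth bound with $\varepsilon_0+\varepsilon<1$ forces $p$ to be constant; passing the orthogonality relation to the limit (the normalised $\tilde u_{2,j_m}(r_m\cdot)$ converges to a multiple of $(x_n)_+^s$, using $c_2d^s\leq u_2-g_{2,z}\leq C_2d^s$) forces that constant to vanish, contradicting $||v||_{L^\infty(B_1)}\geq\tfrac12$. The interior regularity estimate then follows from the just-proved growth bound: with $2r=d(x_0)=|x_0-z|$, rescaling $v_r(x)=(u_1-g_{1,z}-Q_z(u_2-g_{2,z}))(x_0+2rx)$ and applying \cite[Proposition 3.9]{AR20} together with Lemma \ref{LonDistance} and the bound on $L(u_i-g_{i,z})$ gives $\big[u_1-g_{1,z}-Q_z(u_2-g_{2,z})\big]_{C^{s+\varepsilon_0+\varepsilon}(B_r(x_0))}\leq CK_0$.

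The main delicacy, just as in the cited proofs, is carrying the translate functions $g_{i,z}$ and the polynomial $P_z$ through the blow-up simultaneously, so that the limit still solves the constant-coefficient equation in the generalised sense and the $\theta$-normalisation correctly kills all lower-order contributions; apart from this bookkeeping, the argument is a line-by-line transcription of the proofs of Lemmas \ref{2functions1.1}, \ref{2functions2.1} and \ref{1function1.2}.
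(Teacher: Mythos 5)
Your proposal is correct and follows essentially the same route as the paper: the paper's proof likewise runs the blow-up/compactness scheme of Lemma \ref{2functions2.1} (with the translates $g_{i,z}$ handled as in Lemmas \ref{1function1.2} and \ref{2functions1.2}), replaces the $C^{\varepsilon-s}$ assumption on $f$ by the approximation estimate with $P_z$ plus the interior H\"older bound, obtains the limit equation via \cite[Lemma 3.5]{AR20} on compact subsets of $\{x_n>0\}$, concludes with the Liouville theorem \cite[Theorem 3.10]{AR20} and the orthogonality relation, and derives the interior seminorm bound from \cite[Proposition 3.9]{AR20}. No gaps to report.
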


	\begin{proof}
		The proof is again done with the blow up argument as in the proof of Lemma \ref{2functions2.1}, just that the assumption $f\in C^\varepsilon$ is replaced with approximation estimate ($|f_{i,z}(x)-P_z|\leq C_0 |x-z|^\varepsilon d^{\varepsilon_0-s}$) together with the interior regularity ($ \left[ f_z\right] _{C^{\varepsilon+\varepsilon_0-s}(B_{\frac{3r}{2}})}\leq C_0 $). The convergence argument is done in the same way as in Lemma \ref{2functions1.2}, just that for the equation of the limit function we use \cite[Lemma 3.5]{AR20} on every compact set in $\{x_n>0\}$. 
		
		For the interior regularity estimate we again use \cite[Proposition 3.9]{AR20}.
	\end{proof}
	
	We conclude the section with the last generalised result.
	
	\begin{lemma}\label{2functions3.2}
		Let $\beta>1$, such that $\beta\not\in\N$ and $\beta\pm s\not\in\N$. Let $L$ be an operator whose kernel $K$ satisfies the condition \eqref{kernelConditions} and is $C^{2\beta+1}(\S^{n-1})$. Let $\Omega\subset \R^n$ be a bounded domain of class $C^\beta$, with $0\in\partial\Omega$.  Let $z\in\partial\Omega\cap B_{3/4}$ and $u_1,u_2\in L^\infty(\R^n)$ solutions of 
		$$\left\lbrace\begin{array}{rcl l}
		L(u_i-g_{i,z})&=&f_{i,z}&\text{in  } \Omega\cap B_1\\
		u_i-g_{i,z} &=&0 & \text{in  } \Omega^c\cap B_1,
		\end{array}\right.$$
		for some functions $g_{i,z}$. Suppose that  $\partial\Omega\cap B_1$ is a graph of some $C^\beta$ function whose $C^\beta$ norm is bounded by one. Assume that for some $c_1>0$,
		$$u_1(x)-g_{1,z}(x)\geq c_1 d^s(x),\quad \text{for any }x\in B_1(z).$$
		Let there exist two polynomials $P_{i,z}\in\textbf{P}_{\left[ \beta-s\right] }$, so that for some $\varepsilon>0,$ we have $|f_{i,z}(x) - P_{i,z}(x)|\leq C_1|x-z|^{\beta+s-\varepsilon}d^{\varepsilon-2s}, \quad x\in B_1(z)$, with $C_1$ independent of $z$. Assume also 
		$$\left[f_{i,z} \right] _{C^{\beta-s}(B_{\frac{3r}{2}}(x_0))}\leq C_2 ,$$ whenever $d(x_0) = 2r = |x_0-z|>0$, independently of $z,r,x_0$. Then for every $z\in\partial\Omega\cap B_{\frac{1}{2}}$ there exists a polynomial $Q_z\in\textbf{P}_{\lfloor\beta\rfloor }$, such that
		$$|u_2(x) - g_{2,z}(x) - Q_z(x)(u_1(x)-g_{1,z}(x))|\leq C|x-z|^{\beta+s},\quad \text{for any }x\in B_1(z).$$
		Moreover, when $d(x_0) = 2r = |x_0-z|>0$,
		$$\left[u_2- g_{2,z} - Q_z(u_1- g_{1,z}) \right] _{C^{\beta+s}(B_{r}(x_0))}\leq C .$$ 
		The constant $C$ depends only on $n,s,c_1,C_1,C_2$ and $||K||_{C^{2\beta+1}(\S^{n-1})}$.
	\end{lemma}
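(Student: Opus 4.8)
The plan is to run the contradiction--compactness scheme already used for Lemmas \ref{2functions1.2} and \ref{2functions2.2}, now with growth exponent $\beta+s$; that is, to reproduce the argument of \cite[Proposition 4.4]{AR20} while carrying along the auxiliary functions $g_{i,z}$ and replacing its regularity hypotheses on the right-hand sides by the approximation bound plus the interior H\"older bound available here. Write $w_{i,z}:=u_i-g_{i,z}$. By the assumption that $\partial\Omega\cap B_1$ is a graph of a function of $C^\beta$ norm at most $1$, it suffices to treat $z=0$ with outer normal $\nu_0=e_n$. Assume the conclusion fails; then there are sequences $\Omega_j,L_j,w_{i,j},f_{i,j}$ satisfying the hypotheses with uniform constants, such that for every $Q\in\textbf{P}_{\lfloor\beta\rfloor}$ and every $j$,
$$\sup_{r>0}r^{-(\beta+s)}\big\|w_{2,j}-Q\,w_{1,j}\big\|_{L^\infty(B_r)}>j.$$

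First I would introduce the weighted $L^2$ projections $Q_{r,j}\in\textbf{P}_{\lfloor\beta\rfloor}$ determined, as in \cite[Section 4]{AR20}, by $\int_{B_r}(w_{2,j}-Q_{r,j}w_{1,j})\,P\,w_{1,j}\,dx=0$ for all $P\in\textbf{P}_{\lfloor\beta\rfloor}$ (well defined since $w_{1,j}\ge c_1 d^s>0$ in $\Omega$), together with the monotone quantity $\theta(r):=\sup_j\sup_{\rho>r}\rho^{-(\beta+s)}\|w_{2,j}-Q_{\rho,j}w_{1,j}\|_{L^\infty(B_\rho)}$, which by \cite[Lemma 4.5]{AR20} tends to $\infty$ as $r\downarrow0$. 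Picking realizing sequences $r_m,j_m$ and setting
$$v_m(x):=\frac{1}{\theta(r_m)\,r_m^{\beta+s}}\Big(w_{2,j_m}(r_mx)-Q_{r_m,j_m}(r_mx)\,w_{1,j_m}(r_mx)\Big),$$
one has $\|v_m\|_{L^\infty(B_1)}\ge\tfrac12$ and the rescaled orthogonality $\int_{B_1}v_m\,P\,w_{1,j_m}(r_m\cdot)\,dx=0$ for all $P\in\textbf{P}_{\lfloor\beta\rfloor}$. Exactly as in Lemma \ref{2functions1.1} (using $w_{1,j}\ge c_1 d^s$) one gets the coefficientwise bounds $|D^k(Q_{r,j}-Q_{2r,j})|\le C\theta(r)r^{\beta-k}$; summing these dyadically yields both the growth control $\|v_m\|_{L^\infty(B_R)}\le CR^{\beta+s}$ and the fact that every coefficient of $Q_{r_m,j_m}$ is $o(\theta(r_m))$ as $m\to\infty$.

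Next I would compute $L_{j_m}v_m$. Since $Lw_{i,j}=f_{i,j}$, we have $L(w_{2,j}-Q_{r,j}w_{1,j})=f_{2,j}-Q_{r,j}f_{1,j}-\mathcal C_{r,j}$, where $\mathcal C_{r,j}:=L(Q_{r,j}w_{1,j})-Q_{r,j}Lw_{1,j}$ is the (generalized-sense) nonlocal commutator. Writing $f_{i,j}-P_{i,j}$ for the approximating remainder, the polynomial part $P_{2,j}-Q_{r,j}P_{1,j}$ has degree $[\beta-s]=k-1$, where $k=\lceil\beta-s\rceil$ is the order of the generalized-sense equation satisfied by the blow-up limit, so it is absorbed into the polynomial ambiguity of Definition \ref{definitionGeneralised}; the commutator $\mathcal C_{r_m,j_m}$ enters $L_{j_m}v_m$ with a factor carrying the coefficients of $Q_{r_m,j_m}$, hence vanishes locally uniformly since those are $o(\theta(r_m))$ and $w_{1,j_m}$ is uniformly controlled. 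Finally, $r_m^{s-\beta}\theta(r_m)^{-1}$ applied to the remainder $O(|x|^{\beta+s-\varepsilon}d^{\varepsilon-2s})$ scales exactly to $\theta(r_m)^{-1}O(|x|^{\beta+s-\varepsilon}d_m^{\varepsilon-2s})$, which, using also the interior bound $[f_{i,z}]_{C^{\beta-s}(B_{3r/2}(x_0))}\le C_2$ as in Lemma \ref{2functions2.2}, gives $L_{j_m}v_m\to0$ locally uniformly in $\{x_n>0\}$ and $|L_{j_m}v_m|\le Cd^{\varepsilon-2s}$ on bounded sets. Lemma \ref{boundaryRegGeneralised} then provides a uniform H\"older bound for $v_m$ on compact sets, so up to a subsequence $v_m\to v$ locally uniformly in $\R^n$ with $|v(x)|\le C|x|^{\beta+s}$, $v=0$ in $\{x_n\le0\}$, and $L_\star v\stackrel{k}{=}0$ in $\{x_n>0\}$ by \cite[Lemma 3.5]{AR20}.

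Finally, the half-space Liouville theorem \cite[Theorem 3.10]{AR20} applies: here the hypotheses $\beta\notin\N$ and $\beta\pm s\notin\N$ are precisely what excludes logarithmic and critical terms, so $v(x)=p(x)(x_n)_+^s$ for some $p\in\textbf{P}_{\lfloor\beta\rfloor}$. On the other hand, applying Lemma \ref{1function2.2} to $w_{1,j_m}$ gives $w_{1,j_m}(r_m\cdot)/r_m^s\to c\,(x_n)_+^s$ locally uniformly with $c\ge c_1>0$; passing the rescaled orthogonality to the limit yields $\int_{B_1}p\,P\,(x_n)_+^{2s}\,dx=0$ for all $P\in\textbf{P}_{\lfloor\beta\rfloor}$, and choosing $P=p$ forces $p\equiv0$, i.e.\ $v\equiv0$, contradicting $\|v\|_{L^\infty(B_1)}\ge\tfrac12$. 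This establishes the pointwise expansion at every $z\in\partial\Omega\cap B_{1/2}$; the interior H\"older estimate for $u_2-g_{2,z}-Q_z(u_1-g_{1,z})$ then follows by applying \cite[Proposition 3.9]{AR20} to the rescalings $\big(u_2-g_{2,z}-Q_z(u_1-g_{1,z})\big)(x_0+2r\cdot)$ together with the pointwise bound, exactly as at the ends of Lemmas \ref{1function1.2} and \ref{2functions2.1}. I expect the main obstacle to be the simultaneous bookkeeping in the estimate of $L_{j_m}v_m$: one must verify that the dyadic control of the coefficients of $Q_{r,j}$ is strong enough to annihilate the nonlocal commutator $\mathcal C_{r_m,j_m}$ in the limit while still producing the $R^{\beta+s}$ growth of $v_m$, and that the low-regularity right-hand sides (only the approximation bound and an interior seminorm bound, rather than a global H\"older norm) do not spoil the compactness step provided by Lemma \ref{boundaryRegGeneralised}.
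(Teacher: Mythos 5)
Your overall scheme (contradiction, weighted projections, blow-up, Liouville, orthogonality in the limit) is the right family of arguments, but it diverges from the paper at one decisive point, and that is where it breaks. You blow up $w_{2,j}-Q_{r,j}w_{1,j}$ with a \emph{full} polynomial $Q_{r,j}\in\textbf{P}_{\lfloor\beta\rfloor}$ multiplying $w_{1,j}$, and you must therefore control the nonlocal commutator $\mathcal C_{r,j}=L(Q_{r,j}w_{1,j})-Q_{r,j}Lw_{1,j}$. Your claim that it disappears because the coefficients of $Q_{r_m,j_m}$ are $o(\theta(r_m))$ does not survive the scaling: writing $\mathcal C(X)=-\sum_{1\le|\alpha|\le\lfloor\beta\rfloor}\frac{\partial^\alpha Q_{r_m}(X)}{\alpha!}\int (y-X)^\alpha w_{1}(y)K(X-y)\,dy$, the integrals are nonlocal operators of order $2s-|\alpha|$ acting on $w_1\sim d^s$ and are genuinely of size $O(1)$ near the boundary (they are not polynomials in $X$, so they cannot be absorbed into the generalized-sense ambiguity), while the factor in $L_{j_m}v_m$ is $\frac{r_m^{2s}}{\theta(r_m)r_m^{\beta+s}}$. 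The commutator contribution is thus of size $o(\theta(r_m))\cdot\theta(r_m)^{-1}r_m^{s-\beta}=o(1)\,r_m^{s-\beta}$, and since $\beta>s$ and the $o(1)$ carries no rate (nor does $\theta(r_m)$ have any lower growth rate), this term is not controlled and the convergence $L_{j_m}v_m\to0$ fails. A secondary slip of the same origin: $P_{2,j}-Q_{r,j}P_{1,j}$ has degree $\lfloor\beta\rfloor+\lfloor\beta-s\rfloor$, not $\lfloor\beta-s\rfloor$, so it is not automatically inside the polynomial ambiguity either (its high-degree part does vanish after rescaling, but only after the coefficient bookkeeping you skipped).

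The paper's proof avoids the commutator altogether, and this is the missing idea: first apply the Schauder-type expansion (Lemma \ref{1function2.2}, or Lemma \ref{1function1.2} for small $\beta$) to $\tilde u_1=u_1-g_{1,z}$, which shows that approximating $u_2-g_{2,z}$ by $Q(x)\,\tilde u_1$ is equivalent, up to $O(|x-z|^{\beta+s})$, to approximating it by $\tilde q^{(0)}\tilde u_1+\tilde Q^{(1)}(x)d^s$ with $\tilde Q^{(1)}(z)=0$. In that class only a \emph{constant} multiplies $\tilde u_1$ (so $L$ commutes with it exactly and produces $\tilde q^{(0)}f_{1,z}$), while the non-constant coefficients multiply $d^s$, and $L(\tilde Q^{(1)}d^s)$ is controlled by the distance-function machinery of Section 3 (Lemma \ref{LonDistance}, Corollary \ref{aboveCorollary}), exactly as in \cite[Proposition 4.4]{AR20}. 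If you try to repair your commutator estimate, you will need an expansion of $w_1$ in powers of $d$ plus the $L(\eta d^p)$ computations anyway, which brings you back to this reduction; so the fix is to perform the reduction before the blow-up, as the paper does, and then your remaining steps (use of Lemma \ref{boundaryRegGeneralised} for compactness, \cite[Lemma 3.5]{AR20} for the limit equation, the Liouville theorem, and the orthogonality passage) go through as you describe.
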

	
	\begin{proof}
		Again we can assume $z=0$ without loss of generality. Denote $\tilde{u}_1 = u_1-g_{1,0}$ and $\tilde{u}_2=u_2-g_{2,0}$. Applying Lemma \ref{1function2.2} (or Lemma \ref{1function1.2} when $\beta$ is to small) to $\tilde{u}_1$, we see with the same argument as in the paper, that it is equivalent finding $\tilde{Q} = \tilde{q}^{(0)}+\tilde{Q}^{(1)}(x)$ such that 
		$$|\tilde{u}_2 (x)- \tilde{q}^{(0)} \tilde{u}_1(x) - \tilde{Q}^{(1)}(x) d^s(x)|\leq C|x|^{\beta+s},\quad \text{for any }x\in B_1.$$ 
		Just like in Lemma \ref{1function2.2} we assumed all the necessary interior regularity, which replaces the assumption $f_i\in C^{\beta-s}(\overline{\Omega})$. Again it is needed to find a polynomial so that when subtracting it from the operator on the blow-up sequence we get something small, and to get a uniform bound on the $C^\varepsilon$ norm of the blow-up sequence (where we potentially use Lemma \ref{boundaryRegGeneralised}), and the interior regularity estimate in the claim. Note also that the equation of the limit function is obtained through \cite[Lemma 3.5]{AR20} on every compact subset of $\{x_n>0\}$, just like in the proof of Lemma \ref{1function2.2}.
	\end{proof}

	\section{Regularity estimates for solutions to linear equations}
	
	In this section, we study the behaviour near the boundary of solutions to the linear equation \eqref{linearEquation}	
	when $s>\frac{1}{2}$. We denote 
	$$\varepsilon_0=2s-1.$$
	In the first part we establish the optimal $C^s$ regularity of the solutions up to the boundary and in the second we study quotients of solutions with $d^s$ and quotients of two solutions.

	We deal with the weak solutions according to the following definition.
	
	\begin{definition}
		Let $\Omega\subset\R^n$ be a bounded domain. Let $b\in\R^n$, $f\in L^2(\Omega)$ and let $u\colon\R^n \to \R$ satisfy the growth control $|u(x)|\leq C(1+|x|^{2s-\delta}),$ for some $\delta>0.$ Assume $L$ is an operator whose kernel $K$ satisfies \eqref{kernelConditions}. 
		We say that $u$ is a weak solution of 
		$$Lu + b\cdot\nabla u = f\quad\text{in }\Omega,$$ 
		whenever
		$$\int_{\R^n}\int_{\R^n}(u(x)-u(y))(\varphi(x)-\varphi(y))K(x-y) dydx - \int_\Omega u(x)b\cdot\nabla \varphi(x) dx = \int_\Omega f(x)\varphi(x)dx,$$
		holds for every $\varphi\in C^\infty_c(\Omega)$.
	\end{definition}

	\begin{remark}\label{regularisationRemark}
		Due to the fact that $K(x-y) = K((x-t) - (y-t))$ the following fact holds.
		If $\eta_\varepsilon\in C^\infty_c(\R^n)$ is a  mollifier and $u$ a solution to $Lu+b\cdot\nabla u = f$ in $\Omega$ in the weak sense, then $u_\varepsilon:= u\ast \eta_\varepsilon$ satisfies $Lu_\varepsilon +b\cdot\nabla u_\varepsilon = f\ast \eta_\varepsilon$ in $\Omega_\varepsilon$ in the weak sense, for $\Omega_\varepsilon = \{x\in \Omega;\text{  }\operatorname{dist}(x,\partial\Omega)>\varepsilon\}$.
	\end{remark}
	
	\subsection{Interior and boundary regularity}
	
	We present a version of interior regularity estimate that we need in our work. Even though it follows from already known results we provide a short proof.
	
	\begin{lemma}\label{interiorRegularityLemma}
		Let $s\in (1/2,1)$, and let $L$ be an operator whose kernel satisfies condition \eqref{kernelConditions}. Let $u$ be any solution of 
		\begin{equation*}
		Lu +b\cdot\nabla u= f\quad \text{ in }B_1,
		\end{equation*}
		with $f\in L^\infty(B_1)$ and $b\in\R^n$. Then for every $\delta>0,$
		$$||u||_{C^{2s}(B_{1/2})}\leq C\left(||f||_{L^\infty(B_1)}+ \sup_{R\geq 1} R^{\delta-2s}||u||_{L^\infty(B_R)}\right).$$
		The constant $C$ depends only on $n,s,\delta$ and ellipticity constants.
	\end{lemma}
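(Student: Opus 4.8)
The plan is to treat the first-order term $b\cdot\nabla u$ as a lower-order perturbation -- legitimate precisely because $s>\tfrac12$, so that $b\cdot\nabla$ has order $1<2s$ and can be absorbed into a drift-free interior estimate. First I would reduce to the \emph{a priori} bound for a \emph{smooth} solution: by Remark \ref{regularisationRemark} the mollifications $u_\varepsilon=u\ast\eta_\varepsilon$ solve $Lu_\varepsilon+b\cdot\nabla u_\varepsilon=f\ast\eta_\varepsilon$ in $B_{1-\varepsilon}$, with right-hand side of no larger $L^\infty$ norm and with $\sup_{R\ge1}R^{\delta-2s}\|u_\varepsilon\|_{L^\infty(B_R)}$ controlled by the corresponding quantity for $u$; hence it suffices to bound $\|u_\varepsilon\|_{C^{2s}(B_{1/2})}$ uniformly and let $\varepsilon\downarrow0$ (rescaling $B_{1-\varepsilon}$ to $B_1$). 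For a smooth solution all the quantities appearing below are finite, which is what legitimizes the absorption step. I also note that $\sup_{R\ge1}R^{\delta-2s}\|u\|_{L^\infty(B_R)}$ controls $\int_{\R^n}|u(y)|(1+|y|)^{-n-2s}\,dy$, by splitting into dyadic annuli and summing (the series converges since $\delta>0$), so it provides exactly the tail quantity needed to apply interior estimates for $L$.

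Next, for smooth $u$, rewrite the equation as $Lu=f-b\cdot\nabla u$ and invoke the standard interior Schauder estimate for the drift-free operator $L$ with bounded right-hand side, which (since $2s\notin\mathbb N$ as $s\in(\tfrac12,1)$) yields $C^{2s}$ regularity; see e.g.\ \cite{RS16,AuR20}. In its scale-invariant form, applied on every ball $B_{2r}(x_0)\subset B_1$, it bounds $r^{2s}[u]_{C^{2s}(B_r(x_0))}$ by $C_\star$ times $r^{2s}\|Lu\|_{L^\infty(B_{2r}(x_0))}+\|u\|_{L^\infty(B_{2r}(x_0))}+(\text{tail})$, and $\|Lu\|_{L^\infty(B_{2r}(x_0))}\le\|f\|_{L^\infty(B_1)}+|b|\,\|\nabla u\|_{L^\infty(B_{2r}(x_0))}$. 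The key point is the interpolation inequality $\|\nabla u\|_{L^\infty(B_{2r}(x_0))}\le\epsilon\,[u]_{C^{2s}(B_{3r}(x_0))}+C_\epsilon\,r^{-1}\|u\|_{L^\infty(B_{3r}(x_0))}$, which holds because $2s>1$. Packaging these on the appropriate balls in terms of a weighted interior seminorm $\|u\|^{*}=\sup_{x_0\in B_1}(1-|x_0|)^{2s}[u]_{C^{2s}(B_{(1-|x_0|)/8}(x_0))}$ (and the analogous weighted norm for $\nabla u$), the drift contributes $C_\star|b|\,\epsilon\,\|u\|^{*}$ to the right-hand side; choosing $\epsilon$ small, depending on $C_\star$ and $|b|$, absorbs it, and one concludes $\|u\|^{*}\le C\big(\|f\|_{L^\infty(B_1)}+\sup_{R\ge1}R^{\delta-2s}\|u\|_{L^\infty(B_R)}\big)$. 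Evaluating at $x_0\in\overline{B_{1/2}}$ gives the claimed $C^{2s}$ bound on $B_{1/2}$. (Equivalently, the subcriticality can be read off from the dilation $u(\rho\,\cdot)$, which multiplies the drift coefficient by $\rho^{2s-1}\to0$.)

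The proof is purely perturbative; the only genuine difficulty is the bookkeeping in this absorption step, where the drift produces $\|\nabla u\|_{L^\infty}$ on a ball slightly larger than the one on which $[u]_{C^{2s}}$ is controlled, so the naive absorption is circular. The standard remedy -- passing to the weighted interior norm $\|u\|^{*}$, so that the factor gained from the enlarged ball is compensated and all the pieces can be summed -- is the Gilbarg--Trudinger-type device, which transfers verbatim to the nonlocal setting. The single place where the subcritical assumption $s>\tfrac12$ is essential is the interpolation $\|\nabla u\|_{L^\infty}\lesssim\epsilon[u]_{C^{2s}}+C_\epsilon\|u\|_{L^\infty}$, which fails for $2s\le1$: this is exactly why the drift cannot be absorbed in the critical or supercritical regimes.
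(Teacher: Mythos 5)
Your proposal is correct and takes essentially the same route as the paper: reduce to the smooth case via mollification, rewrite the equation as $Lu=f-b\cdot\nabla u$, invoke the drift-free interior Schauder estimate, interpolate $\|\nabla u\|_{L^\infty}\lesssim\epsilon[u]_{C^{2s}}+C_\epsilon\|u\|_{L^\infty}$ (where $s>\tfrac12$ is used), and absorb. The only difference is that the paper compresses the absorption-across-different-balls issue into a citation of \cite[Lemma 2.26]{FR20}, whereas you unpack that black box explicitly via the weighted interior seminorm $\|u\|^*$, which is precisely what that lemma encodes.
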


	\begin{proof}
		Let us first assume that $u$ is a smooth solution. Then, by interior regularity estimates for the equation 
		$$Lu = f-b\cdot\nabla u\quad \text{ in }B_1,$$
		see \cite[Corollary 3.6]{RS16b}, we get 
		$$||u||_{C^{2s}(B_{1/2})}\leq C\left(||f||_{L^\infty(B_1)}+ \sup_{R\geq 1} R^{\delta-2s}||u||_{L^\infty(B_R)} + ||\nabla u||_{L^\infty(B_1)}\right).$$
		Since $s>\frac{1}{2}$, the interpolation inequality gives that for every $\varepsilon>0$ there is a constant $C$ so that
		$$||u||_{L^\infty(B_1)}\leq \varepsilon \left[u\right]_{C^{2s}(B_1)} + C_\varepsilon ||u||_{L^\infty(B_1)}.$$
		Combining the two inequalities, we get that for every $\varepsilon>0$,
		$$||u||_{C^{2s}(B_{1/2})}\leq \varepsilon \left[u\right]_{C^{2s}(B_1)}+ C_\varepsilon\left(||f||_{L^\infty(B_1)}+ \sup_{R\geq 1} R^{\delta-2s}||u||_{L^\infty(B_R)} \right),$$
		which thanks to \cite[Lemma 2.26]{FR20} gives
		$$||u||_{C^{2s}(B_{1/2})}\leq C\left(||f||_{L^\infty(B_1)}+ \sup_{R\geq 1} R^{\delta-2s}||u||_{L^\infty(B_R)}\right).$$
		Using a mollifier and taking the limit proves the general case, see Remark \ref{regularisationRemark}. 
	\end{proof}

	We proceed with the comparison principle for equation \eqref{linearEquation}.

	\begin{lemma}\label{comparisonPrinciple}
		Let $s\in (1/2,1)$, and let $L$ be an operator whose kernel satisfies condition \eqref{kernelConditions} and let $\Omega$ be a bounded domain. Let $u\in C(\R^n)$ be a solution to
		\begin{equation*}
		\left\lbrace\begin{array}{rcl l}
		Lu+b\cdot\nabla u&=&f&\text{in  } \Omega\\
		u &\geq&0 & \text{in  } \Omega^c,
		\end{array}\right.
		\end{equation*}
		with $f\in L^\infty(\Omega)$, $f\geq 0.$
		
		Then $u\geq 0$ in $\R^n$.
	\end{lemma}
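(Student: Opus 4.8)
The plan is to derive, for each fixed $\delta>0$, the inequality $u\ge-\delta$ in $\R^n$, and then let $\delta\to0$. The natural tool is to test the weak formulation against the truncated negative part $w_\delta:=(-u-\delta)_+$. First I would check that $w_\delta$ is an admissible test function. Since $u\ge0$ on $\Omega^c\supseteq\partial\Omega$, the set $\{u\le-\delta\}$ is closed, bounded (it lies in $\overline{\Omega}$) and disjoint from $\partial\Omega$, hence a compact subset of $\Omega$ containing $\operatorname{supp}w_\delta$. Moreover, applying Lemma \ref{interiorRegularityLemma} on balls compactly contained in $\Omega$ (here $f\in L^\infty(\Omega)$ and $u$ has the required polynomial growth) gives $u\in C^{2s}_\loc(\Omega)$, and since $s>\tfrac12$ this means $u\in C^1_\loc(\Omega)$; thus $w_\delta$ is Lipschitz with compact support in $\Omega$, so $w_\delta\in H^s(\R^n)$. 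After mollifying $w_\delta$ to an element of $C^\infty_c(\Omega)$ and passing to the limit, the weak formulation
$$\int_{\R^n}\!\int_{\R^n}(u(x)-u(y))(w_\delta(x)-w_\delta(y))K(x-y)\,dy\,dx-\int_\Omega u\,b\cdot\nabla w_\delta\,dx=\int_\Omega f\,w_\delta\,dx$$
holds with $\varphi=w_\delta$; here one verifies the double integral converges, near the diagonal because $u$ and $w_\delta$ are Lipschitz on a neighbourhood of $\operatorname{supp}w_\delta$, and at infinity because $w_\delta$ is supported in $\Omega$ while $u$ has polynomial growth, controlled against $|x-y|^{-n-2s}$.

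Next I would estimate the three terms. Writing $g:=u+\delta$, so that $u(x)-u(y)=g(x)-g(y)$ and $w_\delta=g^-$, the decomposition $g=g^+-g^-$ yields $(g(x)-g(y))(g^-(x)-g^-(y))=-(g^-(x)-g^-(y))^2+(g^+(x)-g^+(y))(g^-(x)-g^-(y))$, and the last product is $\le0$ pointwise (if $g^-(x)>g^-(y)$ then $g^-(x)>0$ forces $g^+(x)=0\le g^+(y)$). Hence the bilinear form is $\le-\int\!\int(w_\delta(x)-w_\delta(y))^2K(x-y)\,dy\,dx\le0$. For the drift term, on $\{w_\delta>0\}=\{u<-\delta\}$ one has $w_\delta=-u-\delta$ and $\nabla w_\delta=-\nabla u$, so $u\,\nabla w_\delta=(-w_\delta-\delta)\nabla w_\delta=-\nabla\!\big(\tfrac12 w_\delta^2+\delta w_\delta\big)$, an identity valid a.e.\ on $\R^n$ since both sides vanish a.e.\ on $\{u\ge-\delta\}$; as $\tfrac12 w_\delta^2+\delta w_\delta$ is Lipschitz with compact support in $\Omega$ and $b$ is constant, $\int_\Omega u\,b\cdot\nabla w_\delta\,dx=-\int_{\R^n}b\cdot\nabla\!\big(\tfrac12 w_\delta^2+\delta w_\delta\big)\,dx=0$. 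Finally $\int_\Omega f\,w_\delta\,dx\ge0$ because $f\ge0$ and $w_\delta\ge0$. Combining these facts, the weak formulation gives
$$0\le\int_\Omega f\,w_\delta\,dx=\int_{\R^n}\!\int_{\R^n}(u(x)-u(y))(w_\delta(x)-w_\delta(y))K(x-y)\,dy\,dx\le-\int_{\R^n}\!\int_{\R^n}(w_\delta(x)-w_\delta(y))^2K(x-y)\,dy\,dx\le0,$$
so $\int\!\int(w_\delta(x)-w_\delta(y))^2K(x-y)\,dy\,dx=0$; by the ellipticity $K>0$ this forces $w_\delta$ to be a.e.\ constant, and being compactly supported it must vanish. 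Therefore $\{u<-\delta\}=\emptyset$ for every $\delta>0$, i.e.\ $u\ge0$ in $\R^n$.

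I expect no conceptual obstruction here, only bookkeeping: the two points that require care are the admissibility of $w_\delta$ as a test function (the mollification/density step together with convergence of the double integral) and the legitimacy of the integration by parts $\int_{\R^n}b\cdot\nabla h\,dx=0$ for the merely Lipschitz, compactly supported $h=\tfrac12 w_\delta^2+\delta w_\delta$ — both standard. An alternative would be to run the same scheme on the mollifications $u_\varepsilon$ of Remark \ref{regularisationRemark}, which are smooth and solve $Lu_\varepsilon+b\cdot\nabla u_\varepsilon=f_\varepsilon\ge0$ in $\Omega_\varepsilon$; but testing against $(-u_\varepsilon-\delta)_+$ then needs an extra argument to control $u_\varepsilon$ on the collar $\Omega\setminus\Omega_\varepsilon$, so the direct route above seems preferable.
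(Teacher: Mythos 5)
Your proof is correct, and it takes a genuinely different route from the paper's. The paper's own proof is a one-line deferral: it observes that the weak formulation is preserved under mollification (Remark \ref{regularisationRemark}) and then cites the argument of \cite[Lemma 4.1]{DRRV20}, which is a maximum-principle-type argument for nonsymmetric stable operators. You instead give a self-contained Stampacchia-style energy proof: test the weak formulation against the truncated negative part $w_\delta=(-u-\delta)_+$, split the bilinear form via $g=g^+-g^-$ to get the sign $\mathcal{E}(u,w_\delta)\le -\iint(w_\delta(x)-w_\delta(y))^2K\le 0$, and neutralise the drift term by the pointwise identity $u\,\nabla w_\delta=-\nabla(\tfrac12 w_\delta^2+\delta w_\delta)$ together with the divergence theorem for a constant $b$. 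I checked the three key steps: (i) $\operatorname{supp}w_\delta\Subset\Omega$ since $u\ge 0$ on $\Omega^c$; (ii) $u\in C^{2s}_\loc(\Omega)\subset C^1_\loc(\Omega)$ by Lemma \ref{interiorRegularityLemma} and $2s>1$, so $w_\delta$ is Lipschitz with compact support and the bilinear form converges (the diagonal is controlled by $|x-y|^{2-n-2s}$, the far field by the polynomial growth); (iii) the sign computation and the vanishing of $\iint(w_\delta(x)-w_\delta(y))^2K$ forcing $w_\delta\equiv 0$ are both correct. What your route buys is self-containedness and avoidance of the collar issue you mention (after mollification $u_\varepsilon$ only solves the equation in $\Omega_\varepsilon$ and is not controlled on $\Omega\setminus\Omega_\varepsilon$); what the paper's route buys is brevity by offloading the argument to an existing reference. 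Either is acceptable; no gap.
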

	\begin{proof}
		Since the weak formulation of equations is preserved under mollification, the proof is the same as the proof of \cite[Lemma 4.1]{DRRV20}. 
%		Due to Lemma \ref{interiorRegularityLemma} the function $u$ is differentiable at every point in $\Omega$. Suppose that $u$ attains global minimum at $x_0\in\Omega$. Since $u$ is differentiable at $x_0,$ we have that $\nabla u(x_0)=0,$ and since $u(x_0)-u(y)\leq 0$ for all $y\in\R^n$, we have $Lu(x_0) \leq 0$. But since $Lu(x_0)+b\nabla u(x_0) = f(x_0) \geq 0$, we have that $Lu(x_0)=0$ which implies that $u(x_0) = u(y)$ for all $y\in\R^n$. Hence $u\equiv u(x_0)$ and since $u\geq0$ in $\Omega^c$, we get $u\geq0$.
	\end{proof}

	In order to achieve the optimal boundary regularity $C^s$, we need a suitable barrier -- a supersolution which grows as $d^s$ near the boundary. The construction is the same as in \cite[Lemma 2.8]{RS16}, we only need to estimate the gradient terms, which behave nicely thanks to the assumption $2s>1.$

	\begin{lemma}\label{barrier}
		Let $s\in (1/2,1)$, and let $L$ be an operator whose kernel satisfies condition \eqref{kernelConditions} and let $\Omega$ be a $C^{1,\alpha}$ domain. Then for a given $\varepsilon\in(0,\alpha\land \varepsilon_0)$ there exists a $\rho_0>0$ and a function $\phi$ satisfying
		\begin{equation*}
		\left\lbrace\begin{array}{rcl l}
		L\phi+b\cdot\nabla \phi&\leq&-d^{\varepsilon-s}&\text{in  } \Omega\cap\{d\leq\rho_0\}\\
		C^{-1}d^s\leq \phi &\leq&Cd^s & \text{in  } \Omega\\
		\phi&=&0&\text{in }\Omega^c.
		\end{array}\right.
		\end{equation*}
		The constant $C$ depends only on $n,s,\varepsilon,\Omega$ and ellipticity constants.
	\end{lemma}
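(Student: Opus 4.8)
The plan is to reduce the construction to the known barrier for the driftless case \cite[Lemma 2.8]{RS16}, and then absorb the drift term using the subcriticality $2s>1$. First I would recall that \cite[Lemma 2.8]{RS16} produces, for any $\varepsilon'\in(0,\alpha)$, a function $\psi\in C^\infty(\Omega\cap\{d\le\rho_0\})$ with $C^{-1}d^s\le\psi\le Cd^s$ in $\Omega$, $\psi=0$ in $\Omega^c$, and $L\psi\le -2d^{\varepsilon'-s}$ in $\Omega\cap\{d\le\rho_0\}$ for a suitably small $\rho_0$. The only new point is to control $b\cdot\nabla\psi$. Since $\psi$ behaves like $d^s$ near the boundary and $d$ is the (smoothed) distance function with $|\nabla d|$ bounded, the inspection of the construction in \cite{RS16} gives the gradient bound $|\nabla\psi(x)|\le C d^{s-1}(x)$ for $x\in\Omega\cap\{d\le\rho_0\}$; this is where $2s>1$ is used, because then $s-1>-s$, i.e.\ $d^{s-1}$ is of strictly lower-order singularity than $d^{-s}$ and in particular than $d^{\varepsilon-s}$ once we also arrange $\varepsilon<\varepsilon_0=2s-1$, since $\varepsilon-s<s-1$ is equivalent to $\varepsilon<2s-1$.

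Thus I would fix $\varepsilon\in(0,\alpha\wedge\varepsilon_0)$, choose some $\varepsilon'$ with $\varepsilon<\varepsilon'<\alpha$ (still $\varepsilon'<\alpha$, so the cited lemma applies), and take $\phi=\psi$ with $\psi$ the RS16 barrier for exponent $\varepsilon'$. Then on $\Omega\cap\{d\le\rho_0\}$,
\begin{align*}
L\phi+b\cdot\nabla\phi &\le -2d^{\varepsilon'-s}+|b|\,|\nabla\phi|\\
&\le -2d^{\varepsilon'-s}+C|b|\,d^{s-1}.
\end{align*}
Since $s-1>\varepsilon-s$ and also $\varepsilon'-s<\varepsilon-s$ is false — rather $\varepsilon'-s>\varepsilon-s$, so $d^{\varepsilon'-s}\ge d^{\varepsilon-s}\cdot d^{\varepsilon'-\varepsilon}$ is the wrong direction — I should instead observe that on $\{d\le\rho_0\}$ with $\rho_0$ small one has $d^{\varepsilon'-s}\ge \rho_0^{\varepsilon'-\varepsilon} d^{\varepsilon-s}$? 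No: $\varepsilon'-s$ is more negative... let me restate cleanly: since $\varepsilon<\varepsilon'$, on $\{d\le\rho_0\le 1\}$ we have $d^{\varepsilon'-s}=d^{\varepsilon-s}d^{\varepsilon'-\varepsilon}\le d^{\varepsilon-s}$, hence $-2d^{\varepsilon'-s}\le -2 d^{\varepsilon'-s}$ is trivial, and what I actually want is $-2d^{\varepsilon'-s}\le -d^{\varepsilon-s}$ for $d$ small, which holds once $d^{\varepsilon'-\varepsilon}\ge \tfrac12$, false for $d$ small. So the correct bookkeeping is: run the cited construction directly with exponent $\varepsilon$ (legitimate since $\varepsilon<\alpha$), obtaining $L\phi\le -2d^{\varepsilon-s}$, and then
\[
L\phi+b\cdot\nabla\phi\le -2d^{\varepsilon-s}+C|b|d^{s-1}=d^{\varepsilon-s}\bigl(-2+C|b|d^{\,s-1-(\varepsilon-s)}\bigr)=d^{\varepsilon-s}\bigl(-2+C|b|d^{\,2s-1-\varepsilon}\bigr),
\]
and since $2s-1-\varepsilon>0$ the parenthesis is $\le -1$ provided $d\le\rho_0$ with $\rho_0$ chosen so that $C|b|\rho_0^{2s-1-\varepsilon}\le 1$. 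This gives $L\phi+b\cdot\nabla\phi\le -d^{\varepsilon-s}$ on $\Omega\cap\{d\le\rho_0\}$, as required, and the two-sided bound $C^{-1}d^s\le\phi\le Cd^s$ in $\Omega$ together with $\phi=0$ in $\Omega^c$ are inherited verbatim from \cite{RS16}.

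The main obstacle is therefore not conceptual but a matter of carefully reproducing the gradient estimate $|\nabla\phi|\le Cd^{s-1}$ from the barrier construction in \cite{RS16} — the construction there glues an explicit one-dimensional profile $t\mapsto t_+^s$ (suitably truncated) with the regularized distance function, and differentiating this composition produces exactly a $d^{s-1}$ singularity with constant depending on $n,s,\varepsilon$ and the $C^{1,\alpha}$ norm of $\partial\Omega$; I would spell this out by the same $|a^p-b^p|\le|a-b|(a^{p-1}+b^{p-1})$ type estimates already used in the proof of Lemma \ref{LonDistance}. One small point to check is that the gradient of $\phi$ is controlled on all of $\Omega\cap\{d\le\rho_0\}$ and not merely away from $\partial\Omega$, but this is automatic since near the boundary $\phi$ is comparable to a smooth function of $d$. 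With these ingredients assembled, the stated supersolution property, the comparison with $d^s$, and the vanishing outside $\Omega$ all follow, completing the proof.
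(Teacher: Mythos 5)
Your proposal is correct and takes essentially the same route as the paper: the paper's proof builds the barrier explicitly as $\phi_1 = d^s - c\,d^{s+\varepsilon}$ (using Lemma \ref{LonDistance} for $L d^s$ and \cite[Lemma 2.7]{RS16} for $L d^{s+\varepsilon}$), so the gradient bound $|\nabla\phi|\le C d^{s-1}$ that you defer to ``inspection of the construction'' is immediate from the chain rule for the regularized distance, and the drift is then absorbed exactly as in your final computation, using $\varepsilon<2s-1$ to choose $\rho_0$ small. The only difference is presentational: you treat the barrier of \cite{RS16} as a black box, while the paper rewrites it explicitly so that the drift terms $b\cdot\nabla d^{s}$, $b\cdot\nabla d^{s+\varepsilon}$ are visibly of order $d^{s-1}$.
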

	
	\begin{proof}
		By Lemma \ref{LonDistance}, we have that 
		$$|Ld^s|\leq C d^{(\alpha-s)\lor 0},$$
		and by \cite[Lemma 2.7]{RS16}, we have 
		$$Ld^{s+\varepsilon} \geq c_0d^{\varepsilon-s} - C_0,$$
		for some $c_0>0$. Consider the function
		$$\phi_1 = d^s-cd^{s+\varepsilon},$$
		with $c>0$ small enough.
		Then we have
		\begin{align*}
			L\phi_1+b\nabla\phi_1 &\leq Cd^{(\alpha-s)\lor 0} +C|\nabla d^{s}| -cc_0d^{\varepsilon-s} + C|\nabla d^{s+\varepsilon}|\leq\\
			&\leq -cc_0d^{\varepsilon-s}+C\left(d^{(\alpha-s)\lor 0} + Cd^{s-1}+Cd^{s+\varepsilon-1}\right)\leq \frac{cc_1}{2} d^{\varepsilon-s},
		\end{align*}
		in $\Omega\cap\{d\leq \rho_0\}$ provided that $\varepsilon<\varepsilon_0 \land \alpha$.
		By construction we have
		$$\phi_1=0\quad \text{ in }\Omega^c,$$
		as well as
		$$	C^{-1}d^s\leq \phi_1\leq Cd^s.$$
		Hence a suitable rescaling of $\phi_1$ is the searched function.	
	\end{proof}
	
	Having all the above ingredients, we can now prove the following.
	
	\begin{proposition}\label{optimalBoundaryRegularity}
		Let $s\in (1/2,1)$, and let $L$ be an operator whose kernel satisfies condition \eqref{kernelConditions} and let $\Omega$ be a $C^{1,\alpha}$ domain. Let $u$ be a solution to \eqref{linearEquation}, with 
		$$|f|\leq C_0d^{\varepsilon-s}\quad \text{in }\Omega\cap B_1,$$
		for some positive $\varepsilon$. 
		
		Then
		$$||u||_{C^s(B_{1/2})}\leq C\left(C_0 + \sup_{R\geq 1} R^{\delta-2s}||u||_{L^\infty(B_R)}\right).$$
		The constant $C$ depends only on $n,s,\varepsilon,\delta,\Omega$ and ellipticity constants.
	\end{proposition}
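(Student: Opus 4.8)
The plan is to combine the barrier from Lemma \ref{barrier} with the comparison principle (Lemma \ref{comparisonPrinciple}) to obtain the growth bound $|u(x)|\le C d^s(x)$ near $\partial\Omega$, and then use the interior estimate of Lemma \ref{interiorRegularityLemma} to upgrade this pointwise growth to the full $C^s$ norm. Throughout I normalize, setting $C_0+\sup_{R\ge 1}R^{\delta-2s}\|u\|_{L^\infty(B_R)}=1$, so that it suffices to prove $\|u\|_{C^s(B_{1/2})}\le C$.

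\emph{Step 1: growth control near the boundary.} Fix $\varepsilon'\in(0,\alpha\wedge\varepsilon_0\wedge\varepsilon)$ and let $\phi$ be the supersolution provided by Lemma \ref{barrier}, so that $L\phi+b\cdot\nabla\phi\le -d^{\varepsilon'-s}$ in $\Omega\cap\{d\le\rho_0\}$ and $C^{-1}d^s\le\phi\le Cd^s$. Since $|f|\le d^{\varepsilon-s}\le d^{\varepsilon'-s}$ in $\Omega\cap B_1$ (here I use $\varepsilon'\le\varepsilon$ and $d\le 1$) and $|u|\le C$ on $\{d\ge\rho_0\}\cap B_{3/4}$ by Lemma \ref{interiorRegularityLemma} applied on interior balls, the function $w:=C_1\phi - u$ satisfies $Lw+b\cdot\nabla w\ge 0$ in a small neighbourhood $\Omega\cap\{d<\rho_1\}\cap B_{3/4}$ and $w\ge 0$ on the complement of this set, once $C_1$ is chosen large (depending only on the normalized data and on $\|u\|_{L^\infty(B_{3/4})}$, which is itself $\le C$ by the growth hypothesis with $R=1$). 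The comparison principle (Lemma \ref{comparisonPrinciple}, applied on the localized domain, extending $u$ outside $B_1$ by its given values and absorbing the tail into the right-hand side as in the standard argument) then gives $u\le C_1\phi\le C d^s$; applying the same reasoning to $-u$ yields $|u(x)|\le C\,d^s(x)$ for all $x\in\Omega\cap B_{3/4}$.

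\emph{Step 2: rescaling and interior estimate.} Let $x_0\in\Omega\cap B_{1/2}$ and set $r:=d(x_0)/2$; we may assume $r$ small. Consider $v(y):=r^{-s}u(x_0+ry)$ on $B_1$. Then $v$ solves $Lv + rb\cdot\nabla v = r^{2s-s}f(x_0+r\cdot)=r^{s}f(x_0+r\cdot)$ in $B_1$, and $|r^{s}f(x_0+r\cdot)|\le r^{s}\,(rd^{\ldots})$—more precisely, using $|f|\le d^{\varepsilon-s}$ and $d\sim r$ on $B_r(x_0)$, the rescaled right-hand side is bounded in $L^\infty(B_1)$ by $C r^{s}\cdot r^{\varepsilon-s}=Cr^{\varepsilon}\le C$. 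Moreover, by Step 1, $\|v\|_{L^\infty(B_R)}\le C R^s$ for $R\lesssim 1/r$, and the global growth hypothesis controls it for larger $R$; in any case $\sup_{R\ge 1}R^{\delta-2s}\|v\|_{L^\infty(B_R)}\le C$ since $s<2s-\delta$ for $\delta$ small. Lemma \ref{interiorRegularityLemma} applied to $v$ gives $[v]_{C^{2s}(B_{1/2})}\le C$, hence in particular $[v]_{C^{s}(B_{1/2})}\le C$ and $\|v\|_{L^\infty(B_{1/2})}\le C$. Scaling back, $[u]_{C^s(B_{r/2}(x_0))}\le C$ with $C$ independent of $x_0$ and $r$.

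\emph{Step 3: from interior to global $C^s$.} The pointwise bound $|u|\le Cd^s$ together with the uniform interior seminorm bound $[u]_{C^s(B_{d(x_0)/4}(x_0))}\le C$ is exactly the hypothesis of the standard covering lemma (see \cite[Proposition 2.7.6 or the argument following it]{FR20}, or \cite[Lemma 2.26]{FR20}-type interpolation) that concludes $\|u\|_{C^s(\overline{B_{1/2}})}\le C$: one checks the Hölder quotient for pairs $x,y\in\Omega\cap B_{1/2}$ by splitting into the case $|x-y|\ge \tfrac12\max(d(x),d(y))$, handled by the pointwise $d^s$ bound, and the case $|x-y|<\tfrac12\max(d(x),d(y))$, handled by the interior seminorm bound in a single ball. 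Undoing the normalization gives the stated estimate. The main obstacle is Step 1: carefully localizing the comparison principle (the global growth of $u$ forces one to treat the far-away contribution as an extra bounded right-hand side term) and checking that the drift term $b\cdot\nabla\phi$, which is of order $d^{s-1}$ and $d^{s+\varepsilon'-1}$, is genuinely lower order than $d^{\varepsilon'-s}$ — this is where $2s>1$ is used, exactly as in Lemma \ref{barrier}.
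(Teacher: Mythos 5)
Your route coincides with the paper's (which just cites \cite[Proposition 3.1]{RS16}): barrier plus comparison principle for the growth bound $|u|\le Cd^s$, then rescaled interior estimates and a covering argument. Steps~2 and~3 are essentially fine (minor slip: the rescaled drift coefficient is $r^{2s-1}b$, not $rb$, though both are uniformly bounded for $r\le 1$ so this does not affect anything). The problem is in Step~1: you assert $Lw+b\cdot\nabla w\ge 0$ for $w:=C_1\phi-u$, but Lemma~\ref{barrier} provides $L\phi+b\cdot\nabla\phi\le -d^{\varepsilon'-s}$, which is an \emph{upper} bound on $L\phi+b\cdot\nabla\phi$. Chasing the signs with $|f|\le d^{\varepsilon'-s}$,
\[
Lw+b\cdot\nabla w \;=\; C_1\bigl(L\phi+b\cdot\nabla\phi\bigr)-f\;\le\; -C_1 d^{\varepsilon'-s}+d^{\varepsilon'-s}\;\le\;0\qquad\text{for }C_1\ge 1,
\]
which is the opposite of what Lemma~\ref{comparisonPrinciple} requires (it needs $Lw+b\cdot\nabla w\ge 0$) to conclude $w\ge 0$. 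To obtain $u\le C_1\phi$ by comparison one needs a \emph{supersolution} barrier, i.e.\ $L\phi+b\cdot\nabla\phi\ge d^{\varepsilon'-s}$.

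In fact that is what the construction actually produces under this paper's sign convention (where $Lu(x)=\mathrm{p.v.}\int(u(x)-u(x+y))K(y)\,dy$, so $L$ behaves like $+(-\Delta)^s$): one has $Ld^{s+\varepsilon}\approx c_{s+\varepsilon}d^{\varepsilon-s}$ with $c_{s+\varepsilon}<0$ for $\varepsilon>0$ (compare the local limit $s=1$: $-\partial_{xx}(x_+)^{1+\varepsilon}=-(1+\varepsilon)\varepsilon\,x^{\varepsilon-1}<0$), and therefore $L(d^s-c\,d^{s+\varepsilon})\approx -c\,c_{s+\varepsilon}\,d^{\varepsilon-s}>0$, i.e.\ $L\phi+b\cdot\nabla\phi\ge c'd^{\varepsilon-s}$ near $\partial\Omega$. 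The inequality direction printed in Lemma~\ref{barrier}, and the intermediate bound $Ld^{s+\varepsilon}\ge c_0d^{\varepsilon-s}-C_0$ quoted there from \cite{RS16}, appear to inherit the opposite sign convention for $L$ used in \cite{RS16}. So the method is correct and, with the barrier taken as a supersolution, your Step~1 closes; but as literally written your chain of inequalities does not give $w\ge 0$, and this should have been caught rather than asserted.
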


	\begin{proof}
		The wanted regularity follows from Lemma \ref{interiorRegularityLemma}, Lemma \ref{barrier} and Lemma \ref{comparisonPrinciple} exactly as in \cite[Proposition 3.1]{RS16}. 
	\end{proof}

%	\begin{proof}
%		Due to the homogeneity of the equation we may assume that
%		$$\left(||d^{s-\varepsilon}||_{L^\infty(B_1)} + \sup_{R\geq 1} R^{\delta-2s}||u||_{L^\infty(B_R)}\right)=1.$$
%		Then the truncated function $\tilde{u} = u\chi_{B_1}$ satisfies
%		$$|L\tilde{u}|\leq C d^{\varepsilon-s},\quad\text{for}x\in \Omega\cap B_{3/4} ,$$
%		$\tilde{u}\leq 1$ and $\tilde{u}\equiv 0$ in $\R^n\backslash B_1$. 
%		
%		Let $\tilde{\Omega}$ be a bounded $C^{1,\alpha}$ domain satisfying:  
%	\end{proof}
%TODO delete after xavi approves the proof.

	Before proceeding to the finer description of the boundary behaviour of solutions, let us derive the following estimates.

	\begin{lemma}\label{followingRegularityLemma}
		Let $s\in (1/2,1)$, and let $L$ be an operator whose kernel satisfies condition \eqref{kernelConditions} and let $\Omega$ be a $C^{1}$ domain. Let $u$ be a solution to \eqref{linearEquation}, with $|u|\leq C_0d^s$ in $\Omega\cap B_1$, and $f\in C^{\theta}(\overline{\Omega}),$ for some $\theta>0.$ 
		
		Then we have
		\begin{equation}\label{interiorEstimate}
		\left[  u \right]_{C^{\gamma}(B_{r}(x_0))}\leq C_\gamma r^{s-\gamma},
		\end{equation}
		whenever $d(x_0) = 2r$,
		as long as $\gamma \leq \theta+2s-1$. Furthermore, 
		\begin{equation}\label{gradientEstimate}
			|\nabla u|\leq Cd^{s-1}.
		\end{equation}
		The constants $C$ and $C_\gamma$ depend only on $n,s,C_0,||f||_{C^{\theta}(\overline{\Omega})},$ and ellipticity constants.
	\end{lemma}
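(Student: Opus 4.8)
The plan is to rescale the equation at the scale $d(x_0)$ and invoke the interior estimates already established.

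\textbf{Setup and rescaling.} Fix $x_0\in\Omega$ with $d(x_0)=2r$; for $r$ bounded below the conclusions are classical interior estimates, so assume $r$ small, in which case $B_{2r}(x_0)\subset\Omega\cap B_1$ and $d\sim r$ on $B_{2r}(x_0)$ (by comparability of $d$ with $\operatorname{dist}(\cdot,\Omega^c)$ and its Lipschitz character). Set
\[
v(y):=r^{-s}\,u(x_0+r\,y).
\]
Since $K$ is homogeneous of degree $-(n+2s)$, a direct computation gives
\[
Lv+r^{2s-1}\,b\cdot\nabla v=r^{s}f(x_0+r\,\cdot)=:\tilde f\qquad\text{in }B_2 .
\]
Then $\|\tilde f\|_{C^\theta(B_2)}\le C\,r^s\|f\|_{C^\theta(\overline{\Omega})}$, the rescaled drift coefficient $r^{2s-1}b$ has norm $\le|b|$ and tends to $0$ as $r\to0$ (here $2s>1$ is used), and $\|v\|_{L^\infty(B_2)}\le C\,C_0$ by $|u|\le C_0 d^s$ together with $d\sim r$ on $B_{2r}(x_0)$; the tail $\sup_{R\ge1}R^{\delta-2s}\|v\|_{L^\infty(B_R)}$ is bounded uniformly in $r$, using $|u|\le C_0 d^s$ on the relevant scales and the a priori growth control of $u$ built into the notion of solution.

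\textbf{Step 1: the $C^{2s}$ estimate.} Since $\tilde f\in L^\infty(B_2)$ and the drift coefficient is bounded, Lemma \ref{interiorRegularityLemma} (applied on $B_2$, $B_1$) yields $\|v\|_{C^{2s}(B_1)}\le C$, with $C$ depending only on $n$, $s$, $C_0$, $\|f\|_{C^\theta(\overline{\Omega})}$ and ellipticity constants. Undoing the scaling, $D^m u(x_0+ry)=r^{s-m}D^m v(y)$ for derivatives of order $m$, whence $[u]_{C^\gamma(B_r(x_0))}=r^{s-\gamma}[v]_{C^\gamma(B_1)}\le C\,r^{s-\gamma}$ for every $\gamma\le 2s$; this is \eqref{interiorEstimate} for $\gamma\le2s$, hence for $\gamma\le\theta+2s-1$ whenever $\theta\le1$. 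Moreover, as $2s>1$ we get $C^{2s}(B_1)\hookrightarrow C^1(B_1)$, so $|\nabla u(x_0)|=r^{s-1}|\nabla v(0)|\le C\,r^{s-1}=C\,d(x_0)^{s-1}$; letting $x_0$ range over $\Omega\cap B_{1/2}$ proves \eqref{gradientEstimate}.

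\textbf{Step 2: bootstrap (only when $\theta>1$).} With $\|v\|_{C^{2s}(B_1)}\le C$ we have $\nabla v\in C^{2s-1}(B_1)$ with bounded norm, hence $r^{2s-1}b\cdot\nabla v\in C^{2s-1}$ with norm $\le C\,r^{2s-1}|b|$, and therefore $Lv=\tilde f-r^{2s-1}b\cdot\nabla v\in C^{\min(\theta,2s-1)}(B_1)$ with controlled (indeed small) norm. The interior Schauder estimate for $L$ (cf. \cite{RS16b}) then gives $v\in C^{2s+\min(\theta,2s-1)}$ on a slightly smaller ball, with controlled norm. Iterating: at a stage where $v\in C^{\mu}$ one obtains $Lv\in C^{\min(\theta,\mu-1)}$ and hence $v\in C^{2s+\min(\theta,\mu-1)}$, and since $2s-1>0$ the exponent increases by at least $2s-1$ at each step until it stabilizes at $2s+\theta$; after finitely many steps one reaches $\|v\|_{C^{2s+\theta}(B_{1/4})}\le C$. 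Undoing the scaling gives \eqref{interiorEstimate} for all $\gamma\le\theta+2s-1$ (in fact for all $\gamma\le\theta+2s$). The factor $r^{2s-1}$ in front of the drift at every step keeps all constants uniform in $r$.

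\textbf{Main obstacle.} I do not expect a genuine difficulty here: the statement is the interior analogue of the boundary $C^s$ estimate, transported by the scaling $v=r^{-s}u(x_0+r\cdot)$. The only care needed is bookkeeping — controlling the tail of $v$ (routine) and checking that the lower-order drift term is absorbed with constants independent of $r$, which succeeds precisely because $2s>1$ forces $r^{2s-1}\to0$.
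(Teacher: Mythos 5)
Your proposal is correct and follows essentially the same route as the paper: rescale at scale $d(x_0)$, apply the interior estimate of Lemma~\ref{interiorRegularityLemma} (with tail control as in Lemma~\ref{zoomGrowthLemma}), and bootstrap via the higher-order interior Schauder estimate, with the smallness of $r^{2s-1}$ coming from $2s>1$. The only cosmetic difference is that you extract the pointwise gradient bound directly from the full $C^{2s}$ norm of the normalized blow-up $v$, whereas the paper derives it from the $C^{2s-1}$ seminorm of $\nabla u$ via the telescoping Lemma~\ref{growthLemma}; both yield \eqref{gradientEstimate}.
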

	\begin{proof}
		Choose a point $x_0\in\Omega\cap B_{1/2}$, denote $2r=d(x_0)$ and apply interior estimates (Lemma \ref{interiorRegularityLemma}) on $u_r(x):=u(x_0+rx).$
		With aid of Lemma \ref{zoomGrowthLemma} and the bound on growth of $u$ near the boundary, we get
		$$r^{2s}\left[ u\right] _{C^{2s}(B_r(x_0))}\leq C (r^{2s}+ r^s),$$
		which tells 
		$$\left[ u\right] _{C^{2s}(B_r(x_0))}\leq C r^{-s}.$$
		Rewriting it for the gradient we get
		$$\left[ \nabla u\right] _{C^{2s-1}(B_r(x_0))}\leq C r^{-s}.$$
		Using Lemma \ref{growthLemma}, we deduce \eqref{gradientEstimate}. 
		
		To get higher order interior estimate \eqref{interiorEstimate}, we successively apply higher order interior estimates \cite[Proposition 3.9]{AR20}. Concretely assume we already have estimate 
		$$\left[  u\right] _{C^{\gamma'}(B_r(x_0))}\leq C r^{s-\gamma'},$$
		for some $\gamma'>1.$ Then the gradient satisfies $\left[ \nabla u\right] _{C^{\gamma'-1}(B_r(x_0))}\leq C r^{s-\gamma'},$ and so applying \cite[Proposition 3.9]{AR20},\footnote{We have to apply the interior estimates on smaller and smaller balls, but for simplicity let us always write the estimates with $B_r(x_0)$.} we get
		\begin{align*}
		r^{\gamma'+\varepsilon_0}\left[  u\right] _{C^{\gamma'+\varepsilon_0}(B_r(x_0))}&\leq C\left(\left|\left|\frac{u(x_0+r\cdot)}{1+|\cdot|^{\gamma'+\varepsilon_0}}\right|\right|_{L^\infty(\R^n)} + r^{\gamma'+\varepsilon_0}\left(\left[f-b\cdot\nabla u \right]_{C^{\gamma'-1}(B_r(x_0))} \right)\right)\\
		&\leq C(r^{s}+r^{s-\gamma'}),
		\end{align*}
		in view of Lemma \ref{zoomGrowthLemma}, provided $\gamma'-1\leq \theta$, and so
		$$\left[  u\right] _{C^{\gamma'+\varepsilon_0}(B_r(x_0))}\leq Cr^{s-\gamma'-\varepsilon_0}.$$
		With similar argumentation as in Lemma \ref{growthLemma}, we get the wanted estimate for all parameters between $\gamma'$ and $\gamma'+\varepsilon_0$ as well.
	\end{proof}
	\begin{remark}
		Note that the assumption $|u|\leq Cd^s$ can be omitted if we have that the domain is $C^{1,\alpha}.$ Then the growth control follows from the boundary regularity.
	\end{remark}

	\subsection{Boundary Schauder and boundary Harnack estimates}\label{harnackSection}
	
	We continue to study of the behaviour near the boundary of solutions to \eqref{linearEquation}. 
	
	The strategy is to gradually expand the solution in terms of the powers of the distance function with use of the expansion results established in the previous section.
	We improve the accuracy of the expansion of the solution in three steps. First we show how the already established expansion translates to the gradient, which we treat as the right-hand side. Then we need to correct the expansion of the solution, in such a way that its operator evaluation becomes small. In this step, the key ingredient is Theorem \ref{surjectivityResult}. Finally, we apply one of the expansion results from previous section.
	
	Since the procedure differs in three different cases of the regularity of the data we split the result according to the relevant settings.
	Let us point out, that in the proofs we are doing expansions with functions of the form $Qd^{s+k\varepsilon_0+l},$ where $Q$ is a polynomial. We use Theorem \ref{surjectivityResult}, where the power of the distance has to satisfy $s+k\varepsilon_0+l-2s\not\in \N$ and $s+k\varepsilon_0+l-s\not\in \N$. This is equivalent to $s\not\in\Q$. This is the reason we need to assume that $s\in(\frac{1}{2},1)\backslash \mathbb{Q}.$ 
	
	Let us start with the following:
	
	\begin{proposition}\label{harnack1.1}
		Let $s>\frac{1}{2},$ $s\not\in\Q$, and assume $\Omega\subset\R^n$ is a $C^\beta$ domain with $1<\beta<1+s$, $\beta+ s\not\in\N$. Let $L$ be an operator whose kernel $K$ satisfies conditions \eqref{kernelConditions}. Let $u\in L^\infty(\R^n)$ be a solution to \eqref{linearEquation}
		with  $|f|\leq C_1 d^{\beta-1-s}$ and $b\in\R^n.$  
		
		Then
		$$\left|\left|\frac{u}{d^s}\right|\right|_{C^{\varepsilon_0\land (\beta-1)}(\overline{\Omega}\cap B_{1/2})}\leq C\left(C_1+||u||_{L^\infty(\R^n)}\right),$$
		and
		$$\left|\left|\frac{u}{d^s}\right|\right|_{C^{\beta-1}(\partial\Omega\cap B_{1/2})}\leq
		C\left(C_1+||u||_{L^\infty(\R^n)}\right) .$$ 
		The constant $C$ depends only on $n,s,\beta,\Omega$ and ellipticity constants.
		 
		Moreover, there exist constants $Q_z^j$, for $j=0,\ldots,k,$ where $k=\lceil\frac{\beta-1}{\varepsilon_0}\rceil-1$, with $Q_z^j\in C^{\beta-1 - j\varepsilon_0}_z(\partial\Omega\cap B_{1/2}),$ so that for
		$$\tilde{u}_z:= u-Q_z^{k}d^{s+k\varepsilon_0}+\ldots+Q^1_z d^{s+\varepsilon_0},$$
		we have $|L\tilde{u}_z|\leq |x-z|^{\beta-2s}d^{s-1},$
		$$\left|\tilde{u}_z-Q_z^0d^s\right|\leq C|x-z|^{\beta-1+s},$$
		and
		$$\left[ \tilde{u}_z-Q_z^0d^s\right]_{C^{\beta-1+s}(B_{r_1}(x_1))}\leq C\left(\frac{|x_1-z|}{r_1}\right)^{\beta-1+s},$$
		whenever $d(x_1)=2r_1$. The constant $C$ depends only on $n,s,\beta,\Omega$ and ellipticity constants.
	\end{proposition}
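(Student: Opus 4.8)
The plan is to bootstrap the boundary expansion of $u$ in powers $d^{s+j\varepsilon_0}$, starting from the crude $C^s$ bound and improving the exponent by $\varepsilon_0=2s-1$ at each stage, and then read off the claimed H\"older regularity of $u/d^s$ both up to the boundary and along the boundary. First I would record that by Proposition \ref{optimalBoundaryRegularity} (applied after checking $\beta-1-s<0$ so the right-hand side bound $|f|\le C_1 d^{\beta-1-s}$ is of the admissible type $d^{\varepsilon-s}$ with $\varepsilon=\beta-1\in(0,1)\subset(0,s)$ when $\beta<1+s$) we have $|u|\le C(C_1+\|u\|_{L^\infty})\,d^s$ in $B_{3/4}$, together with the interior estimates $[u]_{C^\gamma(B_r(x_0))}\le C r^{s-\gamma}$ whenever $d(x_0)=2r$, via Lemma \ref{followingRegularityLemma} (note its hypothesis $|u|\le C_0 d^s$ is now available, and $f\in C^{\beta-1-s}$ is only bounded, so one uses the version controlling $|\nabla u|\le Cd^{s-1}$). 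After rescaling and normalizing we may take $z=0$, $\nu_0=e_n$, and the boundary a graph of small $C^\beta$ norm.

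The core is an induction producing, for $j=0,1,\dots,k$ with $k=\lceil(\beta-1)/\varepsilon_0\rceil-1$, functions $Q_z^j$ that are $C^{\beta-1-j\varepsilon_0}$ in $z$, such that after subtracting $\sum_{i=1}^{j}Q_z^i d^{s+i\varepsilon_0}$ the equation has right-hand side controlled by $|x-z|^{s+j\varepsilon_0-s}d^{s-1}=|x-z|^{j\varepsilon_0}d^{s-1}$ and the function agrees with $Q_z^0 d^s$ up to $O(|x-z|^{s+j\varepsilon_0})$ — and at the last stage $j=k$ the error is pushed to $O(|x-z|^{\beta-1+s})$. Each inductive step has two substeps. \emph{Step A (transfer to the gradient):} from $|L\tilde u_z|\le|x-z|^{j\varepsilon_0}d^{s-1}$ and the expansion $|\tilde u_z-\sum Q_z^i d^{s+i\varepsilon_0}|\lesssim|x-z|^{s+j\varepsilon_0}$ together with the interior H\"older estimates of Lemma \ref{followingRegularityLemma} and the growth Corollary \ref{generalisedGrowthLemma}, deduce that $b\cdot\nabla\tilde u_z$ inherits an expansion of the form $\sum_i \tilde Q_z^i d^{s-1+i\varepsilon_0}+O(|x-z|^{\cdots}d^{s-1-2s+\cdots})$ — the gradient lowers each power by one but the polynomial coefficients remain $C^{\beta-1-\cdot}$ in $z$. \emph{Step B (corrector):} for each term $\tilde Q_z^i d^{s-1+i\varepsilon_0}$ on the right-hand side apply Theorem \ref{surjectivityResult} with $p=s+(i+1)\varepsilon_0$, which is legitimate precisely because $s\notin\Q$ guarantees $p-2s\notin\N$ and $p-s\notin\N$; this yields a polynomial $Q_z^{i+1}$ with $L(Q_z^{i+1}(\cdot-z)d^p)=\tilde Q_z^i(\cdot-z)d^{p-2s}+R_z+\eta_z$, with $R_z$ of higher H\"older order and $\eta_z$ vanishing to high order at $z$. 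Subtracting these correctors improves the right-hand side bound by one factor of $\varepsilon_0$, and then invoking the expansion result Lemma \ref{1function1.2} (for the first, low-$\beta$ steps) or Lemma \ref{1function2.2} (once enough regularity is accumulated) upgrades the pointwise expansion to order $s+(j{+}1)\varepsilon_0$ and supplies the interior estimate $[\tilde u_z-Q_z^0 d^s]_{C^{\cdot}(B_{r_1}(x_1))}\le C(|x_1-z|/r_1)^{\cdot}$. After finitely many steps one lands at order $\beta-1+s$, giving the last three displayed bounds of the statement.

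Finally, the stated regularity of $u/d^s$ follows from the expansion: the $C^{\varepsilon_0\wedge(\beta-1)}(\overline\Omega\cap B_{1/2})$ bound comes from comparing $u$ at interior points with $Q_z^0 d^s$ — the first corrector term $Q_z^1 d^{s+\varepsilon_0}$ divided by $d^s$ contributes a $C^{\varepsilon_0}$-controlled piece, the error $O(|x-z|^{\beta-1+s})/d^s$ a $C^{\beta-1}$-controlled piece, and one optimizes the two exponents as in \cite[Section 5]{AR20}; while $\|u/d^s\|_{C^{\beta-1}(\partial\Omega\cap B_{1/2})}$ is exactly the statement that $z\mapsto Q_z^0$ is $C^{\beta-1}$, which is tracked through the induction. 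The main obstacle I anticipate is Step A: propagating a boundary expansion of $\tilde u_z$ into a usable expansion of $b\cdot\nabla\tilde u_z$ with the \emph{right} dependence of the polynomial coefficients on the base point $z$ (i.e.\ $C^{\beta-1-j\varepsilon_0}_z$) and with error terms of the form $|x-z|^{\cdot}d^{s-1-\cdot}$ compatible with the hypotheses of Theorem \ref{surjectivityResult} and of the expansion lemmas — this is where the interior estimates of Lemma \ref{followingRegularityLemma}, Corollary \ref{generalisedGrowthLemma}, and careful bookkeeping of which $C^\theta$ norms are available must be combined; the corrector Step B is then essentially an application of Theorem \ref{surjectivityResult}, and the closing argument is the standard $C^1$-to-$C^\infty$ type extraction from the expansion.
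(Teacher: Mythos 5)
Your overall strategy — bootstrap the boundary expansion in powers $d^{s+j\varepsilon_0}$, improving the exponent by $\varepsilon_0$ each step, and deduce both the $C^{\varepsilon_0\wedge(\beta-1)}$ up-to-boundary and $C^{\beta-1}$ along-boundary regularity of $u/d^s$ from the resulting expansion — is the right one, and Step A (the transfer of the expansion to $b\cdot\nabla\tilde u_z$) is indeed where the real work is. The first half is also carried out as in the paper: one normalizes, gets $|\nabla u|\le Cd^{s-1}$ from Lemma~\ref{followingRegularityLemma}, feeds $|f-b\cdot\nabla u|\le Cd^{s-1}$ into Lemma~\ref{1function1.2}, and reads off the $C^{\varepsilon_0\wedge(\beta-1)}$ bound via Lemmas~\ref{divisionLemma} and~\ref{newCoefRegularity}.

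There is, however, a genuine gap in your Step B. You propose applying Theorem~\ref{surjectivityResult} to produce the corrector at each stage. But that theorem requires the kernel $K$ to be $C^{2\beta+1}(\S^{n-1})$, whereas Proposition~\ref{harnack1.1} assumes only the minimal conditions \eqref{kernelConditions} on $K$, with no extra smoothness; so that tool is not available here. It is also unnecessary in this range of $\beta$: because $1<\beta<1+s$, every exponent $j\varepsilon_0$ appearing satisfies $j\varepsilon_0<\beta-1<s$, so the corrector powers $s+j\varepsilon_0$ stay in $(s,2s)$ and the correctors $Q_z^j$ are \emph{constants} rather than polynomials. The paper instead invokes Lemma~\ref{LonDistance} directly, which gives $L(d^{s+\varepsilon_0})=c_{s+\varepsilon_0}|\nabla d|^{2s}d^{\varepsilon_0-s}+R$ under only a $C^{1,\alpha}$ boundary and \eqref{kernelConditions}, and allows one to solve explicitly for the constant corrector $\tilde Q_z=-sQ_z\,b\cdot\nabla d(z)\,(c_{s+\varepsilon_0}|\nabla d(z)|^{2s})^{-1}$. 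Relatedly, your remark that $s\notin\Q$ is what guarantees $p-2s\notin\N$, $p-s\notin\N$ in Step B is off the mark for this proposition: with $j\varepsilon_0<s<1$ those non-integrality conditions hold automatically, and the irrationality of $s$ only becomes essential in the later propositions (\ref{harnack1.2}, \ref{harnack1.3}) where $\beta$ is larger. Finally, Corollary~\ref{generalisedGrowthLemma} is not needed at this stage either — it enters only once the polynomials $Q_z^j$ have nonconstant part, i.e.\ from Proposition~\ref{harnack1.3} onward; here the simpler Lemma~\ref{growthLemma} suffices for the gradient transfer.
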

	
	\begin{proof}
		Because of the homogeneity of the equation, we can assume that $C_1+||u||_{L^\infty(\R^n)}=1$.
		Notice, that $u$ solves
		$$\left\lbrace\begin{array}{rcl l}
		Lu&=&f-b\cdot\nabla u&\text{in  } \Omega\cap B_1\\
		u &=&0 & \text{in  } \Omega^c\cap B_1,
		\end{array}\right.$$
		where $|f-b\cdot\nabla u|\leq Cd^{s-1},$ thanks to \eqref{gradientEstimate}. Therefore we are in position to apply Lemma \ref{1function1.2}, to get that for every $z\in\partial\Omega\cap B_{3/4}$ there is a constant $Q_z$, such that 
		$$|u-Q_zd^s|\leq C|x-z|^{(s+\varepsilon_0)\land (s+\beta-1)}.$$
		The interior regularity result \cite[Corollary 3.6]{RS16b} together with Lemma \ref{zoomGrowthLemma}  render 
		$$\left[u-Q_zd^s \right]_{C^{2s}(B_r(x_0))} \leq Cr^{\varepsilon_0\land(\beta-1)-s},$$
		whenever $d(x_0)=|x_0-z|=2r$. The corollary can be applied due to Lemma \ref{LonDistance}, which assures that $|L(d^s)|\leq C d^{\beta-1-s}$. 
		In particular, this implies that 
		$$\left|\left|\frac{u}{d^s}\right|\right|_{C^{\varepsilon_0\land (\beta-1)}(\overline{\Omega}\cap B_{1/2})}\leq C,$$
		thanks to Lemma \ref{divisionLemma}.
		Moreover, Lemma \ref{newCoefRegularity} gives that $|Q_z-Q_{z'}|\leq C|z-z'|^{\varepsilon_0\land (\beta-1)}$. Note that $Q_z=\frac{u}{d^s}(z)$, and so the proposition is proven if $\beta-1<\varepsilon_0.$ Therefore we continue assuming $\varepsilon_0<\beta-1$.
		
		Reading the interior regularity estimate for the gradient, we get
		$$\left[\nabla w_i-sQ^i_z\nabla d d^{s-1} \right]_{C^{2s-1}(B_r(x_0))} \leq Cr^{\varepsilon_0-s},$$
		which  by Lemma \ref{growthLemma} gives ($2\varepsilon_0-s<0$)
		$$|\nabla w_i(x)-sQ^i_z\nabla d(x) d^{s-1}(x)|\leq Cd^{2\varepsilon_0-s}(x)$$ 
		on the cone $\mathcal{C}_z := \bigcup\big\{ B_r(x_0);\text{ where }d(x_0)=2r=|x-z|\big\}$.
		This furthermore implies 
		$$|\nabla u(x)-sQ_z\nabla d(z) d^{s-1}(x)|\leq C|x-z|^{\varepsilon_0}d^{s-1}(x),$$
		since $\nabla d\in C^{\varepsilon_0}(\overline{\Omega})$, and on such cone $|z-x|$ is comparable to $d(x)$.
		To derive the same inequality on the neighbourhood of the boundary point, we do the following
		\begin{align*}
		|\nabla u(x)-sQ_z\nabla d(z) d^{s-1}(x)|\leq&\text{ }|\nabla u(x)-sQ_{z'}\nabla d(z') d^{s-1}(x)|+\\
		& +s|Q_{z'}\nabla d(z')-Q^i_{z}\nabla d(z)| d^{s-1}(x)\\
		\leq&\text{ } d^{2\varepsilon_0-s}(x) + C|z-z'|^{\varepsilon_0}d^{s-1}(x)\\
		\leq&\text{ } C|x-z|^{\varepsilon_0}d^{s-1}(x),
		\end{align*}
		and so 
		\begin{equation*}
		|b\cdot\nabla u - sQ_zb\cdot\nabla d(z) d^{s-1}(x)|\leq C|x-z|^{\varepsilon_0}d^{s-1}(x).
		\end{equation*}
		Then we use Lemma \ref{LonDistance}, to find a constant $\tilde{Q}_z = -sQ_zb\cdot\nabla d(z) (c_{s+\varepsilon_0}|\nabla d(z)|^{2s})^{-1}$, which is $C^{\varepsilon_0}_z$, so that 
		$$L(u - \tilde{Q}_zd^{s+\varepsilon_0}) = f - b\cdot\nabla u + \tilde{Q}_zc_{s+\varepsilon_0}|\nabla d(x)|^{2s}d^{s-1}(x) + R(x),$$
		where $|R|\leq d^{2\varepsilon_0-s}$. Combining it with the expansion of the gradient of $u$ and the regularity of $\nabla d$, we get
		$$\big|L(u - \tilde{Q}_zd^{s+\varepsilon_0})\big| \leq C|x-z|^{\varepsilon_0}d^{s-1}(x).$$
		We apply Lemma \ref{1function1.2}, to get a constant $Q_z$, such that 
		$$\left|u-\tilde{Q}_zd^{s+\varepsilon_0}-Qd^s\right|\leq C |x-z|^{2\varepsilon_0\land(\beta-1)+s},$$
		and Lemma \ref{newCoefRegularity} gives that $Q_z$ is $C^{2\varepsilon_0\land(\beta-1)}$ in variable $z$.
		
		We proceed in a similar manner to get the expansion
		$$\tilde{u}_z:= u-Q_z^{k}d^{s+k\varepsilon_0}+\ldots+Q^1_z d^{s+\varepsilon_0},$$
		so that $|L\tilde{u}_z| \leq C|x-z|^{\beta-2s}d^{s-1},$ and $Q_z^{j} \in C^{(k-j+1)\varepsilon_0}_z(\partial\Omega\cap B_{1/2}).$ Note that this is not less regularity than stated.
		It remains to apply Lemma \ref{1function1.2} one last time, to get a constant $Q_z^0$, so that 
		$$|\tilde{u}_z-Q_z^0d^s|\leq C |x-z|^{\beta-1+s},$$
		and
		$$\left[ \tilde{u}_z-Q_z^0d^s\right]_{C^{\beta-1+s}(B_{r_1}(x_1))}\leq C\left(\frac{|x_1-z|}{r_1}\right)^{\beta-1+s},$$
		whenever $d(x_1)=2r_1$.
		Finally, Lemma \ref{newCoefRegularity} gives that $||\frac{u}{d^s}||_{C^{\beta -1}(\partial\Omega\cap B_{1/2})}\leq C,$ as wanted.
		
	\end{proof}

	Let us now show, how the arguments change, when $(k+1)\varepsilon_0$ just exceeds $s$. %One could call it "Break the ass."
	Basically, the difference is that we can not use Lemma \ref{LonDistance}, since it does not provide enough regularity. Instead we apply Theorem \ref{surjectivityResult}, while the rest remains the same. 

	\begin{proposition}\label{harnack1.2}
		Let $s>\frac{1}{2},$ $s\not\in\Q$, and assume $\Omega\subset\R^n$ is a $C^\beta$ domain with $1+s<\beta\leq 1+k_0\varepsilon_0,$ where $k_0$ is the least integer such that $k_0\varepsilon_0>1.$ Let $L$ be an operator whose kernel $K$ is $C^{2\beta+1}(\S^{n-1})$ and satisfies conditions \eqref{kernelConditions}. Let $u\in L^\infty(\R^n)$ solve
		\eqref{linearEquation}
		with $f\in C^{\beta-1-s}(\overline{\Omega})$ and $b\in\R^n.$ Then
		$$\left|\left|\frac{u}{d^s}\right|\right|_{C^{\beta-1}(\partial\Omega\cap B_{1/2})}\leq
			C\left(||f||_{C^{\beta-1-s}(\Omega\cap B_1)}+||u||_{L^\infty(\R^n)}\right)$$ 
		The constant $C$ depends only on $n,s,\beta,\Omega$ and $||K||_{C^{2\beta+1}(\S^{n-1})}.$
		
		Moreover, there exist polynomials $Q_z^j\in\textbf{P}_{\lfloor\beta-1-j\varepsilon_0\rfloor}$,\footnote{The only case when polynomial is non-constant is when $j=1$ and $\beta>2$.} for $j=0,\ldots,k,$ where $k=\lceil\frac{\beta-1}{\varepsilon_0}\rceil-1$, with the $\gamma$-th coefficient $(Q_z^j)^{(\gamma)}\in C^{\beta-1 - j\varepsilon_0-|\gamma|}_z(\partial\Omega\cap B_{1/2}),$ so that for
		$$\tilde{u}_z:= u-Q_z^{k}d^{s+k\varepsilon_0}+\ldots+Q^1_z d^{s+\varepsilon_0},$$
		there is a polynomial $P_z$ so that
		\begin{align*}
			|L\tilde{u}_z-P_z|\leq& C|x-z|^{\beta-1-\varepsilon_0}d^{s-1},\\
			\left[ L\tilde{u}_{z}\right]_{C^{\beta-1-s}(B_{\frac{3}{2}r_1}(x_1))}\leq& C\left(\frac{|x_1-z|}{r_1}\right)^{\beta-s},\\
			\left|\tilde{u}_z -Q_z^0d^s\right|\leq &C|x-z|^{\beta-1+s},\quad\text{and}\\
			\left[ \tilde{u}_{z}-Q_z^0d^s\right]_{C^{\beta-1+s}(B_{\frac{3}{2}r_1}(x_1))}\leq& C\left(\frac{|x_1-z|}{r_1}\right)^{\beta-1+s},
		\end{align*}
		whenever $d(x_1)=2r_1$. The constant $C$ depends only on $n,s,\beta,\Omega$ and ellipticity constants.
	\end{proposition}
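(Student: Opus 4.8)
The plan is to follow the scheme of the proof of Proposition \ref{harnack1.1} essentially verbatim, replacing each use of Lemma \ref{LonDistance} --- which only yields $|L(d^s)|\le C d^{(\alpha-s)\land 0}$ and is insufficient once $\beta-1>s$ --- by an appeal to Theorem \ref{surjectivityResult}, and each use of Lemma \ref{1function1.2} by the corresponding higher-regularity expansion Lemma \ref{1function2.2}. First I would normalise so that $\|f\|_{C^{\beta-1-s}}+\|u\|_{L^\infty(\R^n)}=1$ and rewrite \eqref{linearEquation} as $Lu=f-b\cdot\nabla u$ in $\Omega\cap B_1$. Since $|u|\le Cd^s$ by Proposition \ref{optimalBoundaryRegularity} and $f\in C^{\beta-1-s}$, Lemma \ref{followingRegularityLemma} gives $|\nabla u|\le Cd^{s-1}$ together with the interior estimates $[u]_{C^\gamma(B_r(x_0))}\le C r^{s-\gamma}$ for $\gamma\le\beta-1$. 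Applying Lemma \ref{1function1.2} (or Lemma \ref{1function2.2} if $\beta-1>s$, but here we are in the regime $\beta\le 1+k_0\varepsilon_0$ so $\beta-1$ may exceed $s$ only when $k_0\varepsilon_0>s$; in any case use whichever of the two lemmas has matching regularity) produces a first constant $Q_z$ with $|u-Q_zd^s|\le C|x-z|^{(s+\varepsilon_0)\land(s+\beta-1)}$, and Lemma \ref{newCoefRegularity} controls its regularity in $z$.

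The core of the argument is the bootstrap that successively peels off terms $Q_z^j d^{s+j\varepsilon_0}$. At each stage, given $\tilde u_z^{(j)}:=u-Q_z^{j}d^{s+j\varepsilon_0}-\cdots-Q_z^1d^{s+\varepsilon_0}$ with $|\tilde u_z^{(j)}-Q_z^{(j-1)}d^s|\le C|x-z|^{s+j\varepsilon_0}$ (roughly), I would first transfer this expansion to the gradient exactly as in Proposition \ref{harnack1.1}: the interior estimate for $\nabla u$ on the cone $\mathcal C_z$, combined with $\nabla d\in C^{\varepsilon_0}(\overline\Omega)$ and the $z$-regularity of the coefficients, gives a control of the form $|b\cdot\nabla u - (\text{polynomial in }x-z)\, d^{s-1+j\varepsilon_0}|\le C|x-z|^{(j+1)\varepsilon_0}d^{s-1+\cdots}$. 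Then, instead of inverting $L$ on $d^{s+\varepsilon_0}$ via Lemma \ref{LonDistance}, I would invoke Theorem \ref{surjectivityResult} with $p=s+(j+1)\varepsilon_0$ (or the relevant power $s+k\varepsilon_0+l$) to produce a polynomial $\tilde Q_z^{j+1}$, with coefficients of the stated $C^{\beta-1-(j+1)\varepsilon_0-|\gamma|}_z$ regularity, such that $L(\tilde Q_z^{j+1}(\cdot-z)d^{\,p})$ matches the offending $d^{p-2s}$-term up to errors in $C^{\beta-1+\langle\cdot\rangle}$ and a remainder $\eta_z$ of size $|x-z|^{\lfloor\alpha\rfloor+1}$; the hypotheses $s\not\in\Q$, $s+k\varepsilon_0+l-2s\notin\N$ and $-s\notin\N$ are precisely what makes this applicable, and $\varepsilon_0>0$ guarantees the power strictly increases so the process terminates after $k=\lceil(\beta-1)/\varepsilon_0\rceil-1$ steps.

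After the last step, the remaining function $\tilde u_z=\tilde u_z^{(k)}$ satisfies $|L\tilde u_z-P_z|\le C|x-z|^{\beta-1-\varepsilon_0}d^{s-1}$ and, using the interior Schauder estimate \cite[Proposition 3.9]{AR20} applied on dyadic balls together with Lemma \ref{zoomGrowthLemma} as in Lemma \ref{followingRegularityLemma}, the interior Hölder bound $[L\tilde u_z]_{C^{\beta-1-s}(B_{3r_1/2}(x_1))}\le C(|x_1-z|/r_1)^{\beta-s}$ whenever $d(x_1)=2r_1$. These are exactly the hypotheses of Lemma \ref{1function2.2} (with $g_z$ the sum of the subtracted terms), which then yields the polynomial $Q_z^0\in\textbf{P}_{\lfloor\beta-1\rfloor}$ with $|\tilde u_z-Q_z^0 d^s|\le C|x-z|^{\beta-1+s}$ and the corresponding interior seminorm estimate; Lemma \ref{newCoefRegularity} upgrades this to $\|u/d^s\|_{C^{\beta-1}(\partial\Omega\cap B_{1/2})}\le C$. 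I expect the main obstacle to be bookkeeping: at each stage the "gradient-transfer" step does not merely reproduce a single power $d^{s+j\varepsilon_0}$ but generates a whole family of terms $d^{s+k\varepsilon_0+l}$ (since differentiating $Q_z d^{s+j\varepsilon_0}$ lowers the power by integers as well), so one must carefully organise the induction over the two-parameter family of exponents $s+k\varepsilon_0+l<\beta-1$, verify that every power invoked in Theorem \ref{surjectivityResult} avoids $\N+2s$ and $\N+s$, and keep track of the sharp $z$-regularity $C^{\beta-1-j\varepsilon_0-|\gamma|}_z$ of each coefficient through the compositions with the boundary-flattening map; the analytic estimates themselves are all already available from the previous sections.
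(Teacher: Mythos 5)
Your proposal follows essentially the same route as the paper's proof: start from the expansion of Proposition \ref{harnack1.1}, transfer it to the gradient via the interior estimates on the cone (the paper packages this as Lemma \ref{upToBoundaryRegularity}, usable because $k\varepsilon_0+s>1$), correct the right-hand side by invoking Theorem \ref{surjectivityResult} in place of Lemma \ref{LonDistance} (this is exactly where $s\not\in\Q$ enters), then close with Lemma \ref{1function2.2} and Lemma \ref{newCoefRegularity} and iterate until $\beta-1$ is reached. Your worry about a two-parameter family of powers $s+k\varepsilon_0+l$ is only needed in the later regime $\beta>1+k_0\varepsilon_0$ (Proposition \ref{harnack1.3}); in the present range the powers stay of the form $s+j\varepsilon_0$, so the bookkeeping is lighter than you anticipate.
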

	
	\begin{proof}
		We want to continue with the expansion of $u$, where we stopped in the previous proposition. Note that $u$ admits \eqref{interiorEstimate} and \eqref{gradientEstimate}.
		Hence, for $k$ such that $k\varepsilon_0<s<(k+1)\varepsilon_0$, the previous proposition indeed gives existence of constants $Q_z^j\in C^{(k-j)\varepsilon_0}_z(\partial\Omega\cap B_{3/4})$, for $j=0,\ldots, k-1$, so that 
		$$\left|u-Q^{k-1}_zd^{s+(k-1)\varepsilon_0}-\ldots-Q^0_zd^s\right|\leq C|x-z|^{k\varepsilon_0+s}$$
		and 
		$$ \left[ u-Q^{k-1}_zd^{s+(k-1)\varepsilon_0}-\ldots-Q^0_zd^s\right]_{C^{k\varepsilon_0 +s}(B_{\frac{3}{2}r_1}(x_1))}\leq C\left( \frac{|x_1-z|}{r_1} \right)^{k\varepsilon_0+s}, $$
		whenever $d(x_1)=r_1.$  Notice that $k\varepsilon_0+s>1$, since $(k+1)\varepsilon_0>s$. With aid of Lemma \ref{upToBoundaryRegularity} we conclude that in the cone $\mathcal{C}_z$ we have
		$$\left|\nabla\big(u-Q^{k-1}_zd^{s+(k-1)\varepsilon_0}-\ldots-Q^0_zd^s\big) \right|\leq C|x-z|^{k\varepsilon_0+s-1},$$
		as well as
		$$\left[ \nabla\left(u-Q^{k-1}_zd^{s+(k-1)\varepsilon_0}-\ldots-Q^0_zd^s\right) \right]_{C^{k\varepsilon_0 +s-1}(B_{\frac{3}{2}r_1}(x_1))}\leq C\left( \frac{|x_1-z|}{r_1} \right)^{k\varepsilon_0+s}, $$
		whenever $d(x_1)=2r_1.$
		Taking into account that $\nabla d\in C^{k\varepsilon_0}(\overline{\Omega})$ and $|\nabla d ( x)-\nabla d (z)|\leq C|x-z|^{k\varepsilon_0}$, the above implies that for $x\in \mathcal{C}_z,$ we have
		$$\left|\nabla u - ((k-1)\varepsilon_0+s)Q_z^{k-1}\nabla d(z) d^{(k-1)\varepsilon_0+s-1}-\ldots - sQ^0_z\nabla d(z)d^{s-1}\right|\leq C|x-z|^{k\varepsilon_0+s-1},$$
		as well as 
		$$\left[ \nabla u - ((k-1)\varepsilon_0+s)Q_z^{k-1}\nabla d(z) d^{(k-1)\varepsilon_0+s-1}-\ldots - sQ^0_z\nabla d(z)d^{s-1}\right]_{C^{k\varepsilon_0+s-1}}\leq$$$$\leq C\left(\frac{|x_1-z|}{r_1}\right)^{k\varepsilon_0+s}, $$
		in $B_{\frac{3}{2}r_1}(x_1)$,
		thanks to Lemma \ref{reminderRegularity}.
		As in the previous proposition we derive the expansion of the gradient in the neighbourhood, namely for $x\in \Omega\cap B_1,$
		$$\left|\nabla u - ((k-1)\varepsilon_0+s)Q_z^{k-1}\nabla d(z) d^{(k-1)\varepsilon_0+s-1}-\ldots - sQ^0_z\nabla d(z)d^{s-1}\right|\leq C|x-z|^{k\varepsilon_0}d^{s-1}(x).$$ 
		Using Theorem \ref{surjectivityResult}
		we now find constants $\tilde{Q}_{z}^{j}\in C^{(k+1-j)\varepsilon_0}(\partial\Omega)$, $j=1,\ldots,k$, so that 
		$$L(\tilde{Q}_{z}^{j}d^{j\varepsilon_0+s}) = ((j-1)\varepsilon_0+s)Q_{z}^{j-1}\cdot b\cdot\nabla d(z) d^{(j-1)\varepsilon_0+s-1} + R_j + \eta_j,$$
		where $R_j\in C^{k\varepsilon_0-2s+j\varepsilon_0+s}$ and $\eta_j = \phi_{j\varepsilon_0+s} d^{(j-1)\varepsilon_0+s-1}$. The result gives that $\phi_{j\varepsilon_0+s}\in C^{1+k\varepsilon_0}(\overline{\Omega}),$ and by construction $\phi_{j\varepsilon_0+s}(z)=0$. Hence we have $|\eta_j|\leq C|x-z|d^{(j-1)\varepsilon_0+s-1}\leq |x-z|^{k\varepsilon_0}d^{s-1}(x),$ and by Lemma \ref{reminderRegularity}  also $\left[ \eta_j\right]_{C^{k\varepsilon_0+s-1}(B_{\frac{3}{2}r_1}(x_1))}\leq C\left(\frac{|x_1-z|}{r_1}\right)^{k\varepsilon_0}$. So if we define 
		$$\tilde{u}_z := u - \tilde{Q}_{z}^{k}d^{k\varepsilon_0+s}-\ldots-\tilde{Q}_{z}^{1}d^{\varepsilon_0+s},$$ 
		we have
		\begin{equation*}
		|L\tilde{u}_{z}-P_z|\leq C|x-z|^{k\varepsilon_0}d^{s-1}(x),
		\end{equation*}
		for a suitable Taylor polynomial $P_z$ and
		\begin{equation*}
		\left[ L\tilde{u}_{z}\right]_{C^{k\varepsilon_0+s-1}(B_{\frac{3}{2}r_1}(x_1))}\leq C\left(\frac{|x_1-z|}{r_1}\right)^{k\varepsilon_0+s}.
		\end{equation*}
		Applying Lemma \ref{1function2.2}, we get a polynomial $Q_z$ of degree $\lfloor(k+1)\varepsilon_0\land (\beta-1)\rfloor$, so that
		$$|\tilde{u}_z -Q_zd^s|\leq C|x-z|^{(k+1)\varepsilon_0\land(\beta-1)+s},$$
		and
		$$\left[ \tilde{u}_{z}-Q_zd^s\right]_{C^{(k+1)\varepsilon_0\land(\beta-1)+s}(B_{\frac{3}{2}r_1}(x_1))}\leq C\left(\frac{|x_1-z|}{r_1}\right)^{(k+1)\varepsilon_0\land(\beta-1)+s}.$$
		With aid of Lemma \ref{newCoefRegularity}, we get the wanted regularity of the coefficients, in particular the zero order satisfies $$||(Q_z)^{(0)}||_{C^{(k+1)\varepsilon_0\land(\beta-1)}(\partial\Omega\cap B_{1/2})} = \left|\left|\frac{u}{d^s}\right|\right|_{C^{(k+1)\varepsilon_0\land(\beta-1)}(\partial\Omega\cap B_{1/2})} \leq C.$$
		If $\beta-1\leq(k+1)\varepsilon_0$ we are done, otherwise we continue following the same steps.
	\end{proof}

	When regularity becomes greater than $2$, it becomes harder to translate the expansion of the function to the expansion of its gradient, since the polynomials in the expansions start getting non-constant terms. This is where Lemma \ref{A10} and Corollary \ref{generalisedGrowthLemma} become useful. Note also, that these results give a slightly worse estimate, which is the reason for establishing the boundary regularity result - Lemma \ref{boundaryRegGeneralised} which plays the key role when establishing Lemma \ref{1function2.2} and Lemma \ref{2functions3.2}.

	\begin{proposition}\label{harnack1.3}
		Let $s>\frac{1}{2},$ $s\not\in\Q$, and assume $\Omega\subset\R^n$ is a $C^\beta$ domain with $\beta>k\varepsilon_0,$ for $k\varepsilon_0>1>(k-1)\varepsilon_0$, and $\beta\pm s\not\in\N$. Let $L$ be an operator whose kernel $K$ is $C^{2\beta+1}(\S^{n-1})$ and satisfies conditions \eqref{kernelConditions}. Let $u\in L^\infty(\R^n)$ be a solution to
		\eqref{linearEquation}
		with $f\in C^{\beta-1-s}(\overline{\Omega})$ and $b\in\R^n.$ Then
		$$\left|\left|\frac{u}{d^s}\right|\right|_{C^{\beta-1}(\partial\Omega\cap B_{1/2})}\leq
		C\left(||f||_{C^{\beta-1-s}(\Omega\cap B_1)}+||u||_{L^\infty(\R^n)}\right)$$ 
		The constant $C$ depends only on $n,s,\beta,\Omega$ and $||K||_{C^{2\beta+1}(\S^{n-1})}.$
		
		Moreover, there exist polynomials $Q_z^{j,l}\in\textbf{P}_{\lfloor\beta-1-j\varepsilon_0-l\rfloor}$, for $j,l\geq0,$ such that $j\varepsilon_0+l< \beta-1$, with the $\gamma$-th coefficient $(Q_z^{j,l})^{(\gamma)}\in C^{\beta-1 - j\varepsilon_0-l-|\gamma|}_z(\partial\Omega\cap B_{1/2}),$ so that for
		$$\tilde{u}_z := u - \sum_{j\geq1,l\geq0}Q^{j,l}_{z} d^{j\varepsilon_0+l+s}$$
		there is a polynomial $P_z$ so that
		\begin{align*}
		|L\tilde{u}_z-P_z|\leq& C|x-z|^{\beta-1-\varepsilon_0+s}d^{-1},\\
		\left[ L\tilde{u}_{z}\right]_{C^{\beta-1-s}(B_{\frac{3}{2}r_1}(x_1))}\leq& C\left(\frac{|x_1-z|}{r_1}\right)^{\beta-s},\\
		|\tilde{u}_z -Q_z^{0,0}d^s|\leq &C|x-z|^{\beta-1+s},\quad\text{and}\\
		\left[ \tilde{u}_{z}-Q_z^{0,0}d^s\right]_{C^{\beta-1+s}(B_{\frac{3}{2}r_1}(x_1))}\leq& C\left(\frac{|x_1-z|}{r_1}\right)^{\beta-1+s},
		\end{align*}
		whenever $d(x_1)=2r_1$. The constant $C$ depends only on $n,s,\beta,\Omega$ and ellipticity constants.
	\end{proposition}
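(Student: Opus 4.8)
The plan is to continue the iterative expansion scheme exactly as in Propositions \ref{harnack1.1} and \ref{harnack1.2}, but now in the regime $\beta>k\varepsilon_0$ where the polynomials $Q_z^{j,l}$ in the expansion are genuinely non-constant, so the step ``pass the expansion of $u$ to an expansion of $\nabla u$'' can no longer be done by the naive differentiation of $Qd^p$ that worked when $Q$ was a constant. We argue by induction on the accuracy of the expansion, the induction step increasing the order by $\varepsilon_0$ (or by $1$, whichever keeps us below $\beta-1$). Suppose inductively that we already have an expansion
$$u = \sum_{j\varepsilon_0+l<\alpha} Q_z^{j,l}(\cdot-z)\,d^{j\varepsilon_0+l+s} + O(|x-z|^{\alpha+s}),$$
with the stated coefficient regularities and the matching interior estimates on balls $B_{r_1}(x_1)$ with $d(x_1)=2r_1$, obtained on the previous round by an application of Lemma \ref{1function2.2} (using Lemma \ref{boundaryRegGeneralised} inside it). The function $u$ satisfies the interior estimates \eqref{interiorEstimate} and the gradient bound \eqref{gradientEstimate} via Lemma \ref{followingRegularityLemma}, so all the hypotheses needed to iterate are in place.

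The first main step of the induction is to differentiate the expansion. Here I would invoke Corollary \ref{generalisedGrowthLemma} together with Lemma \ref{A10}: from the interior H\"older bounds on $u-\sum Q_z^{j,l}d^{j\varepsilon_0+l+s}$ on the cone $\mathcal C_z$ (and the corresponding seminorm control on $B_{\frac32 r_1}(x_1)$), these results produce the expansion of $\nabla u$ term by term. The point is that $\nabla(Q_z^{j,l}d^{p}) = (\nabla Q_z^{j,l})d^p + p\,Q_z^{j,l}\nabla d\, d^{p-1}$, and the first summand is a lower-order polynomial against $d^p$ (so it is already of a type that appears in the expansion, possibly contributing to a $Q^{j,l-1}$-type term), while the second is $Q_z^{j,l}\nabla d\, d^{p-1}$, which after Taylor-expanding $\nabla d$ around $z$ and using $\nabla d\in C^{\beta-1}(\overline\Omega)$ is again a polynomial-times-power of $d$. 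Collecting everything and absorbing the errors (the delicate bookkeeping being that the error from Lemma \ref{A10}/Corollary \ref{generalisedGrowthLemma} is slightly worse than a pure $|x-z|^{\alpha}d^{s-1}$, which is precisely why Lemma \ref{boundaryRegGeneralised} was proved) gives
$$\Big|\nabla u - \sum_{j\varepsilon_0+l<\alpha}\widehat Q_z^{j,l}(\cdot-z)\, d^{j\varepsilon_0+l+s-1}\Big|\le C|x-z|^{\alpha}d^{s-1}(x),$$
with the matching seminorm estimate on $B_{\frac32 r_1}(x_1)$, and with $\widehat Q_z^{j,l}$ inheriting the appropriate $C_z$-regularity of the coefficients.

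The second main step is the correction: dotting the gradient expansion with $b$ gives $b\cdot\nabla u = \sum \widehat Q_z^{j,l}\cdot b\,(\cdot-z)\,d^{j\varepsilon_0+l+s-1} + O(|x-z|^{\alpha}d^{s-1})$, and for each term I apply Theorem \ref{surjectivityResult} with $p=j\varepsilon_0+l+s$ to produce polynomials $\widetilde Q_z^{j,l}\in\mathbf P_{\lfloor\beta-1-j\varepsilon_0-l\rfloor}$, with coefficients of the stated $C_z$-regularity, solving $L(\widetilde Q_z^{j,l}(\cdot-z)d^{j\varepsilon_0+l+s}) = \widehat Q_z^{j,l}\cdot b(\cdot-z)\,d^{j\varepsilon_0+l+s-1} + R_{j,l} + \eta_{j,l}$ with $R_{j,l}\in C^{\beta-1-s}$-type remainders and $\eta_{j,l} = \phi_{j,l}d^{j\varepsilon_0+l+s-1}$ where $\phi_{j,l}(z)=0$, hence $|\eta_{j,l}|\le C|x-z|\,d^{j\varepsilon_0+l+s-1}\le C|x-z|^{\alpha}d^{s-1}$ and, by Lemma \ref{reminderRegularity}, the corresponding seminorm bound on $B_{\frac32 r_1}(x_1)$. (This is the step that requires $s\notin\Q$: the powers $j\varepsilon_0+l+s$ must avoid $\N+2s$ and $\N+s$.) Setting $\tilde u_z := u - \sum \widetilde Q_z^{j,l}d^{j\varepsilon_0+l+s}$ over the relevant range of $(j,l)$, the equation $Lu = f - b\cdot\nabla u$ together with the cancellations just arranged and the $C^{\beta-1-s}$-regularity of $f$ gives $|L\tilde u_z - P_z|\le C|x-z|^{\beta-1-\varepsilon_0+s}d^{-1}$ and $[L\tilde u_z]_{C^{\beta-1-s}(B_{\frac32 r_1}(x_1))}\le C(|x_1-z|/r_1)^{\beta-s}$ for a suitable Taylor polynomial $P_z$. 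The third step is then to apply Lemma \ref{1function2.2} (respectively Lemma \ref{2functions3.2} in the two-solution version, but here only \ref{1function2.2} is needed) to $\tilde u_z$: it produces a polynomial $Q_z^{0,0}$ with $|\tilde u_z - Q_z^{0,0}d^s|\le C|x-z|^{\beta-1+s}$ and the stated seminorm bound, and Lemma \ref{newCoefRegularity} upgrades this pointwise coefficient control to $\|u/d^s\|_{C^{\beta-1}(\partial\Omega\cap B_{1/2})}\le C$, closing both the induction and the statement.

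The main obstacle, as the authors flag, is the differentiation step: tracking how a non-constant $Q_z^{j,l}d^{j\varepsilon_0+l+s}$ differentiates into a sum of admissible terms without losing the exact power structure $j\varepsilon_0+l+s$, and controlling the error. The naive seminorm estimate coming out of Corollary \ref{generalisedGrowthLemma} is a touch too weak to feed directly back into a pure H\"older-on-cones iteration, which is exactly why Lemma \ref{boundaryRegGeneralised} (the boundary regularity estimate for equations with polynomial growth and a right-hand side blowing up like $d^{\varepsilon-2s}$) was developed, and its use inside Lemma \ref{1function2.2} is what makes the Arzel\`a--Ascoli compactness in the blow-up go through. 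Once that interface is set up, the remaining work — applying Theorem \ref{surjectivityResult} term by term and invoking Lemma \ref{1function2.2} — is the same routine as in Propositions \ref{harnack1.1}--\ref{harnack1.2}, just with heavier multi-index bookkeeping.
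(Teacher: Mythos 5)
Your proposal is correct and follows essentially the same route as the paper: start from the expansion provided by the previous proposition, pass to the gradient via Corollary \ref{generalisedGrowthLemma} (accepting the weaker error $|x-z|^{\alpha+s}d^{-1}$, which is exactly what Lemma \ref{boundaryRegGeneralised} is designed to handle inside Lemma \ref{1function2.2}), Taylor-expand $\nabla d$ to regroup into terms $P^{j,l}_z d^{j\varepsilon_0+l+s-1}$, correct with Theorem \ref{surjectivityResult}, control remainders with Lemma \ref{reminderRegularity}, and close the step with Lemma \ref{1function2.2} and Lemma \ref{newCoefRegularity}, iterating until the order reaches $\beta-1$. The only slight blemish is that your displayed gradient-expansion error $C|x-z|^{\alpha}d^{s-1}$ should read $C|x-z|^{\alpha+s}d^{-1}$, as you yourself acknowledge in the parenthetical remark, and as reflected in the final estimate $|L\tilde u_z-P_z|\le C|x-z|^{\beta-1-\varepsilon_0+s}d^{-1}$.
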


	\begin{proof}
		We apply the previous proposition, to get polynomials $Q_z^j\in\textbf{P}_{\lfloor(k-j)\varepsilon_0\rfloor}$ with the $\gamma$-th coefficient 
		$(Q^j_z)^{(\gamma)}\in C^{(k-j)\varepsilon_0-|\gamma|}$, for $j=0,\ldots,k-1$ so that
		$$|u-Q^{k-1}_zd^{(k-1)\varepsilon_0+s}-\ldots-Q^0_zd^s|\leq C|x-z|^{k\varepsilon_0+s},$$
		and
		$$\left[ u-Q^{k-1}_zd^{(k-1)\varepsilon_0+s}-\ldots-Q^0_zd^s\right]_{C^{k\varepsilon_0+s}(B_{\frac{3}{2}r_1}(x_1))}\leq C\left(\frac{|x_1-z|}{r_1}\right)^{k\varepsilon_0+s},$$
		whenever $d(x_1)=2r_1.$
		
		We proceed with applying Corollary \ref{generalisedGrowthLemma}, we get
		$$\left|\nabla \left(u-Q^{k-1}_zd^{(k-1)\varepsilon_0+s}-\ldots-Q^0_zd^s\right)\right|\leq C|x-z|^{k\varepsilon_0+s}d^{-1}(x),$$ 
		while 
		$$\left[ \nabla \left(u-Q^{k-1}_zd^{(k-1)\varepsilon_0+s}-\ldots-Q^0_zd^s\right)\right]_{C^{k\varepsilon_0+s-1}(B_{r_1}(x_1))}\leq C\left(\frac{|x_1-z|}{r_1}\right)^{k\varepsilon_0+s}$$
		follows straightforward from the interior estimate above.
		Taking into account that $\nabla d - T^1_z(\nabla d)$ is $C^{k\varepsilon_0}\cap O(|x-z|^{k\varepsilon_0}),$ we get the refined expansion of the form
		$$\left|\nabla u - \sum_{j,l\geq0}^{j\varepsilon_0+l<k\varepsilon_0} P_z^{j,l}d^{j\varepsilon_0+l+s-1}\right|\leq C|x-z|^{k\varepsilon_0+s}d^{-1}(x),$$
		where $P_{z}^{j,l}$ is a polynomial of degree $\lfloor k\varepsilon_0-j\varepsilon_0-l\rfloor$, with the $\alpha-$th coefficient being $C^{k\varepsilon_0-j\varepsilon_0-l-|\alpha|}_z$ smooth (multiplying a polynomial with this property with the Taylor polynomial of $\nabla d$ preserves this property). Using Lemma \ref{reminderRegularity} we also get the interior part of the estimate
		$$\left[ \nabla u - \sum_{j,l\geq0}^{j\varepsilon_0+l<k\varepsilon_0} P_{z}^{j,l}d^{j\varepsilon_0+l+s-1}\right]_{C^{k\varepsilon_0+s-1}(B_{r_1}(x_1))}\leq C\left(\frac{|x_1-z|}{r_1}\right)^{k\varepsilon_0+s}.$$
		Using Theorem \ref{surjectivityResult}
		we find polynomials $\tilde{Q}^{j,l}_{z},$ $j\geq 1,l\geq0$ of degree $\lfloor (k+1)\varepsilon_0-j\varepsilon_0-l\rfloor$, with the $\alpha-$th coefficient being $C^{(k+1)\varepsilon_0-j\varepsilon_0-l-|\alpha|}_z$ smooth, so that
		$$L(\tilde{Q}^{j,l}_{z} d^{j\varepsilon_0+l+s}) = P^{j-1,l}_{z} d^{(j-1)\varepsilon_0+l+s-1} + R_{j,l}+\eta_{j,l},$$
		with $R_{j,l}\in C^{k\varepsilon_0-s+j\varepsilon_0+l}(\overline{\Omega})$ (since $j\geq 1,$ the power is greater than $k\varepsilon_0 + s-1$) and $\eta_{j,l} = \phi_{j,l}d^{(j-1)\varepsilon_0+l+s-1}$, and $\phi_{j,l}\in C^{1+k\varepsilon_0}\cap O(|x-z|^{\lceil (k+1)\varepsilon_0-j\varepsilon_0-l\rceil})$ so the worst is when $j=1,l=0$, when we get $|\eta_{j,l}|\leq C|x-z|^{k\varepsilon_0}d^{s-1}$, which gives $\left[ \eta_{j,l}\right]_{C^{k\varepsilon_0+s-1}(B_{r_1}(x_1))}\leq C\left(\frac{|x_1-z|}{r_1}\right)^{k\varepsilon_0} $. 
		Hence, defining 
		$$\tilde{u}_z := u - \sum_{j\geq1,l\geq0}\tilde{Q}^{j,l}_{z} d^{j\varepsilon_0+l+s}$$
		and refining the estimates with Lemma \ref{reminderRegularity} we have
		$$|L\tilde{u}_z - P_z|\leq C |x-z|^{k\varepsilon_0+s}d^{-1}(x),$$
		for a suitable Taylor polynomial $P_z$ and
		$$\left[ L\tilde{u}_z\right]_{C^{k\varepsilon_0+s-1}(B_{r_1}(x_1))}\leq C\left(\frac{|x_1-z|}{r_1}\right)^{k\varepsilon_0+s},$$
		whenever $d(x_1)=2r_1.$ 
		We apply Lemma \ref{1function2.2}, we get a polynomial $Q_z$ of degree $\lfloor(k+1)\varepsilon_0\land (\beta-1)\rfloor$, so that
		$$|\tilde{u}_z -Q_zd^s|\leq C|x-z|^{(k+1)\varepsilon_0\land(\beta-1)+s},$$
		and
		$$\left[ \tilde{u}_{z}-Q_zd^s\right]_{C^{(k+1)\varepsilon_0\land(\beta-1)+s}(B_{\frac{3}{2}r_1}(x_1))}\leq C\left(\frac{|x_1-z|}{r_1}\right)^{(k+1)\varepsilon_0\land(\beta-1)+s}.$$
		With aid of Lemma \ref{newCoefRegularity}, we get the wanted regularity of the coefficients of $Q_z$, in particular the zero order satisfies $$\left|\left|(Q_z)^{(0)}\right|\right|_{C^{(k+1)\varepsilon_0\land(\beta-1)}(\partial\Omega\cap B_{1/2})} = \left|\left|\frac{u}{d^s}\right|\right|_{C^{(k+1)\varepsilon_0\land(\beta-1)}(\partial\Omega\cap B_{1/2})} \leq C.$$
		If $\beta-1\leq(k+1)\varepsilon_0$ we are done, otherwise we continue following the same steps.
	\end{proof}

	We can now also provide the proof of Theorem \ref{Schauder}.
	
	\begin{proof}[Proof of Theorem \ref{Schauder}] 
		It is a special case of Proposition \ref{harnack1.3}.
	\end{proof}

	Now we turn to the ratio of two solutions to \eqref{linearEquation}. In the following result we establish the optimal regularity of the quotient up to the boundary.

	\begin{proposition}\label{harnack2.1}
		Let $s>\frac{1}{2},$ and assume $\Omega\subset\R^n$ is a $C^1$ domain. Let $L$ be an operator whose kernel $K$  satisfies conditions \eqref{kernelConditions}. For $i=1,2$ let $u_i\in L^\infty(\R^n)$ satisfy
		$$|u_i|\leq C_1 d^{s},$$
		and solve
		$$\left\lbrace\begin{array}{rcl l}
		Lu_i+b\cdot\nabla u_i&=&f_i&\text{in  } \Omega\cap B_1\\
		u_i& =&0 & \text{in  } \Omega^c\cap B_1,
		\end{array}\right.$$
		with $|f_i|\leq K_0d^{s-1}$ and $b\in\R^n.$ Assume  $C_2d^s\geq u_2\geq c_2 d^s$ for some positive $C_2,c_2$. Then
		$$\left|\left|\frac{u_1}{u_2}\right|\right|_{C^{\varepsilon_0}(\overline{\Omega}\cap B_{1/2})}\leq
		C\left(K_0+||u||_{L^\infty(\R^n)}\right)$$ 
		The constant $C$ depends only on $n,s,\beta,\Omega,C_1,C_2,c_2,$ and ellipticity constants.
	\end{proposition}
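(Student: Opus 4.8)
The plan is to absorb the drift term into the right-hand side, apply the two-function expansion of Lemma \ref{2functions1.1} -- which is stated for drift-free equations -- at every boundary point of $B_{1/2}$, and then combine the resulting pointwise expansion with the interior estimates to obtain $C^{\varepsilon_0}$ regularity of $u_1/u_2$ up to the boundary.

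First I would rewrite $Lu_i = \tilde f_i := f_i - b\cdot\nabla u_i$ in $\Omega\cap B_1$. Since $|u_i|\le C_1 d^s$, the gradient bound $|\nabla u_i|\le C(K_0+\|u\|_{L^\infty(\R^n)})\,d^{s-1}$ holds in $\Omega\cap B_{3/4}$, which I would prove exactly as \eqref{gradientEstimate} in Lemma \ref{followingRegularityLemma}: rescaling $u_i$ on balls $B_r(x_0)$ with $d(x_0)=2r$, applying the interior estimate of Lemma \ref{interiorRegularityLemma} and then Lemma \ref{growthLemma}. The only difference is the contribution of the right-hand side: here $\|f_i\|_{L^\infty(B_r(x_0))}\le CK_0 r^{s-1}$, which after rescaling is still subdominant because $s>\tfrac12$, so the argument goes through verbatim -- in particular it uses only the $C^{2s}$ interior estimate, hence only the bound $|f_i|\le K_0 d^{s-1}$ and not H\"older continuity of $f_i$. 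Consequently $|\tilde f_i|\le C(K_0+\|u\|_{L^\infty(\R^n)})\,d^{\varepsilon_0-s}$, and since $s\in(\tfrac12,1)$ we have $\varepsilon_0=2s-1\in(0,s)$, so $\tilde f_i$ satisfies the hypotheses of Lemma \ref{2functions1.1} with $\varepsilon=\varepsilon_0$. To meet the remaining hypothesis of that lemma I would localize: near every point of $\partial\Omega\cap B_{1/2}$, after a rigid motion and a dilation, $\partial\Omega$ becomes the graph of a $C^1$ function of norm less than $1$; covering $\partial\Omega\cap B_{1/2}$ by finitely many such balls and filling in with interior estimates yields the estimate on all of $B_{1/2}$.

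Next I would apply Lemma \ref{2functions1.1} to $u_1,u_2$ with $g_{i,z}\equiv 0$ and $\varepsilon=\varepsilon_0$. Writing $K_0':=C(K_0+\|u\|_{L^\infty(\R^n)})$, this yields for every $z\in\partial\Omega\cap B_{1/2}$ a constant $Q_z$ with $|Q_z|\le CK_0'$ such that $|u_1(x)-Q_z u_2(x)|\le CK_0'|x-z|^{\varepsilon_0+s}$ in $B_{1/2}(z)$, together with $[u_1-Q_z u_2]_{C^{\varepsilon_0+s}(B_r(x_0))}\le CK_0'$ whenever $d(x_0)=|x_0-z|=2r$. Since $u_2\ge c_2 d^s$, the first estimate forces $Q_z=\lim_{x\to z,\,x\in\Omega} u_1(x)/u_2(x)$, so $u_1/u_2$ extends continuously to $\overline\Omega\cap B_{1/2}$ with boundary trace $z\mapsto Q_z$; comparing the expansions at two boundary points $z,z'$ at a point $x$ of the inner normal with $d(x)\sim|x-z|\sim|z-z'|$ and using $u_2(x)\ge c_2 d(x)^s$ gives $|Q_z-Q_{z'}|\le CK_0'|z-z'|^{\varepsilon_0}$, i.e. $\|u_1/u_2\|_{C^{\varepsilon_0}(\partial\Omega\cap B_{1/2})}\le CK_0'$ (this is Lemma \ref{newCoefRegularity}). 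Finally, on an interior ball $B_r(x_0)$ with $2r=d(x_0)$ and $z$ a closest boundary point, I would write $u_1/u_2=Q_z+(u_1-Q_z u_2)/u_2$; using $u_2\sim r^s$ and $[u_2]_{C^{2s}(B_r(x_0))}\le Cr^{-s}$ one obtains $[u_1/u_2]_{C^{\varepsilon_0+s}(B_r(x_0))}\le CK_0' r^{-s}$ and $\|u_1/u_2-Q_z\|_{L^\infty(B_r(x_0))}\le CK_0' r^{\varepsilon_0}$, hence, lowering the H\"older exponent to $\varepsilon_0$ over a ball of radius $r$, $[u_1/u_2]_{C^{\varepsilon_0}(B_r(x_0))}\le CK_0'$. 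Gluing this interior bound with the pointwise control of $u_1/u_2-Q_z$ and the boundary $C^{\varepsilon_0}$ bound on $z\mapsto Q_z$ by the standard patching argument (Lemma \ref{divisionLemma}) then gives $\|u_1/u_2\|_{C^{\varepsilon_0}(\overline\Omega\cap B_{1/2})}\le CK_0'$.

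The step I expect to be the main obstacle is the first one: one must verify that the gradient bound $|\nabla u_i|\le Cd^{s-1}$ -- which is precisely what legitimizes moving the drift to the right-hand side and brings us within the scope of Lemma \ref{2functions1.1} -- genuinely holds under the weak hypothesis $|f_i|\le K_0 d^{s-1}$ and not only when $f_i$ is H\"older continuous. Once this is secured, the remaining steps are routine manipulations with the quoted expansion, coefficient-regularity, and division lemmas.
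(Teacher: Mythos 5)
Your proposal is correct and follows essentially the same route as the paper: move the drift to the right-hand side using the gradient bound $|\nabla u_i|\le C d^{s-1}$ (which, as you rightly observe, only needs the interior $C^{2s}$ estimate and the bound $|f_i|\le K_0 d^{s-1}$, not H\"older continuity of $f_i$), apply Lemma \ref{2functions1.1} with $\varepsilon=\varepsilon_0$, and then divide by $u_2$. The explicit coefficient-regularity and patching steps you spell out are exactly what the paper delegates to Lemma \ref{divisionLemma} (and Lemma \ref{newCoefRegularity}), so there is no substantive difference.
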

	
	\begin{proof}
		In particular, $u_i$ solve 
		$$\left\lbrace\begin{array}{rcl l}
		Lu_i&=&f_i-b\cdot\nabla u_i&\text{in  } \Omega\cap B_1\\
		u_i& =&0 & \text{in  } \Omega^c\cap B_1,
		\end{array}\right.$$
		where $|f_i-b\cdot\nabla u_i|\leq Cd^{s-1},$ thanks to \eqref{gradientEstimate}. Hence Lemma \ref{2functions1.1} assures that for every $z\in\partial\Omega\cap B_{1/2}$  we get the existence of $Q_{z}$, so that
		$$|u_1(x)-Q_{z}u_2(x)|\leq C|x-z|^{\varepsilon_0+s},\quad x\in B_{1/2}(z)$$
		and 
		$$\left[u_1-Q_{z}u_2 \right]_{C^{s+\varepsilon_0}(B_r(x_0))} \leq C ,$$
		whenever $d(x_0)=|x_0-z|=2r.$ The latter, together with assumptions on $u_2$ allow us to apply Lemma \ref{divisionLemma} to get 
		$$\left[u_1/u_2 \right]_{C^{\varepsilon_0}(B_r(x_0))} \leq C ,$$ 
		which gives that the quotient is $C^{\varepsilon_0}(\overline{\Omega}\cap B_{1/2})$
	\end{proof}

	Similarly as in the case of the quotient with the distance function, the higher order expansions provide the higher regularity only on the boundary. But in the case of two solutions, we can make one more step of expansions, and then apply Lemma \ref{2functions3.2}. This yields an improvement of the regularity of size $\varepsilon_0$ with respect to the case of the quotient with the distance function. 

	\begin{proposition}\label{harnack2.2}
		Let $s>\frac{1}{2},$ $s\not\in\Q$, and assume $\Omega\subset\R^n$ is a $C^\beta$ domain with $\beta> 1$ and $\beta\pm s\not\in\N$. Let $L$ be an operator whose kernel $K$ is $C^{2\beta+1}(\S^{n-1})$ and satisfies conditions \eqref{kernelConditions}. For $i=1,2$, let $u_i\in L^\infty(\R^n)$ be two solutions to
		$$\left\lbrace\begin{array}{rcl l}
		Lu_i+b\cdot\nabla u&=&f_i&\text{in  } \Omega\cap B_1\\
		u_i &=&0 & \text{in  } \Omega^c\cap B_1,
		\end{array}\right.$$
		with $K_i:=||f_i||_{C^{\beta-1-s +\varepsilon_0}(\overline{\Omega}\cap B_1)}<\infty$ and $b\in\R^n.$\footnote{We can even allow them to be different for $i=1,2$.} When $\beta<1+s-\varepsilon_0$ we assume $K_i:=||fd^{1+s-\varepsilon_0-\beta}||_{L^\infty(\Omega\cap B_1)}<\infty$ instead. Assume that $C_2d^s \geq u_2\geq c_2d^s$, for some positive $C_2,c_2.$ Then
		\begin{align*}
			\left|\left|\frac{u_1}{u_2}\right|\right|_{C^{\beta-1+\varepsilon_0}(\partial\Omega\cap B_{1/2})}\leq
			C\left(K_1+||u_1||_{L^\infty(\R^n)}\right).
		\end{align*}
		The constant $C$ depends only on $n,s,\beta,\Omega,C_2,c_2$ and $||K||_{C^{2\beta+1}(\S^{n-1})}.$
	\end{proposition}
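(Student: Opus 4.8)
The plan is to mimic the three-step expansion machinery developed for $u/d^s$ in Propositions \ref{harnack1.1}--\ref{harnack1.3}, but with the distance function $d^s$ replaced by the non-degenerate solution $u_2$. As usual we reduce to a single boundary point $z=0$ after rescaling so that $K_1+\|u_1\|_{L^\infty(\R^n)}=1$. First I would record, via Proposition \ref{harnack1.3} applied to $u_2$, the full expansion $\tilde u_{2,z}=u_2-\sum_{j\ge1,l\ge0}Q_z^{j,l,(2)}d^{j\varepsilon_0+l+s}$ with $|\tilde u_{2,z}-Q_z^{0,0,(2)}d^s|\le C|x-z|^{\beta-1+s}$ and the matching interior Hölder bounds; since $u_2\ge c_2 d^s$, the leading coefficient $Q_z^{0,0,(2)}$ is bounded below, which is exactly the non-degeneracy needed to divide by $u_2$ at the end. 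Likewise Proposition \ref{harnack1.3} (or \ref{harnack1.1}/\ref{harnack1.2} for small $\beta$) gives the analogous expansion of $u_1$ up to order $\beta-1+s$. The key point is that the difference $u_1-Q\,u_2$ for the \emph{right} constant $Q=Q_z^{0,0,(1)}/Q_z^{0,0,(2)}$ then has an expansion in powers $d^{s+k\varepsilon_0+l}$ whose leading $d^s$-term cancels.

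Next I would run the iterative improvement exactly as in Proposition \ref{harnack1.3}: starting from $|u_1-Q_z u_2|\le C|x-z|^{\varepsilon_0+s}$ (which for $\beta<1+s-\varepsilon_0$ comes straight from Lemma \ref{2functions1.1}, finishing the proof in that range), at each stage I translate the current expansion of $u_1-Q_z u_2$ to an expansion of its gradient using Corollary \ref{generalisedGrowthLemma} together with the regularity of $\nabla d$, feed $b\cdot\nabla(u_1-Q_z u_2)$ into the right-hand side, and use Theorem \ref{surjectivityResult} (legitimate since $s\notin\Q$ forces $s+k\varepsilon_0+l\notin\N+s,\N+2s$) to find correction terms $\tilde Q^{j,l}_z d^{j\varepsilon_0+l+s}$ whose operator evaluation kills the leading part of the new right-hand side, up to a remainder in $C^{\beta-1-s}$ and a vanishing-at-$z$ term $\eta_{j,l}$ bounded by $|x-z|^{(\text{current order})}d^{s-1}$. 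The bookkeeping of the Hölder seminorms on the cones $\mathcal C_z$ is handled by Lemma \ref{reminderRegularity} and Lemma \ref{boundaryRegGeneralised} precisely as before, and the coefficient regularity in $z$ by Lemma \ref{newCoefRegularity}.

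The one genuinely new step — and where the extra $\varepsilon_0$ of regularity comes from — is the final one: once the expansion of $u_1$ relative to $u_2$ is pushed to an error of order $\beta-1+s$ with a right-hand side of the form $|L\tilde u_{1,z}-P_z|\le C|x-z|^{\beta-1+s}d^{-1}$ plus the matching interior seminorm bound, I apply Lemma \ref{2functions3.2} \emph{with $u_2$ in the role of the comparison solution} rather than $d^s$. Because that lemma gains a full unit of order by expanding against a true solution, it yields a polynomial $Q_z$ with $|\tilde u_{1,z}-Q_z\,u_2|\le C|x-z|^{\beta+s}$, whence Lemma \ref{newCoefRegularity} (and the fact that $Q_z^{(0)}=\tfrac{u_1}{u_2}(z)$ after dividing by $u_2\simeq d^s$, which is justified by Lemma \ref{divisionLemma} using the lower bound $u_2\ge c_2 d^s$) gives $\|u_1/u_2\|_{C^{\beta-1+\varepsilon_0}(\partial\Omega\cap B_{1/2})}\le C$; if $(k+1)\varepsilon_0<\beta-1+\varepsilon_0$ one iterates. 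The main obstacle I anticipate is purely technical: verifying that at every stage the corrected function $\tilde u_{1,z}$ still satisfies the interior regularity hypotheses \eqref{interiorEstimate}--\eqref{gradientEstimate} and the cone-type Hölder estimates required to invoke Corollary \ref{generalisedGrowthLemma}, Theorem \ref{surjectivityResult} and Lemma \ref{2functions3.2}, since the correction terms $\tilde Q^{j,l}_z d^{j\varepsilon_0+l+s}$ and the error $\eta_{j,l}$ must be shown not to spoil these — but this is exactly the same delicate accounting already carried out in the proofs of Propositions \ref{harnack1.2} and \ref{harnack1.3}, so it goes through verbatim with $d^s$ replaced by $u_2$.
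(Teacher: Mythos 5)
Your overall strategy coincides with the paper's: expand $u_1$ and $u_2$ via Proposition~\ref{harnack1.3}, use the extra regularity of the $f_i$ to push each expansion one step further, then apply Lemma~\ref{2functions3.2} and divide by $u_2$. However, two of your claims are off and one of them obscures where the gain actually comes from.

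You assert that Lemma~\ref{2functions3.2} yields $|\tilde u_{1,z}-Q_z u_2|\le C|x-z|^{\beta+s}$. The exponent is wrong: the $\beta$ in that lemma's conclusion is the lemma's own internal parameter, not the proposition's. With the right-hand side at order $|x-z|^{\beta-1+s}d^{-1}$ and $\varepsilon=\varepsilon_0$ (so $\varepsilon-2s=-1$), the lemma's parameter is forced to be $\beta-1+\varepsilon_0$, which gives $|\tilde u_{1,z}-Q_z\tilde u_{2,z}|\le C|x-z|^{\beta-1+\varepsilon_0+s}$. Taken at face value, your exponent $\beta+s$ would imply $u_1/u_2\in C^{\beta}(\partial\Omega)$, which is false. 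The comparison must also be with $\tilde u_{2,z}$, not raw $u_2$: Lemma~\ref{2functions3.2} requires the polynomial approximation of the right-hand side for \emph{both} $i=1,2$, and $Lu_2+b\cdot\nabla u_2$ does not satisfy it without subtracting the correction terms from $u_2$ as well. Your final conclusion $\|u_1/u_2\|_{C^{\beta-1+\varepsilon_0}(\partial\Omega)}\le C$ is consistent with the corrected intermediate estimate, so this is a bookkeeping slip rather than a fatal gap, but if carried out as written it would not go through.

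Your explanation of the source of the extra $\varepsilon_0$ is also misleading: the "full unit of order" that Lemma~\ref{2functions3.2} gains over Lemma~\ref{1function2.2} is exactly offset by the one-unit-smoother right-hand side it requires, so by itself it is net-neutral. The actual source of the gain is the hypothesis $f_i\in C^{\beta-1-s+\varepsilon_0}$ (rather than $C^{\beta-1-s}$ as in the Schauder theorem), which allows \emph{one additional round} of iterative correction beyond what Proposition~\ref{harnack1.3} records, improving $|L\tilde u_{i,z}-P_z|$ from $C|x-z|^{\beta-1-\varepsilon_0+s}d^{-1}$ to $C|x-z|^{\beta-1+s}d^{-1}$; this extra step needs to be stated explicitly in your argument, and it is cleaner (and is what the paper does) to perform it on $u_1$ and $u_2$ \emph{separately} rather than on $u_1-Q_zu_2$, since Lemma~\ref{2functions3.2} ultimately needs the separate expansions of both solutions. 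Lastly, the tangential regularity of the quotient is extracted via Lemma~\ref{theUltimateRegularityLemma}, not Lemma~\ref{newCoefRegularity}.
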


	\begin{proof}
		We can assume that $K_1+||u_1||_{L^\infty(\R^n)}=1$.
		Depending on $\beta$, we apply the relevant result amongst Proposition \ref{harnack1.1}, Proposition \ref{harnack1.2}, or Proposition \ref{harnack1.3}, to get polynomials $Q^{j,l}_{i,z}\in\textbf{P}_{\lfloor\beta-1-j\varepsilon_0-l\rfloor}$, for $j,l\geq0,$ such that $j\varepsilon_0+l< \beta-1$, with the $\gamma$-th coefficient $(Q_{i,z}^{j,l})^{(\gamma)}\in C^{\beta-1 - j\varepsilon_0-l-|\gamma|}_z(\partial\Omega\cap B_{1/2}),$
		so that
		$$
		\left|u_i - \sum_{j\geq0,l\geq0}Q^{j,l}_{i,z} d^{j\varepsilon_0+l+s}\right|\leq C|x-z|^{\beta-1+s},$$
		and
		$$\left[ u_i - \sum_{j\geq0,l\geq0}Q^{j,l}_{i,z} d^{j\varepsilon_0+l+s}\right]_{C^{\beta-1+s}(B_{\frac{3}{2}r_1}(x_1))}\leq C\left(\frac{|x_1-z|}{r_1}\right)^{\beta-1+s},
		$$
		whenever $d(x_1)=2r_1$. Since the regularity of $f_i$ is $C^{\beta-1-s+\varepsilon_0}(\overline{\Omega}),$ we are able to make one more step when doing the expansions, to get polynomials $\tilde{Q}_{i,z}^{j,l}\in\textbf{P}_{\lfloor\beta-1+\varepsilon_0-j\varepsilon_0-l\rfloor}$, for $j\geq 1,l\geq0,$ such that $j\varepsilon_0+l< \beta-1+\varepsilon_0$, with the $\gamma$-th coefficient $(Q_{i,z}^{j,l})^{(\gamma)}\in C^{\beta-1+\varepsilon_0 - j\varepsilon_0-l-|\gamma|}_z(\partial\Omega\cap B_{1/2}),$ so that for
		$$\tilde{u}_{i,z} := u_i - \sum_{j\geq1,l\geq0}\tilde{Q}^{j,l}_{i,z}d^{j\varepsilon_0+l+s}$$
		there is a polynomial $P_z$ so that
		$$|L\tilde{u}_{i,z}-P_z|\leq \left\lbrace\begin{array}{rl}
		Cd^{\beta-2+s}& \text{if }\beta<1+s-\varepsilon_0\\
		C|x-z|^{\beta-1}d^{s-1}& \text{if }1+s-\varepsilon_0<\beta<2\\
		C|x-z|^{\beta-1+s}d^{-1}& \text{if }2<\beta,
		\end{array}\right.$$
		and when $\beta>1+s-\varepsilon_0$ also
		$$\left[ L\tilde{u}_{i,z}\right]_{C^{\beta-1-s+\varepsilon_0}(B_{\frac{3}{2}r_1}(x_1))}\leq C\left(\frac{|x_1-z|}{r_1}\right)^{\beta-s+\varepsilon_0},$$
		whenever $d(x_1)=r_1.$
		Applying the relevant expansion result, Lemma \ref{2functions1.2}, Lemma \ref{2functions2.2}, or Lemma \ref{2functions3.2}, gives the existence of $Q_z$ of degree $\lfloor \beta-1+\varepsilon_0\rfloor$, so that
		$$|\tilde{u}_{1,z} - Q_z\tilde{u}_{2,z}|\leq C|x-z|^{\beta-1+\varepsilon_0+s}\quad\text{and}\quad \left[ \tilde{u}_{1,z} - Q_z\tilde{u}_{2,z}\right]_{C^{\beta-1+\varepsilon_0+s}(B_r(x_0))}\leq C.$$
		Finally, Lemma \ref{divisionLemma} and Lemma \ref{theUltimateRegularityLemma} assure that
		$$\left|\left|\frac{u_1}{u_2}\right|\right|_{C^{\beta-1+\varepsilon_0}(\partial\Omega\cap B_{1/2})}\leq C,$$
		as wanted.
	\end{proof}

	We conclude this section with noticing that this also provides the proof of Theorem \ref{Harnack}.
	
	\begin{proof}[Proof of Theorem \ref{Harnack}]
		It is a special case of Proposition \ref{harnack2.2}.
	\end{proof}

	\section{Smoothness of the free boundary}
	
	In this section we use the developed tools on the height function 
	$$w:=u-\varphi,$$ 
	for solution $u$ of problem \eqref{obstacleProblem}. Note that in particular, $w$ solves
	\begin{equation}\label{heightFunctionEquation}
		 \left\lbrace \begin{array}{rcll}
		Lw &=& f - b\cdot\nabla w&\text{in } \Omega\cap B_1\\
		w&=&0&\text{in }B_1\cap\Omega^c,\\
		\end{array}\right.
	\end{equation}
	where $f := -(L+b\cdot \nabla)\varphi $ and $\Omega := \{w>0\}.$  The main goal of this section is to prove Theorem \ref{1.1}. Its proof, as well as this section is divided in two parts. In the first one we establish a general result, stating that if the free boundary $\partial\Omega$ and the height function $w$  satisfy
	\begin{align}\label{naturalConditions}
		\begin{split}
		0\in\partial\Omega,\\
		\partial\Omega\cap B_1 \in C^1,\\
		w\in C^{1}(B_1),\\
		|D w|\leq C d^s, \quad x\in B_1,\\
		\partial_\nu w \geq cd^s,\quad c>0, \quad x\in B_1,
		\end{split}
	\end{align}
	where $\nu$ is the normal vector to $\partial\Omega$ at $0$,
	then the free boundary $\partial\Omega\cap B_{1/2}$ is roughly as smooth as the obstacle. If the above conditions hold true for some point $x_0\in\partial\Omega$, we say that $x_0$ is \emph{a regular free boundary point.} Then in the second part we show how to obtain \eqref{naturalConditions} near the regular free boundary points in the case when the operator is the fractional Laplacian. Let us stress, that in the first part the operator can be very general, its kernel only has to satisfy \eqref{kernelConditions}, and some regularity condition. Therefore as soon as the conditions \eqref{naturalConditions} are established for solutions of \eqref{obstacleProblem} in this setting, we get the same regularity of the free boundary. Notice also, that if $\partial\Omega\cap B_1$ is $ C^{1,\alpha},$ then the fourth assumption in \eqref{naturalConditions} follows from the fact that $w\in C^1(B_1)$ and Proposition \ref{optimalBoundaryRegularity}.  
	
	For the height function $w$, we denote $$w_i := \partial_i w,\quad\quad i=1,\ldots,n.$$

	\subsection{General stable operators}
	Suppose that $0$ is a free boundary point, at which the height function $w$ satisfies \eqref{naturalConditions}. Without loss of generality we also assume that the normal vector to the free boundary $\nu(0) = e_n$.
	In order to obtain the higher regularity of the free boundary, we apply the results established in Section \ref{harnackSection} to the quotients $w_i/w_n$, for $i = 1,\ldots,n-1$. Since we can express the normal vector of the free boundary with these quotients (see \cite[Section 5]{AR20}), we get the regularity of the free boundary.  

	\begin{proposition}\label{regularityOfTheFreeBoundary}
		Let $\varphi$ be an $C^{\theta+s+\varepsilon_0}(\R^n)$ obstacle for some $\theta>1+s-\varepsilon_0$ with $\theta\not\in\N$, $\theta+\varepsilon_0\pm s\not \in\N$.
		Let $L$ be an operator whose kernel $K$ is $C^{2\theta+1}(\S^{n-1})$ and satisfies \eqref{kernelConditions}.
		Assume that $0$ is a free boundary point at which the conditions \eqref{naturalConditions} hold true. Then the free boundary is $C^{\theta}$ around $0$.
	\end{proposition}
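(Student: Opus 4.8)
The strategy is to reduce the regularity of the free boundary $\partial\Omega$ near $0$ to the regularity of the quotients $w_i/w_n$ on the boundary, and then to apply the boundary Harnack estimates from Section \ref{harnackSection} to these quotients. First I would record that, by \eqref{naturalConditions} and $\varphi\in C^{\theta+s+\varepsilon_0}$, the height function $w$ solves \eqref{heightFunctionEquation} with right-hand side $f=-(L+b\cdot\nabla)\varphi\in C^{\theta-1+s+\varepsilon_0}$ (recall $\theta+s+\varepsilon_0>2s$, so $L\varphi$ loses $2s$ derivatives and $b\cdot\nabla\varphi$ loses one, and the $L\varphi$ term is the binding one). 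Differentiating, each $w_i=\partial_i w$ for $i=1,\dots,n$ solves
$$\left\lbrace\begin{array}{rcll}
Lw_i + b\cdot\nabla w_i &=& f_i &\text{in }\Omega\cap B_1,\\
w_i &=& 0 &\text{in }\Omega^c\cap B_1,
\end{array}\right.$$
with $f_i=\partial_i f\in C^{\theta-2+s+\varepsilon_0}(\overline{\Omega}\cap B_1)$. Moreover \eqref{naturalConditions} gives $|w_i|\le Cd^s$ and, after rotating so $\nu(0)=e_n$, the nondegeneracy $\partial_\nu w\ge cd^s$ together with $C^1$ regularity of $\partial\Omega$ yields $w_n\ge c_2 d^s$ in a smaller ball (this is where the $C^1$ hypothesis and the interior gradient estimate \eqref{gradientEstimate} of Lemma \ref{followingRegularityLemma} are used).

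Next I would apply Proposition \ref{harnack2.2} (equivalently Theorem \ref{Harnack}) to the pair $(u_1,u_2)=(w_i,w_n)$. With $\beta=\theta$, the hypotheses $\theta\not\in\N$, $\theta+\varepsilon_0\pm s\not\in\N$, $s\not\in\Q$, and $f_i\in C^{\beta-1-s+\varepsilon_0}(\overline{\Omega}\cap B_1)$ are exactly what is required (note $\theta>1+s-\varepsilon_0$ matches the threshold for the ``second'' case in that proposition, so the hypothesis $K_i=\|f_i\|_{C^{\beta-1-s+\varepsilon_0}}<\infty$ is the relevant one), and $K$ being $C^{2\theta+1}(\S^{n-1})$ matches the kernel assumption. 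This gives
$$\left\|\frac{w_i}{w_n}\right\|_{C^{\theta-1+\varepsilon_0}(\partial\Omega\cap B_{1/2})}\le C,\qquad i=1,\dots,n-1.$$
Since $\varepsilon_0=2s-1>0$, the exponent $\theta-1+\varepsilon_0$ is strictly larger than $\theta-1$, which is the gain we need to close the bootstrap.

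Finally I would translate the boundary regularity of the quotients into regularity of the free boundary itself. As in \cite[Section 5]{AR20}, near a regular point one may write the free boundary locally as a graph $x_n=\gamma(x')$ whose normal vector $\nu$ is, up to normalization, $\big(-\tfrac{w_1}{w_n},\dots,-\tfrac{w_{n-1}}{w_n},1\big)$ evaluated on $\partial\Omega$; hence $\nu\in C^{\theta-1+\varepsilon_0}(\partial\Omega\cap B_{1/2})$, which forces $\partial\Omega\cap B_{1/2}\in C^{\theta+\varepsilon_0}$ — in particular $C^\theta$, as claimed. (A small care point: the quotient $w_i/w_n$ is a priori only defined on $\Omega$, and one must check it extends continuously to $\partial\Omega$ with the stated Hölder modulus; this is part of what Proposition \ref{harnack2.2} and Lemma \ref{divisionLemma}/Lemma \ref{theUltimateRegularityLemma} provide, together with the expansions $w_i=Q^{0,0}_{i,z}d^s+\dots$ showing the leading coefficients are $C^{\theta-1+\varepsilon_0}$ on $\partial\Omega$.)

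\textbf{Main obstacle.} The substantive work is entirely contained in invoking Proposition \ref{harnack2.2} correctly — that is, checking that the pair $(w_i,w_n)$ genuinely satisfies its hypotheses (the lower bound $w_n\ge c_2d^s$ near $0$, the Hölder regularity of $f_i$ with the correct exponent $\beta-1-s+\varepsilon_0$, and the non-resonance conditions on $\theta$, $s$). The geometric step relating $\|\nu\|_{C^{\theta-1+\varepsilon_0}(\partial\Omega)}$ to $\partial\Omega\in C^{\theta+\varepsilon_0}$ is standard once the quotients are controlled; the only genuinely delicate point is that the higher regularity of $w_i/w_n$ holds \emph{only} in the tangential directions, but since we only need it \emph{on} $\partial\Omega$ to read off $\nu$, this tangential-only regularity is exactly sufficient.
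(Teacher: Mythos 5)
There is a genuine gap, and it is structural: the argument as written is circular. Proposition \ref{harnack2.2} (and equivalently Theorem \ref{Harnack}) has as a hypothesis that $\Omega$ is a $C^\beta$ domain, and all the machinery behind it — the expansions of Propositions \ref{harnack1.1}–\ref{harnack1.3}, which build the functions $\tilde{u}_z$ out of powers of a generalised distance function $d$ that is itself only as regular as $\partial\Omega$ — genuinely uses this. You apply Proposition \ref{harnack2.2} directly with $\beta=\theta$, but the only a priori information from \eqref{naturalConditions} is $\partial\Omega\cap B_1\in C^1$. You are therefore invoking the theorem under a hypothesis ($\partial\Omega\in C^\theta$) that coincides with the conclusion you are trying to reach. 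You mention ``closing the bootstrap,'' but the proof you wrote never iterates; it makes a single application at the terminal exponent.

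The fix — which is the paper's actual proof — is the iteration you alluded to but did not carry out. Start from the $C^1$ regularity supplied by \eqref{naturalConditions}: apply Proposition \ref{harnack2.1} (which requires only a $C^1$ domain, and whose hypotheses on $w_i$, $w_n$ you already verified via Lemma \ref{followingRegularityLemma}) to obtain $w_i/w_n\in C^{\varepsilon_0}(\overline{\Omega}\cap B_{1/2})$, hence $\nu\in C^{\varepsilon_0}$ and $\partial\Omega\in C^{1+\varepsilon_0}$. Then run the induction: assuming $\partial\Omega\in C^\beta$, apply Proposition \ref{harnack2.2} with that $\beta$ (not $\theta$), getting $w_i/w_n\in C^{\beta-1+\varepsilon_0}(\partial\Omega\cap B_{1/2})$ and thus $\partial\Omega\in C^{\beta+\varepsilon_0}$. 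The gain of a fixed $\varepsilon_0>0$ per step is what lets you reach $C^\theta$ after finitely many iterations; the process is legitimate exactly as long as the data remains sufficiently regular, i.e.\ $f_i\in C^{\beta-1-s+\varepsilon_0}$, which holds while $\beta\le\theta$ since $f_i\in C^{\theta-1-s+\varepsilon_0}$. Incidentally, your intermediate claim $f\in C^{\theta-1+s+\varepsilon_0}$ is an arithmetic slip — since $L$ loses $2s$ derivatives, $f=-(L+b\cdot\nabla)\varphi\in C^{\theta-s+\varepsilon_0}$ and hence $f_i\in C^{\theta-1-s+\varepsilon_0}$ — but your later use of the correct exponent $\beta-1-s+\varepsilon_0$ renders this harmless. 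The circularity in the choice of $\beta$ is the substantive issue.
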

	
	\begin{proof}
		Note that the partial derivatives $w_i$ solve
		$$ \left\lbrace \begin{array}{rcll}
		Lw_i+ b\cdot\nabla w_i& = &f_i &\text{in } \Omega\cap B_1,\\
		w_i&=&0&\text{in }B_1\cap\Omega^c,\\
		\end{array}\right.$$
		where $f_i = \partial^if\in C^{\theta-1-s+\varepsilon_0}.$ Thanks to Lemma \ref{followingRegularityLemma} we can apply Proposition \ref{harnack2.1} on $w_i$ and $w_n$, to get that the quotient $\frac{w_i}{w_n}$ is $C^{\varepsilon_0}(\overline{\Omega}\cap B_{1/2})$. Therefore the normal vector $\nu\in C^{\varepsilon_0}(\overline{\Omega}\cap B_{1/2})$ and hence the boundary is $C^{1+\varepsilon_0}.$ 
		We proceed with induction. 
		Assume that the free boundary is $C^\beta$ around $0$. Applying Proposition \ref{harnack2.2} to $w_i$ and $w_n$, gives that the quotient $w_i/w_n$ is $C^{\beta-1+\varepsilon_0}(\partial\Omega\cap B_{1/2})$. Therefore the normal vector $\nu\in C^{\beta-1+\varepsilon_0}(\overline{\Omega}\cap B_{1/2})$ and hence the free boundary is in fact $C^{\beta+\varepsilon_0}.$ We can proceed with the induction as long as the regularity of $f_i$ is better than required, $ \theta-1-s+\varepsilon_0\geq \beta-1-s+\varepsilon_0 $, which indeed renders the wanted regularity.
	\end{proof}

	\begin{corollary}\label{infiniteRegularityCorollary}
		Let $L$ be an operator whose kernel $K$ satisfies \eqref{kernelConditions} and is $C^\infty(\mathbb{S}^{n-1}).$ Let $0\in \partial\Omega$ be a free boundary point, and assume that \eqref{naturalConditions} holds. If $\varphi\in C^\infty,$ then the free boundary is $C^\infty$ around $0$.
	\end{corollary}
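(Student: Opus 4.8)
The plan is to reduce Corollary~\ref{infiniteRegularityCorollary} to Proposition~\ref{regularityOfTheFreeBoundary} by a bootstrap argument. First I would record the starting point: under the standing hypothesis \eqref{naturalConditions} we already know $\partial\Omega\cap B_1\in C^1$, and by Proposition~\ref{regularityOfTheFreeBoundary} applied with (essentially) $\theta = 1+\varepsilon_0$ we upgrade this to $\partial\Omega\in C^{1+\varepsilon_0}$ in a smaller ball; note that the hypothesis on $\varphi$ in that proposition is satisfied vacuously here since $\varphi\in C^\infty$, and the kernel hypothesis $K\in C^{2\theta+1}(\S^{n-1})$ holds for every $\theta$ because $K\in C^\infty(\S^{n-1})$.

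Next I would run the induction. Suppose the free boundary is $C^{\beta}$ near $0$ for some $\beta>1$ with $\beta\not\in\N$ and $\beta\pm s\not\in\N$. Since $\varphi\in C^\infty$, all the partial derivatives $f_i=\partial_i f$ (with $f=-(L+b\cdot\nabla)\varphi$) lie in $C^\infty$, hence in $C^{\beta-1-s+\varepsilon_0}(\overline{\Omega})$ for this $\beta$; moreover $w\in C^1(B_1)$, $|Dw|\le Cd^s$, and $\partial_\nu w\ge cd^s$, so by Lemma~\ref{followingRegularityLemma} the functions $w_i$ satisfy the interior estimates needed to invoke Proposition~\ref{harnack2.2} with $u_1=w_i$, $u_2=w_n$. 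That proposition yields $w_i/w_n\in C^{\beta-1+\varepsilon_0}(\partial\Omega\cap B_{1/2})$, hence the unit normal $\nu\in C^{\beta-1+\varepsilon_0}(\partial\Omega\cap B_{1/2})$, so $\partial\Omega$ is in fact $C^{\beta+\varepsilon_0}$ near $0$. Since $\varepsilon_0=2s-1>0$, iterating this gain-of-$\varepsilon_0$ step (choosing the increment slightly smaller than $\varepsilon_0$ at each stage if necessary to dodge the finitely many forbidden values $\beta\in\N$ and $\beta\pm s\in\N$, which is possible because these form a discrete set) drives $\beta\to\infty$, and hence $\partial\Omega$ is $C^k$ near $0$ for every $k$, i.e. $C^\infty$.

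The one point that needs a little care is that each application of Proposition~\ref{harnack2.2} requires $\beta\not\in\N$, $\beta\pm s\not\in\N$, and $s\not\in\Q$; the last is automatic here since the fractional Laplacian has $s$ irrational in Theorem~\ref{1.1}, and more generally one simply restricts Corollary~\ref{infiniteRegularityCorollary} to irrational $s$ as in the hypotheses of this section, while the first two are handled by the observation that between any $\beta$ and $\beta+\varepsilon_0$ there is an admissible value. The mildly delicate part is therefore purely bookkeeping: organising the sequence of exponents $\beta_k$ so that $\beta_k\uparrow\infty$ with each $\beta_k$ admissible and $\beta_{k+1}<\beta_k+\varepsilon_0$, and checking that the hypotheses of Lemma~\ref{followingRegularityLemma} propagate (they do, since $w\in C^1$ and the growth/gradient bounds $|Dw|\le Cd^s$, $\partial_\nu w\ge cd^s$ are part of \eqref{naturalConditions} and are preserved). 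Everything else is a direct quotation of Proposition~\ref{regularityOfTheFreeBoundary} and Proposition~\ref{harnack2.2}, so there is no genuine obstacle beyond this indexing.

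\begin{proof}[Proof of Corollary~\ref{infiniteRegularityCorollary}]
Since $K\in C^\infty(\S^{n-1})$ and $\varphi\in C^\infty$, for every $\theta>1+s-\varepsilon_0$ with $\theta\not\in\N$ and $\theta+\varepsilon_0\pm s\not\in\N$ the hypotheses of Proposition~\ref{regularityOfTheFreeBoundary} are met (the obstacle is smooth and $K\in C^{2\theta+1}(\S^{n-1})$), so the free boundary is $C^\theta$ around $0$. The set of excluded values $\{\theta\in\N\}\cup\{\theta:\theta+\varepsilon_0\pm s\in\N\}$ is discrete, hence for every $N\in\N$ there is an admissible $\theta>N$, and the free boundary is $C^\theta$, in particular $C^N$, around $0$. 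As $N$ was arbitrary, $\partial\Omega$ is $C^\infty$ around $0$.
\end{proof}
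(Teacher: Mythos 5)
Your final proof is correct and takes exactly the approach the paper intends: since $\varphi\in C^\infty$ and $K\in C^\infty(\S^{n-1})$, the hypotheses of Proposition~\ref{regularityOfTheFreeBoundary} are satisfied for arbitrarily large admissible $\theta$ (the excluded set being discrete), so $\partial\Omega$ is $C^\theta$ near $0$ for arbitrarily large $\theta$, hence $C^\infty$. The long preamble essentially re-sketches the bootstrap already carried out inside Proposition~\ref{regularityOfTheFreeBoundary}'s proof, which is superfluous for the corollary but not wrong, and you correctly flag that the implicit $s\not\in\Q$ hypothesis of that proposition (needed because it invokes Proposition~\ref{harnack2.2}) is also implicitly in force here.
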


	\subsection{Fractional Laplacian}
	
	We conclude with establishing conditions \eqref{naturalConditions} for the height function in the case when the operator is the fractional Laplacian. All the work has already been  done in \cite{GPPS17,PP15}, we just show how to translate their results to our setting. 
	
	\begin{lemma}\label{allWeNeedFromPP}
		Let $L$ be the fractional Laplacian and let $\varphi$ be an obstacle in the space $C^{3s}(\R^n)\cap C_0(\R^n),$ which satisfies 
		$$\left((-\Delta)^s\varphi+b\cdot\nabla\varphi\right)^+ \in L^\infty(\R^n).$$ 
		Let $u$ be the solution to problem \eqref{obstacleProblem}. Let 
		Let $0$ be a regular free boundary point with $e_n$ as the normal vector. Then the height function $w:=u-\varphi\in C^{1+s}(\R^n)$, and there exists $r_0>0$ and $\alpha>0$, so that the free boundary is $C^{1,\alpha}$ in $B_{r_0}(0)$. Furthermore, for all $1\leq i\leq n$ the functions $w_i$ satisfy 
		$$|w_i(x)|\leq C d^s(x),\quad x\in B_{r_0}(0) .$$
		and, 
		$$w_n(x)\geq c d^s(x),\quad x\in B_{r_0}(0).$$
	\end{lemma}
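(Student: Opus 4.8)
The plan is to import the relevant results from \cite{GPPS17,PP15} and check that their conclusions are exactly the five conditions required in \eqref{naturalConditions}, up to rescaling and rotation. First I would recall that under the standing hypotheses ($\varphi\in C^{3s}(\R^n)\cap C_0(\R^n)$, $s>\tfrac12$, and $((-\Delta)^s\varphi + b\cdot\nabla\varphi)^+\in L^\infty$) the authors of \cite{GPPS17} establish optimal regularity of the solution: $u\in C^{1+s}(\R^n)$, and hence $w=u-\varphi\in C^{1+s}(\R^n)$ since the obstacle is $C^{3s}$ and $3s>1+s$ when $s>\tfrac12$. This already gives the third assumption in \eqref{naturalConditions} ($w\in C^1(B_1)$, in fact $C^{1+s}$). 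Next, the classification of free boundary points in \cite{GPPS17} (the analogue of step (a)--(b) described in the introduction) provides, at a regular point $x_0$, a radius $r_0>0$ and an exponent $\alpha>0$ so that $\partial\{u>\varphi\}\cap B_{r_0}(x_0)$ is a $C^{1,\alpha}$ graph; after translating $x_0$ to the origin, rotating so that the inner normal $\nu(0)=e_n$, and rescaling so that $r_0$ becomes comparable to $1$ and the $C^{1,\alpha}$ norm of the graph is less than $1$, this yields the first two conditions in \eqref{naturalConditions}.

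Then I would address the two pointwise bounds on the partial derivatives. For the upper bound $|w_i(x)|\le Cd^s(x)$: since the free boundary is $C^{1,\alpha}$ near $0$ and $w$ solves \eqref{heightFunctionEquation} with right-hand side $f - b\cdot\nabla w$ which is bounded (because $f=-(L+b\cdot\nabla)\varphi\in L^\infty$ by hypothesis and $\nabla w$ is bounded since $w\in C^1$), Proposition \ref{optimalBoundaryRegularity} gives $\|w\|_{C^s(B_{1/2})}\le C$; combined with $w=0$ on $\Omega^c$ this gives $|w(x)|\le Cd^s(x)$, and then the interior gradient estimate \eqref{gradientEstimate} of Lemma \ref{followingRegularityLemma} (applicable because $f\in C^\theta$ for the fractional Laplacian with smooth enough obstacle, or directly the $C^{1,\alpha}$ domain version in the Remark after that lemma) yields $|\nabla w|\le Cd^{s-1}$. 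Differentiating once more and using interior Schauder-type estimates on dyadic balls, one bootstraps to $|D^2 w|\le Cd^{s-2}$, whence $|\nabla w(x)| = |\nabla w(x) - \nabla w(\bar x)| \le Cd^s(x)$ for $\bar x$ the free boundary projection of $x$ (using $\nabla w=0$ on the free boundary since $w\in C^1$ and $w\equiv 0$ there together with $w\ge 0$). This gives the fourth condition $|Dw|\le Cd^s$, and in particular $|w_i|\le Cd^s$.

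For the non-degeneracy $w_n(x)\ge cd^s(x)$: this is precisely the content of the nondegeneracy statement at regular free boundary points in \cite{GPPS17,PP15}, where it is shown that the solution detaches from the obstacle at the optimal rate $d^s$ in the direction of the inner normal; after the rotation sending $\nu(0)$ to $e_n$ this becomes $\partial_n w\ge c\,d^s$ in a neighborhood of $0$, which is the fifth condition in \eqref{naturalConditions} and also the last assertion of the lemma. The main obstacle I expect is purely bookkeeping rather than conceptual: the cited papers may phrase their conclusions in slightly different normalizations (e.g. for $(-\Delta)^s u \ge -\psi$ rather than the $\min$ formulation, or with the drift absorbed differently), so one must carefully verify that $(L+b\cdot\nabla)\varphi\in L^\infty$ is exactly the hypothesis under which their regularity and nondegeneracy proofs run, and that the subcritical condition $s>\tfrac12$ is what makes the drift term a lower-order perturbation throughout. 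Once the translation of hypotheses is checked, no new analysis is needed: the lemma is a dictionary between \cite{GPPS17} and the abstract conditions \eqref{naturalConditions} used in Proposition \ref{regularityOfTheFreeBoundary} and Corollary \ref{infiniteRegularityCorollary}.
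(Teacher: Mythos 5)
The main gap is in the nondegeneracy step. You assert that $w_n\ge c\,d^s$ in a full neighbourhood of $0$ ``is precisely the content of the nondegeneracy statement'' in \cite{GPPS17,PP15}, but what those papers provide (and what the paper actually extracts from them) is a blow-up convergence result at the regular point: the homogeneous rescalings of $w$ converge in $C^{1,\gamma}$ to an explicit $\tfrac{1}{1+s}$-homogeneous profile, which after reading off the derivatives yields only the growth of the supremum, $\|w_n\|_{L^\infty(B_r(0))}\ge c\,r^s$ for $r\le r_0$ (and the matching upper bound $\|w_i\|_{L^\infty(B_r)}\le Cr^s$). Passing from this one-point supremum growth to the pointwise lower bound $w_n(x)\ge c\,d^s(x)$ at every $x$ near $0$ is not a rotation/bookkeeping matter: the paper does genuine work here, applying the boundary expansion \cite[Proposition 3.3]{RS16} at every nearby free boundary point $z$ to write $|w_n(x)-Q(z)d^s(x)|\le C|x-z|^{s+\alpha}$, using the coefficient-regularity lemma (Lemma \ref{newCoefRegularity}) to get continuity of $z\mapsto Q(z)$ so that $Q(z)\ge c'>0$ uniformly near $0$ (positivity of $Q(0)$ coming from the growth bound plus $w\ge0$), and then absorbing the error term $C|x-z|^{s+\alpha}$ into $\tfrac{c'}{2}d^s$ for $d(x)$ small. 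Your proposal skips this entire argument by attributing its conclusion to the references, so as written it does not prove the last (and most delicate) assertion of the lemma.

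Two smaller points. For the upper bound $|w_i|\le Cd^s$ your chain ``$|D^2w|\le Cd^{s-2}$, whence $|\nabla w(x)-\nabla w(\bar x)|\le Cd^s$'' is a non sequitur: integrating a bound $d^{s-2}$ from the boundary diverges, and the Hölder-type estimate you need is not a consequence of the second-derivative bound. The correct (and much shorter) route, which is the one the paper takes, is simply that $w\in C^{1+s}(\R^n)$ gives $w_i\in C^s(\R^n)$ globally, $w_i\equiv0$ on $\Omega^c$, hence $|w_i(x)|=|w_i(x)-w_i(z)|\le[w_i]_{C^s}|x-z|^s\le Cd^s(x)$ with $z$ the nearest free boundary point; no bootstrap through $|D^2w|$ is needed. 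Finally, the optimal regularity $u\in C^{1+s}$ is due to \cite{PP15} (Theorem 1.1 there), not \cite{GPPS17}; the $C^{1,\alpha}$ regularity of the free boundary near regular points is the part quoted from \cite{GPPS17}.
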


\begin{proof}
	In \cite{PP15}, they establish that under these assumptions, the height function  $w\in C^{1+s}(\R^n)$. In \cite{GPPS17} the furthermore show that the homogeneous rescaling of the height function $v_{0}(x,y)$, for a regular free boundary point $0$ with the normal vector $e_n$, converges in $C^{1,\gamma}(\overline{B}_{1/8}^+)$ to
	$$a(x_n+\sqrt{x_n^2+y^2})^s(x_n-s\sqrt{x_n^2+y^2}).$$
	where $a>0$ (See \cite[results 3.7, 3.8, 5.2, 5.3, 5.5 ]{GPPS17}.) Reading this convergence at $y=0$, we get
	$$\frac{1}{r^{1+s}}(u-\varphi)(0+rx)\longrightarrow 2^s(1-s)a (x_n)_+^{1+s}\quad\quad \text{in }C^{1,\gamma}(\overline{B}_{1/8}),$$
	for some positive $\gamma$ and $a$. This implies that the partial derivatives satisfy
	$$\frac{1}{r^s}(u-\varphi)_i(0+rx)\longrightarrow 2^s(1-s^2)a(x_n)_+^{s}\delta_{i,n}, \quad \quad \text{in }L^\infty(\overline{B}_{1/8}).$$
	From this it is not hard to get bounds
	$$||(u-\varphi)_i||_{L^\infty(B_r)}\leq Cr^s,\quad i=1,\ldots,n\quad\text{and}$$
	$$||(u-\varphi)_n||_{L^\infty(B_r)}\geq cr^s,\quad c>0,$$
	for all $r\leq r_0$.
	
	Let us now show, that we also have $|(u-\varphi)_i|\leq Cd^s$ and $|(u-\varphi)_n| \geq c d^s$, for some $c>0$, where $d$ is the generalised distance function to the free boundary. The first one follows from optimal regularity of solutions for the obstacle problem, namely $u$ is $C^{1+s}(\R^n)$ (see \cite[Theorem 1.1]{PP15}),
	and so $w := u-\varphi$ is also $C^{1+s}$ and vanishes outside $\Omega := \{w > 0\}.$ So in $\Omega^c$ we have that also $w_i=0$ for every $i = 1,\ldots,n.$ Therefore, since $w_i\in C^s(\R^n),$ we have that $|w_i(x)| = |w_i(x)-w_i(z)|\leq ||w_i||_{C^s}|x-z|^s\leq Cd^s(x)$, for the closest point $z$ in the free boundary. 
	
	Let now $0$ be a regular free boundary point. In \cite[Theorem 1.3]{GPPS17} they show that around $0$, the free boundary is $C^{1,\alpha}$ for some positive $\alpha$. Therefore in a ball $B_{r_2}(0)$), we can apply \cite[Proposition 3.3]{RS16}, to get that for every $z\in \Gamma\cap B_{r_2/2}(0)$ there exists $Q(z)$, such that 
	\begin{equation}\label{expansionEstimate}
		|w_i(x)-Q(z)d^s(x)|\leq C|x-z|^{s+\alpha},
	\end{equation} 
	provided that $|\nabla w_i|\leq Cd^{s-1}$, which we show later. We already established that around $0$ we have $||w_n||_{L^\infty(b_r(0))}\geq cr^s$ for some positive $c$. This implies, that $Q(0)\neq 0$. Since the function $w$ is non-negative, $Q(0)$ must be positive.
	Estimate \eqref{expansionEstimate} together with Lemma \ref{newCoefRegularity} give that $z\mapsto Q(z)$ is a $C^\alpha$ map, and so continuous, which gives that in a perhaps smaller neighbourhood $B_{\tilde{r}_0}0$ we have $Q(z)\geq c'>0$. Choose now a point $x\in B_{r_0'(0)}$, where $r_0'$ will be specified later. Let $z$ be the closest boundary point, which falls into $B_{\tilde{r}_0}(0),$ so that $Q(z)\geq c'.$ Denote $x=z+t\nu_z$, so $d(x) = t$. Then \eqref{expansionEstimate}, with a triangle inequality gives
	\begin{equation}\label{nondegeneracy}
		w_n(x)\geq Q(z)d^s(x) - C|x-z|^{s+\alpha}\geq c' t^s-Ct^{s+\alpha}\geq \frac{c'}{2}t^s = \frac{c'}{2} d^s(x),
	\end{equation}
	if $t\leq t_0$ suitable chosen ($t_0\leq \left(\frac{c'}{2C}\right)^{1/\alpha})$. Hence we got such $r_0$ (the minimum of above constraints), that $w_n\geq c_1 d^s$ in $B_{r_0}(0),$ and $\Gamma\cap B_{r_0}(0)$ is $C^{1,\alpha}$.
\end{proof}

This provides the last ingredients for proving Theorem \ref{1.1}.

\begin{proof}[Proof of Theorem \ref{1.1}]
	The claim follows straight-forward from Lemma \ref{allWeNeedFromPP} and Corollary \ref{infiniteRegularityCorollary}.
\end{proof}

	\appendix
	\section{}
	
	\begin{lemma}\label{generalisedA9}
		Let $\alpha\in(-1,0)$, $\beta\in\R$, $r>0$ and $x_0\in B_1$ be such that $(x_0)_n>2r$. Then there exists a constant $C$, such that for every $x\in B_{r/2}(x_0)$ it holds
		$$\int_{B_1\backslash B_r(x)} (z_n)_+^{\alpha} |z-x|^{-n+\beta}dz\leq Cr^{\alpha+\beta},\quad\text{if }\alpha+\beta<0 \quad\text{and}$$
		$$\int_{B_r(x)} (z_n)_+^{\alpha} |z-x|^{-n+\beta}dz\leq Cr^{\alpha+\beta},\quad\text{if }\beta>0.$$
	\end{lemma}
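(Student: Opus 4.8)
The plan is to reduce both estimates to elementary integral computations in spherical coordinates around $x$, exploiting that the only singularities come from $z=x$ (the $|z-x|^{-n+\beta}$ factor) and from the hyperplane $\{z_n=0\}$ (the $(z_n)_+^\alpha$ factor with $\alpha\in(-1,0)$), and that these two singular loci are a definite distance apart. Concretely, since $x\in B_{r/2}(x_0)$ and $(x_0)_n>2r$, we have $x_n>3r/2$, so on $B_r(x)$ the coordinate $z_n$ stays above $r/2$; thus $(z_n)_+^\alpha\le (r/2)^\alpha=Cr^\alpha$ on $B_r(x)$ because $\alpha<0$. The second inequality then follows from
\[
\int_{B_r(x)}(z_n)_+^\alpha|z-x|^{-n+\beta}\,dz\le Cr^\alpha\int_{B_r(x)}|z-x|^{-n+\beta}\,dz=Cr^\alpha\cdot\omega_{n-1}\int_0^r \rho^{\beta-1}\,d\rho=Cr^{\alpha+\beta},
\]
which converges precisely because $\beta>0$.

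For the first inequality I would split the annular region $B_1\setminus B_r(x)$ into two pieces according to whether we are close to the bad hyperplane. Let $A_1=\{z\in B_1\setminus B_r(x): z_n\le x_n/2\}$ and $A_2=\{z\in B_1\setminus B_r(x): z_n> x_n/2\}$. On $A_2$ we have $(z_n)_+^\alpha\le (x_n/2)^\alpha\le (3r/4)^\alpha=Cr^\alpha$ (again using $\alpha<0$ and $x_n>3r/2$), so
\[
\int_{A_2}(z_n)_+^\alpha|z-x|^{-n+\beta}\,dz\le Cr^\alpha\int_{|z-x|>r}|z-x|^{-n+\beta}\,dz=Cr^\alpha\int_r^\infty \rho^{\beta-1}\,d\rho=Cr^{\alpha+\beta},
\]
finite because $\beta<0$ here (as $\alpha+\beta<0$ and $\alpha>-1$ we need $\beta<-\alpha<1$, but in fact $\beta<0$ is not needed — what we need is $\beta<0$? no: we need $\alpha+\beta<0$, and the bound $r^\alpha\int_r^\infty\rho^{\beta-1}d\rho$ requires $\beta<0$). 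If $\beta\ge 0$ one instead cuts the $|z-x|$-integral at radius $1$ and gets $Cr^\alpha$, which is still $\le Cr^{\alpha+\beta}$ for $r\le 1$; so in all cases the $A_2$ contribution is controlled.

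On $A_1$, the factor $|z-x|^{-n+\beta}$ is harmless: since $z_n\le x_n/2$ we have $|z-x|\ge x_n/2\ge 3r/4$, hence $|z-x|^{-n+\beta}\le C\min\{1,r^{\beta-n}\}\le Cr^{\beta-n}$ when $\beta<n$, and one is left to bound $Cr^{\beta-n}\int_{A_1}(z_n)_+^\alpha\,dz$. The set $A_1$ sits inside $\{0<z_n\le x_n/2\}\cap B_2$, and by Fubini $\int_{\{0<z_n\le x_n/2\}\cap B_2}(z_n)_+^\alpha dz\le C\int_0^{x_n/2}t^\alpha\,dt=C(x_n/2)^{\alpha+1}\le Cr^{\alpha+1}$, valid since $\alpha>-1$ and $x_n\le 2$. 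Combining, the $A_1$ contribution is $\le Cr^{\beta-n}r^{\alpha+1}$; this is not quite $r^{\alpha+\beta}$, so here one must be more careful and decompose $A_1$ dyadically in the distance to $x$ rather than crudely bounding $|z-x|$ from below — writing $A_1=\bigcup_k A_1^k$ with $A_1^k=\{z\in A_1: 2^{k-1}r< |z-x|\le 2^k r\}$, on $A_1^k$ one has $z_n\lesssim 2^k r$ so $\int_{A_1^k}(z_n)_+^\alpha dz\lesssim (2^kr)^{n}$ while also $\int_{A_1^k}(z_n)_+^\alpha dz\lesssim (2^kr)^{n-1}\int_0^{c2^kr}t^\alpha dt\lesssim (2^kr)^{n+\alpha}$, giving $\int_{A_1^k}(z_n)_+^\alpha|z-x|^{-n+\beta}dz\lesssim (2^kr)^{\beta}\cdot(2^kr)^{\alpha}$ wait — this needs the finer estimate that within $A_1^k$ only the sub-layer near $\{z_n=0\}$ of thickness $\sim 2^k r$ contributes, yielding the summable bound $\sum_k (2^k r)^{\alpha+\beta}\le Cr^{\alpha+\beta}$ since $\alpha+\beta<0$. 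I expect this last dyadic bookkeeping near the corner where the two singular sets are ``as close as allowed'' to be the only genuinely delicate point; everything else is a direct application of $\int_0^r\rho^{\beta-1}d\rho<\infty$ for $\beta>0$, $\int_r^\infty\rho^{\gamma-1}d\rho<\infty$ for $\gamma<0$, and $\int_0^c t^\alpha dt<\infty$ for $\alpha>-1$.
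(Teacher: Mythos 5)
Your bound on $B_r(x)$ (the second inequality) is correct, as is the reduction $x_n>3r/2$, the definition of the split, and the dyadic Fubini estimate on $A_1$ which yields $(2^kr)^{\alpha+\beta}$ per shell and sums to $Cr^{\alpha+\beta}$ because $\alpha+\beta<0$. The paper gives no proof, only a citation, so there is nothing to compare against; the issue is purely internal.

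There is a genuine error in the treatment of $A_2$ when $\beta\geq0$. You bound $(z_n)_+^\alpha\leq Cr^\alpha$ uniformly on $A_2$ and then integrate $|z-x|^{-n+\beta}$ over the annulus, arriving at $Cr^\alpha$ and asserting ``which is still $\leq Cr^{\alpha+\beta}$ for $r\leq1$''. This inequality is reversed: for $r<1$ and $\beta>0$ one has $\alpha<\alpha+\beta$ and hence $r^\alpha>r^{\alpha+\beta}$, so $Cr^\alpha$ does \emph{not} imply the claim as $r\to0$. Moreover for $\beta=0$ the radial integral $\int_r^3\rho^{-1}d\rho=\log(3/r)$ is not even bounded by a constant, so the intermediate estimate $Cr^\alpha$ already fails. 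The root cause is that the crude uniform bound $(z_n)_+^\alpha\leq Cr^\alpha$ throws away the decay of $(z_n)_+^\alpha$ on the parts of $A_2$ where $|z-x|$ (and typically $z_n$) is of order $2^kr\gg r$, and when $\beta\geq0$ the radial integral is dominated precisely by these far shells. The fix is to apply the same dyadic-shell argument you use for $A_1$ to $A_2$ as well (or, cleaner, to dispense with the $A_1/A_2$ split altogether): on the shell $\{2^{k-1}r<|z-x|\leq2^kr\}$, if $2^kr\leq x_n/2$ then $z_n\sim x_n$ and the contribution is $\lesssim x_n^\alpha(2^kr)^\beta$, while if $2^kr>x_n/2$ Fubini gives $\int_{\text{shell}}(z_n)_+^\alpha dz\lesssim(2^kr)^{n+\alpha}$ and hence a contribution $\lesssim(2^kr)^{\alpha+\beta}$; both geometric sums are bounded by $Cr^{\alpha+\beta}$ using $\alpha+\beta<0$ and $x_n\geq3r/2$ (the borderline $\beta=0$ case in the first sum is $x_n^\alpha\log(x_n/r)\lesssim r^\alpha$, since $t^\alpha\log t$ is bounded for $t\geq3/2$). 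With this replacement the argument closes.
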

	\begin{proof}
		The proof is exactly the same as the one of \cite[Lemma A.9]{AR20}.
	\end{proof}
	
	\begin{lemma}\label{upToBoundaryRegularity}
		Let $\Omega$ be a domain in $\R^n$ with $0\in\partial\Omega$ and $d$ the distance function to the boundary. Assume function $f$ satisfies 
		$$|f(x)|\leq C|x|^\alpha,$$
		and 
		$$\left[ f\right] _{C^\alpha(B_r(x_0))}\leq C,$$
		whenever $d(x_0) = 2r = |x_0|$.
		
		Denote $\mathcal{C} = \cup_{d(x_0)=|x_0|} B_r(x_0)$. Then $f\in C^\alpha(\overline{\mathcal{C}})$ with 
		$$|D^jf|\leq C |x|^{\alpha-j},\quad j\leq\alpha.$$
	\end{lemma}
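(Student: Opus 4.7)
The statement is essentially a rescaling/compatibility result: the hypotheses are exactly what one obtains from applying interior Schauder-type estimates on balls of dyadic radius comparable to the distance to the origin, and the claim is that this information assembles into a genuine $C^\alpha$ regularity up to the boundary of the cone $\mathcal{C}$, with the natural pointwise bounds on derivatives.

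The plan is as follows. For each $x_0\in\mathcal{C}$ with $d(x_0)=|x_0|=2r$, I would consider the rescaled function $\tilde f(y):=r^{-\alpha}f(x_0+ry)$ on $B_1$. The growth assumption $|f(x)|\le C|x|^\alpha$ together with $|x_0+ry|\le |x_0|+r\le 3r/2$ gives $\|\tilde f\|_{L^\infty(B_1)}\le C$, while the interior Hölder assumption rescales to $[\tilde f]_{C^\alpha(B_1)}\le C$. Hence $\|\tilde f\|_{C^\alpha(B_1)}\le C$, and the standard embedding $C^\alpha\hookrightarrow C^{j}$ for integer $j\le\alpha$ on $B_{1/2}$ yields $\|D^j\tilde f\|_{L^\infty(B_{1/2})}\le C$. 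Undoing the rescaling gives
\[
|D^j f(x)|\le C\,r^{\alpha-j}\le C\,|x|^{\alpha-j},\qquad x\in B_{r/2}(x_0),\ j\le\alpha,
\]
which is the desired pointwise estimate on all of $\mathcal{C}$.

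For the $C^\alpha$ regularity up to the boundary of $\mathcal{C}$, I would extend $f$ by $0$ (together with its lower order derivatives) at $0\in\partial\Omega$; this is consistent since $|f(x)|\le C|x|^\alpha$ and the pointwise derivative bounds above force $|D^j f(x)|\to 0$ as $|x|\to 0$ whenever $j<\alpha$. To prove the global Hölder bound, I would split into two cases for $x,y\in\overline{\mathcal{C}}$ with $|x|\le|y|$: if $|x-y|\le \tfrac14|x|$, then $x$ and $y$ lie in a common ball $B_{r/2}(x_0)$ of the type above and the already-established interior $C^\alpha$ seminorm gives $|D^{\lfloor\alpha\rfloor}f(x)-D^{\lfloor\alpha\rfloor}f(y)|\le C|x-y|^{\langle\alpha\rangle}$; otherwise $|x-y|\gtrsim|x|$ and, using $|y|\le|x|+|x-y|\le C|x-y|$ together with the pointwise bound $|D^{\lfloor\alpha\rfloor}f|\le C|\cdot|^{\langle\alpha\rangle}$, one directly estimates $|D^{\lfloor\alpha\rfloor}f(x)-D^{\lfloor\alpha\rfloor}f(y)|\le C(|x|^{\langle\alpha\rangle}+|y|^{\langle\alpha\rangle})\le C|x-y|^{\langle\alpha\rangle}$.

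The only subtle point is the case-distinction in the last step, which has to be carried out consistently for each derivative of order $j\le\lfloor\alpha\rfloor$; but since $\mathcal{C}$ is a Lipschitz ``cusp'' only at the single point $0$ and every two points of $\mathcal{C}$ can be joined by a short chain staying within the cone at comparable heights, this is routine. No interior regularity of the operator is used; the lemma is purely a post-processing of the hypotheses.
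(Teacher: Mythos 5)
Your overall strategy is sound and, in part, genuinely different from the paper's. You first obtain the pointwise bounds $|D^jf(x)|\le C|x|^{\alpha-j}$ directly by rescaling each admissible ball and interpolating between $\|\tilde f\|_{L^\infty}$ and $[\tilde f]_{C^\alpha}$ (note $|x_0|+r=3r$, not $3r/2$, and the interpolation inequality holds on the full ball, so you may as well work on $B_r(x_0)$ rather than $B_{r/2}(x_0)$ — otherwise the half-balls do not obviously cover $\mathcal C$). The paper proceeds in the opposite order: it first proves the $C^\alpha(\overline{\mathcal C})$ bound by a dyadic chaining argument along rays through the ``central'' points $x'$ with $d(x')=|x'|$, summing a geometric series, and only then deduces the pointwise bounds from $D^jf(0)=0$ by integration. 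Your ordering buys a real simplification: once $|D^{\lfloor\alpha\rfloor}f|\le C|\cdot|^{\langle\alpha\rangle}$ is known, the case $|x-y|\gtrsim|x|$ of the Hölder estimate (which includes points at very different heights and the vertex) is immediate, and the paper's geometric-series chain for that regime becomes unnecessary; there is no circularity, since your pointwise bounds do not use the up-to-the-boundary regularity.

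The gap is in your Case 1. The claim that $|x-y|\le\tfrac14|x|$ forces $x$ and $y$ into a common admissible ball (let alone a common half-ball $B_{r/2}(x_0)$) is not justified: the hypothesis only gives the seminorm bound on the specific balls $B_r(x_0)$ with $d(x_0)=|x_0|=2r$, and a point of $\mathcal C$ near the lateral boundary of the cone lies close to the boundary of every admissible ball containing it, so two nearby such points generated by different admissible balls need not share one on which the hypothesis applies. What is needed here is a short chain of overlapping admissible balls connecting $x$ to $y$ through the central set (project $x,y$ to nearby central points $x',y'$ and use that $|x'-y'|<|x'|/2$ puts $y'$ in $B_{|x'|/2}(x')$, or a finite chain of such balls), with the increments summed; this is exactly the part of the argument the paper makes explicit and that you dismiss as ``routine''. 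In the generality in which the lemma is stated (no regularity of $\partial\Omega$ is assumed, so the central set and hence $\mathcal C$ can a priori be geometrically awkward) this chaining is the actual content of the boundary-regularity claim, so it should be carried out rather than asserted; in the situations where the lemma is applied ($\Omega$ at least $C^1$ and locally a small graph, so $\mathcal C$ is honestly cone-like) it is straightforward, and to be fair the paper's own proof is also informal on the same point.
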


	\begin{proof}
		Let us first show, that $f\in C^\alpha(\overline{\mathcal{C}}).$ Choose therefore a multi-index $\gamma$ of order $\lfloor\alpha\rfloor$ and points $x,y\in\mathcal{C}.$ Denote $x',y'$ the points such that $d(x')=|x'|$ and  $d(y')=|y'|$, so that $x\in B_{|x'|/2}(x')$ and $y\in B_{|y'|/2}(y')$, so that $|x'-y'|\leq |x-y|$, $|x'-x|\leq |x-y|$ and $|y-y'|\leq |x-y|$. For this we need to assume that $x$ and $y$ do not lie in any of the balls $B_r(x_0)$. Otherwise the desired property holds true by assumption. For simplicity let us also assume that $x'$ and $y'$ lie on the same line as $0$. Assume with out loss of the generality that $|x'|<|y'|.$ Denote $x_i = 2^ix$. Then 
		$$|\partial^\gamma f(x)-\partial^\gamma f(y)|\leq |\partial^\gamma f(x)-\partial^\gamma f(x')|+|\partial^\gamma f(x')-\partial^\gamma f(y')|+|\partial^\gamma f(y')-\partial^\gamma f(y)|.$$
		The first term is bounded with $C|x-x'|^{\langle \alpha\rangle}\leq|x-y|^{\langle \alpha\rangle},$ and similarly the last term.
		For the second one, we choose $K\in\N$ so that $2^K|x'|\leq |y'|\leq 2^{K+1}|x'|$, and $2^K|x'|\leq 2|x'-y'|$ and compute
		$$|\partial^\gamma f(x')-\partial^\gamma f(y')|\leq \sum_{i=1}^K |\partial^\gamma f(x_{i-1})-\partial^\gamma f(x_i)| + |\partial^\gamma f(x_K)-\partial^\gamma f(y')|$$
		$$\leq \sum_{i=1}^K C|x_i-x_{i-1}|^{\langle \alpha\rangle} + C|x_K-y'|^{\langle \alpha\rangle} \leq \sum_{i=1}^K C(2^i|x'|)^{\langle \alpha\rangle} + C|x_K-y'|^{\langle \alpha\rangle}$$
		$$\sum_{i=1}^K C(2^i2^{-K}|x'-y'|)^{\langle \alpha\rangle} + C|x'-y'|^{\langle \alpha\rangle}\leq  C|x'-y'|^{\langle \alpha\rangle},$$
		because the series we get is summable. Noticing that $|x'-y'|\leq |x-y|$ finishes the proof. 
		
		The only exception is when for example $x=0$. Then we do the same procedure, with $x_i = 2^{-i}y.$
		
		Once we know that $f\in C^\alpha(\overline{C})$, the growth control implies that $D^{j}f (0) = 0$ for all $j\leq \lfloor\alpha\rfloor$. Since $D^{\lfloor\alpha\rfloor}f$ is a $C^{\langle\alpha\rangle}$ function, we get
		$|D^{\lfloor\alpha\rfloor}f| \leq C |x|^{\langle\alpha\rangle}.$ Integrating this iteratively, we get the others.
	\end{proof}
	
	\begin{lemma}\label{zoomGrowthLemma}
		Let $u$ be a bounded function, satisfying $|u(x)|\leq C|x|^\alpha$ for some $\alpha>0$. Let $\beta>\alpha$, and $x_0\in B_{1/2}$ with $|x|=2r.$ Then the function $u_r(x):=u(x_0+rx)$ satisfies 
		$$\left|\left|\frac{u_r}{1+|\cdot|^\beta}\right|\right|\leq C r^\alpha.$$
	\end{lemma}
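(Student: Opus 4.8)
The statement to prove is Lemma \ref{zoomGrowthLemma}: for a bounded function $u$ with $|u(x)|\le C|x|^\alpha$, $\alpha>0$, and $\beta>\alpha$, $x_0\in B_{1/2}$ with $|x_0|=2r$, the rescaling $u_r(x)=u(x_0+rx)$ satisfies $\left\|\frac{u_r}{1+|\cdot|^\beta}\right\|_{L^\infty}\le Cr^\alpha$.

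\medskip

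The plan is to split $\R^n$ into two regions according to whether $|x|$ is large or small compared to a fixed threshold, and estimate the ratio on each. First I would note that $u_r$ is bounded (since $u$ is), so on $\{|x|\le R_0\}$ for a fixed $R_0$ we have $\left|\frac{u_r(x)}{1+|x|^\beta}\right|\le \|u\|_{L^\infty}$, which is not yet what we want — so the point is that the growth hypothesis $|u(y)|\le C|y|^\alpha$ must be used to extract the factor $r^\alpha$. The natural move is: for $|x|$ small we use that $|x_0+rx|$ is comparable to $|x_0|=2r$ (say $|x_0+rx|\le 2r+r|x|$), hence $|u_r(x)|=|u(x_0+rx)|\le C|x_0+rx|^\alpha\le C(2r+r|x|)^\alpha = Cr^\alpha(2+|x|)^\alpha$; then dividing by $1+|x|^\beta\ge 1$ and using that $(2+|x|)^\alpha$ is bounded on any fixed ball $\{|x|\le R_0\}$ gives the bound $Cr^\alpha$ there. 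Meanwhile for $|x|\ge R_0$ (say $R_0=1$), we have $|x_0+rx|\le 2r+r|x|\le 3r|x|$, so $|u_r(x)|\le C(3r|x|)^\alpha = Cr^\alpha|x|^\alpha$, and dividing by $1+|x|^\beta\ge |x|^\beta$ gives $\left|\frac{u_r(x)}{1+|x|^\beta}\right|\le Cr^\alpha|x|^{\alpha-\beta}\le Cr^\alpha$ since $\alpha-\beta<0$ and $|x|\ge 1$.

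\medskip

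Combining the two regions yields the claimed estimate with a constant $C$ depending only on $\alpha$, $\beta$, $n$, and the constants in the hypotheses (the growth constant and $\|u\|_{L^\infty}$). There is essentially no obstacle here — the only mild subtlety is making sure $r\le 1$ (which holds since $|x_0|=2r$ and $x_0\in B_{1/2}$ forces $r\le 1/4$, in particular $r^\alpha$ absorbs constants cleanly), and tracking that the crude inequality $|x_0+rx|\le 2r+r|x|$ is uniform. I would write this as two short displayed estimates, one for $|x|\le 1$ and one for $|x|\ge 1$, and conclude.

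\begin{proof}
Since $x_0\in B_{1/2}$ and $|x_0|=2r$, we have $r\le 1/4$. For any $x\in\R^n$,
\begin{equation*}
|x_0+rx|\le |x_0|+r|x| = 2r+r|x| = r(2+|x|).
\end{equation*}
Hence, using $|u(y)|\le C|y|^\alpha$,
\begin{equation*}
|u_r(x)| = |u(x_0+rx)|\le C\, r^\alpha (2+|x|)^\alpha.
\end{equation*}
If $|x|\le 1$, then $(2+|x|)^\alpha\le 3^\alpha$ and $1+|x|^\beta\ge 1$, so
\begin{equation*}
\frac{|u_r(x)|}{1+|x|^\beta}\le C\,3^\alpha\, r^\alpha.
\end{equation*}
If $|x|\ge 1$, then $2+|x|\le 3|x|$ and $1+|x|^\beta\ge |x|^\beta$, so, since $\alpha-\beta<0$ and $|x|\ge 1$,
\begin{equation*}
\frac{|u_r(x)|}{1+|x|^\beta}\le \frac{C\,3^\alpha\, r^\alpha |x|^\alpha}{|x|^\beta} = C\,3^\alpha\, r^\alpha\, |x|^{\alpha-\beta}\le C\,3^\alpha\, r^\alpha.
\end{equation*}
Taking the supremum over $x\in\R^n$ gives $\left\|\frac{u_r}{1+|\cdot|^\beta}\right\|_{L^\infty(\R^n)}\le C r^\alpha$, as claimed.
\end{proof}
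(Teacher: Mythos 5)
Your proof is correct and follows essentially the same route as the paper: the paper's own argument is the single chain of inequalities $|u_r(x)| \leq C|x_0+rx|^\alpha \leq Cr^\alpha(1+|x|)^\alpha \leq Cr^\alpha(1+|x|^\beta)$, and your two-region case split ($|x|\leq 1$ versus $|x|\geq 1$) is just a slightly more explicit way of absorbing $(1+|x|)^\alpha$ into $1+|x|^\beta$ using $\beta>\alpha$. Nothing to add.
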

\begin{proof}
	We compute 
	$$|u_r(x)| = |u(x_0+rx)|\leq C|x_0+rx|^\alpha \leq Cr^\alpha (1+|x|)^\alpha \leq Cr^\alpha (1+|x|^\beta).$$
\end{proof}
	
	\begin{lemma}\label{lemmaA8}
		Let $f\in C^{\alpha}(\overline{B_1})$ with $\alpha >  1$. Then there exists a $C^{\alpha-1}$ map $g\colon \overline{B_1} \to \R^n$, such that $$f(x)-f(0) = x\cdot g(x).$$
		Moreover, $||g||_{C^{\alpha-1}}\leq C||f||_{C^\alpha}$.
	\end{lemma}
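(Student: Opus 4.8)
The statement to prove is Lemma \ref{lemmaA8}: for $f\in C^\alpha(\overline{B_1})$ with $\alpha>1$, there is a $C^{\alpha-1}$ map $g\colon\overline{B_1}\to\R^n$ with $f(x)-f(0)=x\cdot g(x)$ and $\|g\|_{C^{\alpha-1}}\le C\|f\|_{C^\alpha}$.

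The plan is to use the standard Hadamard-lemma integral representation. First I would write, for $x\in\overline{B_1}$,
$$
f(x)-f(0)=\int_0^1 \frac{d}{dt}f(tx)\,dt=\int_0^1 \nabla f(tx)\cdot x\,dt = x\cdot \int_0^1 \nabla f(tx)\,dt,
$$
so the natural candidate is $g(x):=\int_0^1 \nabla f(tx)\,dt$, which makes the identity $f(x)-f(0)=x\cdot g(x)$ immediate (note $f\in C^\alpha$ with $\alpha>1$ is $C^1$, so $\nabla f$ is continuous and this is legitimate). The bound $\|g\|_{L^\infty}\le \|\nabla f\|_{L^\infty}\le C\|f\|_{C^\alpha}$ is clear.

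Next I would establish the $C^{\alpha-1}$ regularity of $g$, splitting into the cases according to whether $\alpha\le 2$ or $\alpha>2$, or more efficiently by induction on $\lfloor\alpha\rfloor$. For any multi-index $\gamma$ with $|\gamma|=\lfloor\alpha-1\rfloor=\lfloor\alpha\rfloor-1$, differentiation under the integral sign gives $\partial^\gamma g(x)=\int_0^1 t^{|\gamma|}(\partial^\gamma\nabla f)(tx)\,dt$, which is valid since $\nabla f\in C^{\alpha-1}$ and $|\gamma|\le \alpha-1$ so $\partial^\gamma\nabla f$ is at least continuous. Then for $x,y\in\overline{B_1}$,
$$
|\partial^\gamma g(x)-\partial^\gamma g(y)|\le \int_0^1 t^{|\gamma|}\bigl|(\partial^\gamma\nabla f)(tx)-(\partial^\gamma\nabla f)(ty)\bigr|\,dt\le [\partial^\gamma\nabla f]_{C^{\langle\alpha-1\rangle}}\int_0^1 t^{|\gamma|}|t(x-y)|^{\langle\alpha-1\rangle}\,dt,
$$
and since $\langle\alpha-1\rangle=\langle\alpha\rangle$ and $[\partial^\gamma\nabla f]_{C^{\langle\alpha\rangle}}\le [f]_{C^\alpha}$, the remaining $t$-integral is a finite constant depending only on $|\gamma|$ and $\langle\alpha\rangle$. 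This yields $[\partial^\gamma g]_{C^{\langle\alpha-1\rangle}}\le C\|f\|_{C^\alpha}$; combined with the sup bounds on $g$ and its lower-order derivatives (obtained the same way, bounding $\partial^\delta\nabla f$ in $L^\infty$ for $|\delta|\le|\gamma|$), we get $\|g\|_{C^{\alpha-1}}\le C\|f\|_{C^\alpha}$.

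There is no real obstacle here — the only point requiring a sentence of care is the justification of differentiating under the integral sign, which follows from continuity of the relevant derivatives of $f$ on the compact set $\overline{B_1}\times[0,1]$ (so dominated convergence applies), and the bookkeeping that $\langle\alpha-1\rangle=\langle\alpha\rangle$ and $\lfloor\alpha-1\rfloor=\lfloor\alpha\rfloor-1$ so that the H\"older exponent and integer order of smoothness drop by exactly the right amounts. This completes the proof.
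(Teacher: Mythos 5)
Your proposal is correct and follows essentially the same route as the paper: both define $g(x)=\int_0^1\nabla f(tx)\,dt$ via the Hadamard-type integral representation, differentiate under the integral sign, and bound the $L^\infty$ norms of the lower-order derivatives together with the H\"older seminorm of the top-order derivative using $[\partial^\gamma\nabla f]_{C^{\langle\alpha\rangle}}\leq [f]_{C^\alpha}$. No discrepancies to report.
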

	\begin{proof}
		Write $f(x) - f(0) = \int_0^1 \partial_t f(tx) dt =  \int_0^1 x\cdot \nabla f(tx)  dt,$ and hence we can define $g(x) = \int_0^1 \nabla f(tx) dt.$ For any multi-index $k$ with $|\gamma|\leq \alpha -1$, we have $\partial^\gamma g(x) = \int_0^1 \nabla \partial^\gamma f (tx) t^{|\gamma|}dt.$ Hence we get 
		$||D^j g||_{L^\infty}\leq C||D^{j+1}f||_{L^\infty},$ for every $j\leq \alpha-1$ and for $|\gamma| = \left[ \alpha-1\right] $
		$$ |\partial^\alpha g(x_1) - \partial^\alpha g(x_2)| \leq \int_0^1 |\nabla \partial^\alpha f(tx_1) - \nabla \partial^\alpha f(tx_2)| t^{|\alpha|}dt $$
		$$\leq C\left[f \right]_{C^\alpha}\int_0^1 |tx_1 - tx_2|^{\left\langle\alpha\right\rangle} t^{|\alpha|}dt \leq C\left[f \right]_{C^\alpha}|x_1-x_2|^{\left\langle\alpha\right\rangle}.$$
	\end{proof}

	\begin{lemma}\label{growthLemma}
		Let $\Omega$ be $C^1$ domain and $f\colon \Omega\to\R$ a $C(\Omega)$ function, which for some $\alpha\in(0,1)$, $\beta<-\alpha$ satisfies
		$$\left[f \right]_{C^{\alpha}(B_r(x_0))} \leq C_0r^{\beta},$$ for any $x_0$, $r$ such that $B_{2r}(x_0)\subset \Omega$ and $C_0$ independent of $x_0,r$. 
		
		Then we have the following bound
		$$|f(x)|\leq CC_0d^{\alpha+\beta}(x).$$
	\end{lemma}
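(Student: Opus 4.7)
The plan is to bound $f$ near the boundary via a telescoping argument along a chain of balls that moves away from $\partial\Omega$, using the given scaled H\"older seminorm estimate on each ball and summing a geometric series whose convergence is ensured by $\alpha+\beta<0$.

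Fix $x\in\Omega$ and write $d_{0}:=d(x)$. If $d_{0}$ is bounded below by a fixed positive constant (i.e.\ $x$ lies in a compact subset of $\Omega$ away from $\partial\Omega$), the estimate is immediate: $d^{\alpha+\beta}(x)$ is then bounded below, and $|f(x)|$ is controlled by $C_{0}$ via the hypothesis on a ball of unit size. So it suffices to treat the regime $d_{0}$ small.

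Since $\Omega$ is $C^{1}$, for small $d_{0}$ one can find a curve (essentially along the inward normal direction from the nearest boundary point) and produce points $x=x_{0},x_{1},\ldots,x_{N}$ with
\[
d(x_{k})\geq \lambda^{k}d_{0},\qquad |x_{k}-x_{k+1}|\leq \tfrac{1}{4}d(x_{k}),
\]
for some $\lambda>1$ (e.g.\ $\lambda=5/4$), and terminating at a step $N$ for which $d(x_{N})$ is of order one. By construction $B_{d(x_{k})/2}(x_{k})\subset \Omega$, so the hypothesis applied at scale $r_{k}=d(x_{k})/4$ yields
\[
[f]_{C^{\alpha}(B_{r_{k}}(x_{k}))}\leq C_{0}\,r_{k}^{\beta}\leq C\,C_{0}\,d(x_{k})^{\beta}.
\]
Consequently
\[
|f(x_{k})-f(x_{k+1})|\leq [f]_{C^{\alpha}(B_{r_{k}}(x_{k}))}\,|x_{k}-x_{k+1}|^{\alpha}\leq C\,C_{0}\,d(x_{k})^{\alpha+\beta}.
\]

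Summing the telescoping identity and using $d(x_{k})\geq \lambda^{k}d_{0}$ with $\alpha+\beta<0$:
\[
|f(x_{0})-f(x_{N})|\leq \sum_{k=0}^{N-1} C\,C_{0}\,d(x_{k})^{\alpha+\beta}\leq C\,C_{0}\,d_{0}^{\alpha+\beta}\sum_{k\geq 0}\lambda^{k(\alpha+\beta)}\leq C\,C_{0}\,d_{0}^{\alpha+\beta},
\]
the geometric series converging since $\lambda^{\alpha+\beta}<1$. At the terminal point $x_{N}$ the distance $d(x_{N})$ is of order one, so $|f(x_{N})|\leq CC_{0}$ by one more application of the hypothesis on a unit ball around a fixed interior reference point. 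Since $d_{0}\leq 1$ and $\alpha+\beta<0$ give $d_{0}^{\alpha+\beta}\geq 1$, combining yields
\[
|f(x)|\leq |f(x_{N})|+|f(x)-f(x_{N})|\leq C\,C_{0}\,d_{0}^{\alpha+\beta}=C\,C_{0}\,d(x)^{\alpha+\beta},
\]
which is the claim.

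The only delicate point is the construction of the chain in a general $C^{1}$ domain: one must verify that a curve along the inward (approximate) normal can indeed be subdivided so that each step fits inside a ball of radius $\tfrac14 d(x_{k})$ while the distance to $\partial\Omega$ grows at a definite geometric rate. This is standard since $\partial\Omega\in C^{1}$ ensures a uniform cone condition. Everything else is a clean telescoping estimate plus summation of a geometric series; the sign condition $\alpha+\beta<0$ is used in two places (convergence of the geometric sum, and absorbing the terminal bound into $d_{0}^{\alpha+\beta}$), which is precisely the role of the hypothesis $\beta<-\alpha$.
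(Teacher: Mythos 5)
Your argument is essentially the paper's proof: the paper also builds a chain $x_i=z+2^i r\,\nu_z$ along the inward normal from the nearest boundary point $z$ (with $r=|x-z|$), bounds each increment by the scaled seminorm hypothesis on a ball containing consecutive points, sums the resulting geometric series using $\alpha+\beta<0$, and stops once the chain reaches the compact set $K=\overline{\{d>r_0\}}$.

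The one step you should fix is the bound at the terminal point. You claim $|f(x_N)|\leq CC_0$ ``by one more application of the hypothesis on a unit ball around a fixed interior reference point,'' but the hypothesis is only a H\"older \emph{seminorm} bound: it controls differences $|f(x_N)-f(y)|$, never the value of $f$ at a single point, so it cannot produce a pointwise bound on $|f(x_N)|$ by $CC_0$. The paper handles this differently: it chooses $r_0$ so that every inward normal ray meets $K$, notes that $f$ is continuous on the compact set $K$ and hence bounded there, and uses $|f(x_k)|\leq \sup_K|f|$; the resulting constant is then absorbed using $r^{\alpha+\beta}\geq c>0$ for $r$ bounded (so, strictly, the final constant also depends on the interior bound of $f$, which is harmless in the lemma's applications where such a bound is available). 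Apart from this justification of the terminal bound, your telescoping construction, the role of $\alpha+\beta<0$, and the absorption of the order-one term into $d^{\alpha+\beta}(x)$ all coincide with the paper's argument; the chain construction itself is unproblematic since, as in the paper, one can simply move along the inward normal of the nearest boundary point.
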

	
	\begin{proof}
		Choose $r_0>0$, so that for $K = \overline{\{y\in\Omega; \text{ }d(y,\partial\Omega)>r_0\}},$ and every $z\in\partial\Omega$ the intersection $K\cap\{z+t\nu_z;\text{ } t\in\R\}$ is non-empty. Since $K$ is compact and $f$ continuous on $K$, it is bounded there. Choose now a point $x$ in $\Omega$. Let $z$ be the closest boundary point, and $r= |x-z|$. Denote $x_i =  z + 2^ir\nu_z$, so that $x_0 = x$ and $x_k\in K$. Then, since $x_i$ and $x_{i+1}$ are both in the ball $B_{2^{i-1}r}(y_i)$ for $y_i = 1/2(x_i+x_{i+1})$, by assumption we have 
		$$|f(x_i)-f(x_{i-1})|\leq C_0(2^{i-2}r)^{\beta}|x_i-x_{i+1}|^\alpha \leq C_0(2^{i-2}r)^{\beta}(2^{i-1}r)^\alpha = C_02^{-2\beta}r^{\alpha+\beta}2^{(\alpha+\beta)(i-1)}.$$
		Hence, summing a geometric series and using that $x_k\in K$, we get
		$$|f(x)|\leq \sum_{i=1}^k|f(x_i)-f(x_{i-1})| + |f(x_k)|\leq C_{\alpha+\beta}C_0 r^{\alpha+\beta}+ C\leq CC_0r^{\alpha+\beta}.$$
	\end{proof}

	\begin{lemma}\label{divisionLemma}
		Let $0\in\partial\Omega$ and let $f,g,Q$ be functions on $\Omega$ which satisfy $|f-Qg|\leq C|x|^{a+b}$, and $\left[ f-Qg\right] _{C^{a+b}(B_r(x_0))}$, whenever $d(x_0)=|x_0-0|=2r$. Assume $g$ satisfies $|D^kg^{-1}|\leq C_k d^{-b-k}$, for all $0\leq k \leq \lfloor a\rfloor+1$. 
		
		Then we have $|\frac{f}{g}-Q|\leq C|x|^{a}$ on $\mathcal{C}_0$ and $\left[ \frac{f}{g}-Q\right] _{C^a(B_r(x_0))}\leq C$.
	\end{lemma}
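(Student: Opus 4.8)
The plan is to reduce the statement to an estimate for a product. Writing
$$\frac{f}{g} - Q \;=\; \frac{f - Qg}{g} \;=\; (f - Qg)\,g^{-1},$$
I would set $h := f - Qg$, so that by hypothesis $h$ has the growth $|h|\le C|x|^{a+b}$ together with the interior bound $[h]_{C^{a+b}(B_r(x_0))}\le C$ on balls $B_r(x_0)$ with $d(x_0)=|x_0|=2r$, while $g^{-1}$ satisfies $|D^kg^{-1}|\le C_k d^{-b-k}$ for $0\le k\le \lfloor a\rfloor+1$. The whole idea is that the $(a+b)$ power carried by $h$ cancels the $-b$ power carried by $g^{-1}$, leaving an order-$a$ behaviour; no blow-up or compactness is needed here, unlike elsewhere in Section~5.

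First I would record the elementary geometric fact that on $\mathcal C_0=\bigcup\{B_r(x_0):d(x_0)=|x_0|=2r\}$ one has $d(x)\simeq|x|\simeq r$ on each such ball. Combined with the $k=0$ case of the hypothesis, $|g^{-1}|\le Cd^{-b}$, this gives at once the growth estimate
$$\Big|\tfrac{f}{g}-Q\Big| \;=\; |h|\,|g^{-1}| \;\le\; C|x|^{a+b}\,d^{-b} \;\le\; C|x|^{a}\qquad\text{on }\mathcal C_0.$$

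For the interior Hölder bound I would fix $B_r(x_0)$ with $d(x_0)=|x_0|=2r$ and rescale to unit size, setting $\tilde h(y):=r^{-(a+b)}h(x_0+ry)$ and $\tilde g(y):=r^{b}g^{-1}(x_0+ry)$ on $B_1$. The growth and interior bounds on $h$, after rescaling and interpolation on the fixed domain $B_1$, give $\|\tilde h\|_{C^{a+b}(B_1)}\le C$; the derivative bounds on $g^{-1}$ together with $d\simeq r$ on $B_r(x_0)$ give $\|\tilde g\|_{C^{\lfloor a\rfloor+1}(B_1)}\le C$, hence $\tilde g\in C^{a}(B_1)$ with bounded norm (here one uses $a\le\lfloor a\rfloor+1$). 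The product rule for Hölder functions then yields $\|\tilde h\tilde g\|_{C^{a}(B_1)}\le C\|\tilde h\|_{C^{a}(B_1)}\|\tilde g\|_{C^{a}(B_1)}\le C$, and since $\tilde h(y)\tilde g(y)=r^{-a}\big(\tfrac{f}{g}-Q\big)(x_0+ry)$, unscaling gives $[\tfrac{f}{g}-Q]_{C^{a}(B_r(x_0))}\le C$, uniformly in $x_0$. (Alternatively, the growth plus interior estimate on $h$ can be packaged by Lemma~\ref{upToBoundaryRegularity} applied with exponent $a+b$, after which one differentiates $h\,g^{-1}$ by the Leibniz rule.)

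The only genuinely delicate point is the exponent bookkeeping in the rescaling: one must check that the interpolated intermediate norms $\|D^j\tilde h\|_{L^\infty(B_1)}$ and $\|D^j\tilde g\|_{L^\infty(B_1)}$ stay bounded in the relevant range $j\le\lfloor a\rfloor$. This is precisely why the hypothesis on $g^{-1}$ is required up to order $\lfloor a\rfloor+1$ (so that $D^{\lfloor a\rfloor}\tilde g$ is Lipschitz, hence $C^{\langle a\rangle}$) and why the interior control of $h$ is imposed in $C^{a+b}$ rather than merely $C^{a}$. Once this is in place the argument is routine.
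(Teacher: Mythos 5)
Your proposal is correct and follows essentially the same strategy as the paper: write $\frac{f}{g}-Q=(f-Qg)\,g^{-1}$ and exploit the exponent cancellation between the $|x|^{a+b}$ growth of $f-Qg$ and the $d^{-b-k}$ decay of $D^k g^{-1}$ via the product/Leibniz rule. The only organizational difference is that the paper carries out the Leibniz computation directly on $B_r(x_0)$ with the scaling factors in place (deriving the intermediate derivative bounds $|D^j(f-Qg)|\le Cr^{a+b-j}$ through Lemma~\ref{upToBoundaryRegularity}), whereas you rescale to the unit ball and interpolate there before multiplying — a cleaner packaging of the same estimate, and both approaches rely on the same ingredients.
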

	
	\begin{proof}
		The first estimate is just dividing by $g$ and taking into account its growth. We have to restrict ourselves to the cone, so that $|x|$ and $d$ become comparable.
		
		For the second one, pick $|\gamma|=\lfloor a\rfloor$ and compute
		$$|\partial^\gamma((f-Qg)g^{-1})(x)-\partial^\gamma((f-Qg)g^{-1})(y)|\leq $$
		$$\leq \sum_{\alpha\leq\gamma} \binom{\gamma}{\alpha}|\partial^\alpha(f-Qg)(x)-\partial^\alpha(f-Qg)(y)|\cdot|\partial^{\gamma-\alpha}g^{-1}(x)|+$$
		$$+\sum_{\alpha\leq\gamma}\binom{\gamma}{\alpha}|\partial^{\gamma-\alpha}g^{-1}(x)-\partial^{\gamma-\alpha}g^{-1}(y)|\cdot|\partial^\alpha(f-Qg)(y)|.$$
		First, notice that the assumptions on the growth and regularity $f-Qg$ imply that $f-Qg\in C^{a+b}(\mathcal{C}_0)$, with $D^j(f-Qg)(0) = 0$ for $j\leq \lfloor a+b\rfloor$, and so on $B_r(x_0)$ we have $|D^j(f-Qg)|\leq Cr^{a+b-k}$. Now we estimate the above expression with
		$$\sum_{\alpha\leq\gamma}C||D^{|\alpha|+1}(f-Qg)||_{L^\infty(B_r(x_0))}|x-y|||D^{|\gamma-\alpha|}g^{-1}||_{L^\infty(B_r(x_0))} +$$
		$$+\sum_{\alpha\leq\gamma} ||D^{|\gamma-\alpha|+1}g^{-1}||_{L^\infty(B_r(x_0))}|x-y|||D^{|\alpha|}(f-Qg)||_{L^\infty(B_r(x_0))}\leq$$
		$$\leq \left(Cr^{a+b-|\alpha|-1}r^{1-\langle a\rangle}r^{-b-|\gamma-\alpha|}\right)|x-y|^{\langle a\rangle}  +\left(Cr^{-b-|\gamma-\alpha|-1}r^{1-\langle a\rangle}r^{a+b-|\alpha|}\right)|x-y|^{\langle a\rangle}$$
		$$\leq C|x-y|^{\langle a\rangle}.$$
		Note that in the case when $\lfloor a+b\rfloor = \lfloor a\rfloor$, and when $\alpha=\gamma$ we have to use the regularity from the assumption for $(f-Qg)$, since we do not have the estimate on $D^{|\gamma|+1}(f-Qg)$, but we end up with the same powers of $r$.
	\end{proof}

	%TODO: in the lema below i had assumptions on u and v to have bounded k-th derivatives with D^s-k. maybe it has to be there.

	\begin{lemma}\label{theUltimateRegularityLemma}
		Let $\Omega$ be a domain of class $\beta'$ and let for every boundary point $z\in \partial\Omega\cap B_1$ the following hold true
		$$\left| \frac{u(x) - \sum_{k} P^k_z(x-z)d^{s+p_k}(x)}{v(x) - \sum_{k} R^{k}_z(x-z)d^{s+p_k}(x)}-Q_z(x-z) \right|\leq C_0|x-z|^{\beta'},\quad x \in \mathcal{C}_z$$
		and 
		$$\left[ \frac{u - \sum_{k} P^{k}_zd^{s+p_k}}{v - \sum_{k} R^{k}_zd^{s+p_k}}-Q_z\right]_{C^{\beta'}(B_r(x_0))}\leq C_0,$$
		for some $p_k>0$ and polynomials $P^k_z,R^{k}_z\in\textbf{P}_{\lfloor \beta'-p_k\rfloor}$ whose coefficients of order $\alpha$ are  $C^{\beta'-p_k-|\alpha|}_z(\partial\Omega\cap B_1)$ and a polynomial $Q_z\in\textbf{P}_{\lfloor\beta'\rfloor}.$ Assume that $|v|\leq C_1 d^s$. 
		
		Then the coefficients of $Q_z$ satisfy 
		$$\left|\left|Q_z^{(\alpha)}\right|\right|_{ C^{\beta'-|\alpha|}_z(\partial\Omega\cap B_{1/2})}\leq C.$$
		The constant $C$ depends only on $n,s,\beta',C_0,C_1,$ $\left|\left|(R^{k}_z)^{(\alpha)}\right|\right|_{ C^{\beta'-p_k-|\alpha|}_z(\partial\Omega\cap B_1)}$ and\\ $\left|\left|(P^k_z)^{(\alpha)}\right|\right|_{ C^{\beta'-p_k-|\alpha|}_z(\partial\Omega\cap B_1)}.$
	\end{lemma}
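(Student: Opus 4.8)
The plan is to reduce the statement, which concerns a ratio of "expansion-corrected" functions, to the already-established regularity results for ratios of genuine solutions. Writing $\tilde u_z := u - \sum_k P^k_z(\cdot-z)d^{s+p_k}$ and $\tilde v_z := v - \sum_k R^k_z(\cdot-z)d^{s+p_k}$, the two displayed hypotheses say exactly that $\tilde u_z/\tilde v_z$ agrees, up to order $\beta'$ and with the corresponding interior seminorm control, with the polynomial $Q_z(\cdot-z)$ along the non-tangential cone $\mathcal C_z$, for every boundary point $z$. The zero-order coefficient $Q_z^{(0)}$ equals the value of $\tilde u_z/\tilde v_z$ at $z$ (taken as a non-tangential limit), and more generally $Q_z^{(\alpha)}\,\alpha!$ is the $\alpha$-th non-tangential derivative of $\tilde u_z/\tilde v_z$ at $z$. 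So the content is: these "Taylor coefficients at $z$", viewed as functions of the base point $z\in\partial\Omega\cap B_{1/2}$, lie in $C^{\beta'-|\alpha|}_z$.

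First I would record the elementary observation that $\tilde v_z$ inherits the nondegeneracy $|\tilde v_z|\ge c\,d^s$ near $z$: since $|v|\le C_1 d^s$ is consistent with, and the hypothesis forces, $|\tilde v_z - R^0_z(z)d^s|=O(|x-z|^{s+p_{\min}})$ along the cone, and since the denominator in the hypothesis is nonzero (the ratio is finite), one gets that $\tilde v_z = (R^0_z(z)+o(1))d^s$ with $R^0_z(z)\ne 0$; combined with the assumption $|v|\le C_1 d^s$ and the bounds on the coefficients of $R^k_z$, this yields two-sided comparability with $d^s$ in $\mathcal C_z$. Then I would invoke Lemma~\ref{divisionLemma} with $f = \tilde u_z$, $g = \tilde v_z$, $a = \beta'$, $b = s$ (using that $g^{-1}$ satisfies $|D^k g^{-1}|\le C_k d^{-s-k}$, which follows from $g\sim d^s$ together with the $C^\beta$ structure of $d$ and of the polynomial coefficients) to convert the two hypotheses — a pointwise expansion estimate plus an interior Hölder seminorm estimate — into the single statement $\big\|\tilde u_z/\tilde v_z - Q_z(\cdot-z)\big\|$ controlled in $C^{\beta'}$ along the cone $\mathcal C_z$, uniformly in $z$. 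This is precisely the hypothesis of the coefficient-extraction lemma \cite[e.g. Lemma~\ref{newCoefRegularity} / its generalization]{}: an estimate of the form $|h(x) - Q_z(x-z)|\le C|x-z|^{\beta'}$ holding for all $z$ along non-tangential cones, with matching interior control, forces the map $z\mapsto Q_z^{(\alpha)}$ to be $C^{\beta'-|\alpha|}$ on $\partial\Omega\cap B_{1/2}$.

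The key steps in order: (1) unwind notation, setting $\tilde u_z,\tilde v_z$ and identifying $Q_z^{(\alpha)}$ with rescaled non-tangential Taylor coefficients of $\tilde u_z/\tilde v_z$ at $z$; (2) prove $\tilde v_z$ is comparable to $d^s$ on $\mathcal C_z$, uniformly in $z$, and deduce the required derivative bounds on $\tilde v_z^{-1}$ from the $C^\beta$ regularity of $d$ and of the coefficient functions; (3) apply Lemma~\ref{divisionLemma} to pass from the two displayed hypotheses on the ratio to a clean $C^{\beta'}$-type expansion estimate for $h_z:=\tilde u_z/\tilde v_z$ around each $z$; (4) apply the standard coefficient-regularity lemma (the $z$-dependent analogue of Lemma~\ref{newCoefRegularity}, already used throughout Section~6) to conclude $\|Q_z^{(\alpha)}\|_{C^{\beta'-|\alpha|}_z(\partial\Omega\cap B_{1/2})}\le C$; (5) track constants, noting the dependence on $n,s,\beta',C_0,C_1$ and on the $C^{\beta'-p_k-|\alpha|}_z$ norms of the coefficients of $P^k_z,R^k_z$, exactly as in the statement.

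The main obstacle I anticipate is step (3): making sure the hypotheses of Lemma~\ref{divisionLemma} are met verbatim. That lemma requires the growth/regularity data for $f-Qg = \tilde u_z - Q_z(\cdot-z)\tilde v_z$, not for $\tilde u_z$ and $\tilde v_z$ separately; one must check that the product $Q_z(\cdot-z)\tilde v_z$ absorbs cleanly into the same class of "polynomial times $d^{s+p_k}$" corrections (so that $f-Qg$ is again of the type handled, with a remainder of order $|x-z|^{\beta'+s}$ and matching interior seminorm), and that the power $a+b=\beta'+s$ and the integer thresholds ($\lfloor a\rfloor$, $\lfloor a+b\rfloor$) line up with the hypotheses — in particular the borderline case $\lfloor\beta'+s\rfloor=\lfloor\beta'\rfloor$ flagged in the proof of Lemma~\ref{divisionLemma} must be handled using the assumed interior regularity rather than one extra derivative. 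A secondary subtlety is the uniform-in-$z$ nondegeneracy of $\tilde v_z$; here the hypothesis $|v|\le C_1 d^s$ together with continuity in $z$ of the leading coefficient (which is part of the assumed $C^{\beta'-p_k}_z$ regularity) gives a uniform lower bound $R^0_z(z)\ge c>0$ on $\partial\Omega\cap B_{1/2}$ after possibly shrinking the ball, which is what feeds the comparability $\tilde v_z\sim d^s$.
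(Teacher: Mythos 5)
There is a genuine gap, and it sits exactly where the lemma's actual content lies. First, a structural confusion: the two displayed hypotheses are already estimates on the ratio $\tilde u_z/\tilde v_z - Q_z(\cdot-z)$ (pointwise on the cone and in interior seminorm), so your step (3) invoking Lemma \ref{divisionLemma} is both backwards and unnecessary — that lemma passes from data on a difference $f-Qg$ to data on the quotient $f/g-Q$, and you are not given data on $\tilde u_z - Q_z(\cdot-z)\tilde v_z$; nothing needs to be "converted". The decisive problem is step (4): you invoke a "standard coefficient-extraction lemma (the $z$-dependent analogue of Lemma \ref{newCoefRegularity})" as a black box, but no such off-the-shelf result applies here, and in this paper Lemma \ref{newCoefRegularity} treats a different situation (a fixed function expanded in powers of $d$) and its own proof defers to the finite-difference machinery of the present lemma, so the invocation is essentially circular. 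The key difficulty, which your proposal never addresses, is that the function $h_z=\tilde u_z/\tilde v_z$ whose non-tangential Taylor coefficients at $z$ are the $Q_z^{(\alpha)}$ itself \emph{depends on} $z$ through the corrections $P^k_z,R^k_z$; a Whitney/Campanato-type argument for a single function therefore does not give Hölder regularity of $z\mapsto Q_z^{(\alpha)}$.

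What the paper actually does, and what is missing from your plan, is the following: on the cone one writes $Q_z^{(\gamma)}=\partial^\gamma(\tilde u/\tilde v)-\partial^\gamma\eta_z$ with $|\partial^\gamma\eta_z|\le C|x-z|^{\beta'-|\gamma|}$, then takes $N$-th order finite differences in $z$ (through a $C^{\beta'}$ boundary parametrization) with $N$ large, evaluated at a point $x$ lying in the intersection of the cones $\mathcal C_{z_i}$ with $d(x)\le C|h|$. Since $u$ and $v$ do not depend on $z$, the increments hit only the coefficients of $P^k_z,R^k_z$, whose assumed $C^{\beta'-p_k-|\alpha|}_z$ regularity is then fed through a careful Leibniz/quotient-rule expansion of $\partial^\gamma(\tilde u/\tilde v)$ (this is also where the lower bound $\tilde v\gtrsim d^s\gtrsim|h|^s$, which you rightly flag in step (2), gets used to control the factors $\tilde v^{-(K+1)(M+1)}$). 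One concludes $|\Delta^N_h Q_z^{(\gamma)}|\le C|h|^{\beta'-|\gamma|}$ and invokes the characterization of pointwise Hölder regularity by higher-order increments (the reference \cite{increments}); higher-order differences are unavoidable because $\beta'-|\alpha|$ may exceed $1$. Lower-order coefficients are then handled by treating the already-controlled higher-order ones as remainders. None of these steps can be replaced by citing an existing lemma, so as written your proposal defers the entire proof to a result that does not exist in the needed generality.
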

	\begin{proof}
		With similar argument as in the proof of Lemma \ref{newCoefRegularity} we argue that the coefficients of $Q_z$ are uniformly bounded.
		
		Let us denote $\tilde{u} = u(x) - \sum_{k} P^k_z(x-z)d^{s+p_k}(x)$ and $\tilde{v}$ analogously. The assumptions give that on the cone $\mathcal{C}_z$, the function $\eta_z:=\frac{\tilde{u}}{\tilde{v}}-Q_z$ is of class $C^{\beta'}$ with $D^{\lfloor \beta'\rfloor}\eta_z (z) = 0$. Hence for $|\gamma|=\lfloor\beta'\rfloor$, we have 
		$$Q_z^{(\gamma)}= \partial^\gamma Q_z = \partial^\gamma (\tilde{u}/\tilde{v})-\partial^\gamma\eta_z.$$
		Now choose $N\in\N$ big enough and "take" incremental quotient of the above equation in variable $z$ of increment $h$. We do it through the parametrisation of the boundary, which can be taken of class $C^{\beta'}$. When we get get the boundary points $z_i$, so that $\Delta^N_h Q_z^{(\gamma)} = \sum_{i=0}^N (-1)^i \binom{N}{i}Q_{z_i}^{(\gamma)}$. Then choose $x\in \cap_{i=0}^N \mathcal{C}_{z_i},$ so that $d(x)\leq C|h|$. Then we have 
		$$\Delta^N_h Q_z^{(\gamma)} = \Delta^N_h\partial^\gamma \frac{\tilde{u}}{\tilde{v}}-\Delta^N_h\partial^\gamma \eta_z.$$
		First, since for all $i$ we have $|\partial^\gamma\eta_{z_i}(x)|\leq C|x-z_i|^{\beta'-|\gamma|}$ we can bound \begin{equation}\label{etaEstimate}
			|\Delta^N_h\partial^\gamma \eta_z|\leq C_N d(x)^{\beta'-|\gamma|}\leq C_N |h|^{\beta'-|\gamma|}.
		\end{equation}
		Hence, let us focus on the first term only. We compute
		$$\partial^\gamma \frac{\tilde{u}}{\tilde{v}} = \sum_{\alpha\leq \gamma} \binom{\gamma}{\alpha} \partial^\alpha \tilde{u}\partial^{\gamma-\alpha}\frac{1}{\tilde{v}} = \sum_{\alpha\leq \gamma} \binom{\gamma}{\alpha} \partial^\alpha \tilde{u} \sum_{K=0}^{|\gamma-\alpha|}\frac{1}{\tilde{v}^{K+1}}\sum_{\delta_1+\ldots+\delta_K=\gamma-\alpha}c(\delta,\alpha,\gamma)\partial^{\delta_1}\tilde{v}\cdot\ldots\cdot \partial^{\delta_K}\tilde{v},$$
		where when $K=0$ the inner sum has to be understood as $1$ if also $|\gamma-\alpha|=0$, otherwise $0$. Also, the constant $c(\delta,\alpha,\gamma)$ is non-zero if and only if all $\delta_i$ are non-zero.
		So we need to estimate 
		$$\Delta^N_h\left(\partial^\alpha\tilde{u}\frac{\partial^{\delta_1}\tilde{v}\ldots\partial^{\delta_K}\tilde{v}}{\tilde{v}^{K+1}}\right)$$
		which we split even further with the Leibnitz rule into 
		$$ \sum_{N_1+\ldots N_{K+2}=N}\binom{N}{N_1\ldots N_{K+2}}\Delta^{N_1}_h\partial^\alpha \tilde{u}
		\Delta^{N_2}_h\partial^{\delta_1} \tilde{v}\ldots\Delta^{N_{K+1}}_h\partial^{\delta_K} \tilde{v}\Delta^{N_{K+2}}_h\frac{1}{\tilde{v}^{k+1}}$$
		and furthermore 
		$$\Delta^{N_{K+2}}_h\frac{1}{\tilde{v}^{k+1}} = \sum_{M=0}^{N_{K+2}} \frac{1}{\tilde{v}^{(K+1)(M+1)}}\sum_{L_1+\ldots+L_M=N_{K+2}}\Delta^{L_1}_h\tilde{v}^{K+1}\ldots \Delta^{L_M}_h\tilde{v}^{K+1}.$$
		With final Leibnitz rule on all of the above powers, we conclude that up to some constants
		$$\Delta^{N}_h\partial^\gamma \frac{\tilde{u}}{\tilde{v}} = 
		\sum_{\alpha}\sum_{K}\sum_{\delta}\sum_{N_l}\sum_M\sum_L  \Delta^{N_1}_h(\partial^\alpha \tilde{u})\Delta^{N_2}_h(\partial^{\delta_1} \tilde{v})\ldots \Delta^{N_{K+1}}_h(\partial^{\delta_K} \tilde{v})\frac{1}{\tilde{v}^{(M+1)(K+1)}}
		$$
		$$\times\prod_{i=1}^M \sum_{L^i_1+\ldots+L^i_{K+1}=L_i} \prod_{j=1}^{K+1}\Delta^{L^i_j}_h \tilde{v}  .
		  $$
		So we need to estimate 
		\begin{equation}\label{factorsHoriblles}
			\Delta^{N_1}_h(\partial^\alpha \tilde{u})\Delta^{N_2}_h(\partial^{\delta_1} \tilde{v})\ldots \Delta^{N_{K+1}}_h(\partial^{\delta_K} \tilde{v})\frac{1}{\tilde{v}^{(M+1)(K+1)}}\prod_{i,j}^{M,K+1} \Delta^{L^i_j}_h\tilde{v}.
		\end{equation}
		
		We treat every of the above factors separately. Starting with the first one, we begin with the following manipulation, where we take out the part, which is important for the finite difference:
		$$\partial^\alpha \tilde{u}(x) = \partial^\alpha u(x) - \partial^\alpha\sum_k  \sum_{\eta} c_{k,z}^\eta(x-z)^\eta d^{s+p_k}(x) $$
		$$= \partial^\alpha u(x) - \sum_k  \sum_{\eta}\sum_\varepsilon\binom{\eta}{\varepsilon} c_{k,z}^\eta(0-z)^\varepsilon \partial^\alpha(x-0)^{\eta-\varepsilon} d^{s+p_k}(x).$$
		Now we take the finite difference, and apply the Leibnitz rule to get (with omitting some constants)
		$$\Delta^{N_1}_h(\partial^\alpha \tilde{u}) = - \sum_k\sum_\eta\sum_\varepsilon\sum_{N'}\Delta^{N'}_hc_{k,z}^\eta \Delta^{N_1-N}_h(z-0)^\varepsilon \cdot \partial^\alpha(x-0)^{\eta-\varepsilon} d^{s+p_k}(x).$$
		When estimating, this gives 
		$$\big|\Delta^{N_1}_h(\partial^\alpha \tilde{u})\big|\leq C \sum_k\sum_\eta\sum_\varepsilon\sum_{N'} |h|^{\rho\land N'}|h|^{(|\varepsilon|\lor (N_1-N'))\land \beta'}|h|^{s+p_k+|\eta-\varepsilon|-|\alpha|},  $$
		where $\rho$ denotes the regularity of the coefficient, so $\rho = \beta' - p_k -|\eta|$. With treating the cases carefully, we can bound it by
		$$\leq C\sum_k\sum_\eta|h|^{(N_1+p_k+s-|\alpha|)\land (\rho + p_k+s+|\eta|-|\alpha|)}\leq C |h|^{s-|\alpha|+N_1\land \beta'}.$$
		Note also, that this is true also when $N_1=0$, due to the assumption on the growth of derivatives of $u$.
		For the other factors, we perform the same estimations. We plug it in \eqref{factorsHoriblles} to get
		$$\left|\Delta^{N}_h\partial^\gamma \frac{\tilde{u}}{\tilde{v}}\right|\leq C|h|^{(K+1)(M+1)s - |\gamma|}|h|^{N_1\land\beta'}|h|^{N_2\land\beta'}\ldots |h|^{N_{K+1}\land\beta'}|h|^{-(K+1)(M+1)s}\prod_{i,j}|h|^{L_{i,j}\land\beta'}.$$
		If we choose $N$ big enough, so that at least one of the minimums give $\beta'$ we get the claim, due to \eqref{etaEstimate}.
		
		To establish the regularity of other coefficients, we proceed with the same procedure, only that now we treat higher order coefficients of polynomial $Q_z$ as reminders. Concretely, for $|\gamma|<\lfloor \beta'\rfloor$ we have
		$$Q_z^{(\gamma)} = \partial^\gamma Q_z - \sum_{\gamma<\gamma'}c_{\gamma,\gamma'} Q_z^{(\gamma')}(x-z)^{\gamma'-\gamma}.$$
		After finite differences, we treat the first term as before to get $|h|^{\beta'-|\gamma|}$, while the second one gives at least the same by already proven regularity (we can treat it as we treated polynomials before). 
		
		The claim follows from \cite[Theorem 2.1]{increments}.
	\end{proof}

\begin{lemma}\label{newCoefRegularity}
	Let $u\in L^\infty (\R^n).$ Suppose that for every $z\in\partial\Omega\cap B_1$ we have polynomials $P^{k,l}_{z}\in \mathbb{P}_{\lfloor \beta' - k\varepsilon_0 - l \rfloor}$ such that
	$$\left|u(x)-\sum_{k,l\geq 0}P^{k,l}_z(x-z) d^{s+k\varepsilon_0 + l}(x)\right|\leq C_0|x-z|^{\beta'+s},\quad x\in B_1(z),$$ 
	and 
	$$\left[ u-\sum_{k,l\geq 0}P^{k,l}_z d^{s+k\varepsilon_0 + l}\right]_{C^{\beta'+s}(B_r(x_0))}\leq C_0,\quad \text{if }d(x_0)=2r=|x_0-z|.$$
	Suppose that when $(k,l)\neq (0,0)$, the coefficient $(P^{k,l}_z)^{(\alpha)}$ are $C_z^{\beta' - k\varepsilon_0 - l-|\alpha|}(\partial\Omega\cap B_1)$, with 
	$$\left|\left|(P^{k,l}_z)^{(\alpha)}\right|\right|_{C_z^{\beta' - k\varepsilon_0 - l-|\alpha|}(\partial\Omega\cap B_1)}\leq C_1.$$
	
	Then the same holds true also for $P^{0,0}_z$ i.e.
	$$\left|\left|(P^{0,0}_z)^{(\alpha)}\right|\right|_{C^{\beta'-|\alpha|}_z(\partial\Omega\cap B_{1/2})}.\leq C$$
	The constant $C$ depends only on $n,s,\beta',C_0,C_1$.
\end{lemma}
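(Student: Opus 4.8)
The plan is to extract the regularity of $P^{0,0}_z$ — equivalently of its coefficients $(P^{0,0}_z)^{(\alpha)}$ as functions of the boundary point $z$ — by isolating the term $P^{0,0}_z(x-z)d^s(x)$ in the given expansion and then taking high order finite differences in $z$, exactly in the spirit of the proof of Lemma \ref{theUltimateRegularityLemma}. First I would reorganise each polynomial $P^{k,l}_z(x-z)$ around the fixed base point $0$: writing $P^{k,l}_z(x-z)=\sum_\eta c^{k,l,\eta}_z(x-z)^\eta$ and expanding $(x-z)^\eta=\sum_{\varepsilon\le\eta}\binom{\eta}{\varepsilon}(x)^{\eta-\varepsilon}(-z)^\varepsilon$, so that all the $z$-dependence sits either in the coefficients $c^{k,l,\eta}_z$ (which for $(k,l)\ne(0,0)$ are $C^{\beta'-k\varepsilon_0-l-|\eta|}_z$ by hypothesis) or in explicit monomials $(-z)^\varepsilon$, whose finite differences are trivially controlled. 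The point of this rearrangement is that the quantity to be estimated, $\partial^\gamma_x$ of the whole expansion evaluated at a point $x$ in the cone $\mathcal C_z$ with $d(x)\simeq|h|$, becomes a sum of terms in which the $z$-difference can be distributed by the Leibniz rule for finite differences.

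Next, for a multi-index $\gamma$ with $|\gamma|=\lfloor\beta'\rfloor$ I would use that, on the cone $\mathcal C_z$, the function $R_z(x):=u(x)-\sum_{k,l}P^{k,l}_z(x-z)d^{s+k\varepsilon_0+l}(x)$ is $C^{\beta'+s}$ with $D^{\lfloor\beta'+s\rfloor}R_z(z)=0$, hence $|D^j R_z(x)|\le C d(x)^{\beta'+s-j}$ for $j\le\lfloor\beta'+s\rfloor$; applying $\partial^\gamma_x$ and evaluating where $d(x)\simeq|h|$ produces a term $O(|h|^{\beta'+s-|\gamma|})=O(|h|^{\beta'+s-\lfloor\beta'\rfloor})$, which is a controlled power in $|h|$. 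It remains to handle $\partial^\gamma_x\big(\sum_{k,l}P^{k,l}_z(x-z)d^{s+k\varepsilon_0+l}(x)\big)$. For $(k,l)\ne(0,0)$ the summand, after the rearrangement above, is a finite sum of products (coefficient $c^{k,l,\eta}_z$ of known $z$-regularity)$\times$(explicit $z$-monomial)$\times(\partial^{\gamma'}_x$ of $x^{\eta-\varepsilon}d^{s+k\varepsilon_0+l})$; taking an $N$-fold finite difference $\Delta^N_h$ in $z$ and distributing by Leibniz, each factor contributes a power $|h|^{(\text{its }z\text{-regularity})\wedge N}$, and the $x$-factor evaluated at $d(x)\simeq|h|$ contributes $|h|^{s+k\varepsilon_0+l-|\gamma'|}$; bookkeeping of the exponents — exactly as in the estimate of \eqref{factorsHoriblles} in Lemma \ref{theUltimateRegularityLemma} — shows each such term is $O(|h|^{s+\beta'-|\gamma|})$ up to choosing $N$ large enough that at least one $\wedge$ saturates at $\beta'$. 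Thus the only surviving contribution is $\Delta^N_h\partial^\gamma_x\big(P^{0,0}_z(x-z)d^s(x)\big)$, which equals (up to lower-order $x$-derivatives of $d^s$, whose coefficients involve exactly the $(P^{0,0}_z)^{(\gamma')}$ with $|\gamma'|\le|\gamma|$) a linear combination $\big(\Delta^N_h(P^{0,0}_z)^{(\gamma)}\big)\,d^s(x)+\text{(already-controlled terms, by downward induction on }|\gamma|)$; dividing by $d^s(x)\simeq|h|^s$ and using $|u|\le C$ together with the estimates above yields $|\Delta^N_h(P^{0,0}_z)^{(\gamma)}|\le C|h|^{\beta'-|\gamma|}$, i.e. $(P^{0,0}_z)^{(\gamma)}\in C^{\langle\beta'\rangle}_z$.

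To finish, I would run a short downward induction on $|\gamma|$ from $\lfloor\beta'\rfloor$ to $0$: once the top coefficients are known to be $C^{\langle\beta'\rangle}_z$, for $|\gamma|<\lfloor\beta'\rfloor$ write $(P^{0,0}_z)^{(\gamma)}=\partial^\gamma_x(P^{0,0}_z(x-z))\big|_{x=z\text{-dependent}}-\sum_{\gamma'>\gamma}c_{\gamma,\gamma'}(P^{0,0}_z)^{(\gamma')}(x-z)^{\gamma'-\gamma}$ and treat the second sum as a remainder using the already-established regularity of the higher coefficients (its finite differences being controlled as the polynomial terms above), thereby upgrading to $(P^{0,0}_z)^{(\gamma)}\in C^{\beta'-|\gamma|}_z$. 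Uniform boundedness of all coefficients of $P^{0,0}_z$ is obtained first, by the same argument as at the start of the proof of Lemma \ref{theUltimateRegularityLemma} (or Lemma \ref{newCoefRegularity}'s companion estimates): evaluating the expansion at a collection of well-separated interior points at unit scale and inverting the resulting linear system. The conclusion then follows from the finite-difference characterisation of Hölder spaces, e.g. \cite[Theorem 2.1]{increments}. The main obstacle I anticipate is purely the combinatorial exponent bookkeeping in the Leibniz expansion of $\Delta^N_h$ applied to the products coefficient$\times$monomial$\times\partial^{\gamma'}_x(x^\beta d^p)$ — making sure that in every case the sum of the gained powers of $|h|$ is at least $\beta'+s-|\gamma|$ and that the worst case $(k,l)=(0,0)$ is genuinely the only term that is not already small — but this is routine given the corresponding computation already carried out in Lemma \ref{theUltimateRegularityLemma}.
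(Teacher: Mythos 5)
Your overall strategy is the paper's: uniform boundedness of the coefficients of $P^{0,0}_z$ via an interior ball and equivalence of polynomial norms, then $N$-fold finite differences in $z$ evaluated at points $x$ with $d(x)\simeq|h|$ in a common cone, Leibniz bookkeeping for the blocks with $(k,l)\neq(0,0)$ using their known $C^{\beta'-k\varepsilon_0-l-|\alpha|}_z$ regularity, a downward induction on $|\gamma|$, and the conclusion via the finite-difference characterisation of H\"older spaces. However, there is one genuine gap in the key step, and it is exactly the point where the paper's proof differs from yours: the paper first applies Lemma \ref{divisionLemma} to convert the hypotheses into estimates for $\frac{u}{d^s}-\sum_{k,l}P^{k,l}_z d^{k\varepsilon_0+l}$ on the cone, so that the $(0,0)$ block is the \emph{pure polynomial} $P^{0,0}_z(x-z)$; then for top order $|\gamma|=\lfloor\beta'\rfloor$ one has $\partial^\gamma P^{0,0}_z(x-z)=(P^{0,0}_z)^{(\gamma)}$ exactly, with no other unknown coefficients entering, and the downward induction closes. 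You instead keep the factor $d^s$ attached and only divide by $d^s(x)\simeq|h|^s$ at the end. Then
$$\partial^\gamma_x\bigl(P^{0,0}_z(x-z)\,d^s(x)\bigr)=\sum_{\alpha\le\gamma}\binom{\gamma}{\alpha}\,\partial^\alpha_x P^{0,0}_z(x-z)\,\partial^{\gamma-\alpha}_x d^s(x),$$
and the terms with $\alpha<\gamma$ contain the unknown coefficients $(P^{0,0}_z)^{(\gamma')}$ with $\gamma'\ge\alpha$, $|\gamma'|\le|\gamma|$, multiplied by $(x-z)^{\gamma'-\alpha}\partial^{\gamma-\alpha}d^s(x)\simeq |h|^{\,s+|\gamma'|-|\gamma|}$. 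After taking $\Delta^N_h$ (the worst Leibniz term putting all differences on the unknown coefficient) and dividing by $d^s\simeq|h|^s$, you are left with contributions of size $|\Delta^N_h(P^{0,0}_z)^{(\gamma')}|\,|h|^{|\gamma'|-|\gamma|}$. For $|\gamma'|=|\gamma|$, $\gamma'\neq\gamma$, these are of the same order as the main term, so the top-order coefficients get mixed among themselves; for $|\gamma'|<|\gamma|$ the weight $|h|^{|\gamma'|-|\gamma|}$ blows up, and the crude uniform bound $|\Delta^N_h(P^{0,0}_z)^{(\gamma')}|\le C$ is far from enough. These are precisely the terms you label ``already-controlled terms, by downward induction on $|\gamma|$'', but at the first (top-order) step of a downward induction none of them is controlled, so the argument is circular as written.

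The fix is not a different idea but the missing preliminary step: use Lemma \ref{divisionLemma} (as the paper does) to pass to $u/d^s$ and the expansion in powers $d^{k\varepsilon_0+l}$ on the cone $\mathcal{C}_z$ before differentiating and taking finite differences. Then at top order the only unknown is the single constant $(P^{0,0}_z)^{(\gamma)}$, the blocks with $(k,l)\neq(0,0)$ are handled exactly by your Leibniz bookkeeping, and in the subsequent downward steps the extra terms involve only coefficients of order strictly greater than $|\gamma|$, each contributing $|\Delta^{N_1}_h(P^{0,0}_z)^{(\gamma')}|\,|h|^{|\gamma'-\gamma|}\lesssim|h|^{\beta'-|\gamma|}$ by the inductive hypothesis. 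The rest of your outline (remainder bound on the cone, choice of $x$ in the intersection of the cones $\mathcal{C}_{z_i}$ with $d(x)\simeq|h|$, and the appeal to \cite[Theorem 2.1]{increments}) is correct and matches the paper.
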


\begin{proof}
	We start with proving that all the coefficients of $P^{0,0}_z$ are uniformly bounded for every $z\in \partial\Omega\cap B_{1/2}(0)$. In this direction, we stress that $$\bigcap_{z\in\partial\Omega\cap B_{1/2}(0)} B_1(z)\cap \Omega $$
	has non-empty interior, hence we can get a ball $B$ inside, with $d(\partial\Omega, B)\geq c>0$. Now we can bound 
	$$||P^{0,0}_z||_{L^\infty(B)}\leq \frac{1}{c^s}||P^{0,0}_zd^s||_{L^\infty(B)}\leq \frac{1}{c^s}||\tilde{u}-P^{0,0}_zd^s||_{L^\infty(B)}+\frac{1}{c^s}||\tilde{u}||_{L^\infty(B)} ,$$
	where we denoted $\tilde{u} =  u-\sum_{k\geq 1,l\geq 0}P^{k,l}_z d^{s+k\varepsilon_0 + l}$. But since both terms on the right-hand side above are bounded independently of $z$, we have 
	$$||P^{0,0}_z||_{L^\infty(B)}\leq C.$$
	Now \cite[Lemma A.10]{AR20} applies.
	
	We proceed towards finite differences. First, due to Lemma \ref{divisionLemma} we rewrite the assumptions into
	$$\left|\frac{u}{d^s} - \sum_{k,l\geq 0} P^{k,l}_z d^{k\varepsilon_0+l}\right|\leq C|x-z|^{\beta'},\quad x\in \mathcal{C}_z, \quad\text{and}$$
	$$\left[ \frac{u}{d^s} - \sum_{k,l\geq 0} P^{k,l}_z d^{k\varepsilon_0+l} \right]_{C^{\beta'}(B_r(x_0))} \leq C,$$
	which implies that on the cone $\mathcal{C}_z$ we have 
	$$\left|D^j \left(\frac{u}{d^s} - \sum_{k,l\geq 0} P^{k,l}_z d^{k\varepsilon_0+l}\right)\right|\leq C|x-z|^{\beta'-j}$$
	for $0\leq j<\beta'.$
	We proceed as in Lemma \ref{theUltimateRegularityLemma}. 
	
	Choose $|\gamma|=\lfloor\beta'\rfloor$. On the cone $\mathcal{C}_z$ we have
	$$\partial^\gamma\frac{u}{d^s} -\sum_{(k,l)\neq (0,0)}\sum_{\alpha\leq\gamma}\binom{\gamma}{\alpha}\partial^\alpha P^{k,l}_z \partial^{\gamma-\alpha}d^{k\varepsilon_0+l} - \partial^\gamma P^{0,0}_z = \eta_z,$$
	with $|\eta_z|\leq C|x-z|^{\beta'-|\gamma|}.$ 
	As in Lemma \ref{theUltimateRegularityLemma} we take the finite difference ot he above equation of some order big enough, and choose $x$ in a suitable intersection of cones. Then we can estimate
	$$|\Delta^N_h \eta_z| \leq C|h|^{\beta'-|\gamma|},$$
	$$\left|\Delta^N_h \frac{u}{d^s}\right| = 0,$$
	as well as
	$$ \left|\Delta^N_h \partial^\alpha P^{k,l}_z \partial^{\gamma-\alpha}d^{k\varepsilon_0+l}\right| = \left|\Delta^N_h \sum_{\alpha'\geq \alpha} c_{\alpha',z}(x-z)^{\alpha'-\alpha} \partial^{\gamma-\alpha}d^{k\varepsilon_0+l}\right| $$
	$$= \left| \sum_{\alpha'\geq \alpha}\sum_{\delta\leq\alpha'-\alpha}\binom{\alpha'-\alpha}{\delta} \Delta^N_h\left(c_{\alpha',z}(z-0)^\delta\right) \cdot(x-0)^{\alpha'-\alpha-\delta} \partial^{\gamma-\alpha}d^{k\varepsilon_0+l}\right|.$$
	The finite difference we estimate as in the proof of Lemma \ref{theUltimateRegularityLemma}, ($N'$ comes from the Leibnitz rule) to get
	$$\leq  C \sum_{\alpha',\delta}\sum_{N'}|h|^{(\beta'-k\varepsilon_0-l -|\alpha'|)\land N'}|h|^{|\delta|\lor ((N-N')\land \beta')} \cdot |h|^{|\alpha'-\alpha-\delta|+k\varepsilon_0+l-|\gamma-\alpha|}
	\leq C|h|^{N\land\beta' - |\gamma|}.$$
	Once this is established, we conclude that for $N>\beta'$,
	$$\left|\Delta^N_h \left(P^{0,0}_z\right)^{(\gamma)}\right|\leq C |h|^{\beta'-|\gamma|}.$$
	
	We proceed in the same way as in Lemma \ref{theUltimateRegularityLemma}: when dealing with lower order coefficients of $P^{0,0}_z$ we treat higher order ones as "the reminder". 
	
	The claim follows from \cite[Theorem 2.1]{increments}.
\end{proof}

	\begin{lemma}\label{reminderRegularity}
		Let $b>0$ and $p>-b$. Suppose $\phi\colon \Omega\to\R$ is a $C^{b+p\lor b}(\overline{\Omega})$ function, with $|\phi(x)|\leq C |x-z|^{b}$ for some boundary point $z$. 
		
		Then for any $a<b+p$, and $d(x_1) = 2r$ we have 
		$$\left[ \phi d^p\right]_{C^{a}(B_r(x_1))}\leq C \left(\frac{|x_1-z|}{r}\right)^b.$$
	\end{lemma}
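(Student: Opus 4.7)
The plan is to estimate $[\phi d^p]_{C^a(B_r(x_1))}$ by the classical Leibniz product rule applied to $\phi \cdot d^p$, reducing the claim to pointwise and Hölder-seminorm estimates for derivatives of the two factors. The key geometric observation is that since $d(x_1)=2r$, the ball $B_r(x_1)$ sits in $\Omega$ with $d(x)\in[r,3r]$, and for any boundary point $z$ we also have $r\le |x_1-z|/2$, so $|x-z|\in[\tfrac12|x_1-z|,\tfrac32|x_1-z|]$ throughout $B_r(x_1)$.

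The two essential ingredients come from analysing each factor separately. For $d^p$, smoothness of the generalised distance together with $d\approx r$ on the ball yields by the chain rule $\|D^\gamma(d^p)\|_{L^\infty(B_r(x_1))}\le C r^{p-|\gamma|}$ and $[D^\gamma(d^p)]_{C^\sigma(B_r(x_1))}\le C r^{p-|\gamma|-\sigma}$ for every $\gamma$ and $\sigma\in[0,1]$. For $\phi$, the crucial fact is that the pointwise vanishing $|\phi(x)|\le C|x-z|^b$ combined with $\phi\in C^{b+p\lor b}$ forces, via Taylor expansion at $z$, that $D^\beta\phi(z)=0$ for every $|\beta|<b$. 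Hence for such $\beta$ one obtains $|D^\beta\phi(x)|\le C|x-z|^{b-|\beta|}$, which on $B_r(x_1)$ becomes $\le C|x_1-z|^{b-|\beta|}$; for $|\beta|\ge b$ one only retains the uniform bound $|D^\beta\phi|\le C$ coming from the $C^{b+p\lor b}$ norm, and exploits the remaining smoothness of $\phi$ to estimate $[D^\beta\phi]_{C^\sigma(B_r(x_1))}\le C r^{(b+p\lor b)-|\beta|-\sigma}$ via the mean value theorem.

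With these ingredients in hand, I write $|\alpha|=\lfloor a\rfloor$ and $\sigma=a-\lfloor a\rfloor$, apply the Leibniz rule $D^\alpha(\phi d^p)=\sum_{\beta+\gamma=\alpha}\binom{\alpha}{\beta}D^\beta\phi\,D^\gamma(d^p)$, and on each term use the product estimate $[fg]_{C^\sigma}\le[f]_{C^\sigma}\|g\|_\infty+\|f\|_\infty[g]_{C^\sigma}$. A short case analysis on whether $|\beta|\le b$ or $|\beta|>b$ shows that, using $r\le|x_1-z|$ to absorb extra powers of $(r/|x_1-z|)$, every summand is bounded by $C|x_1-z|^b\,r^{p-a}$. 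Since $a<b+p$ and $r$ is bounded by the diameter of $\Omega$, one has $r^{p-a}=r^{-b}\cdot r^{b+p-a}\le C r^{-b}$, which gives the desired estimate $C(|x_1-z|/r)^b$.

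The main obstacle is the careful bookkeeping in the Leibniz sum, in particular the borderline cases: distributing the Hölder exponent $\sigma$ between the two factors, handling multi-indices $\beta$ close to $b$ where the Taylor-based estimate $|D^\beta\phi|\lesssim|x-z|^{b-|\beta|}$ degenerates, and ensuring that exactly the regularity $C^{b+p\lor b}$ (rather than something smaller) is sufficient to make every error term fit under the target bound without losing powers of $r$.
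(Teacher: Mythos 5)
Your proposal is correct and follows essentially the same route as the paper's proof: both deduce $|D^k\phi(x)|\leq C|x-z|^{b-k}$ for $k\leq b$ from the pointwise vanishing and the regularity of $\phi$, then apply the Leibniz rule to $\partial^\gamma(\phi d^p)$ with $|\gamma|=\lfloor a\rfloor$, estimate the Hölder difference by passing to one more derivative (mean value theorem) with $\|D^j d^p\|_{L^\infty(B_r(x_1))}\lesssim r^{p-j}$, and conclude from the worst term via $r^{p-a}\leq Cr^{-b}$ using $a<b+p$ and $r\leq|x_1-z|$. The case analysis on $|\beta|$ relative to $b$ (and $b+p$) is exactly the bookkeeping carried out in the paper.
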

	\begin{proof}
		First we deduce, that $|D^k\phi(x)|\leq C|x-z|^{b-k}$, for $k\leq b$. Now choose a multi-index $\gamma$ od order $\lfloor a\rfloor$, and compute
		$$\big|\partial^\gamma(\phi d^p)(x)-\partial^\gamma(\phi d^p)(y)\big|\leq \sum_{\alpha\leq\gamma}\binom{\gamma}{\alpha} |\partial^\alpha \phi(x)-\partial^\alpha \phi(y)| \cdot|\partial^{\gamma-\alpha}d^p(x)|+ $$
		$$+\sum_{\alpha\leq\gamma}\binom{\gamma}{\alpha} |\partial^\alpha \phi(y)| \cdot|\partial^{\gamma-\alpha}d^p(x)-\partial^{\gamma-\alpha}d^p(y)|$$
		$$\leq C\left(\sum_{|\alpha|\leq b-1} + \sum_{b-1<|\alpha|\leq b+p-1}+\sum_{b+p-1<|\alpha|\leq \gamma}\right) ||D^{|\alpha|+1}\phi||_{L^\infty}|x-y|||D^{|\gamma-\alpha|}d^p||_{L^\infty}+ $$
		$$ + C \left(\sum_{|\alpha|\leq b}+\sum_{b<|\alpha|\leq\gamma}\right)||D^{|\gamma-\alpha|+1}d^p||_{L^\infty}|x-y|||D^{|\alpha|}\phi||_{L^\infty}$$
		which after taking the worst term (second sum, $|\alpha|=0$) gives 
		$$\leq C|x_1-z|^br^{p-a}|x-y|^{\langle a\rangle}.$$
		But since $r^{p-a}\leq r^{-b},$ the claim is proven.
	\end{proof}

	\begin{lemma}\label{A10}
		Let $p>1$, and assume $f$ satisfies $|f(x)|\leq C |x-z|^{p}$ on the cone $\mathcal{C}_z$, as well as the interior regularity estimate
		$$\left[ f\right] _{C^p{(B_{r}(x_1))}}\leq C\left(\frac{|x_1-z|}{r}\right)^{p},$$ where  $d(x_1) = |x_1-z'|=2r$ ($x_1$ does not need to be in the cone). Then we have
		$$|\partial^\gamma f(x)|\leq C|x-z|^{p}d^{-|\gamma|}(x),\quad\quad x\in\Omega\cap B_1(z), 1 \leq |\gamma|\leq \lfloor p\rfloor. $$
	\end{lemma}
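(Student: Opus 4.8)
The statement of Lemma \ref{A10} asserts that a function $f$ with the growth control $|f|\le C|x-z|^p$ on the cone $\mathcal C_z$, together with the scale-invariant interior seminorm bound, automatically has derivatives $|\partial^\gamma f|\le C|x-z|^p d^{-|\gamma|}$ on the whole neighbourhood $\Omega\cap B_1(z)$, for $1\le|\gamma|\le\lfloor p\rfloor$. The plan is to first upgrade the hypotheses to a \emph{pointwise} interior derivative estimate on balls centred away from the boundary, then combine it with the growth control along cones, and finally transfer the estimate from cone-points to arbitrary interior points by a chaining argument analogous to the one in Lemma \ref{growthLemma} and Lemma \ref{upToBoundaryRegularity}.

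\textbf{Step 1 (interior derivative estimate on a single ball).} Fix $x_1\in\Omega\cap B_1(z)$ and set $2r=d(x_1)$; let $x'$ be a point on the cone $\mathcal C_z$ with $|x'-z|$ comparable to $|x_1-z|$ and $d(x')=2r$, so that $B_r(x_1)$ and $B_r(x')$ are comparable scales. Rescale $f$ to $f_r(y):=f(x_1+ry)$ on $B_1$. The interior seminorm hypothesis gives $[f_r]_{C^p(B_1)}\le C(|x_1-z|/r)^p r^p$. Interpolating between this seminorm bound and the $L^\infty$ bound coming from the cone growth control (here I would pick a reference point in $\mathcal C_z\cap B_{2r}(x_1)$, where $|f|\le C|x_1-z|^p$, and propagate it across $B_r(x_1)$ using the seminorm) yields $\|f_r\|_{C^p(B_1)}\le C|x_1-z|^p$, hence for $1\le|\gamma|\le\lfloor p\rfloor$,
\[
|\partial^\gamma f(x_1)| = r^{-|\gamma|}\,|\partial^\gamma f_r(0)| \le C\,|x_1-z|^p\,r^{-|\gamma|} = C\,|x_1-z|^p\,d^{-|\gamma|}(x_1).
\]

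\textbf{Step 2 (comparing $|x_1-z|$ with $d(x_1)$ off the cone).} The subtlety is that the growth bound $|f|\le C|x-z|^p$ is only assumed on the cone $\mathcal C_z$, not on all of $\Omega\cap B_1(z)$, whereas the conclusion is on all of $\Omega\cap B_1(z)$; also at an arbitrary interior point $|x_1-z|$ may be much larger than $d(x_1)$. To handle this, for a general $x_1$ I would walk from $x_1$ to a cone-point $x'$ with $d(x')=|x'-z|$ by a dyadic sequence $x_1=x_{(0)},x_{(1)},\dots,x_{(K)}=x'$ of points with geometrically increasing distance to $z$, consecutive ones lying in a common ball on which the interior $C^p$ estimate of Step 1 applies; summing the increments of $\partial^\gamma f$ (each controlled by a power of the local radius times $(|x_{(i)}-z|/r_i)^p$), the series is geometric and sums to $C|x_1-z|^p d^{-|\gamma|}(x_1)$, using at the endpoint the cone growth control to anchor $f$ and its derivatives. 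This is exactly the mechanism of Lemma \ref{growthLemma} / Lemma \ref{upToBoundaryRegularity}, adapted to the scale-invariant right-hand side.

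\textbf{Main obstacle.} The delicate point is the bookkeeping in Step 2: making the chaining argument respect both the factor $|x_1-z|^p$ and the factor $d^{-|\gamma|}$ simultaneously, and checking that the geometric series closes for every $|\gamma|\le\lfloor p\rfloor$ (the borderline case $|\gamma|=\lfloor p\rfloor$ with $p\notin\mathbb N$ is where one must be careful that no logarithmic loss appears, which is why the statement stops at $\lfloor p\rfloor$ rather than going to $\lceil p\rceil$). Once the interpolation in Step 1 is set up cleanly, the rest is routine, so I would present Step 1 in some detail and then say ``the remaining transfer from cone points to arbitrary interior points is carried out exactly as in Lemma \ref{growthLemma}''.
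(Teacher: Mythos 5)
Your overall strategy --- dyadic chaining from an arbitrary interior point to the cone, anchored there by the growth control --- is the same as the paper's, but two steps as written do not hold up. First, Step 1 fails off the cone: if $d(x_1)\ll|x_1-z|$ then $\mathcal{C}_z\cap B_{2r}(x_1)=\emptyset$ (cone points satisfy $d\sim$ distance to $z$), so there is no reference point at which to anchor your interpolation; indeed the two requirements you impose on $x'$ (that $|x'-z|$ be comparable to $|x_1-z|$ while $d(x')=2r=d(x_1)$) are incompatible at such points. Moreover the order-zero part of Step 1's conclusion is false in general: off the cone one only gets $|f|\leq C|x-z|^p\log d$ (cf.\ the Remark following the lemma), so $\|f_r\|_{C^p(B_1)}\leq C|x_1-z|^p$ cannot come out of a single-ball interpolation. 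Step 1 is really only valid when $x_1$ lies in the cone, which is the easy case the paper dispatches in one sentence; had it been correct for arbitrary $x_1$, your Step 2 would be superfluous, which is itself a warning sign.

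Second, in Step 2 the increments along the chain are controlled by the hypothesis only at the top order: the $C^p$ seminorm on a ball bounds $|\partial^\gamma f(x_{(i)})-\partial^\gamma f(x_{(i-1)})|$ only when $|\gamma|=\lfloor p\rfloor$. For $1\leq|\gamma|<\lfloor p\rfloor$ you need a sup bound on $D^{|\gamma|+1}f$ first, i.e.\ a downward induction, and this is exactly how the paper proceeds: it first proves the case $|\gamma|=\lfloor p\rfloor$ by chaining along $x_i=z'+2^i(x-z')$, where $z'$ is the nearest boundary point (so it is the distance to $\partial\Omega$, not to $z$, that grows geometrically, while $|x_i-z|\leq C|x-z|$ stays comparable), and then obtains the lower orders by integrating that bound along the segment from $x$ to the first cone point $x_N$, namely $\int_{d(x)}^{d(x_N)}C|x_N-z|^p\,t^{-|\alpha|-1}\,dt\leq C|x-z|^p d^{-|\alpha|}(x)$, anchored by the cone estimate for $\partial^\alpha f(x_N)$. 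This integration/induction step is the ingredient missing from your sketch. A minor further point: the borderline where a logarithmic loss appears is order zero (the Remark), not $|\gamma|=\lfloor p\rfloor$; at top order the geometric series converges precisely because $\lfloor p\rfloor\geq 1$.
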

	\begin{proof}
		Let us stress, that inside the cone the claim is true due to the growth and the regularity estimates.  First let us prove the case $|\gamma|= \lfloor p\rfloor$. From the assumptions of $f$, we have $|\partial^\gamma f(x)|\leq C|x-z|^{p-|\gamma|}$ on the cone. Choose now $x\in \Omega$ outside the cone. Let $z'$ be the closest boundary point. Denote $x_i := z' + 2^{i}(x-z')$ and $r_i = |x_i-x_{i-1}|=2^{i-1}d(x)$. Let $N$ be such that $x_N$ is the first point inside the cone $\mathcal{C}_z$. Then we compute
		\begin{align*}
			|\partial^\gamma f(x)|&\leq \sum_{i=1}^N |\partial^\gamma f(x_i)-\partial^\gamma f(x_{i-1})| + |\partial^\gamma f(x_N)|\\
			&\leq \sum_1^N C\left(\frac{|x_i-z|}{r_i}\right)^p r_i^{\langle p\rangle} + C|x_N-z|^{p-|\gamma|}\\
			&\leq C|z-x|^pd^{-\lfloor p\rfloor}(x) \sum 2^{-\lfloor p\rfloor i}\leq C|x-z|^pd^{-\lfloor p\rfloor}(x).
		\end{align*}
		Since $x_i\not \in \mathcal{C}_z$, we have $|x_i-z|\leq C |x-z|$.
		
		For the lower order derivatives, we integrate the obtained bound along the line from $x$ to $x_N$. Choose $|\alpha|=\lfloor p\rfloor-1$ and compute
		\begin{align*}
			|\partial^{\alpha}f(x)|&\leq |\partial^{\alpha}f(x)-\partial^{\alpha}f(x_N)|+|\partial^{\alpha}f(x_N)|\leq |\int_{x}^{x_N}|D^{|\alpha|+1}f(t)|dt\\
			&\leq \int_{d(x)}^{d(x_N)}C|x_N-z|^p t^{-|\alpha|-1}dt\leq C|x-z|^p d(x)^{-|\alpha|}.
		\end{align*}
		Iterating this, we prove the claim.
	\end{proof}
	\begin{corollary}\label{generalisedGrowthLemma}
		In the same setting as above, we get the estimate 
		$$|\nabla f(x)|\leq C|x-z|^pd^{-1}(x).$$
	\end{corollary}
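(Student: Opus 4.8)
The statement to prove is Corollary \ref{generalisedGrowthLemma}: in the setting of Lemma \ref{A10}, one has $|\nabla f(x)|\le C|x-z|^p d^{-1}(x)$. Since Lemma \ref{A10} has just been proved, this is immediate, and the plan is simply to specialise it.

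\textbf{The plan.} I would apply Lemma \ref{A10} with $|\gamma|=1$. The hypothesis $p>1$ guarantees that $1\le\lfloor p\rfloor$, so the range $1\le|\gamma|\le\lfloor p\rfloor$ is non-empty and $|\gamma|=1$ is an admissible choice. Lemma \ref{A10} then gives, for every $x\in\Omega\cap B_1(z)$,
$$|\partial^\gamma f(x)|\le C|x-z|^p d^{-|\gamma|}(x)=C|x-z|^p d^{-1}(x)$$
for each first-order multi-index $\gamma=e_i$. Summing over $i=1,\dots,n$ (or taking the Euclidean norm of the gradient, which is comparable up to a dimensional constant) yields
$$|\nabla f(x)|\le C|x-z|^p d^{-1}(x),$$
with $C$ depending only on $n$ and the constants in the hypotheses of Lemma \ref{A10}. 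This is exactly the claimed estimate.

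\textbf{Where the work really sits.} There is no genuine obstacle at this stage: the entire content was already carried out inside the proof of Lemma \ref{A10}, namely the chaining argument along the dilated points $x_i=z'+2^i(x-z')$ together with the interior regularity estimate, which is what upgrades the cone-only bound to a bound on all of $\Omega\cap B_1(z)$ at the cost of the factor $d^{-|\gamma|}(x)$. The only thing to check is the bookkeeping that $|\gamma|=1$ falls within the admissible range, which it does precisely because $p>1$. Hence the proof of the corollary is a one-line specialisation, as indicated in the excerpt.
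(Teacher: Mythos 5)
Your proposal is correct and is exactly the paper's (implicit) argument: the corollary is a direct specialisation of Lemma \ref{A10} to multi-indices of order one, admissible since $p>1$ gives $1\le\lfloor p\rfloor$, and summing over the coordinate directions yields the gradient bound. Nothing further is needed.
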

	\begin{remark}
		We can integrate once more time, to get the estimate for $f$ in the full neighbourhoods of $z$: 
		$$|f(x)|\leq C|x-z|^p\log d(x).$$
	\end{remark}


\begin{thebibliography}{99} 
	\bibitem{AR20} N. Abatangelo, X. Ros-Oton, \textit{Obstacle problems for integro-differential operators: Higher regularity of free boundaries}, Adv. Math. \textbf{360} (2020), 106931, 61pp.
	
		
	\bibitem{increments} P. Andersson, \textit{Characterization of pointwise h\"older Regularity}, Appl. Comput. Harmon. Anal. \textbf{4} (1997), 429--443
	
	\bibitem{ACS08} I. Athanasopoulos, L. Caffarelli, S. Salsa, \textit{The structure of the free boundary for lower dimensional obstacle
	problems}, Amer. J. Math. \textbf{130} (2008) 485-498.
	
	\bibitem{AuR20} A. Audrito, X. Ros-Oton, \textit{The Dirichlet problem for nonlocal elliptic operators with $C^\alpha$ exterior data}, Proc. Amer. Math. Soc. \textbf{148} (2020), 4455-4470.
	
	\bibitem{BFR18} B. Barrios, A. Figalli, X. Ros-Oton,  \textit{Free boundary regularity in the parabolic fractional obstacle problem,} Comm. Pure Appl. Math. \textbf{71} (2018), 2129--2159. 
	
	\bibitem{C98}  L. Caffarelli, \textit{The obstacle problem revisited,} J. Fourier Anal. Appl. \textbf{4} (1998), 383--402.
	
	\bibitem{CF13} L. Caffarelli, A. Figalli, \textit{Regularity of solutions to the parabolic fractional obstacle problem,} J.
	Reine Angew. Math. \textbf{680} (2013), 191--233.
	
	\bibitem{CRS17}  L. Caffarelli, X. Ros-Oton, J. Serra, \textit{Obstacle problems for integro-differential operators: regularity of solutions and free boundaries,} Invent. Math. \textbf{208} (2017), 1155--1211.
	
	\bibitem{CSS08} L. Caffarelli, S. Salsa, L. Silvestre, \textit{Regularity estimates for the solution and the free boundary of the obstacle
	problem for the fractional Laplacian}, Invent. Math. \textbf{171} (2008), 425-461.
	
	\bibitem{CDM16} J. A. Carrillo, M. G. Delgadino, A. Mellet, \textit{Regularity of local minimizers of the interaction energy via
	obstacle problems}, Comm. Math. Phys. \textbf{343} (2016), 747-781.
	
	\bibitem{CT04} R. Cont, P. Tankov, \textit{Financial Modelling With Jump Processes}, Financial Mathematics Series. Chapman \&
	Hall/CRC, Boca Raton, FL, 2004.
	
	\bibitem{DS15} D. De Silva, O. Savin, \textit{A note on higher regularity boundary Harnack inequality}, Disc. Cont. Dyn. Syst. \textbf{35}
	(2015), 6155--6163.
	
	\bibitem{DS16} D. De Silva, O. Savin, \textit{Boundary Harnack estimates in slit domains and applications to
	thin free boundary problems}, Rev. Mat. Iberoam. \textbf{32} (2016), 891--912.

	\bibitem{DRRV20} S. Dipierro, X. Ros-Oton, J. Serra,  E. Valdinoci, \textit{Non-symmetric stable operators: regularity theory and integration by parts}, preprint.
	
	\bibitem{FR20} X. Fern\'andez-Real, X. Ros-Oton, \textit{Regularity theory for elliptic PDE}, forthcoming book (2020), available at the webpage of the authors.
	
	\bibitem{FR18} X. Fern\'andez-Real, X. Ros-Oton, \textit{The obstacle problem for fractional Laplacian with critical drift}, Math. Ann. \textbf{371} (2018), 1683--1735.
	
	\bibitem{GPPS17} N. Garofalo, A. Petrosyan, C. A. Pop, M. Smit Vega Garcia, \textit{Regularity of the free boundary for the obstacle
	problem for the fractional Laplacian with drift}, Ann. Inst. H. Poincare Anal. Non Lineaire \textbf{34} (2017), 533--570.
	
	\bibitem{JN17} Y. Jhaveri, R. Neumayer, \textit{Higher regularity of the free boundary in the obstacle problem for the fractional
	Laplacian}, Adv. Math. \textbf{311} (2017), 748-795.
	
	\bibitem{KPS15} H. Koch, A. Petrosyan, W. Shi, \textit{Higher regularity of the free boundary in the elliptic Signorini problem,}
	Nonlinear Anal. \textbf{126} (2015), 3--44.
	
	\bibitem{KR19} H. Koch, A. R\"uland, W. Shi, \textit{Higher regularity for the fractional thin obstacle problem}, New York J. Math.  \textbf{25} (2019) 745–838.

	
	\bibitem{PP15} A. Petrosyan, C. A. Pop, \textit{Optimal regularity of solutions to the obstacle problem for the fractional Laplacian
	with drift}, J. Funct. Anal. \textbf{268} (2015), 417--472.
	
	\bibitem{R18} X. Ros-Oton, \textit{Obstacle problems and free boundaries: an overview}, SeMA J. \textbf{75} (2018), 399--419.
	
	
	\bibitem{RS16} X. Ros-Oton, J. Serra, \textit{Boundary regularity estimates for nonlocal elliptic equations in $C^1$
	and $C^{1,\alpha}$ domains},
	Ann. Mat. Pura Appl. \textbf{196} (2017), 1637--1668.
	
	\bibitem{RS16c} X. Ros-Oton, J. Serra, \textit{Boundary regularity for fully nonlinear integro-differential equations,}
	Duke Math. J. \textbf{165} (2016), 2079--2154.
	
	\bibitem{RS16b} X. Ros-Oton, J. Serra, \textit{Regularity theory for general stable operators}, J. Differential Equations \textbf{260} (2016),
	8675--8715.
	
	\bibitem{S15} S. Serfaty, \textit{Coulomb Gases and Ginzburg-Landau Vortices}, Zurich Lectures in Advanced Mathematics, EMS
	books, 2015.
	

	
	
\end{thebibliography}
\end{document}